\documentclass[12pt,letterpaper]{amsart}
\pdfoutput=1 
\usepackage{etex}
\usepackage{amsmath, amsthm, graphicx, amssymb,verbatim,pst-all,color}
\usepackage[margin=1.0in]{geometry}
\usepackage{mathptmx}
\usepackage[latin1]{inputenc}
\usepackage{multirow}
\usepackage{subfig}
\usepackage{enumerate}
\usepackage{array,nicefrac}
\usepackage{url}
\usepackage{times}
\usepackage{textcomp}
\usepackage{verbatim}
\usepackage{epsf}
\usepackage{stmaryrd}
\usepackage{relsize}
\usepackage{mathtools}
\usepackage{amscd, amsfonts}
\usepackage{enumitem}
\usepackage[all]{xy}
\usepackage{lscape}
\usepackage{etex,soul,picins}
\usepackage{pgf}

\usepackage{enumitem}
\usepackage{qtree}
\usepackage{tikz}
\usetikzlibrary{arrows}
\usetikzlibrary{patterns} 
\usetikzlibrary{plotmarks}
\usetikzlibrary{matrix,arrows}
\usetikzlibrary{calc,positioning}
\numberwithin{equation}{section}

\usepackage{hyperref}

\pagestyle{headings}
\ifx\JPicScale\undefined\def\JPicScale{1.0}\fi
\unitlength \JPicScale mm
{
\theoremstyle{plain}
   \newtheorem{theorem}{Theorem}[section]
   \newtheorem{proposition}[theorem]{Proposition}     
   \newtheorem{lemma}[theorem]{Lemma}

   \newtheorem{corollary}[theorem]{Corollary}
   
}
{
\theoremstyle{definition}

   \newtheorem{example}[theorem]{Example}
   \newtheorem{definition}[theorem]{Definition}
   \newtheorem{remark}[theorem]{Remark}

}

\newcommand{\PP}{\mathbb{P}}

\newcommand{\hatbP}{\check{\bP}}


\newcommand{\ZZ}{\mathbb{Z}}

\newcommand{\Aff}{\mathbb{A}}
\newcommand{\iso}{\cong}

\newcommand{\wbuildtdn}{\mathcal{H}_{\mathcal{A}}}
\newcommand{\wbuildtdnb}{\mathcal{H}_{\mathcal{B}}}

\newcommand{\wmod}{X_\mathcal{A}[n]}

\newcommand{\wtdn}{T_{d,n}^{\mathcal{A}}}
\newcommand{\wtdnu}{(T_{d,n}^{\mathcal{A}})^+}

\newcommand{\wtdno}{(T_{d,n}^{\mathcal{A}})^o}
\newcommand{\wtdnb}{T_{d,n}^{\mathcal{B}}}
\newcommand{\wtdr}{T_{d,r}^{\mathcal{A}(R)}}
\newcommand{\bP}{\mathbb{P}}

\newcommand{\wpdn}{\overline P^{\mathcal A}_{d,n}}
\newcommand{\wpdr}{\overline P^{{\mathcal A}(R)}_{d,r}}
\newcommand{\wpdno}{P^{\mathcal A}_{d,n} }
\newcommand{\wpdnu}{\overline U_{d,n}^{\mathcal{A}}}

\setcounter{tocdepth}{1}
\numberwithin{theorem}{section}

\date{}

\address{642 Boyd Graduate Studies Research Center\\ University of Georgia\\ Athens, GA 30602\\ USA}
 \email{gallardo@uga.edu}
\address{Kavli Institute for the Physics and Mathematics of the Universe (WPI), The University of Tokyo Institutes for Advanced Study, The University of Tokyo, Kashiwa, Chiba 277-8583, Japan}
\email{evangelos.routis@ipmu.jp}

\begin{document}
\title{Wonderful compactifications of the moduli space of points in affine and projective space}
\author{Patricio Gallardo and Evangelos Routis}

\begin{abstract}
We introduce and study smooth compactifications of the moduli space of $n$ labeled points with weights in projective space, which have normal crossings boundary and are defined  as GIT quotients of the weighted Fulton MacPherson compactification. We show that the GIT quotient of a wonderful compactification is also a wonderful compactification under certain hypotheses.  We also study a
weighted version of the configuration spaces  parametrizing $n$ points  in affine space up to translation and homothety.  
In dimension one, the above compactifications are isomorphic to Hassett's moduli space of rational weighted stable curves.  
\end{abstract}
\maketitle
\section{Introduction}

For any smooth variety $X$, Fulton and MacPherson  constructed a smooth compactification $X[n]$ of the configuration space of $n$ distinct labeled points in $X$, such that all points remain distinct in the degenerate configurations  \cite{Fulton-MacPherson}. A few years later, Hu and Keel  showed that $\overline{M}_{0,n}$ is a GIT quotient of $\bP^1[n]$ by $SL_2$  \cite{Hu-Keel}.
Recently, the second author extended the Fulton-MacPherson construction by including weight data which allow points to collide depending on the accumulation of their weights \cite{routis2014weighted}. 
The objectives of this article are the following. First, to generalize 
the results of  Hu and Keel by constructing smooth weighted compactifications   $\overline P^{\mathcal A}_{d,n}$ of the moduli space of $n$ points in $\bP^d$ which are birational to the moduli space of weighted hyperplane arrangements  \cite{Hacking-Keel-Tevelev-hyperplane}, \cite{alexeev2013moduli} (see Section \ref{hyparr}).  Second, to describe a novel iterated blow-up construction of the compactification  $T_{d,n}$  of the configuration space of $n$ labeled points  in $\Aff^d$ up to translation and homothety  \cite{Chen-Gibney-Krashen}, as well as to study a weighted version of it (see Section \ref{sec:hM0m}).  Finally,  to develop a theoretical framework that allows us  to study GIT quotients of wonderful compactifications associated with more general moduli problems  (see Section \ref{sec:WondGIT}).

Let us give a brief description of the geometric points of $\wpdn$. We start with an equivalence class of $n$ labeled points in $\bP^d$ parametrized by a GIT quotient which is defined in Lemma \ref{GITpoints}. 
Let $\mathcal{A}:=\{a_1,a_2,\dots a_n\}$ be an ordered set of numbers between $0$ and $1$, which we call weights, associated to the  labeled points. We impose the requirement that the set $\mathcal{A}$  lies in the domain of admissible weights $\mathcal D^P_{d,n}$ (see Section \ref{weights}). If a subset of those points with weight sum larger than one collides, then we blow up the point of collision and attach a new $\bP^d$, which we glue along the exceptional divisor. 
This subset of points then `moves' to the new $\bP^d$ and is not coincident anymore. We continue this procedure until all colliding points with total weight larger than one are separated. The resulting degenerations are called  \emph{ weighted stable trees} with respect to the set of weights $\mathcal A$.
Let $P_{d,n}^{\mathcal{A}}$ be the open locus in $\wpdn$ parametrizing equivalence classes of $n$ labeled points $\{p_1,\dots p_n\}$ in $\bP^d$ such that any subset of colliding points $\{p_i \;| \; i\in I\}$ has total weight $\sum_{i\in I} a_i$ less than or equal to 1 (see Definition \ref{defpdno}).  The following theorem is proven in Section \ref{conspdn}. 
\begin{theorem}
\label{mainpdn}Let $\mathcal{A}$ be a set of admissible weights in $\mathcal D^P_{d,n}$.
\begin{enumerate}
\item 
The compactification   
$\overline P_{d,n}^{\mathcal{A}}$ of $P_{d,n}^{\mathcal{A}}$ is a smooth projective variety, whose boundary is a union of smooth irreducible divisors that intersect with normal crossings.
 \item 
There exists a smooth variety $\wpdnu$ and a flat, proper morphism
 \begin{align*}
\hat \phi_{\mathcal{A}}:
\wpdnu\rightarrow
\wpdn
\end{align*}
equipped with $n$ sections $\hat\sigma_i:\wpdn\rightarrow \wpdnu$ such that 
\begin{enumerate}
\item the images of $\hat \sigma_i$ lie in the relative smooth locus of $\hat \phi_{\mathcal{A}}$ and
\item the geometric fibers of $\hat \phi_{\mathcal{A}}$ are precisely the weighted stable trees with respect to the set of weights $\mathcal A$.
\end{enumerate}
\end{enumerate}
\end{theorem}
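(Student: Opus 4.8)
The strategy is to present $\wpdn$ as a GIT quotient of the weighted Fulton--MacPherson compactification $\wmod$ by $\SL_{d+1}$, with respect to a linearization $L$ tuned to the weight data $\mathcal{A}$, and then to descend the good geometry of $\wmod$ established in \cite{routis2014weighted} through the quotient map. Since the center of $\SL_{d+1}$ acts trivially on configurations in $\bP^d$, the action factors through $\PGL_{d+1}$ on the locus of interest, so the whole argument rests on a stability analysis for this $\PGL_{d+1}$-action, together with formal descent along the resulting principal bundle.

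\emph{Part (1).} I would first carry out the GIT stability computation. Building on the numerical criterion for $n$ points in $\bP^d$ recalled in Lemma \ref{GITpoints}, the goal is to show that the hypothesis $\mathcal{A}\in\mathcal{D}^P_{d,n}$ is exactly what guarantees that, for a suitable choice of $L$, every semistable point of $\wmod$ is stable and the stabilizer of every stable point is trivial. Once this is in place, $\wpdn=\wmod\quotient\PGL_{d+1}$ is a geometric quotient, the quotient map $\pi:\wmod^{s}\to\wpdn$ is a principal $\PGL_{d+1}$-bundle, and in particular $\pi$ is smooth and surjective; projectivity is then immediate from GIT, and smoothness of $\wpdn$ follows because $\wmod$ is smooth and $\pi$ is a smooth surjection. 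For the boundary, I would use that the boundary divisors of $\wmod$ are smooth, irreducible, cross normally, and are individually $\PGL_{d+1}$-invariant, since the iterated blow-up centers defining $\wmod$ are the invariant collision loci indexed by subsets of the labels, and the labeling is untouched by the group. Each such invariant divisor $D$ satisfies $D=\pi^{-1}(\pi(D))$, so $\pi(D)$ is an irreducible divisor in $\wpdn$, distinct divisors giving distinct images; because $\pi$ is smooth and surjective, smoothness and the normal crossings property descend from $\{D\}$ to $\{\pi(D)\}$. The complement of this boundary is the free quotient of the non-degenerate locus, which is $\wpdno=P^{\mathcal A}_{d,n}$, confirming that $\wpdn$ compactifies $P^{\mathcal A}_{d,n}$.

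\emph{Part (2).} The universal family $\phi:\wuni\to\wmod$ of \cite{routis2014weighted} is $\SL_{d+1}$-equivariant, so I would define $\wpdnu$ as the GIT quotient of $\wuni$ by $\SL_{d+1}$, linearized by $\phi^{*}L$ twisted by a small multiple $\epsilon$ of a relative ample class. For $\epsilon$ small the relative twist cannot create new unstable points, so the semistable locus of $\wuni$ is exactly $\phi^{-1}(\wmod^{s})$; the equivariant morphism $\phi$ then descends to $\hat\phi_{\mathcal{A}}:\wpdnu\to\wpdn$, and the equivariant sections $\sigma_i$ descend to $\hat\sigma_i$. Since $\pi$ is faithfully flat and the base change of $\hat\phi_{\mathcal{A}}$ along $\pi$ is $\phi$ restricted to $\phi^{-1}(\wmod^{s})$, which is flat and proper, $\hat\phi_{\mathcal{A}}$ is flat and proper by fppf descent, and $\wpdnu$ is smooth by the same principal-bundle argument as above applied to the (again free) action on $\wuni^{s}$. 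Because $\PGL_{d+1}$ acts freely and transitively on each orbit in $\wmod^{s}$, the fiber of $\hat\phi_{\mathcal{A}}$ over $[x]$ is isomorphic to $\phi^{-1}(x)$, which is the weighted stable tree attached to $x$; that the $\hat\sigma_i$ land in the relative smooth locus is inherited from the corresponding property of the $\sigma_i$, since this is an open, $\SL_{d+1}$-invariant condition.

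The main obstacle is the stability analysis underlying Part (1). The delicate point is to transport Mumford's numerical criterion from the space of ordered point configurations to the iterated blow-up $\wmod$, and to verify that the linearization can be chosen so that the exceptional divisors contribute no strictly semistable points; it is precisely here that the admissibility conditions cutting out $\mathcal{D}^P_{d,n}$ must be used to rule out semistable-but-not-stable orbits and nontrivial stabilizers. Establishing the equivalence of semistability and stability, together with freeness of the $\PGL_{d+1}$-action, is both the technical core and the place where the weight hypotheses enter essentially; everything else reduces to formal descent along the principal bundle $\pi$.
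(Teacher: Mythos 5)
Your overall architecture coincides with the paper's: realize $\wpdn$ as a GIT quotient of $\bP^d_{\mathcal A}[n]$ by $\SL_{d+1}$ and descend smoothness, the normal crossings boundary, and the universal family along the free quotient. Your Part (2) is essentially the paper's argument: the paper defines $\wpdnu$ as the quotient of $\bP^d_{\mathcal A}[n]^{+}$ linearized by $M^+_{\mathcal A}\otimes\phi_{\mathcal A}^{*}(\tilde L_{\mathcal A}^{\otimes m})$ for $m\gg 0$, which is projectively the same as your small relative twist; your flatness-and-properness via fppf descent along the principal bundle is a legitimate alternative to the paper's route (smoothness of both quotients plus equidimensionality of fibers for flatness, projectivity for properness, and Theorem \ref{GITHu}(3) for the fiber identification).

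The genuine gap is exactly where you locate it, in Part (1), and the way you propose to fill it would not work as described. You plan to ``transport Mumford's numerical criterion to the iterated blow-up'' and to use the admissibility conditions cutting out $\mathcal D^P_{d,n}$ to rule out strictly semistable points and nontrivial stabilizers there. Neither half of this matches how the step is (or feasibly can be) done. First, the Hilbert--Mumford analysis is performed only once, on $(\bP^d)^n$ with the fixed linearization $L_{d,n}$ (Lemma \ref{GITpoints}): the GIT weights $w_i$ are chosen so that no subset sum is an integer, which is what forces semistable $=$ stable and freeness (of the induced $\PGL_{d+1}$-action); this is independent of $\mathcal A$. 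The condition $a_i\geq w_i$ defining $\mathcal D^P_{d,n}$ plays a different role: it controls which diagonals lie in the building set $\mathcal K_{\mathcal A}$ and how they meet the stable locus. Second, stability on the blow-up is not re-derived from the numerical criterion: one takes $\tilde L_{\mathcal A,e}=\pi_{\mathcal A}^{*}(L_{d,n}^{\otimes e})\otimes\mathcal{O}(-E)$ with $e\gg 0$ and invokes relative GIT (Theorem \ref{GITHu}(1), packaged in Proposition \ref{WonGIT}(2)), which gives $(\bP^d_{\mathcal A}[n])^{ss}=(\bP^d_{\mathcal A}[n])^{s}=\pi_{\mathcal A}^{-1}\bigl(((\bP^d)^n)^{s}\bigr)$ with no further computation. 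Ironically, you invoke precisely this principle in Part (2) (``for $\epsilon$ small the relative twist cannot create new unstable points'') without noticing that it dissolves your declared ``main obstacle'' in Part (1). Two smaller points: only those boundary divisors $D_I$ meeting $\pi_{\mathcal A}^{-1}\bigl(((\bP^d)^n)^{s}\bigr)$ survive in the quotient (in the paper these are indexed by $I\subsetneq\{d+1,\dots,n\}$, not by all $I$), so ``distinct divisors give distinct images'' needs that caveat; and while your descent of smoothness and transversality along the smooth surjection $\pi$ suffices for the statement of the theorem, the paper proves the stronger Proposition \ref{WonGIT} (via Luna slices and Kirwan's blow-up lemma), identifying the quotient as a wonderful compactification of the descended building set, a structure it needs later (e.g.\ Corollary \ref{corpdn}).
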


\subsection{Comparing $\wpdn$ with the moduli space of hyperplane arrangements. }\label{hyparr}
The moduli space of $n$ generic labeled points in $\PP^d$ is equal to the moduli space of $n$ generic labeled hyperplanes in the dual projective space $\hatbP^d$. 
Let $\vec w\in \mathcal {D}(d+1,n):=\{  (w_1, \ldots, w_n) \in \mathbb Q^n \; |\; w_1+\ldots+w_n \geq 3,\; 1 \geq w_i > 0\}$ be a set of numbers, which are called weights, associated to the hyperplanes.
A compact moduli space $\overline{M}( \hatbP^d,n)$ that contains the moduli space of $n$ generic hyperplanes in $\hatbP^d$ with weights $(1, \ldots, 1)$ was constructed by Kapranov \cite{Kapranov-chow} and Hacking-Keel-Tevelev \cite{Hacking-Keel-Tevelev-hyperplane}.
A compact moduli space $\overline{M}_{\vec w}( \hatbP^d,n)$ 
 that contains the moduli space of $n$ hyperplanes with weights $\vec w$
was constructed by Alexeev \cite{alexeev2013moduli}. The space $\overline{M}_{\vec w}(\hatbP^d,n)$  can be arbitrarily singular, and  can contain many irreducible components when $d \geq 2$.   
The objects parametrized by these compact moduli spaces are called stable hyperplane arrangements and are abbreviated by shas in the literature (see \cite[Def 5.3.1, Thm 5.3.2]{alexeev2013moduli}).
Moreover,
there is  a unique irreducible component 
$\overline{M}^{m}_{\vec w}(\hatbP^d,n)$ of $\overline{M}_{\vec w}( \hatbP^d,n)$, often referred to as the `main component',  that contains the moduli space of $n$ generic hyperplanes in $\hatbP^d$ as a dense open subset.  
In general, there exists a projective morphism from the main component $\overline{M}^{m}(\hatbP^d,n)$  to any GIT quotient  of $n$ hyperplanes in $\hatbP^d$ 
of the form  $ \left( \bP^d \right)^n / \! \! /_{\mathcal L} SL_{d+1}$  (see \cite[Sec 5.5]{alexeev2013moduli}). 

For convenience we restrict to $d=2$.  By   Lemma \ref{GITpoints} 
and the well-known fact that every projective birational morphism is a blow up with respect to an ideal sheaf,  there is a  blow up
$$
\Phi: \overline{M}^{m}(\hatbP^2,n) \to  \left( \bP^{n-4} \right)^{2}
$$
which by definition is an isomorphism on the open set $U_{gen}$ parametrizing arrangements of $n$ generic lines in $\hatbP^2$.  It holds that 
$\bigcup_{I} C_I = (\PP^{n-4})^2 \setminus U_{gen}$, where  $C_I \subset  (\PP^{n-4})^{2}$ is the locus parametrizing configurations where the lines $\{ l_i \; | \; i \in I \}$ are concurrent at a point and $I\subset \{1,\dots n\}$ such that $3\leq |I|\leq n-2$. The locus $C_I$ can be identified with the one parametrizing collinear points 
$\{ p_i \; | \; i \in I \}$ in $\PP^2$. Since coincident lines are trivially concurrent, it holds that 
$
H_I   \subset C_I   
$
where $H_I$ is the locus  parametrizing configurations with coincident lines  $l_{i}=l_{j}$ for all $i, j $ in $I$. The locus $H_I$ can be identified with the 
one parametrizing configurations of $n$ labeled points $p_1, \ldots, p_n$ in 
$ \PP^2$ such that $p_i=p_j$ for all $i, j$ in $I$. Now let $\overline{P}_{2,n}$ be the variety $\overline{P}^{\mathcal{A}}_{2,n}$ where $\mathcal{A}=\{1,1,\dots,1\}$. We have the following Proposition, whose proof can be found in Section \ref{sec:hyper}. 

\begin{proposition}\label{thm:hyper}
There is a sequence of blow ups  
$\phi_k$ (resp. $\rho_k$) and weights $\vec w_k \in  \mathcal {D}(3,n)$ (resp. $\vec \beta_k\in  \mathcal {D}(3,n)$), where $3\leq k\leq n-4$ (resp. $2\leq k\leq n-3$) such that 
\begin{align*}
    \xymatrix{ 
\overline{M}^{m}(\hatbP^2,n) \ar[r]^{\phi_{3}}
& \overline{M}^{m}_{\vec w_{3}}(\hatbP^2,n) \ar[r] ^{\qquad{}  \rho_2}
&
\ldots  \ar[r]^{\rho_{n-2} \qquad{} }
& \overline{M}^{m}_{\vec \beta_{n-2}}(\hatbP^2,n) \ar[r]^{\phi_{n-4}} 
& \overline{M}^{m}_{\vec w_{n-4}}(\hatbP^2,n) \ar[r]^{\rho_{n-3}}
& 
(\PP^{n-4})^2 .
}
\end{align*}
Moreover, each center $B_k$ (resp. $F_k$) of the blowup $\rho_k$ (resp. $\phi_k$) can be written as
\begin{align*}
B_k = \bigcup_I  B_I && (\text {resp.}\,\, F_k = \bigcup_I  F_I)
 && \text{ with}  &&
 I \subset \{ 1, \ldots n\},  |I|=k,  &&&
\end{align*}
where $B_I $ (resp. $F_I$) are subschemes of $\overline{M}^{m}_{\vec \beta_{k}}(\hatbP^2,n)$ (resp.  $\overline{M}^{m}_{\vec w_{k}}(\hatbP^2,n)$) that parametrize those shas with the property that at least one of their irreducible components has $k$ coincident lines $l_{i}=l_{j}$ for all $i,j\in I$ (resp. $k$ concurrent lines $\{l_i|i\in I\}$).  If $k < (n-3)$, then it holds that
\begin{itemize}
\item 
 $B_I$ is strictly larger than the strict transform of $H_I$ in 
$ \overline{M}^{m}_{\vec \beta_{k}}(\hatbP^2,n) $.
\item 
Each $B_I$ is a reducible, non-equidimensional scheme. 
\end{itemize}
On the other hand, there is a sequence of blow ups $\hat\rho_k$ and weights $\vec \alpha_{k}\in \mathcal D^P_{2,n}$, $2\leq k\leq n-3$,
\begin{displaymath}
    \xymatrix{ 
\overline P_{2,n} 
\ar[r] ^{\hat\rho_2}
& \overline P_{2,n}^{ \vec \alpha_{2}}  \ar[r]^{\hat \rho_3} 
&
\ldots  \ar[r]^{\hat \rho_{n-5} \;\;\; }
& \overline P_{2,n}^{\vec \alpha_{n-5}} \ar[r]^{\hat \rho_{n-4} \; \;}
& 
\overline P_{2,n} ^{\vec \alpha_{n-4}} \ar[r]^{\hat \rho_{n-3} \; \;}
& 
(\PP^{n-4})^2. 
}
\end{displaymath}
Each center $S_k$ of the blowup $\hat \rho_k$ can be written as
\begin{align*}
S_k = \bigcup_{I} S_I
 && \text{ with}  &&
 I \subset \{ 1, \ldots n\},  |I|=k,  &&&
\end{align*}
where $S_I$ are smooth subvarieties of $\overline P_{2,n} ^{\vec \alpha_{k}}$ whose geometric points parametrize those weighted stable trees with the property that at least one of their irreducible components has $k$ overlapping points  $p_{i}=p_{j}$ for all  $i, j \in I$. 
If $k  < n-3$, then:
\begin{itemize}
\item Each $S_I$ is equal to the strict transform of  $H_I$ in $\overline P_{2,n}^{ \vec \alpha_{k}} $.
\end{itemize}
\end{proposition}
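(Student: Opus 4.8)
The plan is to exhibit both $\overline{M}^m(\hatbP^2,n)$ and $\overline P_{2,n}$ as iterated blow-ups of the common GIT quotient $(\PP^{n-4})^2$ and then to compare the two resulting chains of centers. On the hyperplane side I would invoke Alexeev's variation of weights \cite{alexeev2013moduli}, and on the point side the weighted Fulton-MacPherson construction underlying $\overline P^{\mathcal A}_{2,n}$ and Theorem \ref{mainpdn}. What must ultimately be extracted is that the coincidence loci appearing on the two sides have genuinely different local structure, which is precisely what distinguishes the two compactifications.

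For the hyperplane arrangements I would first record the wall-and-chamber structure of the weight domain $\mathcal{D}(3,n)$: the walls are the affine hyperplanes $\sum_{i\in I} w_i = 1$, and crossing the wall indexed by $I$ induces a birational contraction of the main component. Two species of contraction arise, according to whether the lines $\{l_i \mid i\in I\}$ are forced to become concurrent or coincident; these yield the morphisms $\phi_k$ (center $F_k$) and $\rho_k$ (center $B_k$). I would identify $F_I$ and $B_I$ with the loci of shas possessing a component on which the relevant $k$ lines are concurrent, respectively coincident, and then verify by a dimension count that interleaving the two families in the stated order does contract $\overline{M}^m(\hatbP^2,n)$ all the way down to $(\PP^{n-4})^2$.

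The crux, and the step I expect to be the main obstacle, is the local analysis of $B_I$ when $k<n-3$. Since $H_I\subset C_I$, the coincidence locus sits inside a concurrence structure that has already been modified, and the stable replacement of an arrangement with $k$ coincident lines admits several distinct degeneration types: the coincidence may be realised on the main $\hatbP^2$ or pushed onto a bubbled-off component carrying a variable number of the remaining lines. Each such type contributes an irreducible component of $B_I$, and because these bubbles carry different amounts of moduli the components have different dimensions, yielding both reducibility and non-equidimensionality. The same bookkeeping exhibits shas in $B_I$ that are not flat limits of honestly coincident arrangements, so $B_I$ strictly contains the strict transform of $H_I$. Making this rigorous requires a careful reading of the stable models of \cite[Thm 5.3.2]{alexeev2013moduli} together with the matroid combinatorics of their boundary strata.

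On the point side the situation is far more rigid. By the weighted Fulton-MacPherson construction, $\overline P^{\mathcal A}_{2,n}$ is an iterated blow-up of $(\PP^{n-4})^2$ along the strict transforms of the coincidence diagonals $H_I$, performed in the usual Fulton-MacPherson order with the deepest diagonals (largest $|I|$) first; crucially, only coincidence loci are blown up, while collinear-but-distinct configurations are left untouched. Each $H_I$ is a smooth diagonal, and the Fulton-MacPherson blow-ups preserve smoothness and transversality, so the strict transform of $H_I$ remains smooth throughout and one checks directly that it equals the locus $S_I$ of weighted stable trees having a component with $k$ overlapping points. Choosing the weights $\vec\alpha_k$ so that crossing $\sum_{i\in I}a_i=1$ effects exactly the contraction $\hat\rho_k$ then completes the chain and shows that $S_I$ is the smooth strict transform of $H_I$, establishing the asserted dichotomy between the two compactifications.
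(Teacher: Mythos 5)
Your overall strategy coincides with the paper's: Alexeev's wall-crossing description of the main component on the arrangement side, and the wonderful-compactification/iterated-blowup description of $\overline P_{2,n}^{\vec \alpha}$ on the point side; your treatment of the $S_I$ (smoothness of the strict transforms preserved through the blowups, identification with the loci of stable trees with $k$ overlapping points) is in substance the paper's argument, which runs through Corollary \ref{corpdn}(1) with $\mathcal{A}=(1,\dots,1)$, the explicit weights $\vec\alpha_k=(1,1,\tfrac1k,\dots,\tfrac1k)$, and Proposition \ref{buildpullback}(1). One correction on the arrangement side: the two species of contractions do not come from a single family of walls $\sum_{i\in I}w_i=1$; coincidence walls are $\sum_{i\in I}b_i=1$ (for $2\le|I|\le n-3$), while concurrency walls are $\sum_{i\in J}b_i=2$ (for $3\le|J|\le n-2$), and the interleaving of the $\phi_k$ and $\rho_k$ in the statement is exactly the order in which these two distinct families of walls are crossed.

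The genuine gap is at the step you yourself flag as the crux: the assertions that $B_I$ is reducible, non-equidimensional, and strictly larger than the strict transform of $H_I$ are, in your proposal, only attributed to ``bubbles carrying different amounts of moduli,'' with the rigor deferred to ``a careful reading'' of Alexeev's stable models. That deferred step is the actual content of the proof and does not follow formally from the bubbling heuristic: one must (i) produce an explicit second family of shas in $B_I$, (ii) compute its dimension, and (iii) show that neither this locus nor the coincidence locus lies in the closure of the other --- otherwise one has not exhibited two distinct irreducible components, and non-equidimensionality does not follow. The paper does exactly this. It takes (a) the generic arrangement with $l_1=\dots=l_{|I|}$ coincident on an irreducible $\hatbP^2$, whose locus has dimension $2(n-|I|-3)$ and whose closure is by definition the strict transform of $H_I$; and (b) a two-component sha $(X,D)$ with $X=\hatbP^2\cup Bl_x\hatbP^2$ glued along a $\hatbP^1$, where the $|I|$ lines appear as coincident fibers of $Bl_x\hatbP^2\to\hatbP^1$. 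Counting the moduli of the two glued line configurations plus the (zero-dimensional) gluing data gives $2(n-|I|)-7+(|I|-1)=2(n-4)-|I|$, which is $\ge 2(n-|I|-3)$ for $|I|\ge 2$, so the second locus cannot lie in the closure of the first; and since any degeneration of $(X,D)$ has underlying surface a further degeneration of $X$, the strict transform of $H_I$ cannot lie in the closure of the second locus either. Without some such explicit construction and dimension comparison, the three bulleted claims about $B_I$ remain unproved in your write-up.
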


\subsection{ Higher dimensional analogs of $\overline{M}_{0,n}$. }\label{sec:hM0m}
The Deline-Mumford-Knudsen moduli space of pointed stable curves of genus 0 is a very important chapter of algebraic geometry, which has been studied intensively in the past 40 years. It is natural to ask if one can construct higher dimensional generalizations of it, which share some of its remarkable geometric properties, for example smoothness, normal crossings boundary and explicit blowup construction. In this direction, Chen-Gibney-Krashen \cite{Chen-Gibney-Krashen} introduced and studied compactifications $T_{d,n}$ of the parameter space of $n$ labeled points in $\Aff^d$ with the aforementioned properties, whose closed points parametrize a  generalization of stable pointed rational curves known as stable pointed rooted trees. Further, they can be understood as non-reductive Chow quotients of $(\PP^d)^n$ \cite{gallardo2015chen}. However, in contrast with $\overline{M}_{0,n}$, little progress has been made towards a satisfactory understanding of the geometry of these spaces. One of the difficulties of extending our understanding of $\overline{M}_{0,n}$ to $T_{d,n}$ is that the latter has not been given a description as a sequence of smooth blowups analogous to Kapranov's elegant construction of $\overline{M}_{0,n}$ \cite{Kapranov-chow}. In \cite{Chen-Gibney-Krashen}, the authors provide an inductive construction of $T_{d,n}$ as a sequence of blowups of a projective bundle over $T_{d,n-1}$, which is quite involved. Our theory provides a natural description of $T_{d,n}$ as a sequence of blowups of $\mathbb{P}^{dn-d-1}$ which is similar to the one given by Kapranov when $d=1$ (see also \cite[Section 6.2]{Hassett-weighted}). Each of the intermediate blowups in this sequence has an interpretation as a space of so-called \textit{weighted stable rooted trees} (Section \ref{rootedtrees}, Section 4), recently introduced by the second author \cite{ChowHass} in analogy to Hassett's moduli space of rational \textit{weighted stable curves} \cite{Hassett-weighted}:  

\begin{corollary}\label{descrip}
For $d \geq 1$ and $n\geq2$, we fix $n$ planes $P_1, P_2,\dots ,P_n$ of dimension $d-1$ in $\mathbb{P}^{dn-d-1}$ with the property that for any $S\subset \{1,2\dots ,n\}$ with $|S|\leq n-1$, the set of planes $\{P_i|i\in S\}$ spans a linear subspace in $\mathbb{P}^{dn-d-1}$ of the maximal possible dimension, that is $d|S|-1$ . There exists a sequence of morphisms of smooth varieties
\begin{displaymath}
\xymatrix{
T_{d,n}
\ar[r]
&
T_{d,n}^{\mathcal A_{n-2}} \ar[r]
&
\ldots \ar[r]
& 
T_{d,n}^{\mathcal A_3} \ar[r]
&
T_{d,n}^{\mathcal A_2} \ar[r]
&
\mathbb{P}^{dn-d-1}
}
\end{displaymath}
where
\begin{itemize}
\item $T_{d,n}^{\mathcal A_2}$ is the blow up of $\mathbb{P}^{dn-d-1}$ along $P_1,P_2,\dots ,P_n$ in any order.
\item $T_{d,n}^{\mathcal A_{3}}$ is the blow up of $T_{d,n}^{\mathcal A_2}$ along the strict transforms of the $({2d-1})$-planes spanned by all pairs of the $P_i$, $i=1,\dots ,n$, in any order.
\item$T_{d,n}^{\mathcal A_{4}}$ is the blow up of $T_{d,n}^{\mathcal A_{3}}$ along the strict transforms of the $({3d-1})$-planes spanned by all triples
of the $P_i$, $i=1,\dots ,n$, in any order.
$$
\vdots
$$
\item $T_{d,n}$ is the blow up of $T_{d,n}^{\mathcal A_{n-2}}$ along the strict transforms of the $(d(n-2)-1)$-planes spanned by
all $(n-2)$-tuples of the $P_i$, $i=1,\dots ,n$, in any order.
\end{itemize}
\end{corollary}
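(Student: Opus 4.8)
The plan is to deduce this as a corollary of the blow-up construction of the weighted rooted-tree spaces $T_{d,n}^{\mathcal A}$ developed in Section \ref{sec:hM0m} (the rooted-tree analogue of Theorem \ref{mainpdn}), by running a carefully chosen chain of weights and identifying the successive centers with linear spans of the fixed planes $P_i$. First I would pin down the two extremes of the chain together with the interpolating spaces. For the maximal admissible weights the space $T_{d,n}^{\mathcal A}$ specializes to the Chen--Gibney--Krashen space $T_{d,n}$ \cite{Chen-Gibney-Krashen}, while for weights small enough that no subset of points is forced to separate, the weighted Chow quotient degenerates to the ambient $\mathbb{P}^{dn-d-1}$; the latter identification is the trivial-weight case of the Chow-quotient description of \cite{gallardo2015chen} together with the GIT picture of Lemma \ref{GITpoints}. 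I would then exhibit admissible weight vectors $\mathcal A_2 \prec \mathcal A_3 \prec \cdots \prec \mathcal A_{n-2}$ whose chambers are separated by single walls, the wall between $\mathcal A_k$ and $\mathcal A_{k+1}$ being the one at which coincidences of subsets of size $n-k+1$ (those with total weight exceeding the threshold) first become forbidden. Applying the reduction morphisms between the spaces $T_{d,n}^{\mathcal A}$ for comparable weights \cite{ChowHass} then produces the tower of the statement, and it remains to show that each arrow is the asserted blow-up.

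The second ingredient is the geometric dictionary between coincidences of points and linear spans. Writing configurations of $n$ points in $\Aff^d$ in coordinates and passing to the quotient by translations and homotheties realizes $\mathbb{P}^{dn-d-1}$ as the projectivization of $(\Aff^d)^n$ modulo the $(d+1)$-dimensional group, under which the locus where the points indexed by a subset $K$ coincide is cut out by linear equations. I would verify that this coincidence locus is a linear subspace of dimension $d(n-|K|)-1$, that for $|K|=n-1$ these are exactly the $n$ planes $P_i$ (the plane $P_i$ being the locus where all points other than $p_i$ coincide), and that under the general-position hypothesis the coincidence locus of $K$ coincides with the span $\operatorname{span}\{P_i : i \notin K\}$. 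Consequently, blowing up the span of an $m$-element subset of the $P_i$ carries the modular meaning of separating the $n-m$ points of its complement, which matches the layered description in the statement and, for $d=1$, recovers Kapranov's construction \cite{Kapranov-chow} (cf. \cite[\S 6.2]{Hassett-weighted}).

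With these in place, the core step is to identify each reduction morphism $T_{d,n}^{\mathcal A_{k+1}} \to T_{d,n}^{\mathcal A_k}$ with the blow-up along the reduced union $S_k = \bigcup_{|J|=k-1} S_J$, where $S_J$ is the strict transform of $\operatorname{span}\{P_i : i \in J\}$, the reindexing being $m = k-1$ and $|K| = n-k+1$. Three things must be checked: (i) that crossing the chosen wall is a smooth blow-up whose center is the strict transform of the newly-forbidden coincidence loci, which is the rooted-tree analogue of Theorem \ref{mainpdn}(1) combined with the description of reduction morphisms as blow-ups coming from the wonderful/weighted Fulton--MacPherson framework of Section \ref{sec:WondGIT}; (ii) that within a single layer the centers $S_J$ for distinct $J$ of the same size have disjoint strict transforms once the previous layers have been blown up, so that they may be blown up in any order as claimed; and (iii) that the smoothness of each $S_J$ and of the total space is preserved throughout.

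I expect steps (ii) and (iii) to be the main obstacle. The point is to show that the arrangement of linear spans $\{\operatorname{span}(P_J)\}$ determined by the $P_i$ forms a building set in the sense of De Concini--Procesi under the general-position hypothesis, so that the layer-by-layer blow-up is well defined independently of the order within each layer and produces precisely the wonderful model $T_{d,n}$; the disjointness of same-size centers after lower layers are removed, and the smoothness at each stage, both follow once this building-set structure is established. This is exactly the place where the general-position requirement on $P_1, \dots, P_n$ is used in full strength, and where the transversality computations in the local charts of Lemma \ref{GITpoints} enter. Once the building-set property is verified, matching the weight chain to the dimensions $d(k-1)-1$ of the centers completes the proof.
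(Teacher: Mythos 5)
Your overall strategy --- interpolating between $\PP^{dn-d-1}$ (minimal symmetric weights) and $T_{d,n}$ (all weights $1$) through a chain of symmetric weights, translating coincidence loci $\delta_K$ into spans of the planes $P_i$, and letting the wonderful-compactification machinery deliver smoothness and order-independence --- is the same as the paper's. The paper takes $\mathcal{A}_i=\{\tfrac{1}{n-i},\dots,\tfrac{1}{n-i}\}$, notes that the building sets are nested, $\mathcal{H}_{\mathcal{A}_2}\subset\dots\subset\mathcal{H}_{\mathcal{A}_{n-1}}$, with the new layer at stage $i$ consisting exactly of the spans of the $(i-1)$-tuples of the $P_i$, and then quotes Corollary \ref{mainheavy} together with Remark \ref{partialtotal}. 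Two slips in your setup: the identification of the minimal-weight space with $\PP^{dn-d-1}$ is Lemma \ref{base}, not Lemma \ref{GITpoints} (the latter concerns the projective quotient $(\PP^{n-d-2})^d$); and your indexing of which spans are blown up between $\mathcal{A}_k$ and $\mathcal{A}_{k+1}$ is off by one against the statement (for $k=2$ your recipe gives the $P_i$ themselves rather than the spans of pairs). These are cosmetic.

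The genuine problem is your step (ii), which your final paragraph then asserts follows from the building-set property: it is false that same-layer centers have disjoint strict transforms once the earlier layers are blown up. Disjointness does hold when the two index sets overlap: if $I\cap J\neq\emptyset$ and $I\cup J\subsetneq N$, then $\delta_I\cap\delta_J=\delta_{I\cup J}$ lies in an earlier (lower-dimensional) layer, and Lemma \ref{basic}(2)(a) separates the strict transforms after that center is blown up. But when $I\cap J=\emptyset$ --- which occurs in every layer with $|I|\leq n/2$, e.g. $\delta_{\{1,2\}}$ and $\delta_{\{3,4\}}$ for $n\geq 4$ --- the intersection $\delta_I\cap\delta_J$ is a nonempty polydiagonal-type locus (two separate clusters of coincident points) contained in no earlier center, so the strict transforms still meet after all previous blowups. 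What is true, and what order-independence within a layer actually requires, is that such same-layer strict transforms meet transversally: this is the content of Lemma \ref{bset} and Proposition \ref{buildpullback}, and it is exactly what Theorem \ref{thmLi}(2) and Remark \ref{partialtotal} package into the statement that any order extending ascending dimension yields the same variety --- the route the paper takes via Corollary \ref{mainheavy}. So your argument is repairable by replacing ``disjoint'' with ``nested or transversal'' and citing the order-independence of wonderful compactifications directly; as written, however, step (ii) is a check that cannot be carried out.
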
 

\vspace{0.1 in}
(The proof of Corollary \ref{descrip} can be found in Section 4 right after the proof of Corollary \ref{mainheavy}.)
 We show that the space of weighted stable rooted trees appears naturally in the boundary of the compactification $\wpdn$ (see Theorem \ref{maintdn} for the precise statement), we study its geometry and generalize some of the main results in \cite{Chen-Gibney-Krashen} and \cite{Hassett-weighted}. More specifically, for any ordered set $\mathcal{A}=\{a_1,a_2,\dots,a_n\}$ and for any $I \subset\{1, \ldots, n \}$ let $\mathcal{A}(I):=\{ a_i \ | \ i \in I \}$ and $\mathcal{A}_+(I^c):=\{ a_i \ | \ i \not\in I \} \cup \{ a_{n+1}=1 \}$. Further, for any $\mathcal{A}\in D^T_{d,n}$ (see Section \ref{weights}), let $\wtdno$ be the weighted configuration space of $n$ labeled points in $\Aff^d$, up to translation and homothety, with respect to $\mathcal{A}$ (see Definition \ref{deftdno}). 
\begin{theorem}
\label{maintdn}
 \begin{enumerate}
\item For any $\mathcal{A}\in\mathcal D^T_{d,n}$, $\wtdn$ is a smooth projective variety, which contains $\wtdno$ as a dense open subset. The boundary $\wtdn\setminus \wtdno$ is a union of smooth irreducible divisors that intersect with normal crossings.
 
\item   \begin{enumerate}\item Let $\mathcal{A}\in\mathcal D^T_{d,n}$. Each irreducible divisor in the boundary of $T_{d,n}^{\mathcal{A}}$ has the form
$$
\Gamma_I=T_{d,|I|}^{\mathcal{A}(I)} \times T_{d,n-|I|+1}^{\mathcal{A}_+(I^c)} ,
$$
\noindent where $I\subsetneq \{1,\dots n\}$ and $\sum_{i\in I} a_i>1$.

\item Let $\mathcal{A}\in\mathcal D^P_{d,n}$. Each irreducible divisor in the boundary of $\overline P_{d,n}^{\mathcal{A}}$, has the form
$$
E_I \cong T_{d,|I|}^{\mathcal{A}(I)} \times  \overline P^{\mathcal A_+(I^c)}_{n-|I|+1}, 
$$
where $I\subsetneq \{d+1,\dots n\}$ and $\sum_{i\in I} a_i>1$.
\end{enumerate}
\end{enumerate}
\end{theorem}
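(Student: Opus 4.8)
The plan is to realize $\wtdn$ as an iterated blow-up of $\PP^{dn-d-1}$ of wonderful type and to deduce every assertion from the structure theory of wonderful compactifications developed in Section \ref{sec:WondGIT}, together with Theorem \ref{mainpdn}. Concretely, in the weighted refinement of the tower of Corollary \ref{descrip}, I would build $\wtdn$ by blowing up $\PP^{dn-d-1}$ along the strict transforms of the linear subspaces $\langle P_i \mid i\in I\rangle$ for those $I$ with $\sum_{i\in I}a_i>1$, taken in order of increasing dimension. For part (1), I would first verify that the admissibility hypothesis $\mathcal{A}\in\mathcal D^T_{d,n}$ is exactly what guarantees that this collection of subspaces forms a building set in the sense of De Concini--Procesi; smoothness, projectivity and the normal crossings property of the boundary $\wtdn\setminus\wtdno$ then follow from the general smoothness and normal crossings statements for wonderful compactifications of subspace arrangements, and density of $\wtdno$ is immediate since each blow-up center lies in the complement of the open configuration locus.

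For part 2(a), I would invoke the general product structure of boundary divisors in a wonderful compactification: the irreducible divisor $\Gamma_I$ lying over the strict transform of $\langle P_i\mid i\in I\rangle$ (a genuine divisor precisely when $\sum_{i\in I}a_i>1$, whence the stated range of $I$) decomposes as a product of the wonderful compactification of the \emph{restriction} of the arrangement to $\langle P_i\mid i\in I\rangle$ and that of the arrangement \emph{induced on the normal space}. The heart of the argument is then combinatorial: I would identify the restricted arrangement with the defining arrangement of $T_{d,|I|}^{\mathcal{A}(I)}$, and the induced quotient arrangement---after recording the image of $\langle P_i\mid i\in I\rangle$ as a new plane of weight $a_{n+1}=1$---with the defining arrangement of $T_{d,n-|I|+1}^{\mathcal{A}_+(I^c)}$. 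Admissibility of $\mathcal{A}$ ensures $\mathcal{A}(I)\in\mathcal D^T_{d,|I|}$ and $\mathcal{A}_+(I^c)\in\mathcal D^T_{d,n-|I|+1}$, so both factors are well defined $T$-spaces; a dimension count ($\dim\Gamma_I = dn-d-2 = (d|I|-d-1)+(d(n-|I|+1)-d-1)$) confirms the product decomposition is the asserted one.

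Part 2(b) I would prove in the same spirit, now passing through the GIT presentation of $\wpdn$ as a quotient of the weighted Fulton--MacPherson space of $\PP^d$ by $\SL_{d+1}$ and using Theorem \ref{mainpdn}. The boundary divisor $E_I$ is the descent of a boundary divisor of the Fulton--MacPherson space, which by self-similarity splits into a \emph{main} component carrying the points indexed by $I^c$ together with the attaching node and a \emph{bubble} component carrying the points indexed by $I$. Taking the $\SL_{d+1}$-quotient, the main component descends to $\overline P^{\mathcal A_+(I^c)}_{n-|I|+1}$, with the node becoming a marked point of weight $a_{n+1}=1$, while the bubble component---attached to the main $\PP^d$ at a single point, which rigidifies its projective automorphisms down to the affine group $\Ga^d\rtimes\Gm$ of translations and homotheties---descends to $T_{d,|I|}^{\mathcal{A}(I)}$. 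The restriction $I\subsetneq\{d+1,\dots,n\}$ reflects the fact that in the GIT description of Lemma \ref{GITpoints} the first $d+1$ points fix a projective frame and so cannot all lie on a colliding bubble.

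The step I expect to be the main obstacle is making these identifications scheme-theoretic and compatible with the GIT quotient, rather than merely a matching of geometric points. For part 2(a) the wonderful formalism already supplies the product decomposition of $\Gamma_I$ as schemes, so the real work is the combinatorial identification of the restriction and induced arrangements with the defining arrangements of the smaller $T$-spaces, including the correct bookkeeping of the weight-one node. For part 2(b) the additional difficulty is showing that the $\SL_{d+1}$-quotient of the product-structured Fulton--MacPherson boundary divisor is the product of the two quotients; here one must check that semistability is compatible with the splitting and that the reduction of the bubble's stabilizer to $\Ga^d\rtimes\Gm$ is uniform in families. I would handle both points by induction on $n$ along the blow-up tower, verifying at each stage that the relevant center meets the boundary divisor transversally, so that the induced blow-up on $\Gamma_I$ (resp.\ $E_I$) agrees with the product of the two smaller towers, the normal crossings control from part (1) guaranteeing the requisite transversality and flatness.
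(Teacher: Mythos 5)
Your skeleton for part 2(a) --- realize $\wtdn$ as an iterated blow-up of $\PP^{d(n-1)-1}$ along a building set of linear subspaces, and read off each boundary divisor as (blow-up of the center) $\times$ (blow-up of the projectivized normal space) --- is the paper's strategy, but two of your concrete steps are wrong as stated. First, your indexing of the centers is inverted: the divisor $\Gamma_I$, which exists exactly when $\sum_{i\in I}a_i>1$, is the dominant transform of $\delta_I$, the locus where the points labelled by $I$ \emph{collide}; in terms of the fixed planes this is the span $\langle P_k \mid k\in I^c\rangle$, of dimension $d(n-|I|)-1$, not $\langle P_i\mid i\in I\rangle$. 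With your indexing the building set itself comes out wrong: for $\mathcal{A}=(1,\dots,1)$ your recipe never blows up the planes $P_i$ at all (a singleton has weight sum $1\not>1$), whereas $T_{d,n}$ requires blowing them up first (Corollary \ref{descrip}). Second, and consequently, your identification of the two factors is swapped: the restriction of the arrangement to the center yields $T^{\mathcal{A}_+(I^c)}_{d,n-|I|+1}$ --- the new weight-one point records the location of the collided cluster on the root component (this is Lemma \ref{dominant}) --- while the arrangement induced on the projectivized normal space yields $T_{d,|I|}^{\mathcal{A}(I)}$. Your own dimension count refutes your assignment: the restriction factor must have the dimension of the center, but $\dim T_{d,|I|}^{\mathcal{A}(I)}=d(|I|-1)-1$, which never equals $\dim\langle P_i\mid i\in I\rangle=d|I|-1$. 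Since a product is symmetric, the final formula is unharmed, but the combinatorial identifications you propose to carry out would fail.

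The deeper gap is that the ``general product structure of boundary divisors in a wonderful compactification'' you invoke is not available off the shelf in the framework used here. Li Li's results (Theorem \ref{thmLi}) give smoothness, normal crossings and order-independence, but no product decomposition of the boundary divisors; for general arrangements of subvarieties such divisors are genuinely nontrivial fibrations (e.g.\ in the Fulton--MacPherson space). The decomposition is special to linear centers, and proving it is the actual work of the paper: one reorders the blow-ups so that every center not containing $\delta_I$ comes first, then $\delta_I$, then the centers strictly containing it (Definition \ref{part}, Lemma \ref{anyextension}, Corollary \ref{neworder}); one identifies the resulting strict transform ${\delta_I}^{[3]}$ with $T^{\mathcal{A}_+(I^c)}_{d,n-|I|+1}$ (Lemma \ref{dominant}); and one computes the normal bundle of ${\delta_I}^{[3]}$ to be $\bigoplus_{i=1}^{d(|I|-1)}\mathcal{O}(p-1)$ (Lemma \ref{normal}), a sum of copies of a \emph{single} line bundle, so its projectivization is the trivial $\PP^{d(|I|-1)-1}$-bundle; the remaining blow-ups then build the $T_{d,|I|}^{\mathcal{A}(I)}$ factor via Corollary \ref{mainheavy}. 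A correct write-up needs these three ingredients (or a citation to De Concini--Procesi replacing them). Two further points: part (1) is not a formal application of the wonderful machinery, because the paper defines $\wtdn$ as the fiber of the divisor $D_N\subset X_{\mathcal{A}}[n]$ over a point of $X$ (Definition \ref{defTdn}); identifying that fiber with the blow-up of $\PP^{d(n-1)-1}$ is the content of Lemma \ref{tdnwon}, a nontrivial induction that your proposal assumes away by taking the blow-up as the definition. And for part 2(b) the paper does not descend the Fulton--MacPherson boundary through the GIT quotient --- the route you sketch, whose difficulties (the FM divisor is a fiber product over $\PP^d$ rather than a product, and semistability must be checked compatible with the splitting) you name but do not resolve --- it simply repeats the linear-arrangement argument on $(\PP^{n-d-2})^d$ using Corollary \ref{corpdn}, which avoids these issues entirely.
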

\vspace{0.1in}
\begin{remark} Theorem \ref{maintdn} is proven in Section \ref{pTdn}. Note that part (2) is not obvious from the definitions, due to the fact that we do not yet have functors being represented by the varieties $\wtdn$ or $\wpdn$.
\end{remark}

Among all choices of weight data for fixed $d$ and $n$, there exist particular ones, which provide us with interesting examples of smooth toric varieties  $T_{d,n}^{\scriptscriptstyle LM}$ and $\overline P_{d,n}^{\scriptscriptstyle LM}$. These varieties can be viewed as higher dimensional analogs of the Losev-Manin spaces (Section \ref{sec:toric}).

\subsection{GIT quotients of wonderful compactifications} While our focus is the study of $\wpdn$ and $\wtdn$, the theoretical framework developed in this paper allows for many more compactifications of moduli problems arising in different contexts.  In particular, in Section \ref{relativeGIT}, we study GIT quotients of so called `wonderful compactifications of arrangements of subvarieties' of an arbitrary smooth variety (see \cite{LiLi}). Wonderful compactifications are always smooth with normal crossings boundary and can be described as a sequence of smooth blowups. Several compactifications in the literature can be obtained as wonderful compactifications: among these are the Fulton-MacPherson compactification \cite{Fulton-MacPherson}, Keel's construction of $\overline{M}_{0,n}$ \cite{Keel-thesis}, Ulyanov's polydiagonal compactification \cite{ulyanov} and Hu's compactification of open varieties \cite{hu2003compactification} (see Section 4 of [ibid.]). In Proposition \ref{WonGIT}, we show that the GIT quotients of wonderful compactifications are also wonderful compactifications under certain conditions. As a result, we can study degenerations of equivalence classes of points in an arbitrary smooth variety $X$ with a given group action (see Section \ref{screens} for a description).  We  use  stability conditions which are numerical generalizations of the ones used to define the moduli space of weighted stable curves of genus 0 (see Section 2 in \cite{Hassett-weighted}). Our requirement that the total weight of colliding points is less than or equal to one and that the points lie away from the singular locus resembles the one asking for at worst log canonical singularities.   We also require a minimum total weight on each component which is similar to  the ampleness condition  and is used to prevent additional blow ups. 
\\


\subsection{Examples}\label{sec:examples} Next, we illustrate our results. 
Two distinct points in $\mathbb{A}^2$ up to translation and homothety  have one degree of freedom. Indeed, we can always translate one of them to the origin, and  we can scale the second point along the line spanned by the two points. Therefore,   $T_{2,2} \cong \mathbb P^1$. To describe $T_{2,3}$,  we notice that the open locus parametrizing configurations of three distinct points in $\mathbb{A}^2$ up to translation and homothety is   $\mathbb{P}^3 \setminus \{L_{12},L_{13},L_{23} \}$ where $L_{ij}$ are disjoint lines. Each line $L_{ij}$ parametrizes a configuration with the double point $p_i=p_j$.  

\begin{figure}[h!]
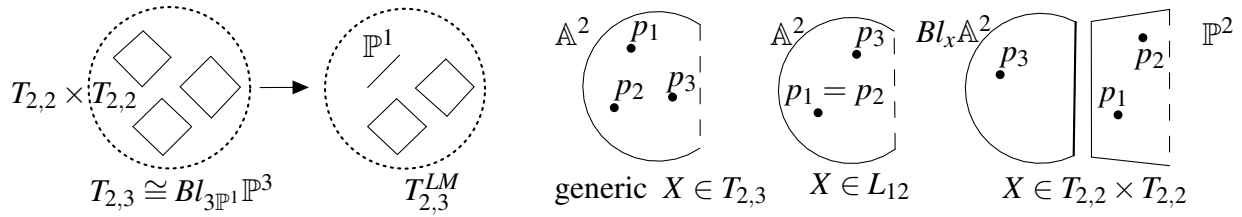

\hspace{-20em}
\tikzpicture[line cap=round,line join=round,>=triangle 45,x=1.0cm,y=1.0cm, scale=0.26]

\begin{scope}[shift={(0,-2)}]
\draw [dotted,  thick ]  (-9.14,13.99) circle (4.13cm);
\draw  [dotted, thick ]  (2.91,13.94) circle (4.09cm);

\draw [shift={(15.83,14.06)}] plot[domain=0.97:5.31,variable=\t]({1*3.83*cos(\t r)+0*3.83*sin(\t r)},{0*3.83*cos(\t r)+1*3.83*sin(\t r)});

\draw [shift={(25.74,13.85)}] plot[domain=0.95:5.35,variable=\t]({1*3.74*cos(\t r)+0*3.74*sin(\t r)},{0*3.74*cos(\t r)+1*3.74*sin(\t r)});

\draw [dash pattern=on 6pt off 6pt] (27.97,10.85)-- (27.92,16.89);
\draw [dash pattern=on 6pt off 6pt] (18,17.22)-- (18,10.89);

\draw [shift={(35.4,13.96)}] plot[domain=1.08:5.17,variable=\t]({1*3.84*cos(\t r)+0*3.84*sin(\t r)},{0*3.84*cos(\t r)+1*3.84*sin(\t r)});

\draw [ thick] (37.2,17.36)-- (37.1,10.53);
\draw [dash pattern=on 6pt off 6pt] (37.2,17.36)-- (37.1,10.53);

\draw (38,17.38)-- (38.01,10.44);
\draw (38.01,10.44)-- (42,10);
\draw (38,17.38)-- (42,18);
\draw [dash pattern=on 6pt off 6pt] (42,18)-- (42,10);
\draw (-10.48,16.9)-- (-12,15.53);
\draw (-12,15.53)-- (-10.53,14.04);
\draw (-10.53,14.04)-- (-9.06,15.55);
\draw (-10.48,16.9)-- (-9.06,15.55);
\draw (-9.52,13.44)-- (-11,12);
\draw (-11,12)-- (-9.47,10.53);
\draw (-9.47,10.53)-- (-8,12);
\draw (-8,12)-- (-9.52,13.44);
\draw (-8.59,14.04)-- (-7.02,15.6);
\draw (-7.02,15.6)-- (-5.51,14.02);
\draw (-5.51,14.02)-- (-6.99,12.44);
\draw (-6.99,12.44)-- (-8.59,14.04);
\draw (1,12)-- (2.48,13.43);
\draw (2.48,13.43)-- (4,12);
\draw (2.52,10.57)-- (4,12);
\draw (1,12)-- (2.52,10.57);
\draw (3.44,13.97)-- (5,15.46);
\draw (5,15.46)-- (6.44,14.04);
\draw (6.44,14.04)-- (5.06,12.53);
\draw (5.06,12.53)-- (3.44,13.97);
\draw (1,14)-- (2.63,15.66);
\draw [->] (-4.5,14) -- (-2,14.01);
\draw[color=black] (-14.0,13.5) node {$T_{2,2} \times T_{2,2}$};

\draw[color=black] (1.59,16.29) node {$\PP^1$};
\fill [color=black] (14.47,16.01) circle (6.5pt);
\draw[color=black] (15.22,16.89) node {$p_1$};
\draw[color=black] (11.50,16.89) node {$\mathbb A^2$};
\fill [color=black] (13.61,12.97) circle (6.5pt);
\draw[color=black] (14.39,13.85) node {$p_2$};
\fill [color=black] (16.59,13.51) circle (6.5pt);
\draw[color=black] (17.07,14.28) node {$p_3$};
\draw[color=black] (22.50,16.89) node {$\mathbb A^2$};
\fill [color=black] (24.03,12.71) circle (6.5pt);
\draw[color=black] (24.89,13.59) node {$p_1=p_2$};
\fill [color=black] (26,15.7) circle (6.5pt);
\draw[color=black] (26.72,16.59) node {$p_3$};
\fill [color=black] (33.34,14.66) circle (6.5pt);
\draw[color=black] (34.09,15.53) node {$p_3$};
\fill [color=black] (39.37,12.6) circle (6.5pt);
\draw[color=black] (39.13,13.49) node {$p_1$};
\draw[color=black] (31.00,16.89) node {$Bl_x\mathbb A^2$};
\draw[color=black] (44.50,16.89) node {$ \PP^2$};
\fill [color=black] (40.64,16.55) circle (6.5pt);
\draw[color=black] (41.0,15.46) node {$p_2$};
\draw[color=black] (-8.5,8.64) node {$T_{2,3} \cong Bl_{3\bP^1} \bP^3$};
\draw[color=black] (4.3,8.55) node {$T^{LM}_{2,3}$};
\draw[color=black] (16.01,8.64) node {\text{generic } $X \in T_{2,3} $};
\draw[color=black] (26.2,8.94) node {$X \in L_{12}$};
\draw[color=black] (38.22,8.64) node {$X \in T_{2,2} \times T_{2,2}$};
\end{scope}
\endtikzpicture
\subfloat[
Space parametrizing three points in $\mathbb A^2$ up to translation and homothety.
]{\hspace{.5\linewidth}}
\subfloat[
Parametrized stable rooted trees by the interior 
and the boundary.
]{\hspace{.5\linewidth}}
\caption[]{(A) depicts the compactifications  $T_{2,3}$ and $T^{LM}_{2,3}$, while (B) depicts the objects they parametrize \label{Fex23a}. }
\end{figure}
For weights equal to $1$ those double points are not allowed and we find that $T_{2,3}$ is the blow up  of $\mathbb{P}^3$ along these three lines $L_{ij}$.  Each of the boundary divisors can be interpreted as 
$T_{2,2} \times T_{2,2}$; they parametrize stable rooted trees that  decompose as the union of two components.   On the other hand, we can  choose weights allowing $p_1=p_2$ while not allowing the other double points.  The respective model  $T^{LM}_{2,3}$   is the blow up of $\bP^3$ along  the lines $L_{13}$ and $L_{23}$
(see Section \ref{sec:toric}).

\noindent
The space $\overline P_{2,5}$ is the blow up of $\bP^1 \times \bP^1$ at three points and the boundary divisor is the union of three disjoint $T_{2,2}$'s.  Indeed, we fix the points
$p_1$, $p_2$ and $p_3$   in general position and away from the other ones.  
The open locus parametrizing 
 five distinct points in $\bP^2$ up to an action of $Aut(\bP^2)$ is  $\bP^1 \times \bP^1$ minus three points. These points parametrize configurations where $p_4=p_5$, $p_3=p_5$ and $p_3=p_4$ respectively. 
\begin{figure}[h!]
\tikzpicture[line cap=round,line join=round,>=triangle 45,x=1.0cm,y=1.0cm, scale=0.48]
\draw [color=black, dashed] (7,10)-- (7,6);
\draw [color=black, dashed] (7,6)-- (11,6);
\draw [color=black, dashed] (11,6)-- (11,10);
\draw [color=black, dashed] (11,10)-- (7,10);

\draw [->] (4.5,8) -- (6.5,8);

\draw [color=black, dashed] (0,10)-- (0,6);
\draw [color=black, dashed] (0,6)-- (4,6);
\draw [color=black, dashed] (4,6)-- (4,10);
\draw [color=black, dashed] (4,10)-- (0,10);
\draw (0.44,8.64)-- (2.08,9.56);
\draw (0.38,7.42)-- (2,6.38);
\draw (3,7)-- (3,9);
\draw (7.34,8.54)-- (9.02,9.58);
\draw (7.4,7.4)-- (8.98,6.42);
\draw(17.03,8.07) circle (1.94cm);
\draw(23,8) circle (1.98cm);
\draw [color=black] (29,10)-- (31,8);
\draw [color=black] (31,8)-- (27,8);
\draw [color=black] (27,8)-- (29,10);
\draw [color=black] (27,8)-- (27,6);
\draw [color=black] (27,6)-- (31,6);
\draw [color=black] (31,6)-- (31,8);
\draw [color=black] (31,8)-- (27,8);

\fill [color=black] (17.03,8.07) circle (2.5pt);
\draw[color=black] (17.71,8.12) node {$p_1$};

\fill [color=black] (17.53,9.45) circle (2.5pt);
\draw[color=black] (17.03,9.45) node {$p_2$};

\fill [color=black] (15.98,8.67) circle (2.5pt);
\draw[color=black] (15.98,8.27) node {$p_3$};

\fill [color=black] (15.9,6.9) circle (2.5pt);
\draw[color=black]  (15.4,6.4) node {$p_4$};

\fill [color=black] (17.7,6.5) circle (2.5pt);
\draw[color=black]   (17.8,6.9)  node {$p_5$};

\draw[color=black] (15,9.82) node {$\bP^2$};
\draw[color=black] (21,9.82) node {$\bP^2$};

\fill [color=black] (23,8) circle (2.5pt);
\draw[color=black] (23.21,8.44) node {$p_1$};

\fill [color=black] (22.53,9.28) circle (2.5pt);
\draw[color=black](23.3,9.48) node {$p_2$};

\fill [color=black] (22,7) circle (2.5pt);
\draw[color=black] (22,6.5)  node {$p_4=p_5$};

\fill [color=black] (24,8) circle (2.5pt);
\draw[color=black] (24,7.5)  node {$p_3$};

\fill [color=black] (9.5,8) circle (2.5pt);
\draw[color=black](8.8,8) node {$pt$};

\draw[color=black](2,5) node {$\overline P_{2,5} \cong Bl_{3pts}( \bP^1 \times \bP^1)$};
\draw[color=black](2.2,8) node {$E_{45}$};

\draw[color=black](9,5) node {$\overline P_{2,5}^{LM} $};
\draw[color=black](17,5) node {generic $X$};
\draw[color=black](23,5) node { $X \in C_{45}$};
\draw[color=black](29,5) node {$X \in E_{45} $};

\draw[color=black] (31.2,9.82) node {$\bP^2$};
\draw[color=black] (32.2,6.82) node {$Bl_x\bP^2$};

\fill [color=black] (28,6.6) circle (2.5pt);
\draw[color=black] (27.51,6.46) node {$p_1$};

\fill [color=black] (28.6,9.1) circle (2.5pt);
\draw[color=black]  (27.7,9.5)  node {$p_4$};

\fill [color=black] (30,8.66) circle (2.5pt);
\draw[color=black](29.4,8.66)  node {$p_5$};

\fill [color=black] (29.01,7.52) circle (2.5pt);
\draw[color=black]  (29.8 ,7.52)   node {$p_2$};

\fill [color=black] (30.65,6.6) circle (2.5pt);
\draw[color=black] (29.95,6.46) node {$p_3$};
\endtikzpicture
\subfloat[
Space parametrizing five points in $\mathbb P^2$ up to the $SL_3$ action.
]{\hspace{.5\linewidth}}
\subfloat[
Stable  trees parametrized by the interior 
and the boundary.
]{\hspace{.5\linewidth}}
\caption[]{(A) depicts the compactifications  $P^{LM}_{2,3}$ and $P_{2,3}$, while (B) depicts the objects they parametrize. \label{Fex23}  }
\end{figure}

\subsection{Acknowledgements}
We thank Valery Alexeev, Kenny Ascher, Angela Gibney,  Noah Giansiracusa for helpful discussions. We especially thank Danny Krashen for carefully reading an earlier draft of this article and for providing valuable feedback. The first author was supported by the NSF grant DMS-1344994 of the RTG in Algebra, Algebraic Geometry, and Number Theory, at the University of Georgia. The second author is supported by the World Premier International Research Center Initiative (WPI), MEXT, Japan.

\tableofcontents

\subsection{Conventions} Throughout this paper, the term variety will be understood as a reduced and irreducible scheme of finite type defined over an algebraically closed field of characteristic 0. Also, we will often denote the set of integers $\{1,2,\dots,n\}$ by the capital letter $N$.
\section{Description of the Parametrized Objects}\label{sec:desc}

In this section, we describe the three types of parametrized objects that appear in our work.  First,  the weighted stable degenerations of $n$ labeled points in an arbitrary nonsingular variety $X$. Second, the weighted stable rooted trees which are degenerations of  $n$ labeled points in $\mathbb A^d$ defined up to translation and homothety. Third, the weighted  stable trees which are degenerations of $n$ points in $\bP^d$ defined up to an action of  $SL_{d+1}$. 
\subsection{Weight Domains} \label{weights} Let $X$ be a smooth variety with $\dim X=d\geq1$ and let $n\geq 2$. The domain of admissible weights for the \textit{weighted compactifications} of the configuration space of $n$ labeled points in $X$ (section \ref{FM}) is given by
\begin{align*}
\mathcal D_{d,n}^{FM} :=
\left\{
(a_1, \ldots,  a_n) \in \mathbb{Q}^{n} \; : \;
0 < a_i\leq 1\,\,,i=1,\dots, n
\right\}
\end{align*}
\noindent
The domain of admissible weights  for the space of \textit{weighted stable rooted trees} (Section \ref{pTdn}) is 
\begin{align*}
\mathcal D_{d,n}^T :=
\left\{
(a_1, \ldots,  a_n) \in \mathbb{Q}^{n} \; : \;
0 < a_i \leq 1 \;\,\,,i=1,\dots, n   \text{ and } 1 < a_1 + \ldots +a_n 
\right\}
\end{align*}Finally,
let us fix $d$ and $n$ such that $d\geq 1$ and $n \geq d+2$ and let $\epsilon=\frac{1}{n-d}$ and  $\hat \epsilon=\frac{1}{(d+1)(n-d)}$. We consider the set of weights 
\begin{align*}
w_1=\ldots =w_d=1-\hat \epsilon,
& &
w_{d+1}=1-(n-(d+1))\epsilon +d \hat \epsilon, 
& & 
 w_{d+2}=\ldots = w_n=\epsilon. 
\end{align*}
Then the domain of admissible weights for the space of \textit{weighted stable trees} (Section \ref{conspdn}) is
\begin{align*} \mathcal D_{d,n}^P = \left\{ (a_1,  \ldots a_n) \in \mathbb{Q}^{n} \; : 
w_i \leq a_i \leq 1\,\,,i=1,\dots, n
\right\}
\end{align*}
These last constraints are motivated by a technical requirement in Lemma \ref{GITpoints}.
In the sequel, we will often refer to the number $a_i$ as a \emph{weight} (of some labeled point $p_i$ in a configuration).  
Given  $I \subset N:=\{1, \ldots, n \}$ and $\mathcal{A}=\{a_1,\dots, a_n\}$, we define
 \begin{align*}
\mathcal{A}(I):=\{ a_{i} \ | \ i \in I \} \hspace{0.1in}& \text{and} \hspace{0.1in} \mathcal{A}_+(I^c):=\{ a_i \ | \ i \not\in I \} \cup \{ a_{n+1}=1 \}.
\end{align*}

\subsection{Weighted stable degenerations}\label{wdeg}(\cite{routis2014weighted}; see also the descriptions in \cite{Fulton-MacPherson} and \cite{pandharipande1995geometric} for the case where all weights are equal to 1.) Let $X$ be a nonsingular variety of dimension $d$. Let $(x_1,x_2,\dots, x_n)$  be an $n$-tuple of labeled points $x_i\in X$ and consider an ordered set $\mathcal{A}\in \mathcal D_{d,n}^{FM}$. We say that $x_i$ has weight $a_i$.
\begin{definition}\label{defscreens} \begin{enumerate}\item A subset $S\subset\{1, \ldots, n \}$ is said to be $\mathcal{A}$-coincident at $x \in X$ if 
 \begin{enumerate}
 \item the total weight of the points labeled by $S$ is larger than one, that is, $\sum \limits _{i\in S} a_i>1$ and
 \item for all  $i\in S$, $x_i=x$.\\
 \end{enumerate} 
\noindent  We will sometimes write $(S,x)$ in place of $S$ for emphasis.

 \vspace{0.1in}
\item A screen of an $\mathcal{A}$-coincident set $S$ at $x\in X$ consists of data $(t_i)_{i\in S}$ such that
 \begin{enumerate}
\item $t_i\in T_x$, the tangent space of $X$ at $x$ ;
\item there exist $i,j\in S$ such that $t_i\neq t_j$.
\end{enumerate}
Two data sets $(t_i)_{i\in S}$ and $(t_i')_{i\in S}$ are equivalent if there exist $c\in\mathbb G_m $ and $v\in T_x$ such that 
$$
c\cdot t_i +v =t_i'
$$ 
for all $i\in S$. 
\end{enumerate}
\end{definition}
 In other words, if we identify $T_x$ with the affine space $\Aff^d$, then $(t_i)_{i\in S}$ defines an equivalence class of points in $\Aff^d$ up to translation and homothety. Now let $X$ and $\mathcal{A}$ as above. We give the following definition.

 \begin{definition} \label{compatible}A compatible collection of $\mathcal{A}$-coincident sets and screens at $x\in X$ consists of the following data:
 \begin{enumerate}
 \item A collection $\mathcal{C}_x$ of $\mathcal{A}$-coincident sets at $x$ with the following property: given any two sets in $\mathcal{C}_x$, then either one is contained in the other or they are disjoint. 
 \item A screen $Q_S$ for each $S\in\mathcal{C}_x$. Moreover, the collection of all such screens has the following property: given $S_1, S_2\in \mathcal{C}_x$  such that $S_1\supset S_2$, then the equivalence class of data $(t_i)_{S_1}$ of the screen $Q_{S_1}$ satisfies $t_i=t_j$ for all $i,j\in S_2$.
 \end{enumerate}
 \end{definition}
\indent Now, consider the $n$-tuple $\vec{x}:=(x_1,x_2,\dots x_n)$ and let $x\in X$ appear multiple times in $(x_1,x_2,\dots x_n)$. For all such $x$ let $\mathcal{C}_x$ be a compatible collection of $\mathcal{A}$-coincident sets at $x$; such a collection could be empty if $\sum\limits_{\{i|x_i=x\}}a_i\leq 1$. We construct an \textit{$n$-pointed $\mathcal{A}$-stable degeneration of ($X,\vec{x})$} as follows. Let $S\in\mathcal{C}_x$ be the maximal (with respect to inclusion) $\mathcal{A}$-coincident at $x$. If $S$ is empty then we don't modify $X$; otherwise, we blow up $X$ at $x$ and attach the projective completion $\PP(T_x\oplus1) \cong \bP^d$ of $T_x$ along the exceptional divisor $\PP(T_x) \cong \bP^{d-1}$, which is identified with the infinity section. 
Note that the complement $\PP(T_x\oplus1)\setminus \PP(T_x)$ is isomorphic to the affine space $T_x \cong \mathbb A^{d}$. The data (equivalence class of tangent vectors) of the screen corresponding to $S$ specify points of $T_x$ (defined up to translation and homothety) labeled by the elements of $S$. By condition (2) in Definition \ref{defscreens}, we see that some separation of those points occurs inside the new component $\PP(T_x\oplus1)$. The maximal (with respect to inclusion) among the $\mathcal{A}$-coincident sets at $x$ that are contained in $S$ and their screens specify further blowups whose centers are points in the new affine space $T_x$. We continue this process until all coincident sets and their screens have been used, for all coordinates $x$ that occur multiple times in $(x_1,x_2,\dots x_n)$. The resulting variety is equipped with $n$ points $s_i$ lying in its smooth locus. By this description we see that if $T\subset \{1, \ldots, n \}$ and $\sum\limits_{i\in T}a_i>1$ then some separation of the points $(s_i)_{i\in T}$ necessarily occurs. This means that if the sections $(s_i)_{i\in T}$ all coincide for some $T$, then $\sum\limits_{i\in T}a_i\leq1$.\\
\begin{example}\label{exSdg} (see Figure \ref{wsdg})
Let $X$ be a $d$-dimensional variety, $n=6$ and $\mathcal{A}=\{\frac{1}{4}+\epsilon,\frac{1}{4}+\epsilon,\frac{1}{4}+\epsilon,\frac{1}{4}+\epsilon,\frac{1}{2}+\epsilon,\frac{1}{2}+\epsilon\}$. We describe an $\mathcal{A}$-stable degeneration of $X$ associated to the collection of $\mathcal{A}$-coincident sets at a point $x\in X$:
$$
\{ 1,2, 3,4, 5, 6\},  \{ 1, 2, 3, 4\} ,\{  5,6 \}.
$$ 
The distinguished component is a blowup of the original variety $X$ at the point $x$. The two end components are isomorphic to  $\PP^d$, where $d=\dim(X)$; on each of the end components we have two distinct loci of (possibly coincident) smooth markings. \qed
\begin{figure}[h]
\begin{tikzpicture}[scale=0.9]
\begin{scope}[shift={(-5,0)}]
\draw [fill=white] (1.7,1.2) ellipse (2cm and 1cm);
\draw [fill=white] (0.75,1.2) rectangle (2.9,3.8);
\draw [fill=white] (0.85,2.55) rectangle (1.8,4.6);
\draw [fill=white] (2, 2.55) rectangle (2.75,4.6);
\node[mark size=2pt,color=black] at (1.2,3.3) {\pgfuseplotmark{*}};
\node[mark size=2pt,color=black] at (2.3,3.9) {\pgfuseplotmark{*}};
\node[mark size=2pt,color=black] at (2.3,3.2) {\pgfuseplotmark{*}};
\node[mark size=2pt,color=black] at (1.2,4.2) {\pgfuseplotmark{*}};
\node [below] at (-0.3,3.5) {$s_1=s_2$};
\node [above] at (3.1,3.9) {$s_6$};
\node [below] at (2.3,3.2) {$s_5$};
\node [above] at (-0.5,4) {$s_3=s_4$};
\node [above] at (4.2,1) {$Bl_xX$};
\node [above] at (1.95,1.3) {$Bl_{x_1,x_2}\PP^d$};
\node [above] at (1.3,4.65) {$\PP^d$};
\node [above] at (2.45,4.65) {$\PP^d$};
\end{scope}
\begin{scope}[shift={(4,0.5)}]

\node [above] at (-2.5,3.6) {$\{(1,2), (3,4) \}$};
\fill [color=black] (-2,3.6) circle (2.5pt);
\draw [->] (-1,2.5) -- (-2,3.5);

\node [above] at (0.5,3.6) {$\{5,6 \}$};
\draw [->] (-1,2.5) -- (0,3.5);
\fill [color=black] (0,3.6) circle (2.5pt);

\draw [->] (-1,1) -- (-1,2.3);
\fill [color=black] (-1,2.4) circle (2.5pt);

\fill [color=black] (-1,0.9) circle (2.5pt);
\node [below] at (-1,0.9) {};
\node [ below] at (-1,0.8)  [align=left]{ distinguished  \\ component};
\end{scope}
\end{tikzpicture}
\caption[A weighted stable degeneration]{\footnotesize   \textit{The weighted stable degeneration 
described in Example \ref{exSdg} and its associated dual graph.}}\label{wsdg}
\end{figure}
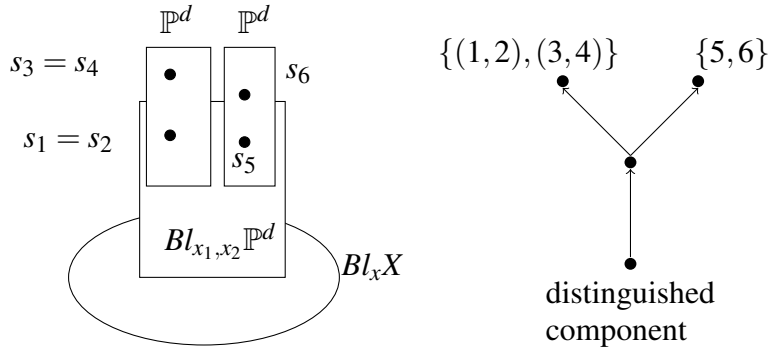
\end{example}
\noindent
To any $\mathcal{A}$-stable degeneration we associate a tree, its dual graph,  whose vertices are in one to one correspondence with its components and whose vertices are in one to one correspondence with the nonempty intersections of its components.  In general, we have
the following types of  components:
\begin{enumerate}
\item A distinguished component which is a blowup of $X$ at a finite set of points. 
\item \textit{End} components are the irreducible components whose vertex has valence equal to 1 and are different from
distinguished component.  Any end component is isomorphic to $\PP^d$ and comes with at least three distinct markings: at least two coming from distinct smooth points  and exactly one from an intersection with another component, which is a divisor of that end component. 
\item  \textit{Ruled} components are the irreducible components whose vertex has valence 2
; they isomorphic to $\bP^d$ blown up at a point.  Any ruled component  also comes with three distinct markings: at least one from a smooth point  and exactly two from intersections with other components (which are divisors of the ruled component). 
\item Any other component different to the above ones
is isomorphic to $\bP^d$ blown up at -at least- two distinct points. It also
comes with at least three distinct markings
which can be either from a smooth point or from  intersections with other components.
\end{enumerate}
Let $(W,s_i)$ be an $\mathcal{A}$- stable degeneration of $X$, where $s_i$ are its labeled smooth sections. An \textit{isomorphism of $\mathcal{A}$- stable degenerations $(W,s_i)$ and $(W',s'_i)$} is an isomorphism of schemes $W\rightarrow W'$ which sends $s_i$ to $s'_i$ for all $i$ and fixes $X$ pointwise. Since, by the above discussion, all components of a stable degeneration of $X$, except for the distinguished one, are equipped with at least three distinct markings, one of which is necessarily  hyperplane, it cannot have nontrivial automorphisms. This justifies the term \textit{stability}.

\subsection{Weighted stable degenerations of $X$ with respect to a group action} \label{screens} 
Let $\mathcal{A}=\{a_1,a_2,\dots,a_n\}$ be a set of rational numbers $a_i\in(0,1]$, $X$ a nonsingular variety and $G$ an algebraic group acting on $X$. Further, let $U\subset X^n$ be an open subvariety which is invariant under the diagonal action of $G$ on $X^n$ and $\vec{x}=(x_1,x_2,\dots, x_n)\in U$. The group action allows us to define an equivalence relation on the set of weighted stable degenerations of $X$ associated with all $n$ tuples in the orbit $G\cdot\vec{x}$. Indeed,  we have:\\
\indent \textbf{1}.\,  \textit{An equivalence class of $n$-tuples in $U$. } We consider all $n$-tuples in the orbit $G\cdot \vec{x}$ equivalent. 
  
\textbf{2}. \textit{An equivalence class of $\mathcal{A}$-coincident sets}. Recall (Definition \ref{defscreens}) that a subset $S\subset N$ is $\mathcal{A}$-coincident at $x$ if $\sum \limits _{i\in S} a_i>1$ and $x_i=x$ for all $i\in S$. We write $(S,x)$ for emphasis and we say that $(S,x)$ is equivalent to $(S,g\cdot x)$. 
\\

\textbf{3}. \textit{An equivalence class of screens } as follows.  Let $(t_i)_{i\in S}$ be the screen data of an $\mathcal{A}$-coincident set $(S,x)$ at $x\in X$ (Definition \ref{defscreens}). 
An element $g\in G$ induces a map on tangent spaces $T_x  \rightarrow T_{g \cdot x}$. Then, we define $(g \cdot t_i)$ 
in  $T_{g \cdot x}$ to be the image of $t_i$ via $T_x\to T_{g \cdot x}$. It follows immediately that
\begin{align*}
(t_i)_{i \in S} \sim (t'_i)_{i \in S}  
\iff
 (g \cdot t_i)_{i \in S} \sim (g \cdot t'_i)_{i \in S}
\end{align*}
where $(\sim)$ is the equivalence relation by translation and homothety among the screen data (Definition \ref{defscreens}(2)). Therefore, the $(g \cdot t_i)_{i \in S}$ form screen data for the $\mathcal{A}$-coincident set $(S,{g\cdot x})$ at $g\cdot x$. We say that the screen data $( t_i)_{i \in S}$ and $(g \cdot t_i)_{i \in S}$ are \textit{equivalent} and write $( t_i)_{i \in S}\equiv(g \cdot t_i)_{i \in S}$. It is straightforward to check that $(\equiv)$ is an equivalence relation, so we define an equivalence class of screens.\\

The above equivalence relations give us an obvious equivalence relation among the set of all compatible collections of $\mathcal{A}$-coincident sets and screens (Definition \ref{compatible}), thus we get:\\
\indent \textbf{4}. \textit{An equivalence class of compatible collections of $\mathcal{A}$-coincident sets and screens}.\\

Finally, since a weighted stable degeneration of $(X,\vec{x})$ is in one to one correspondence with compatible collections of $\mathcal{A}$-coincident sets and screens at each coordinate that appears with multiplicity greater than one in $\vec{x}$ (if any), we get:\\
\indent \textbf{5}. \textit{An equivalence class of $n$-pointed $\mathcal{A}$ stable degenerations} of $X$ with respect to $G$. In particular, two degenerations of $X$ are equivalent if  and only if 
\begin{enumerate}
\item Their distinguished components are equivalent in the following sense. Recall that the distinguished components of the two degenerations are equivalent to data $(X,\vec{y})$ and $(X,\vec{z})$ where $\vec{x}$ and $\vec{y}$ are two initial  configurations of $n$ labeled points. Then$(X,\vec{y})$ is equivalent to $(X,\vec{z})$ if and only if there exists $g\in G$ such that $y_i=g\cdot z_i$ for all $i=1,\dots,n$.
\item Their non-distinguished components are equivalent in the following sense: each $\PP(T_x\oplus1)$ arising in the construction of \ref{wdeg} from a coordinate $x$ that appears multiple times in $(x_1,\dots x_n)$ is identified with $\PP(T_{g \cdot x}\oplus1)$ via $T_x\to T_{g \cdot x}$.  Moreover, $\PP(T_x)$ is identified with $\PP(T_{g \cdot x})$ and the corresponding markings (smooth points) defined by the screen data, which lie in $T_x=\PP(T_x\oplus1)\setminus \PP(T_x)$ by construction, are identified with the corresponding markings in $T_{g\cdot x}=\PP(T_{g\cdot x}\oplus1)\setminus \PP(T_{g\cdot x})$.

\end{enumerate}
Next, we describe how the geometric objects parametrized by $\wtdn$ and $\wpdn$ are obtained by the above procedure. They will be weighted pointed stable degenerations of $X$ with respect to $G$ for suitably chosen input data $X, \mathcal{A}, G$ with an action 
$G \times X \to X$ and an open subvariety $U\subset X^n$, which is invariant under the diagonal induced action of $G$ on $X^n$ . 

\subsection{Weighted stable rooted trees}\label{rootedtrees}  Let $\mathcal{A}$ be a set of weights in $\mathcal D_{d,n}^T$.   The geometric points of $\wtdn$ are obtained by the procedure in Section \ref{screens} for input data $\mathbb A^d, \mathcal{A}$, the group $G$ that acts by translation and homothety on $\mathbb A^d$ and $U=(\Aff^d)^n\setminus \Delta _N$, where $\Delta _N$ is the small diagonal, i.e. the locus parametrizing configurations of $n$ coincident points in $\Aff^d$. 
\begin{figure}[h!]
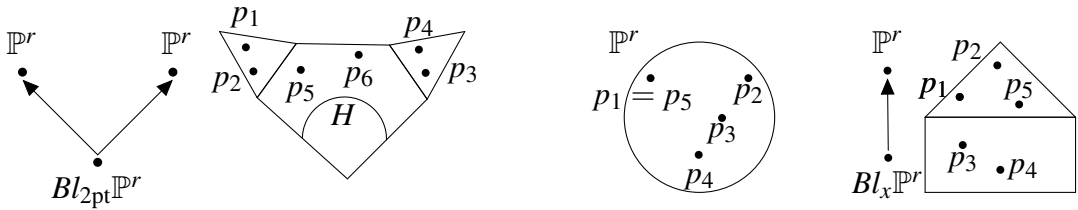



\tikzpicture[line cap=round,line join=round,>=triangle 45, scale=1.0]

\begin{scope}[shift={(-10,0)}]
\draw [->] (-1,2.5) -- (-2,3.5);
\draw [->] (-1,2.5) -- (0,3.5);

\draw [shift={(2.28,2.71)}] plot[domain=-0.02:3.12,variable=\t]({1*0.56*cos(\t r)+0*0.56*sin(\t r)},{0*0.56*cos(\t r)+1*0.56*sin(\t r)});
\draw [color=black]  (2.27,3.03)node {$H$};

\draw [color=black] (1.12,3.26)-- (1.64,3.98);
\draw [color=black] (1.64,3.98)-- (2.88,3.96);
\draw [color=black] (2.88,3.96)-- (3.38,3.24);
\draw [color=black] (3.38,3.24)-- (2.32,2.18);
\draw [color=black] (2.32,2.18)-- (1.12,3.26);
\draw [color=black] (2.88,3.96)-- (3.88,4.14);
\draw [color=black] (3.88,4.14)-- (3.38,3.24);
\draw [color=black] (3.38,3.24)-- (2.88,3.96);
\draw [color=black] (1.64,3.98)-- (0.62,4.14);
\draw [color=black] (0.62,4.14)-- (1.12,3.26);
\draw [color=black] (1.12,3.26)-- (1.64,3.98);
\fill [color=black] (-1,2.4) circle (1.7pt);
\draw[color=black] (-1.01,2) node {$Bl_{2\text{pt}}\PP^r$};
\fill [color=black] (-2,3.6) circle (1.7pt);
\draw[color=black] (-2.01,3.98) node {$\PP^r$};
\fill [color=black] (0,3.6) circle (1.7pt);
\draw[color=black] (0.05,3.98) node {$\PP^r$};

\fill [color=black] (0.97,3.93) circle (1.5pt);
\draw [color=black]  (0.97,4.33)node {$p_1$};

\fill [color=black] (1.7,3.63) circle (1.5pt);
\draw [color=black]  (1.7,3.33)node {$p_5$};

\fill [color=black] (2.47,3.83) circle (1.5pt);
\draw [color=black]  (2.47,3.53)node {$p_6$};

\fill [color=black] (1.07,3.61) circle (1.5pt);
\draw [color=black]   (0.72,3.46)  node {$p_2$};

\fill [color=black] (3.36,3.59) circle (1.5pt);
\draw [color=black]  (3.86,3.59) node {$p_3$};

\fill [color=black] (3.27,3.9) circle (1.5pt);
\draw [color=black]  (3.27,4.2)  node {$p_4$};
\end{scope}

\begin{scope}[shift={(-1,1)}]
\draw(-2,2) circle (1cm);
\draw[color=black] (-3,3) node {$\PP^r$};
\draw [color=black] (1,2)-- (1,1);
\draw [color=black] (1,1)-- (3,1);
\draw [color=black] (3,1)-- (3,2);
\draw [color=black] (3,2)-- (1,2);
\draw [color=black] (1,2)-- (2,3);
\draw [color=black] (2,3)-- (3,2);
\draw [color=black] (3,2)-- (1,2);

\draw [->] (0.51,1.56) -- (0.5,2.52);
\fill [color=black] (0.51,1.46) circle (1.5pt);
\fill [color=black] (0.5,2.62) circle (1.5pt);
\draw[color=black] (0.5,3) node {$\PP^r$};
\draw[color=black] (0.5,1.1) node {$Bl_x\PP^r$};

\fill [color=black] (-2.65,2.52) circle (1.5pt);
\draw [color=black] (-2.75,2.22)node {$p_1=p_5$};

\fill [color=black] (-1.35,2.52) circle (1.5pt);
\draw [color=black]  (-1.35,2.3) node {$p_2$};

\fill [color=black] (-1.7,1.99) circle (1.5pt);
\draw [color=black]  (-1.7,1.78)  node {$p_3$};

\fill [color=black] (-2.01,1.5) circle (1.5pt);
\draw [color=black]  (-2,1.18)  node {$p_4$};

\fill [color=black] (2.25,2.17) circle (1.5pt);
\draw [color=black] (2.25,2.37) node {$p_5$};

\fill [color=black] (1.96,2.69) circle (1.5pt);
\draw [color=black] (1.56,2.89) node {$p_2$};

\fill [color=black] (1.46,2.27) circle (1.5pt);
\draw [color=black] (1.12,2.39)  node {$p_1$};

\fill [color=black] (1.46,2.27) circle (1.5pt);
\draw [color=black] (1.12,2.39)  node {$p_1$};

\fill [color=black] (1.5,1.63) circle (1.5pt);
\draw [color=black] (1.5,1.40)  node {$p_3$};

\fill [color=black] (2,1.3) circle (1.5pt);
\draw [color=black](2.3,1.3)  node {$p_4$};

\end{scope}

\endtikzpicture
\subfloat[
weighted stable rooted tree
]{\hspace{.5\linewidth}}
\subfloat[
weighted stable trees
]{\hspace{.5\linewidth}}
\caption[]{Examples of parametrized objects and their dual graphs}
\end{figure}

 It is convenient  to think of points in $\mathbb A^d$ as points in $\mathbb{P}^d$ that lie away from a fixed hyperplane $H \subset \mathbb{P}^d$ which is called the \textit{root}. 
Let $G \cong \mathbb G_m \rtimes \mathbb G_a^d \subset Aut(\mathbb{P}^d)$ be the group which fixes the root pointwise. Under this interpretation, $X=\bP^d\setminus H$ and  the equivalence class of $n$ points is determined by the restriction of the action of  $G$ to $X=\bP^d\setminus H$.  \\

The dual graph of the resulting variety is a rooted tree.  The distinguished vertex corresponds to the variety that contains the root $H$.

\subsection{Weighted stable trees}\label{trees}
Let $\mathcal{A}$ be a set of weights in $\mathcal D_{d,n}^P$. 
The geometric points of $\wpdn$  are obtained by the procedure in Section \ref{screens} for input data $\mathbb P^d, \mathcal{A}$, $G \cong SL_{d+1}$ with the usual action on $\bP^d$ and $U\subset (\PP^d)^n$ defined by the following conditions:
 \begin{enumerate}
\item $p_1, \ldots , p_d, p_{d+1}$ are in general position;
\item none of the $p_i,\, i\in\{d+2,\dots n\}$ can lie in the linear subspace spanned by $p_1, \ldots , p_d$  ;
\item we cannot have $p_{d+1}=\ldots =p_n$ and
\item the points $p_i,\, i=d+2,\dots ,n$ cannot all lie on the hyperplane  spanned by\\
 $p_1, \ldots  p_{k-1},p_{k+1}, \ldots ,p_{d+1} $ simultaneously.
\end{enumerate}
The geometric meaning of these last conditions will become apparent in Lemma \ref{GITpoints}. The resulting variety has a dual graph that is a tree whose distinguished vertex corresponds to the original $\bP^d$, where $p_1, \ldots , p_d, p_{d+1}$ lie. 

\section{Wonderful Compactifications and GIT}\label{sec:WondGIT}

In order to construct our compactifications $\wpdn$ and $\wtdn$ we make use of the theory of wonderful compactifications (\cite{LiLi}; Section \ref{wonder}) and relative GIT (\cite{hu1996relative}; Section \ref{relativeGIT}). The theory of wonderful compactifications will allow us to describe our compactifications as iterated blowups at smooth centers. $\wpdn$ will be constructed in Section 4 as a GIT quotient of a particular wonderful compactification, the weighted Fulton-MacPherson compactification of $n$ labeled points in $\PP^d$ associated to $\mathcal{A}$(\cite{routis2014weighted}; Section \ref{FM}). A descent result for wonderful compactifications (Proposition \ref{WonGIT}) will then allow us to conclude that $\wpdn$ is also wonderful.

\subsection{ Wonderful Compactifications} \label{wonder}
We give a summary of the results in \cite{LiLi} that we will use in the sequel.

\begin{definition}\cite[Sec 2.1]{LiLi}\label{factors}
An \textit{arrangement} of subvarieties of a nonsingular variety $Y$ is a finite set $\mathcal{S}=\{S_i\}$ of nonsingular closed  subvarieties $S_i \subset Y$ such that 
\begin{enumerate} 
\item any two varieties $S_i$ and $S_j$ \textit{  intersect cleanly}, i.e. their set theoretic intersection is nonsingular and their tangent spaces satisfy the equality $T_{S_i \cap S_j, y} = T_{S_i,y}\cap T_{S_j,y}$ for all $y\in S_i\cap S_j$. 
\item $S_i \cap S_j$ is either equal to some $S_k$ or empty.
\end{enumerate}
\end{definition}
We say that $S_i$ and $S_j$ intersect \textit{transversally} if, for every point $y$ in $Y$, we have   
$T_{Y,y} = T_{S_i,y}+T_{S_j,y}$; here, if $y\notin S$, for some $S\subsetneq Y$, we adopt the convention that $T_{S,y}:=T_{Y,y}$. \\
\begin{lemma}\cite[Lemma 5.1]{LiLi} Let $S_1,S_2$ be two smooth subvarieties of a smooth variety $Y$. Then $S_1,S_2$ intersect cleanly if and only if their scheme theoretic intersection is smooth.
\end{lemma}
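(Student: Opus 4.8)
The plan is to reduce everything to a single local computation of Zariski tangent spaces and then play the reduced structure off against the scheme-theoretic one. Write $W$ for the scheme-theoretic intersection of $S_1$ and $S_2$ and $Z=W_{\mathrm{red}}$ for the set-theoretic intersection. First I would establish the key identity
\[
T_{W,y}=T_{S_1,y}\cap T_{S_2,y}\qquad\text{for every }y\in W,
\]
which holds \emph{unconditionally}, with no cleanness hypothesis. Locally on an affine chart let $I_1,I_2\subset\mathcal O_{Y,y}$ be the ideals of $S_1,S_2$, so that $W$ is cut out by $I_1+I_2$. Viewing $T_{S,y}\subset T_{Y,y}$ as the annihilator of the image of $I_S$ in $\mathfrak m_{Y,y}/\mathfrak m_{Y,y}^2$, the identity follows from the elementary fact that the annihilator of a sum of subspaces is the intersection of the annihilators, applied to $I_1+I_2$. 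This is the technical heart of the argument, and it is where the scheme-theoretic (rather than set-theoretic) structure of $W$ is essential.

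Given this identity, the converse direction is immediate. If $W$ is smooth then it is regular, hence reduced, so $W=Z$; in particular the set-theoretic intersection $Z=W$ is nonsingular, which is the first requirement of cleanness, and the second requirement $T_{Z,y}=T_{S_1,y}\cap T_{S_2,y}$ is exactly the identity above, since $Z=W$.

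For the forward direction I would argue with local rings. Assume $S_1,S_2$ intersect cleanly, so $Z$ is smooth and $T_{Z,y}=T_{S_1,y}\cap T_{S_2,y}$. Set $A=\mathcal O_{W,y}$ and $B=\mathcal O_{Z,y}=A_{\mathrm{red}}$. The closed immersion $Z\hookrightarrow W$ gives an inclusion $T_{Z,y}\subseteq T_{W,y}$, while combining the cleanness hypothesis with the unconditional identity forces $T_{Z,y}=T_{W,y}$; dually, the surjection $\mathfrak m_A/\mathfrak m_A^2\twoheadrightarrow\mathfrak m_B/\mathfrak m_B^2$ is an isomorphism, so $A$ and $B$ have the same embedding dimension. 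Since passing to the reduction does not change Krull dimension, $\dim A=\dim B=\dim_y Z$, and $B$ is regular, so its embedding dimension equals its dimension; therefore the embedding dimension of $A$ equals its Krull dimension, i.e.\ $A$ is regular. A regular local ring is a domain, hence reduced, so $A=A_{\mathrm{red}}=B$ and $W=Z$ is smooth.

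The only real subtlety, and the step I expect to require the most care, is the unconditional tangent identity: one must keep track of the scheme-theoretic ideals and the annihilator/duality bookkeeping, and be sure that ``tangent space of the intersection'' is interpreted scheme-theoretically there, whereas the cleanness condition refers to the reduced intersection. Once that distinction is pinned down, both implications are short, relying only on the standard facts that a Noetherian local ring whose embedding dimension equals its Krull dimension is regular, and that regular local rings are reduced.
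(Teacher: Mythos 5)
This lemma is not proved in the paper at all: it is quoted verbatim from \cite[Lemma 5.1]{LiLi}, so there is no in-paper argument to compare against. Your proof is correct, and it is the standard argument behind the cited result: the unconditional identity $T_{W,y}=T_{S_1,y}\cap T_{S_2,y}$ for the scheme-theoretic intersection $W$ (annihilator of a sum of subspaces of $\mathfrak m_{Y,y}/\mathfrak m_{Y,y}^2$ equals the intersection of the annihilators), from which the reverse implication follows since a smooth scheme is reduced, and the forward implication follows from the criterion that a Noetherian local ring is regular exactly when its embedding dimension equals its Krull dimension, together with the fact that regular local rings are reduced. The only point worth making explicit is that, in the paper's setting of varieties over an algebraically closed field of characteristic $0$, it suffices to run the tangent-space computation at closed points (where the residue field is $k$), since regularity at all closed points of a finite-type scheme gives regularity everywhere, and regularity is equivalent to smoothness here; over an imperfect base field that last equivalence would require additional care.
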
\label{clean}
\begin{definition}\label{defbuild}\cite[Sec 2.1]{LiLi}\begin{enumerate}
\item Let $\mathcal S$ be an arrangement of subvarieties of a nonsingular variety $Y$. A subset $\mathcal G \subset \mathcal S$ is a \textit{building set of $\mathcal S $} if for all $S\in\mathcal S$ the minimal elements of $\mathcal G$ containing $S$ intersect transversally and their intersection is equal to $S$ (this condition is trivially satisfied if $S\in \mathcal G$). These minimal elements are called the $\mathcal G$-factors of $S$.
\item A finite set $\mathcal{G}$ of nonsingular subvarieties of $Y$ is called a \textit{building set} if the set of all possible intersections of collections of subvarieties from $\mathcal{G}$ forms an arrangement $\mathcal{S}$ and $\mathcal{G}$ is a building set of $\mathcal{S}$. In this situation, $\mathcal{S}$ is called the \textit{arrangement induced} by $\mathcal{G}$.
\end{enumerate}
\end{definition}

\begin{definition} \label{Aconst}\cite[Def 1.1]{LiLi}
Let $\mathcal{G}$ be a building set of an arrangement of subvarieties of a nonsingular variety $Y$.  
The \textit{wonderful compactification} $Y_{\mathcal G}$ of $\mathcal G$ is the closure of the image of the natural locally closed embedding 
$$
Y^{\circ} := Y \setminus \bigcup_{S_k \in \mathcal G} S_k
\hookrightarrow\ 
 \prod \limits_ {S_k \in\ \mathcal G} Bl_{S_k} Y.
$$
\end{definition}

\begin{definition}\label{dominant}\cite[Definition 2.7]{LiLi} Let $Z$ be a nonsingular subvariety of a nonsingular variety $Y$ and $\pi : Bl_ZY \rightarrow Y$ be the blow-up of $Y$ along $Z$. For any subvariety $V$ of $Y$, the \textit{dominant transform} of $V$, denoted by $\widetilde{V}$ or $V\texttildelow$, is the strict transform of $V$ if $V \not\subset Z$ or the scheme-theoretic inverse $\pi^{-1}(V )$ if $V\subset Z$. \\
For a sequence of blow-ups, we still denote the iterated dominant transform $(\dots((V^{\texttildelow})^{\texttildelow})\dots)^{\texttildelow}$ by $\widetilde{V}$ or $V\texttildelow$.
\end{definition}
\begin{definition}\label{iterated} Let $\mathcal{G}=\{S_1,S_2,\dots ,S_n\}$ be a totally ordered set of subvarieties of a nonsingular variety $Y$. For $k=0,1,\dots, n$, we define $(Y_k,\mathcal{G}^{(k)})$ inductively as follows.
\begin{enumerate}[label=\roman*.]
\item Let $Y_0=Y, S_i^{(0)}=S_i$ for $1\leq i\leq n$ and $\mathcal{G}^{(0)} =\mathcal{G}=\{S_1,S_2,\dots ,S_n\}$ .
\item Let $k\geq1$ and suppose that $(Y_{k-1}, \mathcal{G}^{(k-1)}=\{S_1^{(k-1)},S_2^{(k-1)},\dots,S_n^{(k-1)}\} )$ has been constructed. 
\begin{itemize}
\item We define $Y_k$ to be the blowup of $Y_{k-1}$ at the subvariety $S_k^{(k-1)}$.
\item We define $S_i^{(k)}$ to be the dominant transform of $S_i^{(k-1)}$ under the blowup $Y_k\rightarrow Y_{k-1}$ for $1\leq i\leq n$ and $\mathcal{G}^{(k)}$ to be the set $\{S_1^{(k)},S_2^{(k)},\dots,S_n^{(k)}\} $.
\end{itemize}
\end{enumerate}

The \textit{iterated blowup} $Bl_{\mathcal{G}} Y$ of $Y$ at $\mathcal{G}$ is defined to be the variety $Y_n$. \\

\begin{proposition}\label{buildpullback}\ Let $\mathcal{G}=\{S_1,S_2,\dots ,S_n\}$ be a set of subvarieties of a nonsingular variety $Y$. Assume that $\mathcal{G}$ is a building set and that it is given a total order compatible with inclusions, that  is $i\leq j$ if and only if $S_i\subseteq S_j$. Let $\mathcal{T}^{(k)}$ be the arrangement induced by $\mathcal{G}^{(k)}$.  
\begin{enumerate}\item $\mathcal{G}^{(k)}$ is a building set of $\mathcal{T}^{(k)}$. 
\item Let any $i$ and $k$ such that $1\leq i, k\leq n$ and $i\neq k$. If $S_k\not\subseteq S_i $, then the intersection $S_{k}^{(k-1)}\cap S_{i}^{(k)}$ is transversal. 
\end{enumerate}
\end{proposition}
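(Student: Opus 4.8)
The plan is to prove both statements together by induction on $k$, reducing the inductive step to the single-blowup results of \cite{LiLi}. The base case $k=0$ is exactly the hypothesis that $\mathcal{G}=\mathcal{G}^{(0)}$ is a building set of the arrangement it induces. For the inductive step I would assume that $\mathcal{G}^{(k-1)}$ is a building set of $\mathcal{T}^{(k-1)}$ (and that the transversality in (2) has been verified at the earlier stages), and then analyze the single blowup $Y_k\to Y_{k-1}$ along $S_k^{(k-1)}$ as described in Definition \ref{iterated}, tracking the dominant transforms of Definition \ref{dominant}.

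The key preliminary observation is that $S_k^{(k-1)}$ is a \emph{minimal} element of the building set $\mathcal{G}^{(k-1)}$. Because the order on $\mathcal{G}$ is compatible with inclusions, every $S_j\subsetneq S_k$ satisfies $j<k$ and has therefore already been blown up, while for $j>k$ compatibility gives $S_j\not\subseteq S_k$ in $Y$. I would then show that no strict containment $S_j^{(k-1)}\subsetneq S_k^{(k-1)}$ survives for $j\neq k$: for $j<k$ the transform $S_j^{(k-1)}$ carries the exceptional divisor of an earlier blowup and is not contained in the center, whereas for $j>k$ the relation $S_j\not\subseteq S_k$ is propagated through the dominant transforms. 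This minimality is precisely the hypothesis needed to apply Li Li's single-step theorem, and it is where the order compatible with inclusions does its essential work.

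Granting minimality, part (1) follows from the result of \cite{LiLi} stating that blowing up a minimal element of a building set and passing to dominant transforms yields again a building set of the newly induced arrangement; along the way one must check that $\mathcal{T}^{(k)}$, the set of all intersections of members of $\mathcal{G}^{(k)}$, is indeed an arrangement, i.e. that these intersections are clean (equivalently smooth, by Lemma \ref{clean}) and closed under intersection. For part (2), when $S_k\not\subseteq S_i$ the building-set structure of Definition \ref{defbuild} forces the relevant $\mathcal{G}$-factors to meet transversally; I would translate this into transversality of the center $S_k^{(k-1)}$ with $S_i^{(k-1)}$ in $Y_{k-1}$, and then read off the asserted transversality from the standard local model of a blowup along a smooth center that meets a smooth subvariety transversally, so that the transform $S_i^{(k)}$ inherits transversality against the exceptional behavior.

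I expect the main obstacle to be the bookkeeping that cleanness and the building-set property persist under the dominant transform: one must verify that strict and total transforms of the $S_i$ continue to intersect cleanly, that their mutual intersections remain in $\mathcal{T}^{(k)}$ or become empty, and---most delicately---that the transversality of $\mathcal{G}$-factors survives each blowup. This is exactly the technical heart of Li Li's analysis; the inductive packaging above is what lets us feed the minimal center into the single-step lemma at every stage and thereby keep the transforms transversal in the precise sense required by (2).
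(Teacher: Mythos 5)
Your proposal is correct and follows essentially the same route as the paper: the paper's entire proof is a citation to the Remark after Definition 2.12 and Lemma 2.6(i) of \cite{LiLi}, which are precisely the two facts your induction is organized around — that an inclusion-compatible order makes each center $S_k^{(k-1)}$ a minimal element of the building set $\mathcal{G}^{(k-1)}$ (so the dominant transforms again form a building set of the induced arrangement), and that a minimal element of a building set meets any element not containing it transversally. Your write-up simply unpacks the single-step-plus-induction argument that those citations encapsulate.
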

\begin{proof} See the Remark after Definition 2.12 and Lemma 2.6(i) in \cite{LiLi}.
\end{proof}
\end{definition}
The following theorem  will be central to our constructions:
\begin{theorem}(\cite[Theorems 1.2,1.3 and Proposition 2.13]{LiLi})\label{thmLi}
Let $Y$ be a nonsingular variety and $\mathcal G=\{S_1,S_2,\dots ,S_n\}$ be a building set of an arrangement of subvarieties of $Y$. 
\begin{enumerate}
\item The wonderful compactification $Y_\mathcal G$ is a nonsingular variety. Moreover, for every $S_i\in\mathcal{G}$ there exists an irreducible smooth divisor $D_{S_i}$ such that :
\begin{enumerate}[label=\roman*.]
\item  $Y_\mathcal G\setminus Y^{\circ}=\bigcup_{i=1}^n D_{S_i}$.
\item Any set of divisors $D_{S_i}$ intersects  transversally. 
\end{enumerate}
\item Suppose that the elements of $\mathcal G$ are arranged either
\begin{enumerate} 
\item in an order compatible with inclusion relations (i.e. $i\leq j$ iff $S_i\subseteq S_j$) or 
\item in such an order that the first $i$ terms  $S_1,S_2,\dots ,S_i$ form a building set for all $1\leq i \leq n$. \\
\end{enumerate}
\noindent Then the wonderful compactification $Y_\mathcal G$  of $\mathcal G$ is equal to the iterated blowup $Bl_{\mathcal{G}}Y$ of $Y$ at $\mathcal{G}$. Moreover, under this identification, each divisor $D_{S_i}$ in $Y_\mathcal G\setminus Y^{\circ}$ is the iterated dominant transform of the variety $S_i$ under $ Bl_{\mathcal{G}}Y\rightarrow Y$.\\
\item Let $\mathcal{I}_i$ be the ideal sheaf of $S_i$. Then, the wonderful compactification  $Y_\mathcal{G}$ is isomorphic to the blowup of $Y$ with respect to the ideal sheaf $\mathcal{I}_1\mathcal{I}_2\dots \mathcal{I}_n$.
\end{enumerate}
\end{theorem}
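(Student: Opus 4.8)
The plan is to prove part (2) first and then deduce (1) and (3) from it. I would induct on $n=|\mathcal G|$, with Proposition \ref{buildpullback} supplying the inductive machinery. The case $n=1$ is immediate, since $Y^\circ$ is dense in $Bl_{S_1}Y$ and its closure is therefore all of $Bl_{S_1}Y$. For the inductive step, order $\mathcal G$ compatibly with inclusion, blow up the minimal element to form $Y_1=Bl_{S_1}Y$, and pass to the dominant transforms $\mathcal G^{(1)}=\{S_2^{(1)},\dots,S_n^{(1)}\}$. Proposition \ref{buildpullback}(1) guarantees that $\mathcal G^{(1)}$ is again a building set; since every element of a building set is nonsingular, all subsequent centers $S_i^{(i-1)}$ are smooth and the iterated blowup is well defined. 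It then suffices to establish the identity $Y_{\mathcal G}\cong (Y_1)_{\mathcal G^{(1)}}$ of wonderful compactifications, after which the inductive hypothesis yields $Y_{\mathcal G}=Bl_{\mathcal G}Y$.

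The heart of the argument is this identity, which I would prove by constructing a canonical morphism $Bl_{\mathcal G}Y\to Y_{\mathcal G}$ and showing it is an isomorphism. On the iterated blowup each dominant transform $\widetilde{S_i}$ is a Cartier divisor, so the inverse image ideal sheaf of $\mathcal I_i$ is invertible; the universal property of blowing up then produces morphisms $Bl_{\mathcal G}Y\to Bl_{S_i}Y$ for each $i$, hence a morphism to $\prod_i Bl_{S_i}Y$ restricting on the dense open $Y^\circ$ to the embedding of Definition \ref{Aconst}. As $Bl_{\mathcal G}Y$ is irreducible with $Y^\circ$ dense, its image is exactly the closure $Y_{\mathcal G}$, so we obtain a proper birational surjection $Bl_{\mathcal G}Y\to Y_{\mathcal G}$. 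To see it is an isomorphism I would argue locally over $Y$: using the clean-intersection hypothesis to linearize the $S_i$ near a point and the transversality of Proposition \ref{buildpullback}(2), one matches the charts of the successive blowups with those of the fiber product of the $Bl_{S_i}Y$. This local identification is the main obstacle, since it is precisely where the combinatorics of the building set must be converted into explicit commutative algebra.

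Part (1) then follows formally. Smoothness of $Y_{\mathcal G}=Bl_{\mathcal G}Y$ is clear, every center being smooth. Each $D_{S_i}$ is the iterated dominant transform of the exceptional divisor created when $S_i^{(i-1)}$ is blown up, so it is smooth and irreducible, and the union $\bigcup_i D_{S_i}$ is exactly the exceptional locus $Y_{\mathcal G}\setminus Y^\circ$. Transversality of any collection of the $D_{S_i}$ is obtained by carrying the exceptional divisors through the remaining blowups: blowing up a smooth center preserves the normal-crossings property of the accumulated boundary, and Proposition \ref{buildpullback}(2) ensures that no new tangency arises at later stages.

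Finally, part (3) comes from the universal property of blowing up. Writing $Z=Bl_{\mathcal I_1\cdots\mathcal I_n}Y$, the fact that each $\mathcal I_i$, and hence the product, pulls back to an invertible sheaf on $Bl_{\mathcal G}Y$ gives a morphism $Bl_{\mathcal G}Y\to Z$. Conversely, the clean-intersection structure forces the product ideal to factor into invertible sheaves on $Z$, so that each $\mathcal I_i$ becomes principal there; the universal property then yields $Z\to Bl_{S_i}Y$ for every $i$ and hence $Z\to Y_{\mathcal G}\cong Bl_{\mathcal G}Y$. Verifying that these two morphisms are mutually inverse, again through the local chart analysis, finishes the proof. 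The delicate point is the factorization of the product ideal on $Z$, which once more reduces to the clean-intersection hypothesis rather than to any global consideration.
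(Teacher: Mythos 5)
This theorem is not proved in the paper at all: it is quoted from \cite{LiLi} (Theorems 1.2, 1.3 and Proposition 2.13 there), so there is no internal argument to compare against. Your outline does in fact mirror the inductive strategy of Li Li's original proof: blow up a minimal element, pass to the dominant transforms (which again form a building set, Proposition \ref{buildpullback}(1)), and reduce to the identity $Y_{\mathcal G}\cong (Bl_{S_1}Y)_{\mathcal G^{(1)}}$. But as a proof, the proposal has genuine gaps, and they sit exactly at the points you label as "obstacles."

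First, the claim that the inverse image ideal sheaf $\mathcal I_i\cdot\mathcal O_{Bl_{\mathcal G}Y}$ is invertible "because $\widetilde{S_i}$ is a Cartier divisor" is a non sequitur: the scheme-theoretic preimage of $S_i$ consists of the dominant transform together with the exceptional divisors over every center contained in $S_i$, and local principality of that ideal (equivalently, its factorization as a product of divisorial ideals) is precisely the hard local content of Li Li's Proposition 2.13 --- it is what the deferred "local chart analysis" would have to establish, so the universal-property morphism $Bl_{\mathcal G}Y\to\prod_i Bl_{S_i}Y$ is not yet constructed. Second, in part (1) the assertion that "blowing up a smooth center preserves the normal-crossings property of the accumulated boundary" is false without hypotheses relating the center to the existing divisors (a smooth center tangent to, or meeting non-transversally, an earlier exceptional divisor destroys normal crossings); what must be proved inductively is that each successive center $S_k^{(k-1)}$ meets the already-created boundary in the compatible way guaranteed by the building-set structure, which is again the technical core being assumed rather than shown. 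Third, the central identity $Y_{\mathcal G}\cong (Y_1)_{\mathcal G^{(1)}}$, which carries the whole induction (Li Li's Proposition 2.8), is explicitly deferred. In short, your proposal is a correct road map of the known proof, but each of its load-bearing steps is asserted rather than established, so it cannot stand as a proof of the statement.
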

 \begin{remark}\label{partialtotal} Let $Y$ be a nonsingular variety and let $\mathcal{G}$ be a building set  of an arrangement of subvarieties of $Y$. Suppose also that $\mathcal{G}$ is given a partial order with the property that any total order extending it satisfies either of the hypotheses of Theorem \ref{thmLi}(2). Then, Theorem \ref{thmLi}(2) asserts that, for any such extension, the corresponding iterated blowup of $Y$ at $\mathcal{G}$ gives the same variety up to isomorphism, that is $Y_\mathcal{G}$. We can therefore define the iterated blowup of $Y$ at such $\mathcal{G}$ without ambiguity.
 \end{remark}
 \begin{definition}\label{defpart} Let $Y$ be a nonsingular variety and $\mathcal{G}$ be a set of subvarieties of $Y$ which is a building set. Assume that $(<)$ is a partial order on $\mathcal{G}$ with the property that any total order that extends $(<)$ satisfies either of the hypotheses of Theorem \ref{thmLi}(2). We define the \textit{iterated blowup $Bl_{\mathcal{G}} Y$ of $Y$ at }$\mathcal{G}$ in the order $(<)$ to be the iterated blowup of $Y$ at $\mathcal{G}$ in any total order that extends $(<)$.
 \end{definition}
 
\begin{definition}An \textit{ascending dimension order} on a set $\mathcal{G}$ of subvarieties of a variety $Y$ is a partial order $(<)$ on $\mathcal{G}$ with the property that for any $T,S\in \mathcal{G}$, we have $T<S$ if and only if $\dim T<\dim S$. 
\end{definition}

 \vspace{0.1in}
\subsection{Weighted Compactifications of Configuration Spaces}\label{FM}
We recall some results from \cite{routis2014weighted}. Let $X$ be a nonsingular variety of dimension $d$ over an algebraically closed field $k$ and $\mathcal{A}:=\{a_1,a_2, \dots,a_n\} $ be an ordered set of rational numbers in $ \mathcal D^{FM}_{d,n}$.
 Also, let 
\begin{equation*}\label{diag}
\mathcal{K}_{\mathcal{A}}:= \{ \Delta_I \subset X^n | I \subset N \,\text{and}\, \sum \limits _{i\in I} a_i >1\}
,  \,\text{where}\,\,\,
\Delta_I:=
\{ (x_1, \ldots, x_n) \in X^n | x_i=x_j, \; \text{for all}\,\, i,j \in I \}
\end{equation*}
 and let us list its elements in ascending dimension (hence inclusion preserving) order . The above set is shown in [ibid.] to be a building set. The work [ibid.] is concerned with the study of a natural compactification of the configuration space $X^n\setminus \bigcup\limits_ {\Delta_I \in \mathcal{K}_{\mathcal{A}}} \Delta_I$, i.e. the parameter space of $n$ labeled points in $X$ carrying weights $a_i$ subject to the following condition:\\
\renewcommand{\labelitemi}{$\bullet$}
\begin{itemize}
\item for any set of labels $I \subset N$ of coincident points we have $\sum \limits _{i\in I} a_i \leq 1\ $.
\end {itemize}
\begin{definition}The weighted compactification $\wmod$ of $X^n\setminus \bigcup\limits_ {\Delta_I \in \mathcal{K}_{\mathcal{A}}} \Delta_I$ is the wonderful compactification of the building set $\mathcal{K}_{\mathcal{A}}$. \end{definition}
 We have the following result (\cite[Theorems 2 and 3]{routis2014weighted}):

\begin{theorem}\label{wcomp} For any set of admissible weights $\mathcal A\in \mathcal D^{FM}_{d,n}$:
\begin{enumerate}
\item {$\wmod$ is a nonsingular variety. Further, if $\sum \limits _{i\in N} a_i >1$, the boundary $X_\mathcal{A}[n] \setminus (X^n \setminus \bigcup \limits_ {\Delta_I \in \mathcal{K}_{\mathcal{A}}} \Delta_I) $ is the union of $|\mathcal{K}_{\mathcal{A}}|$ smooth irreducible divisors $D_I,$ where $I\subset N $ and $ \sum \limits _{i\in I} a_i >1\ $.}

\item Any set of boundary divisors $D_I$ intersects transversally. 

 \item{$X_\mathcal{A}[n]$ is the iterated blowup of $X^n$ at $\mathcal{K}_{\mathcal{A}}$} in ascending dimension order. Moreover, each divisor $D_I$ is the iterated dominant transform of $\Delta_I$ in $\wmod$.  
 \item There exists a `universal' family  $\phi_\mathcal{A}: X_\mathcal{A}[n]^+\rightarrow X_\mathcal{A}[n]$ equipped with $n$ sections 
 $\sigma_i : X_\mathcal{A}[n]\rightarrow X_\mathcal{A}[n]^+, i=1,\dots n$ whose images lie in the relative smooth locus of  $\phi_\mathcal{A}$. The morphism $\phi_\mathcal{A}$ is a flat morphism between nonsingular varieties, whose fibers are the $n$ pointed $\mathcal{A}$ stable degenerations of $X$ described in Section \ref{wdeg}.
 \end{enumerate}
\end{theorem}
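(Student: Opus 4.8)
The plan is to concentrate the content of parts (1)--(3) into a single input, namely that $\mathcal{K}_{\mathcal{A}}$ is a building set of an arrangement of subvarieties of $X^n$, and then to read these three statements directly off Li Li's structure theorem (Theorem \ref{thmLi}). Part (4) is of a different nature and is where I expect the real work to lie: it calls for an explicit construction of the family followed by a fiberwise local analysis.

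First I would verify the building set property of $\mathcal{K}_{\mathcal{A}}$. Each diagonal $\Delta_I\subset X^n$ is smooth, and whenever $I\cap J\neq\emptyset$ one has $\Delta_I\cap\Delta_J=\Delta_{I\cup J}$; since $I\cup J\supseteq I$ forces $\sum_{i\in I\cup J}a_i\geq\sum_{i\in I}a_i>1$, the collection $\mathcal{K}_{\mathcal{A}}$ is closed under such overlapping intersections. The remaining intersections are the multidiagonals $\Delta_{I_1}\cap\cdots\cap\Delta_{I_k}$ with $I_1,\dots,I_k$ pairwise disjoint; these are smooth, the $\Delta_{I_j}$ meet cleanly because their scheme-theoretic intersection is smooth (Lemma \ref{clean}), and the minimal elements of $\mathcal{K}_{\mathcal{A}}$ containing such a multidiagonal are exactly the $\Delta_{I_j}$, which meet transversally with intersection equal to it; this is the building set axiom of Definition \ref{defbuild}. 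Granting this, parts (1) and (2) are immediate from Theorem \ref{thmLi}(1): the boundary is the union of the smooth pairwise transversal divisors $D_I$ indexed by the $\Delta_I\in\mathcal{K}_{\mathcal{A}}$, hence $|\mathcal{K}_{\mathcal{A}}|$ of them, and the hypothesis $\sum_{i\in N}a_i>1$ merely guarantees $\Delta_N\in\mathcal{K}_{\mathcal{A}}$ so that the boundary is nonempty. For part (3), since $\dim\Delta_I=(n-|I|+1)d$, an ascending-dimension order places the smaller diagonals (larger $I$) first and is compatible with inclusion in the sense required by Remark \ref{partialtotal} and Definition \ref{defpart}, because $\Delta_I\subsetneq\Delta_J$ forces $\dim\Delta_I<\dim\Delta_J$. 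Theorem \ref{thmLi}(2) then identifies $\wmod$ with the iterated blowup $\Bl_{\mathcal{K}_{\mathcal{A}}}X^n$ in this order and each $D_I$ with the iterated dominant transform of $\Delta_I$.

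For part (4) I would realize the family as a second wonderful compactification on $X^{n+1}=X^n\times X$, engineered so that the moving $(n+1)$-st point follows the bubbling of the base configuration but never triggers a bubble on its own. Let $p\colon X^{n+1}\to X^n$ be the projection forgetting the last coordinate and let $\mathcal{K}_{\mathcal{A}}^{+}$ consist of the diagonals $\Delta_I$ together with the diagonals $\Delta_{I\cup\{n+1\}}$, taken over all $I\subseteq N$ with $\sum_{i\in I}a_i>1$; here $\Delta_I$ does not constrain the last coordinate, so $\Delta_I=p^{-1}(\Delta_I)$, and crucially the singleton-collision diagonals $\Delta_{\{i,n+1\}}$ are excluded. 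The same argument as above shows $\mathcal{K}_{\mathcal{A}}^{+}$ is a building set, and I define $\wuni$ to be its wonderful compactification, which is smooth by Theorem \ref{thmLi}(1). Since $p$ carries each $\Delta_{I\cup\{n+1\}}$ onto $\Delta_I$ and the pullback to $\wuni$ of the ideal $\prod_{\Delta_I\in\mathcal{K}_{\mathcal{A}}}\mathcal{I}_{\Delta_I}$ is invertible (each base center $\Delta_I$ reappears among the centers blown up on $X^{n+1}$), Theorem \ref{thmLi}(3) and the universal property of blowing up produce the morphism $\phi_{\mathcal{A}}\colon\wuni\to\wmod$. The $n$ sections are the strict transforms of the excluded graphs $\Delta_{\{i,n+1\}}=\{p_{n+1}=p_i\}$; these are not blown up, so each maps isomorphically to $\wmod$, and since the $i$-th marked point lies in the smooth locus of every degeneration (Section \ref{wdeg}) the section lands in the relative smooth locus. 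Finally $\wuni$ and $\wmod$ are smooth and the fibers of $\phi_{\mathcal{A}}$ are equidimensional of dimension $d$, so flatness follows by miracle flatness.

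The main obstacle is the identification of the geometric fibers of $\phi_{\mathcal{A}}$ with the $\mathcal{A}$-stable degenerations of Section \ref{wdeg}, which is not formal. Over the open configuration locus the fiber is $X$, since no $\Delta_{\{i,n+1\}}$ is blown up; I would then descend through the boundary strata of $\wmod$, using the transversality of Proposition \ref{buildpullback}(2) to pass to normal-crossings local coordinates adapted to a stratum, restrict the iterated blowup defining $\wuni$ to a single fiber, and check that over a collision set $I$ with $\sum_{i\in I}a_i>1$ the exceptional behaviour attaches exactly the blown-up projective completion $\bP(T_x\oplus 1)$ prescribed by the corresponding screen. The delicate point is to match the nesting and disjointness of the blowup centers with the compatibility conditions on coincident sets and screens (Definition \ref{compatible}), and to confirm that the resulting tree of components, together with the positions of the markings, is precisely the object constructed in Section \ref{wdeg}.
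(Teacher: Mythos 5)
First, note that the paper never proves Theorem \ref{wcomp}: it is imported verbatim from \cite{routis2014weighted} (``Theorems 2 and 3'' there), with $\wmod$ \emph{defined} in Section \ref{FM} as the wonderful compactification of the building set $\mathcal{K}_{\mathcal{A}}$. Within that framing, your treatment of parts (1)--(3) is correct and is exactly the intended argument: verify that $\mathcal{K}_{\mathcal{A}}$ is a building set (your case analysis --- overlapping index sets merge into $\Delta_{I\cup J}\in\mathcal{K}_{\mathcal{A}}$, disjoint ones give transversal polydiagonals whose minimal containing elements are the $\Delta_{I_j}$ --- is the right one), and then quote Theorem \ref{thmLi}, using that ascending dimension order refines inclusion. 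Nothing to add there.

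Part (4) is where your route departs from the cited construction, and also where the genuine gap lies. According to the proof of Lemma \ref{equiv} in the paper, \cite{routis2014weighted} builds $\wuni$ as a sequence of blowups of $\wmod\times X$ along iterated dominant transforms of the diagonals $\Delta_{I\cup\{n+1\}}$, with the sections obtained as iterated transforms of the graphs of the evaluation maps $\wmod\to X$. You instead define $\wuni$ as the wonderful compactification of the augmented building set $\mathcal{K}_{\mathcal{A}}^{+}=\{p^{-1}(\Delta_I)\}\cup\{\Delta_{I\cup\{n+1\}}\}$ on $X^{n+1}$. These two constructions should in fact agree: the order listing all pullbacks $p^{-1}(\Delta_I)$ first and then the $\Delta_{I\cup\{n+1\}}$ in ascending dimension can be checked to satisfy hypothesis (b) of Theorem \ref{thmLi}(2), and blowing up the pullback centers alone produces $\wmod\times X$. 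So your definition is a legitimate, arguably cleaner, repackaging; but you neither make this identification nor work with your own definition to the end.

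Concretely, three steps are missing or under-justified. (i) The assertion that the strict transforms of the graphs $\Delta_{\{i,n+1\}}$ ``are not blown up, so each maps isomorphically to $\wmod$'' is not a proof: not being a center does not leave a subvariety untouched. The strict transform of $\Delta_{\{i,n+1\}}\cong X^n$ is blown up along its traces with all the centers; one must check that these traces are precisely (possibly with repetitions) the elements of $\mathcal{K}_{\mathcal{A}}$ under this isomorphism, that the induced order is admissible, that repeated centers become divisors and hence contribute trivially, and that the composite with $\phi_{\mathcal{A}}$ is the identity --- only then is the transform a section isomorphic to $\wmod$. (ii) Flatness via miracle flatness presupposes equidimensionality of the fibers, which you have not established independently; as written this is circular, since equidimensionality is extracted from the fiber description you defer. (iii) Most importantly, the identification of the geometric fibers with the $n$-pointed $\mathcal{A}$-stable degenerations of Section \ref{wdeg} --- which, together with the sections landing in the relative smooth locus, \emph{is} the content of part (4) --- is explicitly left as a plan (``descend through the boundary strata \ldots and check''). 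In \cite{routis2014weighted} this local analysis is the bulk of the proof. So the proposal establishes (1)--(3) but does not prove (4).
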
 

Now, with notation as above, let $X=\bP^d$ . By Theorem \ref{wcomp}(3), $\bP^d_{\mathcal A}[n]$ is the iterated blowup of $(\bP^d)^n$ at the iterated strict transforms of the diagonals $\Delta _I \in \mathcal{K}_{\mathcal{A}}$ in (any) ascending dimension order (see also Remark \ref{partialtotal}). Let $\pi_{\mathcal A}:\bP^d_{\mathcal A}[n] \to (\bP^d)^n$ be the resulting morphism. Further, consider $SL_{d+1}$ with its usual action on $\bP^d$ and let $SL_{d+1}$ act diagonally on $(\bP^d)^n $. We will use the following Lemma in the construction of $\wpdn$:
\begin{lemma}\label{equiv} 
\begin{enumerate}
\item There is a lift of the diagonal action of $SL_{d+1}$ on $(\bP^d)^n$ to $\bP^d_{\mathcal A}[n]$ and $\bP^d_{\mathcal A}[n]^+$, so that $\pi_{\mathcal A}:\bP^d_{\mathcal A}[n] \to (\bP^d)^n$ and $\phi_\mathcal{A}:\bP^d_{\mathcal A}[n]^+ \rightarrow \bP^d_{\mathcal A}[n] $ (cf Theorem \ref{wcomp}(4)), as well as the sections $\sigma_i:\bP^d_{\mathcal A}[n]\rightarrow\bP^d_{\mathcal A}[n]^+ ,\, i=1,\dots,n$ of $\phi_{\mathcal A}$ become $SL_{d+1}$-equivariant morphisms.
\item The morphism $\phi_\mathcal{A}:\bP^d_{\mathcal A}[n]^+ \rightarrow \bP^d_{\mathcal A}[n] $ is projective.
\end{enumerate}
\end{lemma}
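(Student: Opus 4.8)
The plan is to prove (1) via the standard principle that an invariant center can be blown up equivariantly, applied to the iterated construction of Theorem~\ref{wcomp}(3). First I would observe that every member $\Delta_I\in\mathcal{K}_{\mathcal{A}}$ is invariant under the diagonal action of $SL_{d+1}$ on $(\bP^d)^n$: if $x_i=x_j$ for all $i,j\in I$, then $g\cdot x_i=g\cdot x_j$ as well, so $g\cdot\Delta_I=\Delta_I$ for every $g$. Next I would invoke the lifting lemma: if $G$ acts on a nonsingular $Y$ and $Z\subset Y$ is a $G$-invariant smooth center, then $g^{*}\mathcal{I}_Z=\mathcal{I}_Z$ for each $g\in G$, so $g$ induces an automorphism of the Rees algebra $\bigoplus_m\mathcal{I}_Z^{m}$ and hence of $\Bl_Z Y=\Proj\bigoplus_m\mathcal{I}_Z^{m}$; this is functorial in $g$ and makes the blowdown equivariant. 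Since $\bP^d_{\mathcal A}[n]$ is built as an iterated blowup of $(\bP^d)^n$ at the dominant transforms of the $\Delta_I$ in ascending dimension order, I would argue by induction that at each stage the lifted $SL_{d+1}$-action preserves the dominant transform of every $\Delta_I$; this holds because dominant transform is computed via strict transform or scheme-theoretic preimage, both of which commute with the automorphism induced by an invariant center. Consequently every successive center is invariant, the action lifts all the way to $\bP^d_{\mathcal A}[n]$, and $\pi_{\mathcal A}$ is equivariant.

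For the universal family I would invoke the construction of $\bP^d_{\mathcal A}[n]^{+}$ in \cite{routis2014weighted} as the relative (fibered) analog of this same iterated blowup over $\bP^d_{\mathcal A}[n]$, whose centers are again cut out by the (invariant) diagonal conditions together with the marked points. Because all of this input data is $SL_{d+1}$-invariant, the identical lifting argument produces an $SL_{d+1}$-action on $\bP^d_{\mathcal A}[n]^{+}$ for which $\phi_\mathcal{A}$ is equivariant. Finally, each section $\sigma_i$ is the natural section carrying a configuration to its $i$-th marked point (the closure of the graph of the $i$-th projection over the open configuration locus); since the diagonal action sends $x_i$ to $g\cdot x_i$ and respects taking closures, each $\sigma_i$ is equivariant, completing (1).

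For (2) I would use that projectivity is stable under composition and is automatic for a morphism between projective $k$-schemes. The base $\bP^d_{\mathcal A}[n]$ is projective over $k$, being an iterated blowup of the projective variety $(\bP^d)^n$ at smooth centers (Theorem~\ref{wcomp}(3)). By its construction, $\bP^d_{\mathcal A}[n]^{+}$ is an iterated blowup of a $\bP^d$-bundle over $\bP^d_{\mathcal A}[n]$; since a projective-bundle projection is projective and each blowup morphism is projective, their composition $\phi_\mathcal{A}$ is projective. Alternatively, this construction exhibits $\bP^d_{\mathcal A}[n]^{+}$ as projective over $k$, and any $k$-morphism from a projective $k$-scheme to the separated scheme $\bP^d_{\mathcal A}[n]$ is projective: the graph $\bP^d_{\mathcal A}[n]^{+}\hookrightarrow \bP^d_{\mathcal A}[n]^{+}\times\bP^d_{\mathcal A}[n]$ is a closed immersion, composing with a closed immersion $\bP^d_{\mathcal A}[n]^{+}\hookrightarrow\bP^N$ realizes $\phi_\mathcal{A}$ as a closed immersion into $\bP^N\times\bP^d_{\mathcal A}[n]$ followed by the (projective) projection to $\bP^d_{\mathcal A}[n]$.

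The main obstacle I anticipate is not the part over $\bP^d_{\mathcal A}[n]$, which is routine, but rather making the two claims about $\bP^d_{\mathcal A}[n]^{+}$ precise: one must extract from \cite{routis2014weighted} that the universal family is built from $SL_{d+1}$-invariant centers relative to a projective bundle, so that both the equivariance of $\phi_\mathcal{A}$ and its projectivity follow from the same input. Once that construction is in hand, both assertions reduce to the elementary facts above about equivariant blowups and composition of projective morphisms.
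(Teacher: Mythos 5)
Your proposal is correct and follows essentially the same route as the paper: both lift the $SL_{d+1}$-action through the iterated blowups because all centers (iterated transforms of diagonals, both for $\bP^d_{\mathcal A}[n]$ over $(\bP^d)^n$ and for $\bP^d_{\mathcal A}[n]^{+}$ over $\bP^d_{\mathcal A}[n]\times\bP^d$) are $SL_{d+1}$-invariant, and both obtain projectivity of $\phi_{\mathcal A}$ as a composition of blowup morphisms with the (projective) projection $\bP^d_{\mathcal A}[n]\times\bP^d\to\bP^d_{\mathcal A}[n]$. The only cosmetic difference is that the paper handles the sections by citing their explicit construction in \cite{routis2014weighted} as iterated blowups of the graphs $S_i$ along invariant centers, whereas you argue via invariance of the closure of the graph over the configuration locus; both yield equivariance.
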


\begin{proof}
(1) $\bP^d_{\mathcal A}[n]$ is a sequence of blowups at iterated strict transforms of diagonals of $(\bP^d)^n$, hence a sequence of blowups at $SL_{d+1}$-invariant centers. Therefore the diagonal action of $SL_{d+1}$ on $(\bP^d)^n$ lifts to $\bP^d_{\mathcal A}[n]$ so that $\pi_{\mathcal A}$ becomes $SL_{d+1}$ equivariant. Also, by the construction in \cite[Section 3]{routis2014weighted}, $\bP^d_{\mathcal A}[n]^{+}$ is a sequence of blowups of $\bP^d_{\mathcal A}[n]\times \PP^d$ at iterated dominant transforms of certain diagonals of $(\PP^d)^n\times \PP^d$ (via $\pi_{\mathcal A}\times id$), hence an iterated blowup at $SL_{d+1}$-invariant centers. Therefore the morphism $\bP^d_{\mathcal A}[n]^{+}\rightarrow \bP^d_{\mathcal A}[n] \times \PP^d$ is $SL_{d+1}$-equivariant. By construction [ibid.], $\phi_\mathcal{A}$ is the composition of the morphism $\bP^d_{\mathcal A}[n]^{+}\rightarrow \bP^d_{\mathcal A}[n] \times \PP^d$ with the projection $\bP^d_{\mathcal A}[n] \times \PP^d \rightarrow \bP^d_{\mathcal A}[n] $, which is $SL_{d+1}$-equivariant, therefore $\phi_{\mathcal{A}}$ is equivariant. Finally, the sections $\sigma_i$ are obtained by successively blowing up strict transforms of $SL_{d+1}$-invariant subvarieties of $\bP^d_{\mathcal A}[n] \times \bP^d$ along $SL_{d+1}$-invariant centers. Indeed, let $S_i$, resp. $D_{I+}$, be the graph of the composite morphism $\bP^d_{\mathcal A}[n]\rightarrow (\PP^d)^n\xrightarrow{q_i} \bP^d$, resp $D_I\hookrightarrow \bP^d_{\mathcal A}[n]\rightarrow (\PP^d)^n\xrightarrow{q_i} \bP^d$, where $q_i$ is the projection to the $i$-th factor and $D_I$ is the divisor corresponding to $I\subset N$ (see Theorem \ref{wcomp}). Then each $\sigma_i$ is the iterated blowup of $S_i$ at $D_{I+}$ ([ibid.]). \\
(2) As noted above, $\phi_{\mathcal A}$ is the composition of a sequence of blowups $\bP^d_{\mathcal A}[n]^{+}\rightarrow \bP^d_{\mathcal A}[n] \times \PP^d$ with the projection  $\bP^d_{\mathcal A}[n] \times \PP^d \rightarrow \bP^d_{\mathcal A}[n]$, i.e. a composition of projective morphisms, hence projective. 
\end{proof}

\subsection{Relative GIT and blowing up} \label{relativeGIT}
Let us first recall some results of the theory known as relative GIT, developed by Hu \cite{hu1996relative}  (see also  
\cite{reichstein1989stability}).

\begin{theorem}\cite[Thm 3.11 and Thm 3.13]{hu1996relative}
\label{GITHu}
Let $\pi: Y \to Z$  be a $G$-equivariant projective morphism between two
 (possibly singular) quasi-projective varieties. Given any linearized
ample line bundle $L$ on $Z$ such that 
the GIT stable locus of $Z$ with respect to $L$ is equal to its strictly semistable locus, that is
$$
Z^{ss}(L) =Z^s(L),
$$ 
 choose a relatively ample linearized line bundle
$M$ on $Y$ . Then 
\begin{enumerate}
\item there exists $n_0$ such that for any $n \geq n_0$, we have
$$
Y^{ss}\left( \pi^*L^n \otimes M\right )
= Y^{s}\left( \pi^*L^n \otimes M\right )
=\pi^{-1}\left(   Z^{s}(L  )  \right).
$$
\item 
Let $n \geq n_0$ and set $\tilde L:=\pi^*L^n \otimes M $. Then 
there is a projective morphism 
\begin{align*}
\hat \pi:
Y / \! \!/_{\tilde L} G
\rightarrow
  Z/ \! \! /_L  G.
\end{align*}
\item For any $z \in Z^s(L)$  with stabilizer $G_z$, we have
$$
\hat \pi^{-1} \left( [G \cdot z ] \right) \cong \pi^{-1}(z) / G_z.
$$ 
\item If $\pi$ is a fibration and $G$  acts freely on $Z^s(L)$, then $\hat  \pi$ is  also a fibration with the same fibers as $\pi$.
\end{enumerate}
\end{theorem}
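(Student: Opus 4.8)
The plan is to reduce everything to the Hilbert--Mumford numerical criterion and to exploit the additivity of the Mumford weight in the linearization. For a point $y\in Y$ and a one-parameter subgroup $\lambda\colon\Gm\to G$, write $\mu^{\mathcal L}(y,\lambda)$ for the weight of a linearized bundle $\mathcal L$. Additivity together with the $G$-equivariance of $\pi$ (which forces $\lim_{t\to0}\lambda(t)\cdot y$ to map under $\pi$ to $\lim_{t\to0}\lambda(t)\cdot\pi(y)$, so that $\mu^{\pi^*L}(y,\lambda)=\mu^{L}(\pi(y),\lambda)$) yields the single identity
\begin{align*}
\mu^{\tilde L}(y,\lambda)=n\,\mu^{L}(\pi(y),\lambda)+\mu^{M}(y,\lambda),
\end{align*}
which is the engine of the whole argument: once $n$ is large the base term $n\,\mu^{L}(\pi(y),\lambda)$ dominates whenever it is nonzero, so the (semi)stability of $y$ is governed by that of $\pi(y)$.

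For part (1), the hypothesis $Z^{ss}(L)=Z^{s}(L)$ means every semistable point of $Z$ is \emph{properly} stable; in particular each stabilizer $G_z$ with $z\in Z^{s}(L)$ is finite, so $G_z$ contains no nontrivial one-parameter subgroup and $\mu^{L}(z,\lambda)>0$ strictly for every nontrivial $\lambda$. Fixing $y$ with $z=\pi(y)\in Z^{s}(L)$, the displayed identity then has strictly positive leading term for all nontrivial $\lambda$, so for large $n$ we get $\mu^{\tilde L}(y,\lambda)>0$ and $y$ is properly stable; this gives $\pi^{-1}(Z^{s}(L))\subseteq Y^{s}(\tilde L)$. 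Conversely, if $\pi(y)\notin Z^{ss}(L)$ there is a $\lambda$ with $\mu^{L}(\pi(y),\lambda)<0$, whence $\mu^{\tilde L}(y,\lambda)<0$ for large $n$ and $y$ is unstable, giving $Y^{ss}(\tilde L)\subseteq\pi^{-1}(Z^{ss}(L))=\pi^{-1}(Z^{s}(L))$. Since $Y^{s}(\tilde L)\subseteq Y^{ss}(\tilde L)$ always holds, the two inclusions close up into the asserted chain of equalities.

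The main obstacle is to produce a \emph{single} $n_0$ valid simultaneously for all $y$ and all $\lambda$; the estimates above only give an $n$ depending on the chosen pair. The resolution is a boundedness statement: after normalizing by a fixed norm $\|\lambda\|$ on one-parameter subgroups, the ratios $\mu^{L}(z,\lambda)/\|\lambda\|$ and $\mu^{M}(y,\lambda)/\|\lambda\|$ range over a bounded set as $(y,\lambda)$ varies (using quasi-compactness of $Y$ and $Z$), while $\mu^{L}$ is bounded away from $0$ on the stable locus by properness of the action. One clean way to organize this is through Kempf's theory of optimal (most destabilizing) one-parameter subgroups, which allows the destabilizing data to be taken from a finite collection up to conjugacy and scaling; a uniform $n_0$ then follows. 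This is precisely the technical heart of relative GIT, and I would spend the bulk of the work securing it.

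Granting part (1), the remaining statements are formal. For (2), $\pi$ restricts to a $G$-equivariant morphism $Y^{ss}(\tilde L)=\pi^{-1}(Z^{s}(L))\to Z^{s}(L)=Z^{ss}(L)$, which descends by the universal property of the good quotient to $\hat\pi\colon Y / \! \!/_{\tilde L} G\to Z / \! \! /_L G$; properness of $\pi$ descends to $\hat\pi$, and the relatively ample $M$ descends on the stable locus to a relatively ample bundle for $\hat\pi$, so a proper morphism with a relatively ample bundle is projective. For (3), since $Z^{ss}(L)=Z^{s}(L)$ the map $Z^{s}(L)\to Z / \! \! /_L G$ is a geometric quotient, so $\hat\pi^{-1}([G\cdot z])$ is the image of $\pi^{-1}(G\cdot z)$; equivariance and finiteness of $G_z$ give $\pi^{-1}(G\cdot z)=G\cdot\pi^{-1}(z)\cong G\times_{G_z}\pi^{-1}(z)$, and passing to the $G$-quotient yields $\pi^{-1}(z)/G_z$, which equals the whole fiber because all of $\pi^{-1}(z)$ is stable by part (1). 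Finally, for (4), freeness of the $G$-action on $Z^{s}(L)$ makes $Z^{s}(L)\to Z / \! \! /_L G$ a principal $G$-bundle; as $\pi$ is a $G$-equivariant fibration, $\hat\pi$ is the associated fiber bundle, hence again a fibration, with fiber $\pi^{-1}(z)/G_z=\pi^{-1}(z)$ by (3), i.e. the same fiber as $\pi$.
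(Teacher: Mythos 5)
A preliminary remark: the paper contains no proof of this statement at all --- it is imported verbatim, with citation, from Hu's relative GIT paper \cite{hu1996relative} (which in turn builds on \cite{reichstein1989stability}), and is used downstream as a black box. So your attempt can only be measured against the proofs in that literature. Your route --- the additivity identity $\mu^{\tilde L}(y,\lambda)=n\,\mu^{L}(\pi(y),\lambda)+\mu^{M}(y,\lambda)$, domination by the base term for $n\gg 0$, and a boundedness/finiteness argument to extract a uniform $n_0$ --- is precisely Reichstein's argument, and it is correct in spirit \emph{when $Y$ and $Z$ are projective} (which, incidentally, is the only case this paper ever invokes).

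The genuine gap is that the theorem is stated for quasi-projective, possibly non-complete $Y$ and $Z$, and in that generality the Hilbert--Mumford formalism you rely on is not available. The weight $\mu^{L}(z,\lambda)$ is defined at the limit point $\lim_{t\to 0}\lambda(t)\cdot z$, which need not exist in a non-complete $Z$; and for non-complete varieties (semi)stability is defined section-theoretically (existence of an invariant section $s$ of a power of $L$ with $s(z)\neq 0$ and $Z_s$ affine), a notion that is \emph{not} detected by positivity of weights computed in some completion --- passing between an invariant open subvariety and a compactification changes the semistable locus in an uncontrolled way. The same objection hits your boundedness step: the claim that the normalized weights $\mu^{L}(z,\lambda)/\|\lambda\|$ range over a set bounded away from $0$ on $Z^{s}$ rests on finiteness of states (weight polytopes), again a projective phenomenon. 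So your argument establishes part (1) only in the projective case; the extension to quasi-projective $Y,Z$ is genuinely additional content (it is what Hu's paper supplies) and cannot be waved through. Compounding this, you explicitly defer the uniform-$n_0$ step --- ``the technical heart'' in your own words --- so even in the projective case the writeup is an outline rather than a proof. The deductions (2)--(4) are formal modulo (1), with two caveats: descent of $M$ to a relatively ample bundle on the quotient requires Kempf's descent criterion (only a suitable power of $M$ descends, using finiteness of stabilizers on the stable locus), and projectivity of $\hat\pi$ is more robustly obtained from the relative Proj description of $Y/\!\!/_{\tilde L}G$ over $Z/\!\!/_{L}G$ than from ``properness descends,'' since good-quotient maps are themselves not proper.
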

\if[\begin{definition}\label{buildmore}Let $G$ be a reductive algebraic group acting on a nonsingular projective variety $Y$. Also, let $L$ be a $G$-linearized ample line bundle on $Y$ and $Y^s(L)$ be the stable locus with respect to $L$. For any subvariety $T\subset Y$, we denote by $\hat {T}$ the schematic image of $T\cap Y^s$ in the geometric quotient $Y^s/\! \! /G$. For any set $\mathcal G$ that consists of G-invariant subvarieties of $Y$, we define the set of subvarieties of $Y^s$ $${\mathcal G}^s:=\{T \cap Y^s(L) |\, T \in \mathcal{G}\, \text{and}\,\,  T \cap Y^s(L)\neq\emptyset \}$$ and the set of subvarieties of $Y^s/\! \! /G$ $$\hat{\mathcal{G}}:=\{\hat T |\, T \in \mathcal{G}\, \text{and}\,\,  T \cap Y^s\neq\emptyset \}$$
\end{definition}\fi

The following  result shows that blowing up at a building set is compatible with forming GIT quotients under certain hypotheses. 

\begin{proposition}\label{WonGIT} Let $G$ be a reductive algebraic group acting on a nonsingular projective variety $Y$. Also, let $L$ be a $G$-linearized ample line bundle on $Y$ such that $Y^{s}(L) =Y^{ss}(L)$ and $G$ acts with trivial stabilizers on $Y^s=Y^s(L)$. Further, let $\mathcal G$ be a building set that consists of G-invariant subvarieties of $Y$. 
\begin{enumerate}
\item The variety $Y^s/\! \! /G$ is smooth. Also, the set of subvarieties of $Y^s$, ${\mathcal G}^s:=\{T \cap Y^s |\, T \in \mathcal{G}\, \text{and}\,\,  T \cap Y^s\neq\emptyset \}$, as well as the set $\hat{\mathcal{G}}$ of their scheme theoretic images in $Y^s/\! \! /G$ via $Y^s\rightarrow  Y^s/\! \! /G$ are building sets. 
\item Let ${\mathcal G}$ be given an order compatible with inclusion relations. Let $p: Bl_{{\mathcal G}}Y\rightarrow Y $ be the iterated blowup of $Y$ at $\mathcal G$. Also, let $\widetilde{L_d}:= p^*(L^d)\otimes \mathcal{O}(-E)$, where $E$ is the sum of the exceptional divisors, i.e. the iterated dominant transforms of the $T\in \mathcal{G}$ in $Bl_{{\mathcal G}}Y$.
Then, for sufficiently large $d$, $\widetilde{L_d}$ is a very ample line bundle on $Bl_{{\mathcal G}}Y$ over $Y$ and admits a linearization such that $(Bl_{{\mathcal G}}Y)^{ss}(\widetilde{L_d})=(Bl_{{\mathcal G}}Y)^s(\widetilde{L_d})=p^{-1}(Y^s(L))=Bl_{{\mathcal G}^s}Y^{s}$. Moreover,
$p$ descends to a morphism
$$
\hat p: (Bl_{{\mathcal G}}Y)^s(\widetilde{L_d})/ \! \! /G \to Y^{s}/\! \! /G
$$ 
which is the iterated blowup of $Y^{s}/\! \! /G$ at $\hat{\mathcal{G}}$, where $\mathcal{G}^s$ and $\hat{\mathcal{G}}$ are given the inclusion preserving order induced from ${\mathcal G}$.
\item Let ${\mathcal G}$ be given an order compatible with inclusion relations. Then, for all sufficiently large $d$, the variety $Bl_{{\mathcal G}}Y/ \! \! /_{\widetilde{L^d}}G $ is the wonderful compactification of the building set $\hat{\mathcal{G}}$. 
\end{enumerate}
\end{proposition}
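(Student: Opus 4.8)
The plan is to prove the three parts of Proposition \ref{WonGIT} by systematically combining the relative GIT machinery of Theorem \ref{GITHu} with the wonderful compactification theory of Theorem \ref{thmLi}, exploiting at each stage the fact that $G$ acts freely on $Y^s$. The guiding principle is that because all the centers in $\mathcal{G}$ are $G$-invariant and $G$ acts with trivial stabilizers on $Y^s$, the quotient map $q\colon Y^s \to Y^s/\!\!/G$ is a principal $G$-bundle (hence smooth and faithfully flat), so geometric data on $Y^s$ descend cleanly to the quotient. I would develop the descent of the building-set structure first, then handle the line bundle and stability computation, and finally identify the quotient of the iterated blowup as the wonderful compactification downstairs.

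For part (1), I would first note that $Y^s/\!\!/G$ is smooth because $q$ is a principal bundle for a free action on a smooth variety, so $Y^s/\!\!/G = Y^s/G$ is a geometric quotient and smoothness descends. To show $\hat{\mathcal{G}}$ is a building set, the key observation is that since each $T \in \mathcal{G}$ is $G$-invariant, $T \cap Y^s = q^{-1}(\hat T)$, so the correspondence $\hat T \mapsto q^{-1}(\hat T)$ between subvarieties of $Y^s/\!\!/G$ and $G$-invariant subvarieties of $Y^s$ is an inclusion-preserving bijection that commutes with intersection. Because $q$ is smooth, clean intersection, transversality, and the ``intersection is a member of the arrangement or empty'' conditions (Definition \ref{factors}) all transfer between $\mathcal{G}^s$ and $\hat{\mathcal{G}}$ — one checks tangent-space conditions fiberwise using the short exact sequence relating $T_{Y^s}$ and the relative tangent bundle of $q$. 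The building-set axiom (Definition \ref{defbuild}) involving $\mathcal{G}$-factors then descends verbatim, since minimal elements containing a given subvariety and their transversal intersection are preserved under $q^{-1}$.

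For parts (2) and (3), the strategy is to interleave blowing up and quotienting. Since $\mathcal{G}$ consists of $G$-invariant smooth centers ordered compatibly with inclusion, the iterated blowup $p\colon Bl_{\mathcal{G}}Y \to Y$ is $G$-equivariant (each center is invariant and each blowup of an invariant center inherits a canonical $G$-action, exactly as in the proof of Lemma \ref{equiv}(1)). The identification $(Bl_{\mathcal{G}}Y)^s(\widetilde{L_d}) = p^{-1}(Y^s(L))$ is precisely the content of Theorem \ref{GITHu}(1) applied to the projective morphism $p$: taking $M = \mathcal{O}(-E)$ relatively ample over $Y$ (which holds for $\widetilde{L_d} = p^*(L^d) \otimes \mathcal{O}(-E)$ after absorbing the positive power of $L$), the hypothesis $Y^s(L) = Y^{ss}(L)$ lets us conclude $(Bl_{\mathcal{G}}Y)^{ss} = (Bl_{\mathcal{G}}Y)^s = p^{-1}(Y^s(L))$ for $d \gg 0$, and Theorem \ref{GITHu}(2) produces the descended morphism $\hat p$. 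That $p^{-1}(Y^s(L)) = Bl_{\mathcal{G}^s}Y^s$ follows because blowing up commutes with the open restriction to $Y^s$. The heart of part (2) is then showing that $\hat p$ is the iterated blowup of $Y^s/\!\!/G$ at $\hat{\mathcal{G}}$; here I would use Theorem \ref{GITHu}(4) together with the freeness of the action to argue that forming the quotient commutes with the blowup construction step-by-step: blowing up a $G$-invariant center $T \cap Y^s$ upstairs and then quotienting yields the same variety as quotienting first and blowing up $\hat T$, because the quotient map identifies the blowup of an invariant center with the pullback of the blowup of its image under a smooth $G$-bundle. Finally, part (3) is immediate from part (2) and Theorem \ref{thmLi}(2): once $\hat p$ is identified as the iterated blowup of $Y^s/\!\!/G$ at the building set $\hat{\mathcal{G}}$ in inclusion-compatible order, Theorem \ref{thmLi}(2) identifies this iterated blowup with the wonderful compactification of $\hat{\mathcal{G}}$.

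**The main obstacle** I anticipate is making rigorous the commutation of blowup with quotient at each inductive step in part (2) — that is, proving $\bigl(Bl_{T\cap Y^s}Y^s\bigr)/G \cong Bl_{\hat T}\bigl(Y^s/G\bigr)$ and, more delicately, that the dominant transforms of the remaining centers upstairs descend to the dominant transforms of their images downstairs so that the \emph{entire ordered sequence} of blowups matches. This requires showing that the universal property of blowing up is compatible with descent along the principal bundle $q$, and that $G$-invariance of centers is preserved under iterated dominant transform (so the action continues to lift at every stage). The freeness of the $G$-action on $Y^s$ is exactly what rescues this: it guarantees that $q$ restricts to a principal bundle over each successive blown-up stratum, so the local-triviality of $q$ lets one reduce the commutation statement to the trivial case $Y^s \cong (Y^s/G) \times G$, where blowing up an invariant center $\hat T \times G$ and quotienting by $G$ manifestly returns $Bl_{\hat T}(Y^s/G)$.
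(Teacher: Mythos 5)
Your proposal is correct in substance and follows the same overall skeleton as the paper (Theorem \ref{GITHu}(1) to match stable loci with the linearization $\widetilde{L_d}$, an inductive step-by-step treatment of the blowup sequence, part (1) proved first so that the elements of $\hat{\mathcal{G}}$ are smooth, and part (3) deduced from part (2) via Theorem \ref{thmLi}(2)), but it replaces the paper's key lemma in the crucial commutation step by a different mechanism. Where you argue that $q\colon Y^s \to Y^s/\!\!/G$ is a principal $G$-bundle and deduce $\bigl(Bl_{T\cap Y^s}Y^s\bigr)/G \cong Bl_{\hat T}\bigl(Y^s/G\bigr)$ by flat descent (blowups and strict transforms commute with flat base change, and $T\cap Y^s = q^{-1}(\hat T)$ for invariant $T$), the paper instead cites Kirwan's result \cite[Lemma 3.11]{kirwan1985partial}, which states directly that the GIT quotient of the blowup along an invariant smooth center is the blowup of the quotient along the image of the center, and combines it with Theorem \ref{GITHu}(1) at each inductive stage; similarly, in part (1) the paper works through Luna's \'etale slice theorem explicitly (pulling back strongly \'etale slices $G\times W_x \to U_x$ to each $S_i$ to descend clean and transversal intersections), whereas you invoke the principal-bundle property, which is itself a consequence of Luna's theorem. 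Your route buys a more self-contained and conceptually transparent argument that exploits the freeness hypothesis fully, and it makes the descent of dominant transforms essentially automatic; the paper's route buys generality and brevity, since Kirwan's lemma does not require a free action (it is designed for partial desingularization where stabilizers are nontrivial) and avoids having to justify descent of the ordered sequence by hand. Two small caveats on your write-up: the reduction "to the trivial case $Y^s \cong (Y^s/G)\times G$" is only available \'etale-locally, not Zariski-locally, so the clean formulation is flat base change along $q$ followed by faithfully flat descent rather than literal local triviality; and your passing appeal to Theorem \ref{GITHu}(4) is misplaced, since a blowup is not a fibration --- the statement you actually use there is flat base change for blowups, which your principal-bundle argument already supplies.
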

\begin{proof}
We modify the arguments in \cite[Section 7]{hu2003compactification}. For part (1), note that all defining properties of a building set and its induced arrangement (Definition \ref{defbuild}) are Zariski local, so ${\mathcal G}^s$ is readily seen to be a building set. We now show that $\hat{\mathcal{G}}$ is a building set. For any $T\subset Y$, we denote by $T^s$ the restriction $T\cap Y^s$ and by $\hat {T}$ the schematic image of $T\cap Y^s$ in the geometric quotient $Y^s/\! \! /G$. Further, let $\mathcal{S}$ be the arrangement induced by $\mathcal{G}$. Also, let $\hat{\mathcal{S}}$ be the set of all possible nonempty intersections of collections of subvarieties from $\hat{\mathcal G}$. \\

Next we claim that the variety $Y^s/\! \! /G$ is smooth and that $\hat{\mathcal{S}}$ is an arrangement of subvarieties of $Y^s/\! \! /G$.
 To prove this, note that any element of $\hat{\mathcal{S}}$ can be written as $\hat S$, for some $S\in \mathcal{S}$. Also, it is enough to work Zariski locally. Let $x$ be a point in $Y^s$; then, since $G$ acts with trivial stabilizers on $Y^s$, by Luna's \'etale slice Theorem, there exists a locally closed smooth subvariety $W_x$ of $Y^s$ containing $x$ and an open $G$-invariant subvariety $U_x \subset Y^s$ containing $x$ such that the morphism 
$$ G \times W_x \rightarrow U_x$$ is strongly \'etale and $G\cdot W_x=U_x$. Let $S_i,S_j\in \mathcal{S}$; by pulling back the above morphism via $U_x\cap S_i\rightarrow U_x$, we obtain \'etale morphisms 
\begin{align}\label{mor}
G \times (W_x\cap S_i) \rightarrow U_x\cap S_i
\end{align} 
Since $\mathcal{S}$ is an arrangement of subvarieties of $Y$, $S_i$ and $S_j$ intersect cleanly, hence their restrictions to $U_x$ must also intersect cleanly. Since morphism (\ref{mor}) is \'etale, it induces an isomorphism on tangent spaces, therefore $(W_x\cap S_i)$ and $(W_x\cap S_j)$ are smooth and intersect cleanly as well. Now, the (strongly \'etale) morphism  $G \times W_x \rightarrow U_x$ induces an \'etale surjective morphism $W_x\rightarrow U_x/\! \! /G$. Since $x$ is arbitrary, we deduce that $Y^s/\! \! /G$ is smooth. Further, by pullback, we obtain \'etale surjective morphisms
$$
(W_x\cap S_i)\rightarrow (U_x\cap S_i)/\! \! /G= \hat S_i \cap (U_x/\! \! /G)\,\,
\,\text{and}\,\,\,(W_x\cap S_j)\rightarrow (U_x\cap S_j)/\! \! /G=\hat S_j \cap( U_x/\! \! /G)
$$
 which take $((W_x\cap S_i) \cap (W_x\cap S_j))$ to $(U_x\cap S_i \cap S_j)/\! \! /G=\hat S_i \cap \hat S_j \cap (U_x/\! \! /G)$. Therefore, each $\hat S_i$ is smooth and $\hat S_i$ and $\hat S_j$ intersect cleanly. 
\\
 \indent Now let $S\in \mathcal{S}$. To finish the proof of (1), it remains to show that the minimal elements of $\hat{\mathcal{G}}$ that contain $ \hat S$ intersect transversally and that their intersection is equal to $\hat{S}$. Assume that these minimal elements are $\hat S_{i_1}, \hat S_{i_2},\dots \hat S_{i_m}$, where $S_{i_j}\in\mathcal{G}$ for $ j=1,\dots ,m$. Then we see that  $S_{i_j}, j=1,\dots ,m$ are the minimal elements of $\mathcal G$ that contain $S$. Indeed, if $\hat S_{i_j}\supset \hat S' \supset \hat S$ for some $S'\in \mathcal{G}$, then $S_{i_j}\cap Y^s \supset S' \cap Y^s \supset S \cap Y^s$, therefore $S_{i_j}=\overline {S_{i_j}\cap Y^s} \supset \overline {S' \cap Y^s}=S'\supset \overline {S\cap Y^s}=S$. Consequently, by the definition of a building set, the $S_{i_j},\, j=1,\dots ,m$, intersect transversally. Now we may repeat the argument of the proof of the Claim in the previous paragraph to deduce that their images $\hat S_{i_j}$ in $Y^s /\! \! /G$ intersect tranversally as well. Clearly their intersection is equal to $\hat{S}$.\\

Next, we show part (2); Let $k\in \ZZ$ such that $0\leq k\leq |\mathcal{G}|$. We denote by $\mathcal{G}_k$ the subset of the first $k$ elements of $\mathcal{G}$ (here $\mathcal{G}_0=\emptyset$) and let $\mathcal{G}_k^s:=\{T\cap Y^s| T\in \mathcal{G}_k\}$ with the induced order from $\mathcal{G}_k$. Without loss of generality let us assume that $T^s=T\cap Y^s$ is nonempty for all $T\in\mathcal{G}$. Further, let $p_k: Bl_{{\mathcal G}_k}Y\rightarrow Y$  be the natural blowup morphism and set $L_{k,d}:= p_k^*(L^d)\otimes \mathcal{O}(-\sum \limits _{i=1}^k E_{i,k})$, where $E_{i,k}$ are the exceptional divisors of $Bl_{{\mathcal G}_k}Y$, i.e. the iterated dominant transforms of the $T_i,\,i=1,\dots,k$ (in particular $L_{0,d}=L^d$). Let $\hat{T}$ be the images of the $T^s$ in $Y^{s} / \! \!/G$. We prove the following Claim, from which the proof of the Proposition follows. \\

\indent\textit{Claim}. Let $k$ be such that $0\leq k\leq |\mathcal{G}|$.
\begin{enumerate}[label=\roman*.]
 \item For sufficiently large $d$, the line bundle $L_{k,d}$ admits a linearization such that $(Bl_{{\mathcal G}_k}Y)^s(L_{k,d})=(Bl_{{\mathcal G}_k}Y)^{ss}(L_{k,d})= p_k^{-1}(Y^s(L))=Bl_{{\mathcal G}_k^s}Y^{s}$ and $Bl_{{\mathcal G}_k}Y / \! \!/_{ L_{k,d}} G =Bl_{{\mathcal G}_k^s}Y^{s}/ \! \!/ G $ is the iterated blowup, which we denote by $\hat{p}_k$, of $Y^s/ \! \!/ G $ at  \{$\hat T_1,\dots \hat T_k\}$.

\item There is a commutative diagram
 \begin{displaymath}
    \xymatrix{ 
Bl_{{\mathcal G}_k^s}Y^{s} \ar[r]^-{ q_k} \ar[d]_{p_k}
& Bl_{{\mathcal G}_k^s}Y^{s}
/ \! \!/ G  
  \ar[d]^{\hat p_k} \\
Y^s  \ar[r]_{q_0}  &  Y^s
/ \! \!/ G }
\end{displaymath}
such that for every $l$ with $|\mathcal{G}|\geq l>k$  the iterated strict transform
 $(\hat T_{l})^{(k)}$ of $\hat T_{l}$ in $Bl_{{\mathcal G}_k^s}Y^{s}/ \! \!/ G $ is the image of the iterated strict transform $(T_{l}^s)^{(k)}$ of $T_{l}^s$
 in $Bl_{{\mathcal G}_k^s}Y^{s}$ via $q_k$. 
\end{enumerate}
 \indent \textit{Proof of Claim}.  The line bundle $p_k^*(L^d)\otimes \mathcal{O}(-\sum \limits _{i=1}^k E_{i,k})$ is well known to be relatively very ample for large $d$. Therefore, by using Theorem \ref{GITHu}(1)- after twisting by a large enough power of  $p_k^*(L)$ if necessary- we can choose $d_0$ such that that $(Bl_{{\mathcal G}_k}Y)^s(L_{k,d})=(Bl_{{\mathcal G}_k}Y)^{ss}(L_{k,d})= p_k^{-1}(Y^s(L))$ \textit{for all} $d\geq d_0$ and $k=0,\dots,|\mathcal{G}|$. Since blowing up commutes with restricting to Zariski open subsets, the pullback $p_k^{-1}(Y^s(L))$ of $p_k:Bl_{{\mathcal G}_k}Y\rightarrow Y$ via $Y^s\rightarrow Y$ is  $Bl_{{\mathcal G}_k^s}Y^{s}$. Therefore,
 \begin{align}\label{nice}
 (Bl_{{\mathcal G}_k}Y)^s(L_{k,d})=(Bl_{{\mathcal G}_k}Y)^{ss}(L_{k,d})= p_k^{-1}(Y^s(L))=Bl_{{\mathcal G}_k^s}Y^{s}
 \end{align}
\textit{for all} $d\geq d_0$ and $k=0,\dots,|\mathcal{G}|$. From now on, we take $d\geq d_0$, so that (\ref{nice}) holds. We proceed by induction on $k$. For $k=0$ statements (i) and (ii) follow immediately. Assume the above statements are true for some $k\geq0$.  Now, let $p_{k+1,k}:Bl_{{\mathcal G}_{k+1}}Y\rightarrow Bl_{{\mathcal G}_{k}}Y$ be the blowup of $Bl_{{\mathcal G}_{k}}Y$ at the iterated strict transform $T_{k+1}^{(k)}$ of $T_{k+1}$ (see Definition \ref{iterated}) and let $M_{k,d}^{[m]}:=p_{k+1,k}^* (L_{k,d}^m)\otimes \mathcal{O}(-E_{k+1,k+1})$. By \cite[Lemma 3.11]{kirwan1985partial}, it holds that, for infinitely many values of $m>0$, the variety\\
\begin{align}\label{kirwan}
(Bl_{{\mathcal G}_{k+1}}Y)^s(M_{k,d}^{[m]})
/ \! \!/ G  = \left(Bl_{T_{k+1}^{(k)}}(Bl_{{\mathcal G}_k}Y)\right)^s(M_{k,d}^{[m]}) / \! \!/ G  
\end{align}
is the blowup of $Bl_{{\mathcal G}_k^s}Y^{s} / \! \!/ G=(Bl_{{\mathcal G}_k}Y)^s(L_{k,d}) / \! \!/ G$ at the image of $(T_{k+1}^{(k)})^s(L_{k,d})=(T_{k+1}^s)^{(k)}$ via $q_k$, which, by the inductive hypothesis, is equal to the iterated strict transform of $\hat T_{k+1}$ under $\hat p_k$. Therefore, since we have assumed that part (i) is true for $k$, part (i) for $k+1$ will follow if we find $m$ large satisfying (\ref{kirwan}) such that $$(Bl_{{\mathcal G}_{k+1}}Y)^s(M_{k,d}^{[m]})=  (Bl_{{\mathcal G}_{k+1}}Y)^s(L_{k+1,d}).$$ 

But since $M_{k,d}^{[m]}$ is relatively very ample for all large $m$, Theorem \ref{GITHu}(1) gives $(Bl_{{\mathcal G}_{k+1}}Y)^s(M_{k,d}^{[m]})=p_{k+1,k}^{-1}((Bl_{{\mathcal G}_{k}}Y)^s(L_{k,d}))$, which is equal to $p_{k+1,k}^{-1}(p_k^{-1}(Y^s(L)))=p_{k+1}^{-1}(Y^s(L))=(Bl_{{\mathcal G}_{k+1}}Y)^s(L_{k+1,d})$ by (\ref{nice}) for $k+1$. Hence part (i) for $k+1$ is proven and we also obtain a commutative diagram
\begin{displaymath}
    \xymatrix{ 
Bl_{{\mathcal G}_{k+1}^s}Y^{s} \ar[r]^-{ q_{k+1}} \ar[d]_{p_{k+1,k}}
& Bl_{{\mathcal G}_{k+1}^s}Y^{s}
/ \! \!/ G  
  \ar[d]^{\hat p_{k+1,k}}  
\\
Bl_{{\mathcal G}_k^s}Y^{s}  \ar[r]_-{q_k}  &  Bl_{{\mathcal G}_k^s}Y^{s}
/ \! \!/ G  
} 
\end{displaymath}
 
 To prove (ii) for $k+1$, it remains to show that for $l>k+1$ the strict transform of $(\hat T_{l})^{(k)}$ under $\hat p_{k+1,k}$ is equal to the image of $(T_{l}^s)^{(k+1)}$ via $q_{k+1}$. 
Now the strict transform of $(\hat T_{l})^{(k)}$ under $\hat p_{k+1,k}$ is, by definition, equal to the blowup of $(\hat T_{l})^{(k)}$ at its intersection with the center 
$(\hat T_{l})^{(k)}\cap (\hat T_{k+1})^{(k)}$, which is nonsingular by Proposition \ref{buildpullback}. By part (ii) for $k$, the blowup of $(\hat T_{l})^{(k)}$ at $(\hat T_{l})^{(k)}\cap (\hat T_{k+1})^{(k)}$ is equal to the blowup of $q_k((T_{l}^s)^{(k)})$ at $q_k((T_{l}^s)^{(k)}\cap (T_{k+1}^s)^{(k)} )$, which in turn, by \cite[Lemma 3.11]{kirwan1985partial} and Theorem \ref{GITHu}(1), is equal to $(Bl_{(T_{l}^s)^{(k)}\cap (T_{k+1}^s)^{(k)} } (T_{l}^s)^{(k)})^s / \! \!/ G=((T_{l}^s)^{(k+1)})^s / \! \!/ G$, so we are done. \textit{End of Proof of Claim.}\\

\noindent(3) The proof follows immediately by Theorem  \ref{thmLi}(2) and Proposition \ref{WonGIT}.
\end{proof}
\vspace{0.1in}


 \if[For (2) note that blowing up is compatible with restricting to an open set.  Therefore, we have the commuting diagram
\begin{displaymath}
    \xymatrix{ Bl_{\mathcal{G}^s} Y^s \ar[r] \ar[d] & Bl_{\mathcal{G}} Y  \ar[d]^{\bar{p}} \\
               Y^s  \ar[r] & Y }
\end{displaymath}
where $(\bar{p})^{-1}(Y^s)= Bl_{\mathcal{G}^s} Y^s$. By Theorem \ref{GITHu}, we have
$(\bar{p})^{-1}(Y^s) = (Bl_{\mathcal{G}} Y)^s$. Consequently,  $Bl_{\mathcal{G}^s} Y^s = (Bl_{\mathcal{G}} Y)^s$  and
$$
 Bl_{\mathcal{G}^s} Y^s/ \! \! /_{\widetilde{L_d}} G  \cong Bl_{\mathcal{G}} Y / \! \! /_{\bar {L}_d} G
$$]\fi

\section{The 
compactifications $\overline P_{d,n}^{\mathcal A}$ and $T_{d,n}^{\mathcal A}$ (Theorems \ref{mainpdn} and \ref{maintdn}). }\label{proof}

In this section, we construct our compactifications, their respective universal families and we describe their boundary.

\subsection{Points in projective space} \label{conspdn}  Our first step is to give a model of
$\overline P_{d,n}^{\mathcal A}$ isomorphic to a product of projective spaces.  
\begin{definition}Let $n,d$ be positive integers such that $n> d+2$. Let $\epsilon=\frac{1}{n-d}$ and  $\hat \epsilon=\frac{1}{(d+1)(n-d)}$.
 We define $L_{d,n}$ to be 
the fractional line bundle $\mathcal{O}(w_1,w_2,\dots,w_n)$ on $(\bP^d)^n$, 
where $$w_1=\ldots =w_d=1-\hat \epsilon, 
w_{d+1}=1-(n-(d+1))\epsilon +d \hat \epsilon\,\, \text{and}\,\, w_{d+2}=\ldots = w_n=\epsilon.$$ 
\end{definition}
$L_{d,n}$ is an ample line bundle that admits a canonical $SL_{d+1}$ linearization (\cite[Chapter 11]{dolgachev2003lectures}). \\
\indent The following Lemma and its proof  was first communicated to the first author by V. Alexeev.
\begin{lemma}\label{GITpoints} 
There exist an isomorphism
\begin{equation}\label{GITiso}(\bP^d)^n /\!\!/_{L_{d,n}} SL_{d+1}\iso \left( \bP^{n-d-2} \right)^{d}\end{equation}
Futhermore, 
\begin{enumerate}
\item there is no strictly semistable locus in $(\bP^d)^n$ with respect to $L_{d,n}$ and
\item
we can choose coordinates so that the point 
$$
\prod_{k=0}^{d-1}
[b^k_{d+2} : \ldots: b^k_{n}]  \in \left( \bP^{n-d-2} \right)^d
$$ 
parametrizes the equivalence class of $n$ points induced by:
\begin{align*}
p_1 = [1:\ldots:0], \quad p_2=[0:1: \ldots 0], \quad p_{d+1} = [0: \ldots: 1] \,\,\,
\text {and} \,\,p_i=[b^0_i : \ldots :b^{d-1}_i:1] 
\end{align*}
 with $ d+2 \leq i \leq n$.
\end{enumerate}
\end{lemma}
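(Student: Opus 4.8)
The plan is to reduce the statement to the Hilbert--Mumford numerical criterion for weighted points and then carry out an explicit normal-form computation. First I would recall that, for the diagonal $\SL_{d+1}$-action on $(\bP^d)^n$ linearized by $L_{d,n}=\mathcal O(w_1,\dots,w_n)$, a configuration $(p_1,\dots,p_n)$ is semistable (resp.\ stable) if and only if for every proper linear subspace $\bP(W)\subsetneq\bP^d$ one has $\sum_{p_i\in\bP(W)}w_i\le \tfrac{\dim W}{d+1}\sum_j w_j$ (resp.\ $<$). Since a direct computation gives $\sum_j w_j=d+1$, this reads $\sum_{p_i\in\bP(W)}w_i\le k+1$ (resp.\ $<$), where $k=\dim\bP(W)$.

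For part (1), I would prove that these borderline equalities are never attained. Writing every weight over the common denominator $(d+1)(n-d)$, the numerators are $(d+1)(n-d)-1$ for $i\le d$, the value $2d+1$ for $i=d+1$, and $d+1$ for $i\ge d+2$. A short arithmetic check then shows that for a subset $I$ with $a$ indices in $\{1,\dots,d\}$, $\delta\in\{0,1\}$ indices equal to $d+1$, and $b$ indices in $\{d+2,\dots,n\}$, the quantity $\sum_{i\in I}w_i$ is an integer only when $-a+\delta(2d+1)+b(d+1)\equiv 0 \pmod{(d+1)(n-d)}$, and the bounds $0\le a\le d$, $0\le b\le n-d-1$ force this to happen only for $I=\emptyset$ or $I=N$. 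Hence no proper subspace can turn the inequality into an equality, so $(\bP^d)^n$ has no strictly semistable points and $S^{ss}(L_{d,n})=S^s(L_{d,n})$.

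To produce the isomorphism and the coordinates of part (2), I would first identify the stable locus with the open set $U$ of Section~\ref{trees}. The same weight arithmetic shows that any violation of the conditions defining $U$ forces an overweight subspace: if $p_1,\dots,p_{d+1}$ fail to be in general position, or some $p_i$ ($i\ge d+2$) lies in the span of $p_1,\dots,p_d$, or $p_{d+1}=\dots=p_n$, or all the $p_i$ ($i\ge d+2$) lie on a hyperplane spanned by $d$ of the first $d+1$ points, then the corresponding $\bP(W)$ has weight strictly exceeding $\dim\bP(W)+1$. Conversely, because part (1) already upgrades semistability to stability, it suffices to verify the non-strict inequalities for configurations satisfying these conditions, which I would do by a case analysis bounding $\sum_{i\in I}w_i$ using the dimension estimate $k\ge a+\delta-1$ (forced by general position) together with the two avoidance conditions, which prevent too many light points from collapsing into a small subspace. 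On $S^s=U$ the group $\PGL_{d+1}$ acts freely, so the quotient is geometric and smooth; I would then move $p_1,\dots,p_{d+1}$ to the coordinate points, leaving the residual diagonal torus $T\cong(\Gm)^d$, normalize each $p_i=[b_i^0:\dots:b_i^{d-1}:1]$ (possible by condition (2)), and observe that $T$ acts on the matrix $(b_i^k)$ by independently scaling its $d$ rows. The geometric quotient by $T$ of the matrices with all rows nonzero (condition (4)) is exactly $\prod_{k=0}^{d-1}[b^k_{d+2}:\dots:b^k_n]\in(\bP^{n-d-2})^d$, yielding both the asserted isomorphism and the coordinate description.

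To make this rigorous I would phrase the normalization as an $\SL_{d+1}$-invariant morphism $\psi\colon S^s\to(\bP^{n-d-2})^d$, descend it to $\bar\psi\colon (\bP^d)^n/\!\!/_{L_{d,n}}\SL_{d+1}\to(\bP^{n-d-2})^d$, and check that $\bar\psi$ is bijective; since both sides are smooth and projective and we are in characteristic $0$, Zariski's Main Theorem promotes the bijection to an isomorphism. The main obstacle I anticipate is the converse direction in the identification of the stable locus, namely verifying that every configuration satisfying conditions (1)--(4) is semistable, which requires the careful subspace case analysis above; the reduction of ``stable'' to ``semistable'' provided by the arithmetic of part (1) is precisely what keeps this casework finite and tractable. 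A secondary point requiring care is confirming that the $\PGL_{d+1}$-stabilizers are trivial, so that the quotient is genuinely geometric and $\bar\psi$ is a bijection rather than merely surjective.
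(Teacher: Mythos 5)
Your proposal is correct and follows essentially the same route as the paper's proof: the Dolgachev numerical criterion for weighted point configurations, the observation that no proper nonempty subset of the weights sums to an integer (hence semistable equals stable), the identification of the stable locus with the four geometric conditions, and the reduction to a residual $\mathbb{G}_m^d$-action after fixing $p_1,\dots,p_{d+1}$ at the coordinate points. The only differences are presentational: you spell out the congruence arithmetic that the paper merely asserts, and you finish with an invariant-morphism/Zariski's Main Theorem descent where the paper simply computes the torus quotient $\left( \mathbb{A}^{n-d-1} \setminus \{0\}\right)^{d} /\!\!/ \mathbb{G}_m^d \cong \left( \bP^{n-d-2} \right)^d$ directly.
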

\begin{proof}
A configuration of points is GIT semistable (resp. stable) with respect to $L_{d,n}$ if and only  if $\sum_{\{i\in N|p_i \in W\}}w_i \leq (\dim(W)+1)$ (resp. $\sum_{\{i\in N|p_i \in W\}}w_i < (\dim(W)+1))$ for any proper subspace $W \subset \bP^d$ (see \cite[Thm 11.2]{dolgachev2003lectures}). Observe that the sum of the GIT weights of any subset of points is never equal to an integer, so the above inequality is always strict. Therefore, there is no strictly semistable locus.
The above inequality is equivalent to the following conditions:
\begin{enumerate}
\item$p_1, \ldots , p_d, p_{d+1}$ must be in general position.
\item None of the $p_i, \,i\in\{d+2,\dots n\}$ can lie in the linear subspace spanned by $p_1, \ldots , p_d$.  
\item We cannot have $p_{d+1}=\ldots =p_n$.
\item The points $p_i,\, i=d+2,\dots ,n$, cannot all lie on the hyperplane  spanned by \\
$ p_1, \ldots,  p_{k-1},p_{k+1} \ldots p_{d+1} $ simultaneously.
\end{enumerate}
Then, we can fix the configuration of points $\{p_1, \ldots , p_n\}$ to be as in  the statement. Consequently, the automorphism group of the resulting configuration is isomorphic to $\mathbb{G}_m^d$. By our conditions on the weights, the parameter space of each point $p_i$   with $(d+2) \leq i \leq n$  is contained in $\mathbb{A}^d$, because  $p_i$ cannot lie in the  hyperplane $(x_{n+1}=0)$ spanned by the points $\{p_1, \ldots, p_d \}$.  
The only other restriction on the points  $p_i,\, i=d+2,\dots ,n$, is that they cannot all lie on the hyperplane  spanned by $\{ p_1, \ldots  p_{k-1},p_{k+1} \ldots p_{d+1} \}$  at the same time. 
This means that configurations of points  
with 
$$
b_{d+2}^k=\ldots =b_n^k=0
$$ are not allowed as well. The loci parametrizing these last configurations of points are isomorphic to affine spaces of dimension $(d-1)(n-(d+1))$, which we denote by $\mathbb A^{(d-1)(n-(d+1))}_k$ for each $k$ such that $0 \leq k \leq d-1$. Then,  we obtain 
$$
\left(  \prod_{i=d+2}^{n} \mathbb{A}^{d}  \mathbin{\big\backslash}
 \bigcup \limits_{k=0}^{d-1}
\mathbb A^{(d-1)(n-(d+1))}_k  \right)
/ \!\!/\mathbb{G}_m^d
=
\left(( \mathbb{A}^{n-(d+1)} )^{d}
\setminus  \bigcup \limits_{k=0}^{d-1}
( \mathbb{A}^{n-(d+1)} )^{d-1}\right)
/ \!\!/\mathbb{G}_m^d
$$
$$= \left( \mathbb{A}^{n-(d+1)} \setminus \textbf{0}\right)^{d} / \!\!/\mathbb{G}_m^d=
\left( \bP^{n-(d+2)} \right)^d.$$
\end{proof}

\indent 
We now define the open locus in $(\bP^d)^n /\!\!/_{L_{d,n}} SL_{d+1}$ that we want to compactify. Let us fix $\mathcal A\in \mathcal D_{d,n}^P$. Also, let $\mathcal{K}_{\mathcal{A}}= \{ \Delta_I \subset (\bP^d)^n | I \subset N \,\text{and}\, \sum \limits _{i\in I} a_i >1\}$ be the building set associated to the construction of $\bP^d_{\mathcal A}[n]$  (see Section \ref{FM}). Following the notation of Proposition \ref{WonGIT}, for any $\Delta_I\in\mathcal{K}_{\mathcal{A}}$ such that $\Delta_I \cap\left((\bP^d )^n \right)^s\neq \emptyset $, let $\hat{\Delta}_I$ be the scheme theoretic image of $\Delta_I \cap\left((\bP^d )^n \right)^s $ in $(\bP^d)^n /\!\!/_{L_{d,n}} SL_{d+1}$ and let 
\begin{align}
\hat{\mathcal{K}_{\mathcal{A}}}:=\displaystyle\{\hat{\Delta}_I\subset (\bP^d)^n /\!\!/_{L_{d,n}} SL_{d+1}|\,\Delta_I\in\mathcal{K}_{\mathcal{A}}\,\,\text{and}\,\, \Delta_I \cap\left((\bP^d )^n \right)^s\neq \emptyset\}.
\end{align}

\begin{definition} \label{defpdno}  Let $\mathcal A\in \mathcal D_{d,n}^P$. The \textit{weighted moduli space} of $n$ labeled points in $({\PP})^d$ with respect to $\mathcal{A}$ and $L_{d,n}$ is the open subvariety
$$
\wpdno:=\left((\bP^d)^n / \! \! /_{L_{d,n}}
SL_{d+1}\right)\setminus \bigcup \limits_ {\hat{\Delta}_I \in \hat{\mathcal{K}_{\mathcal{A}}}} \hat{\Delta}_I
$$

of $(\bP^d)^n / \! \! /_{L_{d,n}}SL_{d+1}$.

\end{definition}

Recall that, by Theorem \ref{wcomp}, there is a sequence of blowups $\pi_{\mathcal A}:\bP^d_{\mathcal A}[n] \longrightarrow  (\bP^d )^n$ along the iterated strict transforms of the varieties in the building set $\mathcal{K}_{\mathcal{A}}$. Let $E$ be the the sum of the exceptional divisors, i.e. the iterated dominant transforms of the $\Delta_I\in\mathcal{K}_{\mathcal{A}}$ under this blowup. The strictly semistable locus of $({\bP^d})^n$ with respect to $L_{d,n}$ is empty, by Lemma \ref{GITpoints}, so the hypotheses of Proposition \ref{WonGIT}(2) are satisfied. Hence, we can pick $e_o$, such that for all $e\geq e_0$, the line bundle $\tilde{L}_{\mathcal A,e}:=\pi_{\mathcal A}^*(L_{d,n}^{\otimes e}) \otimes \mathcal{O}(-E)$ is very ample and admits a linearization such that
\begin{equation}\label{stablelocus}
(\bP^d_{\mathcal A}[n] )^{ss}(\tilde{L}_{\mathcal A,e})=(\bP^d_{\mathcal A}[n] )^s(\tilde{L}_{\mathcal A,e}) = \pi_{\mathcal A}^{-1}[\left(  (\bP^d )^n \right)^s],
\end{equation}
 where $\left((\bP^d )^n \right)^s=\left((\bP^d )^n \right)^s(L_{d,n})$ is the stable locus induced by $L_{d,n}$. Therefore, the GIT (and actually geometric) quotient $$\bP^d_{\mathcal A}[n] / \! \!/_{\tilde L_{\mathcal A,e}} SL_{d+1}$$ is independent of the choice of $e\geq e_0$. From now on we will fix one such line bundle $\tilde{L}_{\mathcal A,e}$ and we will simply denote it by $\tilde{L}_{\mathcal A}$.

\begin{definition}\label{defpdn}
Let $\mathcal A\in\mathcal D_{d,n}^P$. The \textit{weighted compactification} of $\wpdno$ is the GIT quotient
 $$
\overline P^{\mathcal A}_{d,n}:= \bP^d_{\mathcal A}[n] / \! \!/_{\tilde L_{\mathcal A}} SL_{d+1}.
$$
\end{definition}
\vspace{0.1in}
\begin{remark} 
When $\mathcal{A}=\{a_1,a_2\dots ,a_n\}$ is such that $a_1= \ldots=a_d=1- \hat{\epsilon},\, a_{d+1}=1-(n-(d+1))\epsilon +d \hat \epsilon$ and $a_{d+2}=\ldots=a_n=\epsilon$, where $\epsilon=\frac{1}{n-d}$ and  $\hat \epsilon=\frac{1}{(d+1)(n-d)}$, then $\mathcal{A}$ coincides with the set of GIT weights corresponding to $L_{d,n}$. Then $\wpdn$ is the GIT quotient of Lemma \ref{GITpoints}. Indeed, 

$$
\overline P^{\mathcal A}_{d,n}=(\bP^d_{\mathcal A}[n] )^s(\tilde{L}_{\mathcal A}) / \! \!/SL_{d+1}=\pi_{\mathcal A}^{-1}[\left(  (\bP^d )^n \right)^s] / \! \!/SL_{d+1}=\left(  (\bP^d )^n \right)^s / \! \!/SL_{d+1}.
$$ The last equality follows because  $\left(  (\bP^d )^n \right)^s=\left(  (\bP^d )^n \right)^s(L_{d,n})$ is contained in the open locus $(\bP^d)^n  \setminus \bigcup_{\Delta_I \in \mathcal K_{\mathcal A}} \Delta_I$, hence $\pi_{\mathcal A}^{-1}[\left(  (\bP^d )^n \right)^s]$ is isomorphic to $\left(  (\bP^d )^n \right)^s$. The above equalities still hold if we increase $a_1,\ldots, a_d$ to any number between $1-\hat{\epsilon}$ and $1$, because $\mathcal K_{\mathcal A}$ remains invariant. $\wpdn$ changes only if we increase the weights $a_{d+1},\dots, a_n$. 
\end{remark}
We now describe the loci parametrizing equivalence classes of configurations with coincident points in the GIT quotient of Lemma \ref{GITpoints}.  These loci can also be interpreted in the context of the moduli of hyperplane arrangements (see Section \ref{hyparr}) as part of the loci parametrizing 
configurations of lines with non-log canonical singularities. This last fact was communicated to the first author by V. Alexeev. Recall that the GIT quotient $(\bP^d)^n /\!\!/_{L_{d,n}} SL_{d+1}$ is  isomorphic to $d$ copies of ${\PP}^{n-d-2}$ (Lemma \ref{GITpoints}).  
\begin{definition}\label{hi}
\begin{enumerate} \item Let $[b^k_{d+2} : \ldots: b^k_{n}], \, k=0,1,\dots, d-1,$ be a system of projective coordinates for each copy of ${\PP}^{n-d-2}$ in the product  $\left( \bP^{n-d-2} \right)^d$. Let $I\subsetneq \{d+1,\dots n\}$ such that $ |I| \geq 2$. We define subvarieties $H_I$ of $({\PP}^{n-d-2})^d$ as follows:
  \begin{equation*}
  H_I :=\begin{cases}  
\bigcap_{i,j \in I} V((b^0_i-b^0_j, \ldots ,b^{d-1}_i-b^{d-1}_j))  &,  \  \text{if } d+1 \notin I 
\\
\bigcap_{i\in I \setminus d+1} V((b^0_i,\ldots,b^{d-1}_i))        &,  \   \text{if } d+1 \in I 
\end{cases}
  \end{equation*}
  \item Let $\mathcal{A}=\{a_1,a_2,\dots, a_n\}\in \mathcal D_{d,n}^P$. We define the set $$\mathcal{G}_{\mathcal{A}}:=\{ H_I  \subset ( \bP^{n-d-2})^d  \; | \;I\subsetneq \{d+1,\dots n\}\,\, \text{and}\,\, \sum_{i \in I} a_i > 1 \}.$$
  \end{enumerate}
\end{definition}

\begin{lemma}\label{wallStrata}\begin{enumerate}
\item The subvarieties $H_I$ of $({\PP}^{n-d-2})^d$ parametrize those equivalence classes of configurations of $n$ points $p_1,\dots,p_n$ in $((\bP^d)^n)^s(L_{d,n})$ where all points $ p_{i}$ such that $i \in I $ coincide.  Furthermore, $H_I\iso  ({\PP}^{(n-|I|)-d-1})^d$.\\
\label{buildpdn}
\item Let $\mathcal{A}\in \mathcal D_{d,n}^P$. Then:\begin{itemize}
 \item The set $\hat{ \mathcal{K}_{\mathcal{A}}}$ is a building set.
\item Under the isomorphism (\ref{GITiso}) of $(\bP^d)^n /\!\!/_{L_{d,n}} SL_{d+1}$ with $({\PP}^{n-d-2})^d$, the set  $\hat{ \mathcal{K}_{\mathcal{A}}}$ is identified with the set $\mathcal{G}_{\mathcal{A}}$.
\end{itemize}
\end{enumerate}
\end{lemma}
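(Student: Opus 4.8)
The plan is to handle the two parts in turn, using the explicit coordinates of Lemma~\ref{GITpoints} for part (1) and invoking Proposition~\ref{WonGIT}(1) together with an analysis of the stability conditions for part (2).

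For part (1), I would first transcribe the equations defining $H_I$ into the coordinates of Lemma~\ref{GITpoints}, under which a stable configuration is recorded by fixing $p_1,\dots,p_{d+1}$ to the standard frame and writing $p_i=[b_i^0:\dots:b_i^{d-1}:1]$ for $i\geq d+2$. Two moving points $p_i,p_j$ (with $i,j\geq d+2$) coincide exactly when $b_i^k=b_j^k$ for all $k$, and a moving point $p_i$ equals $p_{d+1}=[0:\dots:0:1]$ exactly when $b_i^k=0$ for all $k$; these are precisely the two cases in Definition~\ref{hi}, according to whether $d+1\notin I$ or $d+1\in I$. This shows that $H_I$ is the locus of stable configurations in which all $p_i$, $i\in I$, coincide. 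The isomorphism type is then a linear computation carried out one factor at a time: in each of the $d$ copies of $\bP^{n-d-2}$ the defining equations of $H_I$ (namely $b_i^k=b_j^k$, respectively $b_i^k=0$) are $|I|-1$ independent homogeneous linear conditions, cutting out a linear $\bP^{(n-|I|)-d-1}$; taking the product over the $d$ factors yields $H_I\cong(\bP^{(n-|I|)-d-1})^d$.

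For part (2), the building-set claim follows from Proposition~\ref{WonGIT}(1) applied to $\mathcal K_{\mathcal A}$, whose members $\Delta_I$ are invariant under the diagonal action and which is a building set by Section~\ref{FM}, together with Lemma~\ref{GITpoints}(1), which supplies $Y^{ss}=Y^s$ for $Y=(\bP^d)^n$. The point demanding attention is the trivial-stabilizer hypothesis of Proposition~\ref{WonGIT}: the center of $SL_{d+1}$ acts trivially on $(\bP^d)^n$, so one passes to the effective $PGL_{d+1}$-action, and condition (4) of Lemma~\ref{GITpoints} is exactly what forces the $PGL_{d+1}$-stabilizer of any stable point to be trivial, since an element of the frame-fixing diagonal torus that fixes every $p_i$ must be the identity once condition (4) guarantees that for each coordinate $k$ some $b_i^k$ is nonzero. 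Hence $\hat{\mathcal K_{\mathcal A}}$ is a building set.

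It then remains to match $\hat{\mathcal K_{\mathcal A}}$ with $\mathcal G_{\mathcal A}$, which reduces to deciding which diagonals $\Delta_I$ meet the stable locus. Here I would read off the consequences of the stability conditions of Lemma~\ref{GITpoints}: (1) forbids $I$ from containing two frame indices in $\{1,\dots,d+1\}$, (2) forbids $I$ from containing an index in $\{1,\dots,d\}$ alongside any index $\geq d+2$, and together these force $I\cap\{1,\dots,d\}=\emptyset$, i.e.\ $I\subset\{d+1,\dots,n\}$; finally (3) rules out $I=\{d+1,\dots,n\}$, leaving exactly the proper subsets $I\subsetneq\{d+1,\dots,n\}$ (with $|I|\geq2$ automatic, since $\sum_{i\in I}a_i>1$ and each $a_i\leq1$). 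For each such $I$ one checks conversely that $\Delta_I$ does meet the stable locus by exhibiting a configuration satisfying (1)--(4) with the $I$-points coincident, and part (1) identifies the smooth scheme-theoretic image $\hat\Delta_I$ with $H_I$; these are exactly the members of $\mathcal G_{\mathcal A}$. I expect the main obstacle to be this final bookkeeping step --- reconciling the index condition $I\subsetneq\{d+1,\dots,n\}$ built into $\mathcal G_{\mathcal A}$ with the nonemptiness condition $\Delta_I\cap((\bP^d)^n)^s\neq\emptyset$ defining $\hat{\mathcal K_{\mathcal A}}$ --- together with the stabilizer subtlety above, which is what licenses the use of Proposition~\ref{WonGIT}.
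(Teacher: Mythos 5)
Your proof is correct and follows essentially the same route as the paper's: part (1) via the explicit coordinates of Lemma \ref{GITpoints} (which the paper dismisses as ``clear''), and part (2) via Proposition \ref{WonGIT}(1) for the building-set claim together with matching the stability conditions against the index condition $I\subsetneq\{d+1,\dots,n\}$, exactly as the paper does. If anything, your handling of the stabilizer hypothesis (passing to the effective $PGL_{d+1}$-action and using condition (4) to kill the frame-fixing torus) is more careful than the paper's, which only addresses this point in Corollary \ref{Wonpdn} by inferring trivial stabilizers from smoothness of the quotient.
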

\begin{proof}
Part (1) is clear. As for part (2), by  Proposition \ref{WonGIT}(1), $\hat{ \mathcal{K}_{\mathcal{A}}}$ is a building set. Also, observe that the equivalence class $[SL_{d+1} \cdot (p_1, \ldots p_n)]$ is  in $\{ H_I  \subset ( \bP^{n-d-2})^d  \; | \;I\subsetneq \{d+1,\dots n\}\,\, \text{and}\,\, \sum_{i \in I} a_i > 1 \}$ if and only if the $n$-tuple $(p_1, \ldots p_n)$ is contained in the stable locus $((\bP^d )^n )^{s}$ and there exists some $I \subsetneq \{d+1,\dots n\}$ such that $ p_i=p_j$ for all $i,j\in I $ and $\sum_{i_k \in I}a_i > 1$. These two conditions are the ones defining  $\mathcal{K}_{\mathcal{A}}^s= \{\Delta_I \cap ((\bP^d )^n )^{s} | \Delta_I \in \mathcal{K}_{\mathcal{A}}\,\,\text{and}\,\, \Delta_I \cap ((\bP^d )^n )^{s}\neq\emptyset\} $.
\end{proof}
\noindent

\begin{corollary}\label{Wonpdn} $\bP^d_{\mathcal A}[n] / \! \!/_{\tilde L_{\mathcal A}}SL_{d+1}$ is the wonderful compactification of $\hat{ \mathcal{K}_{\mathcal{A}}}$.
\end{corollary}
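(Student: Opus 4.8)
The plan is to obtain the statement as a direct application of Proposition \ref{WonGIT}(3) to the data $Y=(\bP^d)^n$, $G=\SL_{d+1}$, the polarization $L=L_{d,n}$, and the building set $\mathcal{G}=\mathcal{K}_{\mathcal{A}}$. The first task is to match the objects on the two sides. By Theorem \ref{wcomp}(3), $\bP^d_{\mathcal A}[n]$ is the iterated blowup $Bl_{\mathcal{K}_{\mathcal{A}}}(\bP^d)^n$ taken in ascending dimension order; since the diagonals $\Delta_I$ are nested by reverse inclusion of their index sets, this order is compatible with inclusions in the sense required by Proposition \ref{WonGIT}. Moreover, the line bundle $\tilde L_{\mathcal A}=\pi_{\mathcal A}^*(L_{d,n}^{\otimes e})\otimes\mathcal{O}(-E)$ fixed just before Definition \ref{defpdn} is exactly the bundle $\widetilde{L_d}$ appearing in Proposition \ref{WonGIT}(2)--(3). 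Hence $\wpdn=\bP^d_{\mathcal A}[n]/\!\!/_{\tilde L_{\mathcal A}}\SL_{d+1}$ is verbatim the variety that Proposition \ref{WonGIT}(3) identifies with the wonderful compactification of $\hat{\mathcal{K}_{\mathcal{A}}}$, provided its hypotheses hold.

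Most of those hypotheses are immediate. The variety $(\bP^d)^n$ is nonsingular and projective; $L_{d,n}$ is ample and carries a canonical $\SL_{d+1}$-linearization; Lemma \ref{GITpoints}(1) shows there is no strictly semistable locus, so $Y^{s}(L_{d,n})=Y^{ss}(L_{d,n})$; and $\mathcal{K}_{\mathcal{A}}$ is a building set (Theorem \ref{wcomp}) whose members $\Delta_I$ are invariant under the diagonal $\SL_{d+1}$-action. That $\hat{\mathcal{K}_{\mathcal{A}}}$ is again a building set has already been established in Lemma \ref{wallStrata}(2). So the only remaining hypothesis to secure is the trivial-stabilizer condition.

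This is the step I expect to require the most care. The center $\mu_{d+1}\subset\SL_{d+1}$ acts trivially on $\bP^d$, so $\SL_{d+1}$ does not act with trivial stabilizers in the literal sense, and I would dispose of this by passing to $\PGL_{d+1}=\SL_{d+1}/\mu_{d+1}$. Using the stability conditions of Lemma \ref{GITpoints}---that $p_1,\dots,p_{d+1}$ are in general position, so that any projective stabilizer of a stable configuration lies in the diagonal torus $\Gm^d$, together with condition (4), which prevents the remaining points from lying simultaneously on the relevant coordinate hyperplanes and thereby forces this torus to act nontrivially---one checks that $\PGL_{d+1}$ acts freely on $((\bP^d)^n)^{s}(L_{d,n})$. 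Since $\mu_{d+1}$ is finite and acts trivially on $\bP^d_{\mathcal A}[n]$, it acts on $\tilde L_{\mathcal A}$ through a character; replacing $\tilde L_{\mathcal A}$ by a power on which this character is trivial yields a $\PGL_{d+1}$-linearization with the same stable locus and the same GIT quotient (this is consistent with $\sum_i w_i=d+1$, which is what makes a power of $L_{d,n}$ descend to $\PGL_{d+1}$). Thus the center is harmless and all hypotheses of Proposition \ref{WonGIT} hold for the free action of $\PGL_{d+1}$.

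With this in place, Proposition \ref{WonGIT}(3) applies directly and gives that $\bP^d_{\mathcal A}[n]/\!\!/_{\tilde L_{\mathcal A}}\SL_{d+1}$ is the wonderful compactification of $\hat{\mathcal{K}_{\mathcal{A}}}$, which is the claim. If one prefers not to change groups, the alternative is to revisit the proof of Proposition \ref{WonGIT} and note that the trivial-stabilizer hypothesis enters only through Luna's \'etale slice theorem; a finite central subgroup acting trivially produces the same \'etale-local models, so the argument carries over unchanged. I would record this as a short remark rather than redo the argument.
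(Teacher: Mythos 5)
Your proposal is correct and follows the same route as the paper: both deduce the corollary by applying Proposition \ref{WonGIT}(3) to $Y=(\bP^d)^n$, $G=\SL_{d+1}$, $L=L_{d,n}$, $\mathcal{G}=\mathcal{K}_{\mathcal{A}}$, using Lemma \ref{GITpoints} for the absence of strictly semistable points and Theorem \ref{wcomp} to identify $\bP^d_{\mathcal A}[n]$ with the wonderful compactification (iterated blowup) of $\mathcal{K}_{\mathcal{A}}$. The one place where you genuinely diverge is the trivial-stabilizer hypothesis, and there your treatment is more careful than the paper's. The paper argues that since the quotient $(\bP^d)^n/\!\!/_{L_{d,n}}\SL_{d+1}$ is smooth, $\SL_{d+1}$ must act with trivial stabilizers on the stable locus; as you observe, that conclusion is literally false (every point of $(\bP^d)^n$ is fixed by the center $\mu_{d+1}$), and the inference itself is invalid, since smoothness of a geometric quotient does not force trivial stabilizers. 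What the paper implicitly relies on is exactly what you supply: $\PGL_{d+1}$ acts freely on $((\bP^d)^n)^{s}(L_{d,n})$ (general position of $p_1,\dots,p_{d+1}$ confines any stabilizer to the diagonal torus, and condition (4) of Lemma \ref{GITpoints} kills the torus), a suitable power of $L_{d,n}$ descends to a $\PGL_{d+1}$-linearization because $\sum_i w_i=d+1$, and a finite central subgroup acting trivially does not disturb the Luna-slice argument in the proof of Proposition \ref{WonGIT}. So your proof buys rigor precisely at the point where the paper is sloppiest; otherwise the two arguments coincide, and your closing remark would be a worthwhile addition to the paper, since the same stabilizer issue also underlies Lemma \ref{wallStrata}(2) and the proof of Theorem \ref{mainpdn}.
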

\begin{proof}By Section \ref{FM}, the weighted Fulton-Macpherson space 
$\bP^d_{\mathcal A}[n] $  is the wonderful compactification of the building set $\mathcal{K}_{\mathcal{A}}$. Moreover,  the hypotheses of Proposition \ref{WonGIT} are satisfied by the variety $(\bP^d)^n $ with $SL_{d+1}$-linearized ample line bundle $L_{d,n}$. Indeed, there is no strictly semistable locus induced by $L_{d,n}$ (Lemma \ref{GITpoints}); and since the quotient $(\bP^d)^n /\!\!/_{L_{d,n}} SL_{d+1}$ is smooth (Lemma \ref{GITpoints}), we conclude that $SL_{d+1}$ acts with trivial stabilizers on $(\bP^d)^n$. Therefore, 
by Proposition \ref{WonGIT}(3), the variety $\bP^d_{\mathcal A}[n] / \! \!/_{\tilde L_{\mathcal A}}SL_{d+1}$ is the wonderful compactification of the building set $\hat{ \mathcal{K}_{\mathcal{A}}}$.
\end{proof}
\noindent\textit{Proof of Theorem} \ref{mainpdn}:
~\\
\noindent
\textbf{Part 1:} Follows immediately from Corollary \ref{Wonpdn} and Theorem \ref{thmLi} (1).\\
\textbf{Part 2}:
First we define $\hat \phi_{\mathcal{A}}$ and its sections $\hat\sigma_i$ by descending the analogous morphisms that appear in the weighted Fulton MacPherson construction. Let $\phi_{\mathcal{A}} : \bP^d_{\mathcal A}[n]^{+}\rightarrow \bP^d_{\mathcal A}[n]$ be the `universal' family of the weighted Fulton MacPherson compactification $\bP^d_{\mathcal A}[n]$ (Theorem \ref{wcomp}(4)). We have already seen that the diagonal action of $SL_{d+1}$ on $(\bP^d)^n$ lifts to $\bP^d_{\mathcal A}[n]$ and $\bP^d_{\mathcal A}[n]^{+}$ so that $\phi_{\mathcal{A}}$ becomes equivariant (Lemma \ref{equiv}). Recall by Lemma \ref{equiv}(2) that $\phi_{\mathcal{A}}$ is projective. Now let us choose a relatively ample linearized line bundle $M_\mathcal{A}^+$ for $\phi_{\mathcal{A}}$. Using Theorem \ref{GITHu}(1), we can pick $m_0$ such that for all $m\geq m_0$ the line bundle $L^+_{\mathcal A,m}:=M_\mathcal{A}^+\otimes \phi_{\mathcal{A}}^{*}({{\tilde {L}}_{\mathcal A}}^{\otimes m})$ is ample and admits a linearization such that 
\begin{equation}\label{stablepullback}(\bP^d_{\mathcal A}[n]^{+})^{ss}(L^+_{\mathcal A,m})= (\bP^d_{\mathcal A}[n]^{+})^{s}(L^+_{\mathcal A})= \phi_{\mathcal{A}}^{-1}((\bP^d_{\mathcal A}[n])^s(\tilde L_{\mathcal A})).\end{equation}
 Now, we define $\wpdnu:=\bP^d_{\mathcal A}[n]^{+} / \! \!/_{ L^+_{\mathcal A,m_0}} SL_{d+1}$. The above GIT quotient is independent of the choice of $M_\mathcal{A}^+$, so $\wpdnu$ is well defined. Then, $\phi_{\mathcal{A}}$ descends to a morphism 

$$\hat \phi_{\mathcal{A}}:\wpdnu\rightarrow\wpdn. $$

Moreover, $\phi_{\mathcal{A}}$ is equipped with $n$ sections $\sigma_i:\bP^d_{\mathcal A}[n]\rightarrow\bP^d_{\mathcal A}[n]^{+}$(Theorem  \ref{wcomp}), which are $SL_{d+1}$ equivariant by Lemma \ref{equiv}. Recall that $(\bP^d_{\mathcal A}[n] )^{ss}(\tilde L_{\mathcal A})=(\bP^d_{\mathcal A}[n]) ^s(\tilde L_{\mathcal A})$. Since $ \phi_{\mathcal{A}}^{-1}(\bP^d_{\mathcal A}[n] ^s(\tilde L_{\mathcal A}))=(\bP^d_{\mathcal A}[n]^{+})^{s}(L^+_{\mathcal A})$, the restriction $\sigma_i^s$ of the section $\sigma_i$ to $\bP^d_{\mathcal A}[n] ^{s}(\tilde L_{\mathcal A})$ maps to the subvariety $(\bP^d_{\mathcal A}[n]^{+})^{s}(L^+_{\mathcal A})$ of $\bP^d_{\mathcal A}[n]^{+}$. Let $\phi_{\mathcal{A}}^s$ be the restriction of $\phi_{\mathcal{A}}$ to the stable locus (which is also the pullback of $\phi_{\mathcal{A}}$ via $\bP^d_{\mathcal A}[n] ^{s}\rightarrow \bP^d_{\mathcal A}[n]$). We may now descend $\sigma_i^s$ to obtain sections $\hat\sigma_i:\wpdn\rightarrow\wpdnu,\, i=1,\dots,n$,  that fit in the following commutative diagram 

    \[
   \xymatrix
 {(\bP^d_{\mathcal A}[n]^{+})^{s}\ar[d] ^{\phi_{\mathcal{A}}^s}\ar[r] & (\bP^d_{\mathcal A}[n]^{+})^{s} / \! \!/ SL_{d+1}\ar[d]^{\hat \phi_{\mathcal{A}}}\\
\bP^d_{\mathcal A}[n] ^{s}\ar[r] \ar@/^/[u]^{\sigma_i^s} & (\bP^d_{\mathcal A}[n])^{s} / \! \!/ SL_{d+1}\ar@/^/[u]^{\hat\sigma_i }}\\
  \]

  By Theorem \ref{wcomp}(4), $\sigma_i$ lie in the relative smooth locus of $\phi_{\mathcal{A}}$. By part 1 of Theorem \ref{mainpdn}, the geometric quotient $\wpdn=\bP^d_{\mathcal A}[n] ^{s} /\!\!/SL_{d+1}$ is smooth, so $SL_{d+1}$ acts with trivial stabilizers on $\bP^d_{\mathcal A}[n] ^{s}$. Consequently, by Theorem \ref{GITHu}(3), the fiber of $\hat \phi_{\mathcal{A}}$ over a geometric point $[SL_{d+1}\cdot x]$ in $\bP^d_{\mathcal A}[n] ^{s} /\!\!/SL_{d+1}$ is isomorphic to the fiber of $\phi_{\mathcal{A}}^s$ over any point in the orbit $SL_{d+1}\cdot x$. Therefore, the relative smooth locus of $\phi_{\mathcal{A}}^s$ maps to the relative smooth locus $\hat \phi_{\mathcal{A}}$ via the quotient morphism. Hence, by the commutativity of the above diagram we deduce that the images of $\hat\sigma_i$ lie in the relative smooth locus of $\hat\phi_{\mathcal{A}}$.\\
 \indent  Moreover, since $\bP^d_{\mathcal A}[n]$ and $\bP^d_{\mathcal A}[n]^{+}$ are projective, the GIT quotients $\wpdn$ and $\wpdnu$ are projective, so $\hat \phi_{\mathcal{A}}$ is proper. By Theorem \ref{GITHu} (3) again and the fact that fibers of $\phi_{\mathcal{A}}^s$ are equidimensional, it follows that the fibers of $\hat \phi_{\mathcal{A}}$ are also equidimensional. Further, since $\phi_{\mathcal{A}}^s$ is equivariant and $SL_{d+1}$ acts with trivial stabilizers on $\bP^d_{\mathcal A}[n] ^{s}$, we
 see that $SL_{d+1}$ acts with trivial stabilizers on the smooth variety $(\bP^d_{\mathcal A}[n] ^{+})^{s}$ as well. Therefore $\wpdnu=(\bP^d_{\mathcal A}[n] ^{+})^{s} / \! \!/ SL_{d+1}$ is smooth (for example, by Luna's \'etale slice theorem)  and, since we have shown that $\wpdn$ is also smooth (part 1 of Theorem \ref{mainpdn}), we deduce that $\hat \phi_{\mathcal{A}}$ is flat. It remains to verify part 2(b); Theorem \ref{GITHu} (3) yields that if $x$ is a geometric point in $\bP^d_{\mathcal A}[n] ^{s}$, then $\hat \phi_{\mathcal{A}}^{-1}([SL_{d+1}\cdot x])\iso\phi_{\mathcal{A}}^{-1}(x) $. Then the statement of  part 2(b) is equivalent to the statement that the weighted stable $n$-pointed degeneration $(\phi_{\mathcal{A}}^{-1}(x), \sigma_i^s(x))$ is isomorphic to $(\hat \phi_{\mathcal{A}}^{-1}([SL_{d+1}\cdot x]), \hat\sigma_i^s([SL_{d+1}\cdot x]))$. This follows immediately from the descriptions in Sections \ref{wdeg} and \ref{trees} and Theorem \ref{wcomp}(4).
\qed

\vspace{0.1in}

Now let $\mathcal{G}_{\mathcal{A}}$ be the set defined in Definition \ref{hi}.
\begin{corollary}\label{corpdn}
For every $\mathcal A\in \mathcal D_{d,n}^P$:
\begin{enumerate}
\item $\wpdn$ is a sequence of blowups of $({\PP}^{n-d-2})^d$ at the iterated strict transforms of the varieties in the set $\mathcal{G}_{\mathcal{A}}$ in ascending dimension order.
\item The boundary $\overline P^{\mathcal A}_{d,n}\setminus \wpdno$ is the union of $|\mathcal{G}_{\mathcal{A}}|$ smooth irreducible divisors $E_I$, where each $E_I$ is the iterated dominant transform of the variety $H_I\in\mathcal{G}_{\mathcal{A}}$ in the sequence of blowups $\wpdn\rightarrow({\PP}^{n-d-2})^d$.
\item Any set of boundary divisors  intersects transversally.  
\end{enumerate}
\end{corollary}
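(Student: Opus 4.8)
The plan is to assemble the statement from three ingredients that are already in place: the identification of $\wpdn$ as a wonderful compactification (Corollary \ref{Wonpdn}), the dictionary between the building set $\hat{\mathcal{K}_{\mathcal{A}}}$ and the explicit subvarieties $\mathcal{G}_{\mathcal{A}}$ of $(\PP^{n-d-2})^d$ (Lemma \ref{wallStrata}), and the structure theorem for wonderful compactifications (Theorem \ref{thmLi}). No new geometry is needed; the whole content is to transport the abstract conclusions of Theorem \ref{thmLi} across the isomorphism (\ref{GITiso}).

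For part (1), I would first invoke Corollary \ref{Wonpdn}, which identifies $\wpdn$ with the wonderful compactification of $\hat{\mathcal{K}_{\mathcal{A}}}$, and then Lemma \ref{wallStrata}(2), which identifies $\hat{\mathcal{K}_{\mathcal{A}}}$ with $\mathcal{G}_{\mathcal{A}}$ under (\ref{GITiso}). Theorem \ref{thmLi}(2) then realizes the wonderful compactification of $\mathcal{G}_{\mathcal{A}}$ as the iterated blowup $Bl_{\mathcal{G}_{\mathcal{A}}}(\PP^{n-d-2})^d$, provided the building set is ordered compatibly with inclusion. The only point to check is that ascending dimension order is such an order: by Lemma \ref{wallStrata}(1) we have $H_I \cong (\PP^{(n-|I|)-d-1})^d$, so $\dim H_I$ strictly decreases as $|I|$ grows, while $I \subset J$ forces $H_J \subseteq H_I$; hence strict inclusion forces a strict drop in dimension, and ascending dimension order refines inclusion order (cf. the parenthetical remark in Section \ref{FM} and Remark \ref{partialtotal}, which also guarantees independence of the chosen total order).

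For parts (2) and (3), I would apply Theorem \ref{thmLi}(1). Item (1)(i) gives that the boundary is the union of exactly $|\mathcal{G}_{\mathcal{A}}|$ smooth irreducible divisors, one for each $H_I\in\mathcal{G}_{\mathcal{A}}$, while Theorem \ref{thmLi}(2) identifies each such divisor $E_I$ with the iterated dominant transform of $H_I$ under $\wpdn \to (\PP^{n-d-2})^d$. To see that these divisors constitute the boundary $\wpdn \setminus \wpdno$, I would match the open loci: by Definition \ref{defpdno} together with Lemma \ref{wallStrata}, the complement $\wpdno$ is exactly $(\PP^{n-d-2})^d \setminus \bigcup_{H_I \in \mathcal{G}_{\mathcal{A}}} H_I$, which is precisely the arrangement complement $Y^{\circ}$ appearing in the definition of the wonderful compactification. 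Part (3) is then immediate from Theorem \ref{thmLi}(1)(ii), which asserts that any collection of the boundary divisors meets transversally.

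The argument is essentially bookkeeping, so I do not anticipate a serious obstacle. The only places that demand care are confirming that the order prescribed in the statement (ascending dimension) is admissible for Theorem \ref{thmLi}(2), and verifying that the complement in Definition \ref{defpdno} coincides, after the identification (\ref{GITiso}), with the arrangement complement $Y^{\circ}$ used to define the wonderful compactification; both follow directly from Lemma \ref{wallStrata}.
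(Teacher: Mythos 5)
Your proposal is correct and takes essentially the same route as the paper's proof: it combines Corollary \ref{Wonpdn} and Lemma \ref{wallStrata} to identify $\wpdn$ with the wonderful compactification of $\mathcal{G}_{\mathcal{A}}$, and then reads off all three statements from Theorem \ref{thmLi}. The two verifications you add explicitly (that ascending dimension order refines the inclusion order on $\mathcal{G}_{\mathcal{A}}$, and that $\wpdno$ coincides with the arrangement complement $Y^{\circ}$) are left implicit in the paper but are exactly the bookkeeping its terse proof relies on.
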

\begin{proof}
By Theorem \ref{thmLi}(2) the first statement 
is equivalent to the claim that $\wpdn$ is the wonderful compactification of the set $\mathcal{G}_{\mathcal{A}}$. This last claim follows from Lemma \ref{buildpdn} and 
Corollary \ref{Wonpdn}.  The other statements  follow from Theorem \ref{thmLi}.
\end{proof}

\subsection{Points in affine space}\label{pTdn}
 We first recall the definition of $\wtdn$ and then we give a  
birational model of it, which is isomorphic to a  projective space. \\
Let $\wmod$ be the weighted compactification of the weighted configuration space of $n$ labeled points in a smooth variety $X$ (Section \ref{FM}), where $\mathcal{A}=\{a_1,a_2,\dots a_n\}\in \mathcal{D}_{d,n}^{T}$, that is $0<a_i\leq1$ and $\sum \limits _{i=0}^n a_i>1$. The above inequality implies that $\Delta_N$ belongs to the building set $\mathcal{K}_{\mathcal{A}}$ associated to the construction of $\wmod$. Now let $D_N\subset \wmod$ be the divisor corresponding to $\Delta_N$, that is the iterated dominant transform of $\Delta_N$ under the sequence of blowups $\pi_\mathcal{A}:\wmod\rightarrow X^n$ (Theorem \ref{wcomp}(3)). Let  $\pi_{\mathcal{A}}|_{D_N}:D_N \rightarrow \Delta_N\iso X$ be the restriction of $\pi_\mathcal{A}$ to $D_N$.\\

\definition\cite{ChowHass} \label{defTdn}Let $\mathcal{A}\in \mathcal{D}_{d,n}^{T}$, let $X$ be a smooth variety of dimension $d$ and $x\in X$ a geometric point of $X$. We define $\wtdn:=({\pi_{\mathcal{A}}|_{D_N}})^{-1}(x)$.
$$
\xymatrix{
T^{\mathcal{A}}_{d,n} \ar[r]  \ar[d] & D_N \ar[d]^{\pi_{\mathcal{A}}|_{D_N}} \ar[r] & X_{\mathcal{A}}[n] \ar[d]
\\
x \ar[r] & \Delta_N \cong X \ar[r] & X^n
}
$$
\remark It is shown in [ibid.] that the above definition is independent of $x$ and $X$, as long as $d,n$ and $\mathcal{A}$ are fixed.
\remark \label{utdn}The universal family $\wtdnu\to\wtdn$ is defined in \cite{ChowHass} as the pullback of the universal family $\phi_{\mathcal{A}}:\wmod^+\rightarrow \wmod$ via $\wtdn\rightarrow \wmod$. Its geometric fibers are precisely the objects described in Section \ref{rootedtrees}.\\
$$
\xymatrix{
\wtdnu \ar[r]  \ar[d] &\wmod^+ \ar[d]^{\phi_{\mathcal{A}}}
\\
T^{\mathcal{A}}_{d,n} \ar[r] & \wmod 
}
$$
\vspace{0.1in}

Recall that given a configuration of $n$ points in affine space defined up to translation and homothety,  it is convenient  to think of them as points in $\mathbb{P}^d$ that lie away from a fixed hyperplane $H \subset \mathbb{P}^d$ called the \textit{root} and defined up to the action of the subgroup $G \subset SL_{d+1}$ that fixes the root pointwise.

\begin{lemma}\label{base}
Let $\mathcal{B}:= \left( \frac{1}{n}+\epsilon, \ldots, \frac{1}{n}+\epsilon  \right)\in\mathcal D_{d,n}^T$, where $\epsilon <\frac{1} {n(n-1)}$. Then $T^{\mathcal B}_{d,n} \cong \mathbb{P}^{dn-d-1}$ and there is a choice of coordinates so that the point
$$
[x_{11}: x_{12}:  \ldots : x_{1d} :\ldots : x_{21}:  x_{22} : \ldots x_{2d}:
 \ldots : x_{(n-1)1}:  x_{(n-1)2} : \ldots : x_{(n-1)d}]
\in \mathbb{P}^{dn-d-1}
$$
parametrizes the equivalence class associated to the collection of $n$ points:
\begin{align*}
p_1:=[1:x_{11}: \ldots : x_{1d}],  & &
\ldots  & &
p_{n-1}:=[1:x_{(n-1)1}: x_{(n-1)2}: \ldots:  x_{(n-1)d}],  & &
p_n:= [1:0: \ldots: 0]
\end{align*}
\end{lemma}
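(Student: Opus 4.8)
The plan is to exploit the very special nature of the weight vector $\mathcal B$, which makes the associated building set as small as possible. First I would check that $\mathcal B\in\mathcal D_{d,n}^{T}$: since each weight equals $\tfrac1n+\epsilon$, the total weight is $1+n\epsilon>1$, while each individual weight is $<1$. The crucial combinatorial observation is then that the building set $\mathcal K_{\mathcal B}$ of Section \ref{FM} consists of the single diagonal $\Delta_N$. Indeed, for a subset $I\subset N$ with $|I|=k$ the total weight is $k(\tfrac1n+\epsilon)$, and this exceeds $1$ precisely when $k=n$: for $k\le n-1$ the hypothesis $\epsilon<\tfrac1{n(n-1)}$ gives $k(\tfrac1n+\epsilon)\le\tfrac{n-1}{n}+(n-1)\epsilon<\tfrac{n-1}{n}+\tfrac1n=1$. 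Hence $\Delta_I\in\mathcal K_{\mathcal B}$ if and only if $I=N$.

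Consequently, by Theorem \ref{wcomp}(3), the weighted Fulton--MacPherson space $X_{\mathcal B}[n]$ is simply the blowup $\Bl_{\Delta_N}X^n$ of $X^n$ along the smooth small diagonal $\Delta_N\cong X$, and the divisor $D_N$ is its exceptional divisor, i.e.\ the projectivized normal bundle $\bP(N_{\Delta_N/X^n})$. The restriction $\pi_{\mathcal B}|_{D_N}\colon D_N\to\Delta_N\cong X$ is exactly the bundle projection, so by Definition \ref{defTdn} the fiber $T^{\mathcal B}_{d,n}=(\pi_{\mathcal B}|_{D_N})^{-1}(x)$ is the projectivization $\bP(N_{\Delta_N/X^n,x})$ of the normal space at $x$. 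Identifying $T_xX^n$ with $T_x^{\oplus n}$, where $T_x:=T_xX$, and the tangent space of $\Delta_N$ with the diagonal copy of $T_x$, the normal space is $T_x^{\oplus n}/T_x^{\mathrm{diag}}$, which has dimension $dn-d$. Therefore $T^{\mathcal B}_{d,n}\cong\bP(T_x^{\oplus n}/T_x^{\mathrm{diag}})\cong\bP^{dn-d-1}$, proving the first claim. By the remark following Definition \ref{defTdn} this is independent of the choice of $X$ and $x$, so I am free to take $X=\bP^d$ and $x=[1:0:\dots:0]$, with root $H=\{x_0=0\}$.

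To pin down the coordinates I would invoke the modular description of the fiber from Section \ref{rootedtrees}: a point of $T^{\mathcal B}_{d,n}$ is a screen for the $\mathcal B$-coincident set $N$ at $x$, that is, an $n$-tuple $(t_1,\dots,t_n)\in T_x^{\oplus n}$ with not all $t_i$ equal, taken up to the translation-and-homothety equivalence $t_i\sim c\,t_i+v$ of Definition \ref{defscreens}. This is precisely the affine-cone presentation of $\bP(T_x^{\oplus n}/T_x^{\mathrm{diag}})$ from the previous paragraph. Using the translation freedom I normalize $t_n=0$, and identify $T_x\cong\Aff^d\subset\bP^d$ via $(y_1,\dots,y_d)\mapsto[1:y_1:\dots:y_d]$ in the chart complementary to $H$; writing $t_i=(x_{i1},\dots,x_{id})$ for $1\le i\le n-1$ then yields the configuration $p_i=[1:x_{i1}:\dots:x_{id}]$ and $p_n=[1:0:\dots:0]$. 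The residual homothety $c\in\Gm$ rescales all the $t_i$ simultaneously, which is exactly the projective rescaling of the $d(n-1)$ coordinates $x_{ij}$, so the equivalence class is recorded by the point $[x_{11}:\dots:x_{1d}:\dots:x_{(n-1)1}:\dots:x_{(n-1)d}]\in\bP^{dn-d-1}$, as asserted.

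The genuinely essential step is the building-set computation of the first paragraph, where the precise bound $\epsilon<\tfrac1{n(n-1)}$ is exactly what forces every proper subset to be light and collapses $\mathcal K_{\mathcal B}$ to $\{\Delta_N\}$; everything afterward is the standard identification of the exceptional divisor of a single smooth blowup with a projectivized normal bundle. The main piece of bookkeeping will be the final step, matching the intrinsic screen data with the explicit affine chart on $\bP^d$, but this is only a choice of trivialization of the normal bundle and introduces no new geometric input.
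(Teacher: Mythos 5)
Your proof is correct and takes essentially the same route as the paper: the weight bound $\epsilon<\tfrac{1}{n(n-1)}$ forces the building set to be $\{\Delta_N\}$, so $X_{\mathcal B}[n]$ is the single blowup $Bl_{\Delta_N}X^n$, the divisor $D_N$ is $\PP(N_{\Delta_N/X^n})$, and its fiber over $x$ is $\PP^{dn-d-1}$. For the coordinates, the paper fixes $p_n$ at the origin of $\Aff^d=\bP^d\setminus H$ and quotients $((\Aff^d)^{n-1}\setminus \mathbf{0})$ by the residual $\Gm$ of automorphisms fixing $H$ pointwise and $p_n$, which is exactly your normalization $t_n=0$ of the screen data followed by the homothety rescaling; your write-up merely makes explicit the building-set computation that the paper asserts implicitly.
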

\begin{proof}
By Definition \ref{defTdn}, $T^{\mathcal B}_{d,n}$ is  the fiber 
$q_{\mathcal{B}}^{-1}(x)$ of a point in the divisor $D_N$ over $X$  where $D_N$ is the iterated dominant of the small diagonal
$\Delta_N$ along the sequence of blowups $X_{\mathcal B}[n ] \to X^n$. For our choice of weights there is only one blow up involved, hence the dominant transform of $\Delta_N$ in $X_{\mathcal B}[n]$ is the projective bundle $\PP(N_{\Delta_N/X^n})$. Therefore, its fiber over $\Delta_N=X$ is isomorphic to the projective space $\mathbb{P}^{dn-d-1}$. 
To obtain  the coordinates, we describe an alternative and instructive construction. We consider $\PP^d$ with homogeneous coordinates $[x_0:\dots :x_d]$ and take the root $H$ to be $(x_0=0)$. We can choose the location of one of the points, say $p_n$, to be  $[1:0\dots :0] \in (\PP^d\setminus H)=\Aff^d$. 
The location of the other $(n-1)$ points can be anywhere in $\bP^d \setminus H$, but they cannot all overlap with $p_n$ simultaneously. The automorphism group of $\PP^d$ that fixes the hyperplane $H$ pointwise and the point $p_n$ is $\mathbb G_m$. Then, we conclude that the our parameter space is 
\begin{center}
$
\left((\Aff^d)^{n-1}\setminus (0,0,\dots0)\right) / \! \! / \mathbb G_m \iso \PP^{d(n-1)-1}$
\end{center} 
with the coordinates described in the statement.
 \end{proof}
\begin{remark}
It can be shown that  there exist a linearization $L'_{d,n}$ such that 
$\PP^{nd-d-1}$ is a non-reductive GIT quotient of the form 
$\PP^{nd-d-1} \cong (\PP^d)^n / \! \! /_{L_{d,n}'} \mathbb{G}_a^d \rtimes \mathbb{G}_m$. That type of compactification is explored in \cite{NPP}.  Within this context Lemma \ref{base} is very similar to Lemma \ref{GITpoints}. 
\end{remark}
Next, we describe the loci in $T^{\mathcal B}_{d,n}$, where $\mathcal{B}= \left( \frac{1}{n}+\epsilon, \ldots, \frac{1}{n}+\epsilon  \right)\in\mathcal D_{d,n}^T$, such that $\epsilon <\frac{1} {n(n-1)}$, that parametrize configurations with overlapping points. 
 Let $$
[x_{11}: x_{12}:  \ldots : x_{1d} :\ldots : x_{21}:  x_{22} : \ldots x_{2d}:
 \ldots : x_{(n-1)1}:  x_{(n-1)2} : \ldots : x_{(n-1)d}]
$$
be a system of projective coordinates for $\mathbb{P}^{dn-d-1}$. For each $I\subset N$ with $2 \leq |I| \leq (n-1)$, we define a subvariety 
$\delta_I$ of $ \PP^{d(n-1)-1}$ as follows:
  \begin{equation}\label{coincident}
  \delta_I =\begin{cases}  
\bigcap_{i,j \in I}V((x_{i1}-x_{j1},\ldots , x_{id}-x_{jd}))  & | \  \text{if } n\notin I 
\\
\bigcap_{i\in I\setminus n }V(( x_{i1} ,\ldots ,x_{1d}))             & | \   \text{if } n\in I 
\end{cases}
  \end{equation}
   Each $\delta_I$ is isomorphic to $ \PP^{d(n-|I|)-1}$. By Lemma \ref{base}, it follows immediately that $\delta_I$ parametrizes those equivalence classes (with respect to translation and homothety) of configurations of $n$-tuples $(p_1,p_2,\dots p_n)\in(\Aff^d)^n$ of $\mathbb{P}^{dn-d-1}$ where $p_i=p_j$ for all  $i,j\in I$.  

\begin{definition}\label{defset}
Let $\mathcal A=\{a_1,\dots, a_n\} \in \mathcal D^T_{d,n}$. We define the set 
\begin{center}
$
\wbuildtdn:=
\left\{
\delta_I \subset \PP^{d(n-1)-1} | \;
 \sum_{i \in I}a_i >1 
\right\}
$
\end{center}
with partial order $(<)$ determined by the rule $\delta_I<\delta_J$ if and only if $|J|<|I|$.
\end{definition}
\begin{lemma}\label{bset}
The  set $\wbuildtdn$ is a building set.
\end{lemma}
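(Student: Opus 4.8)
The plan is to verify directly the two requirements packaged in Definition \ref{defbuild}(2): that the set $\mathcal S$ of all intersections of subfamilies of $\wbuildtdn$ is an arrangement, and that $\wbuildtdn$ is a building set of $\mathcal S$ in the sense of Definition \ref{defbuild}(1). The decisive simplification is that every $\delta_I$ is a \emph{linear} subspace of $\PP^{d(n-1)-1}$, since in the coordinates of Lemma \ref{base} the defining equations (\ref{coincident}) are homogeneous linear forms. Any intersection of projective linear subspaces is again a (possibly empty) linear subspace, and two linear subspaces always intersect cleanly because their scheme-theoretic intersection is smooth, so Lemma \ref{clean} applies. Hence the finite set $\mathcal S$ consists of linear subspaces and is closed under intersection, which makes both axioms of an arrangement automatic.

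The combinatorial heart of the argument is a description of the elements of $\mathcal S$. Given a subfamily $\mathcal F\subseteq\wbuildtdn$, I would observe that $\bigcap_{\delta_I\in\mathcal F}\delta_I$ is the locus of configurations with $p_i=p_j$ whenever $i$ and $j$ lie in the same block of the equivalence relation $\sim$ on $N$ generated by declaring $i\sim j$ for all $i,j\in I$ with $\delta_I\in\mathcal F$. Let $B_1,\dots,B_r$ be the blocks of size at least two; these are pairwise disjoint. If some block equals $N$ the intersection is empty, as the small diagonal is removed from the configuration space, so for a nonempty $S\in\mathcal S$ every block has size at most $n-1$ and $S=\bigcap_{t=1}^r\delta_{B_t}$. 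Moreover each $\delta_{B_t}$ lies in $\wbuildtdn$: the block $B_t$ contains some $I\in\mathcal F$ with $\sum_{i\in I}a_i>1$, and since the weights are positive $\sum_{i\in B_t}a_i\ge\sum_{i\in I}a_i>1$.

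Next I would identify the minimal elements of $\wbuildtdn$ containing such an $S$. A subvariety $\delta_J$ contains $S$ exactly when the coincidences cutting out $S$ force $p_i=p_j$ for all $i,j\in J$; since on $S$ one has $p_i=p_j$ precisely for $i,j$ in a common block, this happens if and only if $J$ is contained in a single block $B_t$. A larger index set defines a smaller linear subspace, so the minimal (smallest) such $\delta_J$ inside block $t$ is $\delta_{B_t}$ itself, which belongs to $\wbuildtdn$ by the previous paragraph. Hence the minimal elements of $\wbuildtdn$ containing $S$ are precisely $\delta_{B_1},\dots,\delta_{B_r}$, and their intersection is $S$ by construction.

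Finally, for transversality: because the blocks $B_t$ are pairwise disjoint, the equations (\ref{coincident}) defining the distinct $\delta_{B_t}$ involve pairwise disjoint groups of the homogeneous coordinates $x_{i\bullet}$, namely those indexed by $i\in B_t\setminus\{n\}$, and are therefore linearly independent. Consequently $\operatorname{codim}\big(\bigcap_t\delta_{B_t}\big)=\sum_t\operatorname{codim}\delta_{B_t}$, which is exactly the statement that the $\delta_{B_t}$ meet transversally at every point. This verifies the defining property of a building set of $\mathcal S$, and hence that $\wbuildtdn$ is a building set. I expect the transversality-via-disjointness step, together with the correct bookkeeping of the equivalence relation generated by $\mathcal F$ and the verification that each merged block still carries weight exceeding one, to be the only points requiring care; everything else is formal once the linearity of the $\delta_I$ is in hand.
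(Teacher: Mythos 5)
Your proof is correct and follows essentially the same route as the paper's: your blocks $B_1,\dots,B_r$ of the generated equivalence relation are exactly the paper's unique rewriting of the intersection as $\delta_{I'_1}\cap\dots\cap\delta_{I'_m}$ with pairwise disjoint $I'_i$, and you verify membership in $\wbuildtdn$ by the same weight-monotonicity argument and transversality by the same disjointness-of-indices observation. The only difference is that you spell out details the paper leaves implicit (linearity of the $\delta_I$, the characterization of which $\delta_J$ contain $S$, and the codimension count), which is fine but not a new idea.
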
 
\begin{proof}
By Lemma \ref{clean}, we see immediately that the set of all possible nonempty intersections of the $\delta_I\in\wbuildtdn$ is an arrangement of subvarieties of $\PP^{d(n-1)-1}$. Consider an arbitrary nonempty intersection $S:=\delta_{I_1}\cap\dots \cap \delta_{I_k} $ of varieties that belong to the set $\wbuildtdn$. To finish the proof, we need to show that the minimal elements of $\wbuildtdn$ containing $S$  intersect transversally and  their intersection is $S$. To see this, observe that the above intersection can be written uniquely as an intersection of the form $\delta_{I'_1}\cap\dots \cap \delta_{I'_m}$, where $m\leq k$, the sets $I'_i$  are pairwise disjoint and every $I'_i,i=1,\dots m$, is a union of $I_j$'s. Further, each of the varieties $\delta_{I'_i}$ belongs to $\wbuildtdn$; indeed, $I'_i$ cannot be the set $\{1,\dots,n\}$ (otherwise S would be empty), and  since $I'_i$ contains some $I_j$, we have
\begin{equation*}
\sum \limits_{k\in I'_i} a_k \geq \sum \limits_{k\in I_j} a_k >1.
\end{equation*} 
\bigskip
Clearly the varieties $\delta_{I'_i},\,i=1,\dots m$, are the minimal elements of $\wbuildtdn$ that contain $S=\delta_{I'_1}\cap\dots \cap \delta_{I'_m}$. Finally, since the indices $I'_i$ are disjoint, we see that the varieties $\delta_{I'_1},\dots, \delta_{I'_m}$ intersect transversally.
\end{proof}
\definition \label{deftdno}Let  $\mathcal{A}\in \mathcal D^T_{d,n}$. The weighted configuration space of $n$ labeled points in $\Aff^d$, up to translation and homothety, with respect to $\mathcal{A}$ is the open subvariety
$$\wtdno:=\bP^{d(n-1)-1}  \setminus \bigcup_{\Delta_I \in \wbuildtdn} \delta_I $$
of $\bP^{d(n-1)-1}$.

 \begin{lemma} \label{tdnwon}
For any $\mathcal{A}\in \mathcal D^T_{d,n}$, $\wtdn$ is isomorphic to the wonderful compactification of $\mathcal H_{\mathcal A}$.
\end{lemma}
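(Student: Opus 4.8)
The plan is to realize $\wtdn$ directly as an iterated blowup of the space $\mathbb{P}^{dn-d-1}$ produced in Lemma \ref{base} and then invoke Theorem \ref{thmLi}(2). Recall from Theorem \ref{wcomp}(3) that $\wmod$ is the iterated blowup $\pi_{\mathcal A}\colon \wmod\to X^n$ of $X^n$ along the members of $\mathcal{K}_{\mathcal{A}}$ in ascending dimension order. Since $\mathcal{A}\in\mathcal D^T_{d,n}$ we have $\Delta_N\in\mathcal{K}_{\mathcal{A}}$, and because $\dim\Delta_I=d(n-|I|+1)$ the small diagonal $\Delta_N$ is the unique minimal (lowest-dimensional) center; it is therefore blown up first. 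This first blowup replaces $\Delta_N$ by its exceptional divisor $D_N^{(1)}=\PP(N_{\Delta_N/X^n})$, a $\mathbb{P}^{dn-d-1}$-bundle over $\Delta_N\cong X$. I would identify the fiber $\PP(N_{\Delta_N/X^n,x})$ over $x$ with the $\mathbb{P}^{dn-d-1}$ of Lemma \ref{base} via $N_{\Delta_N/X^n,x}\cong (T_xX)^{n}/T_xX\cong(T_xX)^{n-1}$, using $(v_1,\dots,v_n)\mapsto(v_1-v_n,\dots,v_{n-1}-v_n)$, which is exactly the coordinate choice recorded there.

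The heart of the argument is to show that the remaining blowups, restricted to the transform of $\Delta_N$, become blowups of $D_N^{(1)}$ along the loci $\delta_I$. Every other center $\Delta_I$ (with $I\subsetneq N$, $\sum_{i\in I}a_i>1$) contains $\Delta_N$, so at each stage $k$ the dominant transform $D_N^{(k-1)}$ is a smooth divisor not contained in $\Delta_I^{(k-1)}$, and hence its dominant transform coincides with its strict transform. Using Proposition \ref{buildpullback} together with Lemma \ref{clean} I would verify that $\Delta_I^{(k-1)}$ meets $D_N^{(k-1)}$ cleanly, so that by the standard local description of blowing up along a clean intersection the strict transform of the smooth divisor $D_N^{(k-1)}$ in $\Bl_{\Delta_I^{(k-1)}}Y_{k-1}$ is canonically $\Bl_{\Delta_I^{(k-1)}\cap D_N^{(k-1)}} D_N^{(k-1)}$. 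Iterating, $D_N$ is obtained from $D_N^{(1)}$ by successively blowing up the traces $\Delta_I^{(k-1)}\cap D_N^{(k-1)}$, which are the iterated strict transforms of $\Delta_I^{(1)}\cap D_N^{(1)}=\PP(N_{\Delta_N/\Delta_I})$.

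All of these centers are subbundles over $\Delta_N\cong X$, so the whole tower is compatible with the projection to $X$ and may be restricted to the fiber over $x$. Under the identification of the first paragraph, the fiber of $\PP(N_{\Delta_N/\Delta_I})$ over $x$ is $\PP(N_{\Delta_N/\Delta_I,x})$, i.e. the locus where the tangent directions indexed by $I$ agree; comparing with (\ref{coincident}) this is precisely $\delta_I$, and one checks that $\dim\delta_I=d(n-|I|)-1$ matches. Consequently $\wtdn$ is the iterated blowup of $\mathbb{P}^{dn-d-1}$ along the $\delta_I\in\wbuildtdn$ performed in ascending dimension order, which coincides with the order $\delta_I<\delta_J\iff|J|<|I|$ of Definition \ref{defset}, since larger $|I|$ gives smaller $\dim\delta_I$. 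Because $\wbuildtdn$ is a building set (Lemma \ref{bset}) and this order extends an inclusion- and dimension-compatible total order, Theorem \ref{thmLi}(2) identifies the iterated blowup with the wonderful compactification of $\wbuildtdn$, completing the proof.

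The main obstacle is the commutation of blowup with restriction to $D_N$ in the second paragraph: one must confirm both that $\Delta_I^{(k-1)}$ and $D_N^{(k-1)}$ intersect cleanly throughout the tower, which should follow from the transversality and cleanness built into Proposition \ref{buildpullback} for the iterated building set $\mathcal{K}_{\mathcal{A}}^{(k)}$, and that cleanness is exactly the hypothesis under which the strict transform of a smooth divisor equals the blowup of that divisor along its smooth intersection with the center. A secondary but genuine point is the bookkeeping identification of $\PP(N_{\Delta_N/\Delta_I,x})$ with $\delta_I$ in the chosen coordinates, together with checking that no centers are gained or lost in passing from $\mathcal{K}_{\mathcal{A}}$ to $\wbuildtdn$, the only element of $\mathcal{K}_{\mathcal{A}}$ without a counterpart being $\Delta_N$ itself, which becomes the ambient space.
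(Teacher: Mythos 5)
Your overall architecture is the same as the paper's: blow up $\Delta_N$ first, identify $D_N^{(1)}=\PP(N_{\Delta_N/X^n})$ fiberwise with the $\PP^{d(n-1)-1}$ of Lemma \ref{base} and the traces $\Delta_I^{(1)}\cap D_N^{(1)}=\PP(N_{\Delta_N/\Delta_I})$ with the $\delta_I$, restrict the remaining tower to $D_N$, and conclude via Lemma \ref{bset} and Theorem \ref{thmLi}(2). The gap is in your second paragraph, at exactly the step the paper spends most of its proof on. The fact you invoke --- that the strict transform of the smooth divisor $D_N^{(k-1)}$ under the blowup along $\Delta_{I_k}^{(k-1)}$ is $\Bl_{\Delta_{I_k}^{(k-1)}\cap D_N^{(k-1)}}D_N^{(k-1)}$, with the trace smooth by cleanness --- only shows that $D_N$ is \emph{some} iterated blowup of $D_N^{(1)}$ along the successive traces of the centers. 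It does not show that those traces are the iterated strict transforms of the initial traces $\PP(N_{\Delta_N/\Delta_I})$, which is what you need to recognize the restricted tower as the wonderful tower of $\wbuildtdn$. That identification requires, at every stage, the compatibility $\widetilde{\Delta_J}\cap\widetilde{D_N}=\widetilde{\Delta_J\cap D_N}$, and cleanness does not imply it. By Lemma \ref{basic}(2)(b), when the center $\Delta_{I_k}^{(k-1)}$ neither contains nor is disjoint from $\Delta_J^{(k-1)}$ (e.g.\ $I_k\cap J=\emptyset$), one needs the center to be transversal not only to $\Delta_J^{(k-1)}$ and to $D_N^{(k-1)}$ (which Proposition \ref{buildpullback}(2), plus the fact that clean intersection with a divisor forces transversality, do give) but also to the \emph{triple} intersection $\Delta_J^{(k-1)}\cap D_N^{(k-1)}$. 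Proposition \ref{buildpullback} says nothing about this: the traces are elements of the induced arrangement, not of the building set, and for arrangement elements one only gets cleanness, under which the intersection of strict transforms can strictly contain the strict transform of the intersection.

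This missing transversality is precisely what the paper's interlocked three-part induction (its Claim (1)--(3)) supplies: it simultaneously identifies the fibers of the traces over $x\in X$ with the $\delta_I$-transforms, applies Proposition \ref{buildpullback} to the \emph{fiber} building set $\wbuildtdn$ to get transversality inside the fiber, and then uses flatness of $\Delta_N^{(i)}\to X$ (via Lemma \ref{commute} and a dimension count) to lift that to the required triple-intersection transversality upstairs, at which point Lemma \ref{basic}(2) applies. Note also that your third paragraph ("the centers are subbundles over $X$, so the tower restricts to fibers") presupposes this same compatibility rather than providing it, so it cannot be used to close the loop. To repair your argument you would have to either reproduce an induction of this interlocked type, or give an independent proof of the transversality of each center with the traces $\Delta_J^{(k-1)}\cap D_N^{(k-1)}$; the appeal to cleanness and Proposition \ref{buildpullback} alone does not suffice.
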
 
\begin{proof}Let $X$ be a smooth variety of dimension $d$. We extend the partial order of ascending dimension on $\mathcal{K}_{\mathcal{A}}= \{ \Delta_I \subset X^n | I \subset N \,\text{and}\, \sum \limits _{i\in I} a_i >1\}$ to a total order ($\vartriangleleft$). We also extend the partial order of ascending dimension on $\mathcal H_{\mathcal A}$ to a total order ($\blacktriangleleft$) compatible with ($\vartriangleleft$), that is for any $I,J\subsetneq N$, we have $\delta_I\blacktriangleleft\delta_J $ if and only if  $\Delta_I\vartriangleleft \Delta_J$. By Remark \ref{partialtotal}, any sequence of blowups of $X^n$ dictated by an extension of the partial order on $\mathcal{K}_{\mathcal{A}}$  is isomorphic to $\wmod$. For any $Y\subset X^n$ (resp. $Y\subset \PP^{d(n-1)-1}$), we denote by $Y^{(i)}$ the iterated dominant transform of $Y$ in the $i$-th step of the sequence of blowups $\wmod\rightarrow X^n$ (resp. $Bl_{\wbuildtdn}\PP^{d(n-1)-1}\rightarrow \PP^{d(n-1)-1}$); see also Definition \ref{iterated}. 
 The proof of the Lemma follows from part (1) of the following Claim and Theorem \ref{thmLi}(2). \\

\textit{Claim}: For \textit{every} $\Delta_I\subset X^n$, where $I\neq N$, and $i\geq 1$:
\begin{enumerate}
\item The fiber $\Delta_N^{(i)} \times_X x$ is isomorphic to the $(i-1)$-th iterated blowup of $\PP^{d(n-1)-1}$. 
\item The strict transform $(\Delta_N^{(i-1)}\cap \Delta_I^{(i-1)})^{\tilde{}}$\,   of $\Delta_N^{(i-1)}\cap \Delta_I^{(i-1)}$ in $(X^n)^{(i)}$ is equal to $\Delta_N^{(i)}\cap \Delta_I^{(i)}$.
\item The fiber $(\Delta_N^{(i)}\cap \Delta_I^{(i)})\times_X x $ is isomorphic to  $\delta_I ^{(i-1)}$. 
\end{enumerate}
\textit{Proof of Claim}: We proceed by induction on $i$. To prove the above claim for $i=1$, observe that $\Delta_N^{(1)}$ is the exceptional divisor of the first blowup, hence isomorphic to $\PP(N_{\Delta_N/X^n})=\PP\left(T_{X^n}/T_X\right)$ and  $\Delta_I^{(1)}\cap \Delta_N^{(1)}= \PP(N_{\Delta_N/ \Delta_I})=\PP\left(T_{\Delta_I}/T_X\right)$. Further, we have seen that the $\delta_I$ are the images of the diagonals of $(\Aff^d)^{n-1}\setminus\textbf{0}$ labeled by $I$ (i.e. the subvarieties $\{(z_1,z_2,\dots ,z_{n-1})\in(\Aff^d)^{n-1}\setminus \textbf{0}\,\,|z_i\in \Aff^d\,\,\text {and}\,\,z_i=z_j\,\, \text{for all}\,i,j\in I \}$) in the quotient $\left((\Aff^d)^{n-1}\setminus (0,0,\dots0)\right) / \! \! / \mathbb G_m \iso \PP^{d(n-1)-1}$. Therefore the embedding $\PP\left(T_{\Delta_I}/T_X\right) \hookrightarrow\PP\left(T_{X^n}/T_X\right) $ over $X$ pulls back to $\delta_I\hookrightarrow\PP^{d(n-1)-1}$ via $x\rightarrow X$. \\
\indent Now let $\Delta_J^{(i)}$ be the center of the $(i+1)$-th blowup of $X^n$ and assume the Claim is true for some $i\geq1$. We will show the Claim is true for $i+1$. By Proposition \ref{buildpullback} the set $\{\Delta_I^{(i)}|\, \Delta_I \in \mathcal K_{\mathcal{A}}\}$ is a building set (of its induced arrangement of subvarieties of the $i$-th blowup $(X^n)^{(i)}$ of $X^n$) and $\Delta_N^{(i)}\cap \Delta_J^{(i)}$ is a smooth variety. Moreover, by parts (1) and (3) of the Claim for i we see that $\Delta_N^{(i)}\cap \Delta_J^{(i)}$ and $\Delta_N^{(i)}$ have equidimensional fibers over $X$, so they are flat over $X$. Part (1) of the Claim for $i+1$ now follows from Lemma \ref{commute}. \\
 \indent We now show part (2) of the Claim for $i+1$. If $\Delta_I^{(i)}$ is the center $\Delta_J^{(i)}$ of the $i+1$-th blowup, then the Claim is immediate. Otherwise, by part (3) of the Claim for $i$, we see that the fiber of $\Delta_N^{(i)}\cap \Delta_I^{(i)}$ (resp. $\Delta_N^{(i)}\cap \Delta_I^{(i)}\cap \Delta_J^{(i)}$) over any $x\in X$ is isomorphic to $\delta_I ^{(i-1)}$ (resp. $\delta_I ^{(i-1)}\cap \delta_J^{(i-1)}$), so in particular, $\Delta_N^{(i)}\cap \Delta_I^{(i)}$ (resp. $\Delta_N^{(i)}\cap \Delta_I^{(i)}\cap \Delta_J^{(i)}$) is smooth. Now, by applying Proposition \ref{buildpullback} to the building set $\wbuildtdn$, we deduce that $\delta_I ^{(i-1)}$ is smooth and intersects transversally with  $\delta_J^{(i-1)}$ in $(\PP^{d(n-1)-1})^{(i-1)}$.  Therefore, the sum of the codimensions of $\delta_I^{(i-1)}$ and $\delta_J^{(i-1)}$ in $(\PP^{d(n-1)-1})^{(i-1)}$ is equal to the codimension of $\delta_I ^{(i-1)}\cap \delta_J^{(i-1)}$. Since $\Delta_N^{(i)}\rightarrow X$ is flat, by a straightforward dimension count, we see that the intersections of $\Delta_J^{(i)}$ with $\Delta_N^{(i)}$ and $\Delta_N^{(i)}\cap \Delta_I^{(i)}$ as well as the intersection of $\Delta_I^{(i)}$ with $\Delta_N^{(i)}$ in $(X^n)^{(i)}$ are transversal. Moreover, by Proposition \ref{buildpullback}(2), $\Delta_I^{(i)}$ either contains or intersects transversally with $\Delta_J^{(i)}$. Therefore, by using Lemma \ref{basic}(2) and (3), part (2) of the Claim for $i+1$ follows.

   Finally we show part (3) of the Claim for $i+1$. As noted above, the varieties $(\Delta_N^{(i)}\cap \Delta_I^{(i)}\cap \Delta_J^{(i)})$ and $(\Delta_N^{(i)}\cap \Delta_J^{(i)}) $ are smooth and have equidimensional fibers over $X$, hence they are flat over $X$. Lemma \ref{commute} then shows that $(\Delta_N^{(i)}\cap \Delta_I^{(i)})^{\tilde{}} \times_X x = ((\Delta_N^{(i)}\cap \Delta_I^{(i)}) \times_X x )^{\tilde{}}$. Consequently, by parts (2) and (3) of the Claim for $i+1$ and $i$ respectively, we deduce that
    $$(\Delta_N^{(i+1)}\cap \Delta_I^{(i+1)})\times_X x= (\Delta_N^{(i)}\cap \Delta_I^{(i)})^{\tilde{}} \times_X x = ((\Delta_N^{(i)}\cap \Delta_I^{(i)}) \times_X x )^{\tilde{}}=(\delta_I ^{(i-1)})^{\tilde{}}=\delta_I ^{(i)}.$$ 
    This concludes part (3) of the Claim for $i+1$. \textit{End of Proof of Claim.}
 \end{proof}
\textit{Proof of Theorem \ref{maintdn}(1)}: Immediate by Lemma \ref{tdnwon} and Theorem \ref{thmLi}.
\begin{corollary}\label{mainheavy}
For every $\mathcal A\in \mathcal D^T_{d,n}$:
 \begin{enumerate}
 \item $\wtdn$ is the iterated blowup of $\PP^{d(n-1)-1}$ at $\mathcal{H}_{\mathcal{A}}$ in ascending dimension order.
\item 
The boundary $\wtdn\setminus \wtdno$ is the union of $|\mathcal{H}_{\mathcal{A}}|$ smooth irreducible divisors $\Gamma_I$.
\item 
Each of the divisors $\Gamma_I$ is the iterated dominant transform of $\delta_I$ in the sequence of blowups 
$\wtdn\rightarrow\PP^{d(n-1)-1}$.
\item 
Any set of boundary divisors intersects transversally.
\end{enumerate}
\end{corollary}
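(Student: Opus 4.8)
The plan is to deduce all four assertions formally from the identification of $\wtdn$ with a wonderful compactification, in complete parallel with the proof of Corollary \ref{corpdn}. By Lemma \ref{bset} the set $\mathcal{H}_{\mathcal{A}}$ is a building set, and by Lemma \ref{tdnwon} the variety $\wtdn$ is isomorphic to its wonderful compactification $(\PP^{d(n-1)-1})_{\mathcal{H}_{\mathcal{A}}}$. Once this is granted, parts (1)--(4) are just the corresponding assertions of Theorem \ref{thmLi}, read off for the building set $\mathcal{H}_{\mathcal{A}}$ inside $\PP^{d(n-1)-1}$.

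For part (1), I would first check that the ascending dimension order on $\mathcal{H}_{\mathcal{A}}$ fixed in Definition \ref{defset} meets the hypothesis of Theorem \ref{thmLi}(2)(a), i.e.\ that it refines the inclusion order. This is immediate from the dimension formula $\dim \delta_I = d(n-|I|)-1$: an inclusion $\delta_J \subseteq \delta_I$ is equivalent to $I \subseteq J$, whence $|J| \geq |I|$ and $\dim \delta_J \leq \dim \delta_I$, so $\delta_J$ comes no later than $\delta_I$ in ascending dimension. Thus every total order extending the ascending dimension order is compatible with inclusions, the iterated blowup $Bl_{\mathcal{H}_{\mathcal{A}}} \PP^{d(n-1)-1}$ is well defined by Remark \ref{partialtotal}, and Theorem \ref{thmLi}(2) identifies it with the wonderful compactification. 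Combined with Lemma \ref{tdnwon}, this yields (1).

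Parts (2), (3) and (4) then require no further argument. Theorem \ref{thmLi}(1)(i) exhibits the boundary $\wtdn \setminus \wtdno$ as the union of exactly $|\mathcal{H}_{\mathcal{A}}|$ smooth irreducible divisors, one for each $\delta_I \in \mathcal{H}_{\mathcal{A}}$, which gives (2); the last sentence of Theorem \ref{thmLi}(2) identifies each such divisor $\Gamma_I$ with the iterated dominant transform of $\delta_I$ under $\wtdn \to \PP^{d(n-1)-1}$, which gives (3); and Theorem \ref{thmLi}(1)(ii) gives the transversality asserted in (4).

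There is essentially no obstacle here: all of the geometric content has already been extracted, namely that $\mathcal{H}_{\mathcal{A}}$ is a building set (Lemma \ref{bset}) and that the fibrewise description of Lemma \ref{tdnwon} realises $\wtdn$ as its wonderful compactification. The corollary is a formal unwinding of Theorem \ref{thmLi}, and the only point meriting explicit verification is the compatibility of the ascending dimension order with inclusions, so that the iterated-blowup description in (1) and (3) is unambiguous.
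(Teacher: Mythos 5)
Your proposal is correct and follows essentially the same route as the paper, whose proof of Corollary \ref{mainheavy} is a one-line citation of Lemma \ref{bset} and Theorem \ref{thmLi} (with Lemma \ref{tdnwon} supplying the identification of $\wtdn$ with the wonderful compactification, exactly as you use it). Your explicit verification that the ascending dimension order on $\mathcal{H}_{\mathcal{A}}$ refines the inclusion order, via $\dim \delta_I = d(n-|I|)-1$, is a detail the paper leaves implicit but is the right thing to check.
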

\begin{proof}
The proof is a direct consequence of Lemma \ref{bset} and Theorem \ref{thmLi}.  
\end{proof}

\textit{Proof of Corollary \ref{descrip}}.
We can take $\delta_I$ (see \ref{coincident}), where $|I|=n-1$, to be the planes isomorphic to $\PP^{d-1}$ that we fix in the statement of the Corollary. Then, for $i=2,\dots n-1$, we set $\mathcal A_i$ to be the ordered set of $n$ numbers $\{\frac{1}{n-i},\frac{1}{n-i},\dots,\frac{1}{n-i}\}$. Then, clearly we have a sequence of nested sets $ \mathcal{H}_{\mathcal{A}_2}\subset \dots\subset\mathcal{H}_{\mathcal{A}_{n-1}}$  (see Definition \ref{defset}), such that the elements of $\mathcal{H}_{\mathcal{A}_i}\setminus \mathcal{H}_{\mathcal{A}_{i-1}} $ (note $\mathcal{H}_{\mathcal{A}_1}=\emptyset$) are precisely the planes in $\PP^{d(n-1)-1}$ spanned by the $(i-1)$-tuples of the $\delta_I,\,|I|=n-1$. Also, $T_{d,n}=T_{d,n}^{\mathcal{A}_{n-1}}$ and, by Corollary \ref{mainheavy}, each $T_{d,n}^{\mathcal{A}_i}$ is precisely the iterated blowup of the set $\mathcal{H}_{\mathcal{A}_{i}}$, where $i=2,\dots n-1$, in \textit{any} order of ascending dimension (see Remark \ref{partialtotal}). Finally, by Theorem \ref{maintdn}(1), each $T_{d,n}^{\mathcal{A}_i}$ is smooth.
\qed

\subsection{Structure of the boundary}
In this Section, we give a proof of Theorem \ref{maintdn} (2). We only prove the result about $\wtdn$; by our proof it will be made apparent that the analogous statement for $\wpdn$ follows in the exact same way using its description as an iterated blowup of $({\PP}^{n-d-2})^d$ (Corollary \ref{corpdn}). Let $I\subsetneq N$ such that $\sum_{i\in I} a_i>1$. Recall that the divisor $\Gamma_I$ is the dominant transform of the variety $\delta_I$ under the sequence of blowups $\wtdn\rightarrow \bP^{d(n-1)-1}$ (see (\ref{coincident}) and Theorem \ref{mainheavy}).\\

 We will prove Theorem  \ref{maintdn} (2) by studying the iterated dominant transform of $\delta_I$ in $\wtdn$ under an alternate, yet equivalent, order of blowups of $\bP^{d(n-1)-1}$. We now give a different order $(\prec)$ on $\wbuildtdn$ (Definition \ref{defset}) as follows.
\begin{definition}\label{part}
Let $\mathcal{A}\in D^T_{d,n}$. For any $I\subsetneq N$, we define the following subsets of $\wbuildtdn$:
\begin{itemize}
\item  $\mathcal{H}_1^I:=\{\delta_J\in \wbuildtdn \,| J\supsetneq I\}$ .
\item $\mathcal{H}_2^I:=\{\delta_J\in \wbuildtdn \,| J\cap I=\emptyset \}$.
\item 
$\mathcal{H}_3^I:=\{\delta_J\in \wbuildtdn \,| \emptyset \neq J \cap I \subsetneq J\}$.
\item 
$\mathcal{H}_4^I:=\{\delta_J\in \wbuildtdn \,| J\subseteq I\}$.
\end{itemize}
We give a partial order $(\prec)$ on the set $\wbuildtdn$, which is defined by the following rules:
\begin{itemize} 
\item Let $i\in\{1,2,3,4\}$ and $\delta_J,\delta_{J'} \in \mathcal{H}_i^I$ such that $|J'|<|J|$. Then  $\delta_{J}\prec\delta_{J'}$.
\item $\delta_{J_i}\prec\delta_{J_j}$ for any $\delta_{J_i}\in \mathcal{H}_i^I$ and $\delta_{J_j}\in \mathcal{H}_j^I$ such that $i<j$. 
 \end{itemize}
  \end{definition}
It is clear that the union of the sets $\mathcal{H}_i^I,\,i\in\{1,2,3,4\}$ is equal to $\wbuildtdn$.
\begin{lemma}\label{anyextension} For any  total order on $\wbuildtdn$ that extends the partial order $(\prec)$, the set $\wbuildtdn$ satisfies hypothesis (b) of Theorem \ref{thmLi}(2). 
\end{lemma}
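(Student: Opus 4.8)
The plan is to apply hypothesis (b) of Theorem~\ref{thmLi}(2): it suffices to show that for every total order refining $(\prec)$, each initial segment $\mathcal{G}'$ (the set of the first $k$ elements, for some $k$) is a building set. I will reduce this to a combinatorial closure property of the index sets. Recall from the proof of Lemma~\ref{bset} that $\wbuildtdn$ is indexed by $\mathcal{I}:=\{I\subsetneq N: |I|\ge 2,\ \sum_{i\in I}a_i>1\}$ via $I\mapsto\delta_I$, that $\delta_{J_1}\cap\delta_{J_2}=\delta_{J_1\cup J_2}$ when $J_1\cap J_2\neq\emptyset$ and $J_1\cup J_2\subsetneq N$, that disjoint index sets give transversal intersections, and that $\delta_{J_1}\cap\delta_{J_2}=\emptyset$ when $J_1\cap J_2\neq\emptyset$ but $J_1\cup J_2=N$.

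The first step is the criterion: if a subfamily $\mathcal{I}'\subseteq\mathcal{I}$ is closed under connected union — that is, $J_1,J_2\in\mathcal{I}'$ with $J_1\cap J_2\neq\emptyset$ and $J_1\cup J_2\subsetneq N$ imply $J_1\cup J_2\in\mathcal{I}'$ — then $\{\delta_J:J\in\mathcal{I}'\}$ is a building set. The proof repeats the argument of Lemma~\ref{bset}: the intersections of the $\delta_J$, $J\in\mathcal{I}'$, are clean linear subspaces and hence form an arrangement; any nonempty such intersection $S$ rewrites uniquely as $\delta_{K_1}\cap\cdots\cap\delta_{K_m}$ with the $K_j$ pairwise disjoint connected unions of the participating sets, closure forces $K_j\in\mathcal{I}'$, and then $\delta_{K_1},\dots,\delta_{K_m}$ are exactly the minimal elements of $\{\delta_J:J\in\mathcal{I}'\}$ containing $S$ (a generic point of $S$ admits no coincidences beyond those internal to a single $K_j$), meeting transversally in $S$ because their index sets are disjoint. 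Closure under pairwise connected union upgrades to closure under arbitrary connected unions by adjoining one overlapping set at a time, each partial union staying $\subsetneq N$.

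It remains to show each initial segment $\mathcal{G}'$ is closed under connected union. Since the total order refines $(\prec)$, for overlapping $J_1,J_2$ with $K:=J_1\cup J_2\subsetneq N$ it is enough to prove $\delta_K\preceq\delta_{J_1}$ or $\delta_K\preceq\delta_{J_2}$ in $(\prec)$ (with $\preceq$ meaning $\prec$ or equality): then $\delta_K$ precedes in the total order whichever of $\delta_{J_1},\delta_{J_2}$ it dominates, so $\delta_K\in\mathcal{G}'$ as soon as both $\delta_{J_1},\delta_{J_2}\in\mathcal{G}'$. Interpreting the four sets $\mathcal{H}^I_i$ of Definition~\ref{part} as the genuine partition of $\wbuildtdn$ recording whether $J$ strictly contains $I$, is disjoint from $I$, properly crosses $I$, or is contained in $I$, and writing $\tau(J)\in\{1,2,3,4\}$ for the block of $J$, the order $(\prec)$ blows up smaller $\tau$ first and, within a block, larger cardinality first. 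Since $K\supseteq J_i$ forces $|K|\ge|J_i|$, the desired inequality follows from
\begin{equation*}
\tau(J_1\cup J_2)\ \le\ \max\bigl(\tau(J_1),\tau(J_2)\bigr),
\end{equation*}
which I verify by cases on $\tau(K)$: if $K\subseteq I$ then $J_1,J_2\subseteq I$ and $\tau(J_1)=\tau(J_2)=4$; if $K\cap I=\emptyset$ then $J_1,J_2$ are disjoint from $I$ and $\tau(J_1)=\tau(J_2)=2$; if $K\supsetneq I$ the bound is automatic; and if $K$ properly crosses $I$ then no $J_i$ can satisfy $J_i\supsetneq I$ (which would give $K\supsetneq I$) and not both can be disjoint from $I$ (which would give $K\cap I=\emptyset$), so some $\tau(J_i)\ge 3=\tau(K)$. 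Choosing $j$ with $\tau(J_j)=\max(\tau(J_1),\tau(J_2))$, we get $\tau(K)\le\tau(J_j)$ and $K\supseteq J_j$, whence $\delta_K\preceq\delta_{J_j}$.

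The main obstacle is conceptual: $(\prec)$ is deliberately not compatible with inclusion — a low-cardinality set in block $1$ is blown up before a high-cardinality set in block $4$, reversing the containment order of the corresponding $\delta$'s — so hypothesis (a) of Theorem~\ref{thmLi}(2) is unavailable and one must use (b). The whole content of the lemma is then the monotonicity $\tau(J_1\cup J_2)\le\max(\tau(J_1),\tau(J_2))$, expressing that merging two overlapping index sets never pushes their union into a strictly later block than both; this is precisely what reconciles the four-block grouping with the connected-union closure demanded by (b). One should also note, for $(\prec)$ to be a well-defined partial order in the first place, that the $\mathcal{H}^I_i$ must be read as a partition, so that block $3$ excludes the sets containing $I$.
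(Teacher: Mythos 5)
Your proof is correct, and both of its main steps hold up under scrutiny: the criterion that a subfamily of $\wbuildtdn$ whose index sets are closed under connected unions (with union still a proper subset of $N$) is a building set, and the verification that every initial segment of any total order extending $(\prec)$ has this closure property, via the block monotonicity $\tau(J_1\cup J_2)\le\max(\tau(J_1),\tau(J_2))$ combined with $|J_1\cup J_2|\ge|J_j|$. As for comparison: the paper does not actually write out a proof of this lemma; it says only that the argument is ``similar to the proof of Theorem \ref{maintdn}(1)'', i.e., to the geometric induction of Lemma \ref{tdnwon}, which works fiberwise inside the weighted Fulton--MacPherson space over $X^n$. Your route is different in character: it is purely combinatorial, reuses the disjoint-cluster decomposition from the proof of Lemma \ref{bset}, and isolates in the displayed monotonicity inequality exactly why initial segments of $(\prec)$ remain building sets. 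That makes the lemma self-contained at the level of linear subspaces of $\PP^{d(n-1)-1}$, with no recourse to the upstairs geometry, which is arguably a gain in transparency over the paper's implied argument.

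One correction to a side remark (it does not affect the proof): your claimed witness for the incompatibility of $(\prec)$ with inclusion --- a low-cardinality block-$1$ set blown up before a high-cardinality block-$4$ set --- is not a reversal at all. If $J\in\mathcal{H}^I_1$ and $J'\in\mathcal{H}^I_4$, then $J\supsetneq I\supseteq J'$, so $\delta_J\subsetneq\delta_{J'}$, and both $(\prec)$ and any inclusion-compatible order put $\delta_J$ first. The genuine reversals occur between blocks $2$ and $3$: for example, with $I=\{1,2\}$, $J'=\{3,4\}\in\mathcal{H}^I_2$ and $J''=\{2,3,4\}\in\mathcal{H}^I_3$ (both lie in $\wbuildtdn$ when, say, all weights equal $1$ and $n\ge 5$), one has $\delta_{J''}\subsetneq\delta_{J'}$, yet $(\prec)$ blows up $\delta_{J'}$ first. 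So hypothesis (a) of Theorem \ref{thmLi}(2) is indeed unavailable, but for this reason rather than the one you state. Your other observation --- that $\mathcal{H}^I_3$ must be read as excluding the sets containing $I$, so that the four blocks partition $\wbuildtdn$ and $(\prec)$ is irreflexive --- is correct and worth keeping.
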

\begin{proof}The proof is similar to the proof of Theorem \ref{maintdn} (1) so we omit it.
\end{proof}
In the remainder of this Section, we fix a \textit{total} order on $\wbuildtdn$ extending $(\prec)$ which we also denote by $(\prec)$ for convenience. The following Corollary is an immediate consequence of Lemma \ref{anyextension} and Theorem \ref{thmLi}(2).
\begin{corollary} \label{neworder}$\wtdn$ is isomorphic to the iterated blowup of $\PP^{d(n-1)-1}$ at ($\wbuildtdn$, $(\prec)$).
\end{corollary}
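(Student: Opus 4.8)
The plan is to assemble three facts that are already in place and to observe that Corollary \ref{neworder} is a formal matching of two descriptions of the same variety. Since the statement only asserts an isomorphism, and both sides will be exhibited as wonderful compactifications (resp. iterated blowups) of the single building set $\wbuildtdn$, the proof reduces to invoking Theorem \ref{thmLi}(2) to make the blowup order irrelevant up to isomorphism.

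First I would check that the object appearing on the right-hand side, the iterated blowup of $\PP^{d(n-1)-1}$ at $(\wbuildtdn,\,(\prec))$, is well-defined. By Lemma \ref{anyextension}, every total order extending the partial order $(\prec)$ of Definition \ref{part} satisfies hypothesis (b) of Theorem \ref{thmLi}(2). Hence the hypotheses of Remark \ref{partialtotal} are met, and Definition \ref{defpart} applies: the iterated blowup in the order $(\prec)$ is by definition the iterated blowup in \emph{any} total order extending $(\prec)$, and Theorem \ref{thmLi}(2) guarantees that all such choices produce isomorphic varieties. This disposes of the only potential ambiguity in the statement.

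Next I would fix one total order extending $(\prec)$. Applying Theorem \ref{thmLi}(2), in its case (b), to the building set $\wbuildtdn$ of $\PP^{d(n-1)-1}$ arranged in this order, I obtain that the iterated blowup $Bl_{\wbuildtdn}\PP^{d(n-1)-1}$ in that order is isomorphic to the wonderful compactification $(\PP^{d(n-1)-1})_{\wbuildtdn}$. By the well-definedness established in the previous step, this wonderful compactification is precisely the iterated blowup at $(\wbuildtdn,\,(\prec))$.

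Finally I would invoke Lemma \ref{tdnwon}, which identifies $\wtdn$ with the wonderful compactification of $\mathcal H_{\mathcal A}=\wbuildtdn$. Chaining the two isomorphisms yields $\wtdn\cong Bl_{(\wbuildtdn,\,(\prec))}\PP^{d(n-1)-1}$, as claimed. I expect no substantive obstacle here: the genuine geometric content has already been discharged, namely in Lemma \ref{tdnwon} (that $\wtdn$ is wonderful) and in Lemma \ref{anyextension} (that $(\prec)$ is an admissible blowup order), so this corollary only records that, under these two inputs, Theorem \ref{thmLi}(2) renders the chosen order of blowups immaterial up to isomorphism.
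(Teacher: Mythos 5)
Your proposal is correct and follows the paper's own route: the paper likewise deduces the corollary immediately from Lemma \ref{anyextension} together with Theorem \ref{thmLi}(2), with Lemma \ref{tdnwon} supplying the identification of $\wtdn$ as the wonderful compactification of $\wbuildtdn$. Your additional remarks on well-definedness via Remark \ref{partialtotal} and Definition \ref{defpart} merely make explicit what the paper leaves implicit.
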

Corollary \ref{neworder} allows us to consider the iterated dominant transform of $\delta_I$ along the iterated blowup of  $\PP^{d(n-1)-1}$ at $\wbuildtdn$ in the order $(\prec)$ instead of the order $(<)$ given in Definition \ref{defset}. 
\begin{definition}  Let $i\in\{1,2,3,4\}$.\begin{enumerate}
\item We define $\textbf{P}_I^{[i]}$ to be the iterated blowup of $\PP^{d(n-1)-1}$ at $(\mathcal{H}^I_1\cup \dots \cup \mathcal{H}^I_i,(\prec))$.
\item Let $V$ be a subvariety of $\PP^{d(n-1)-1}$. We define $V^{[i]}$ to be the iterated dominant of $V$ in $\textbf{P}_I^{[i]}$ under the sequence of blowups $\textbf{P}_I^{[i]}\rightarrow\PP^{d(n-1)-1}$.
\end{enumerate} 
\end{definition}

We have following results, whose proofs can be found in the Appendix.

\begin{lemma}\label{dominant}
The iterated strict transform ${\delta_I}^{[3]}$ of $ \delta_I \cong \PP^{d(n-|I|)-1}$ under $\textbf{P}_I^{[3]}\rightarrow \PP^{d(n-1)-1}$  is isomorphic to $T^{\mathcal{A}_+(I^c)}_{d,n-|I|+1}$. 
\end{lemma}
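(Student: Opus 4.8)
The plan is to identify $\delta_I\cong\PP^{d(n-|I|)-1}$ with the base projective space of the smaller moduli problem on $n-|I|+1$ points via Lemma \ref{base}, and then to check that the three blocks $\mathcal H_1^I,\mathcal H_2^I,\mathcal H_3^I$ of blowups cut out on $\delta_I$ exactly the building set $\mathcal H_{\mathcal A_+(I^c)}$, so that its iterated strict transform $\delta_I^{[3]}$ becomes the wonderful compactification $T^{\mathcal A_+(I^c)}_{d,n-|I|+1}$. First I would fix the dictionary of labels. Since $\dim\delta_I=d(n-|I|)-1=d(n-|I|+1)-d-1$, Lemma \ref{base} identifies $\delta_I$ with the $\PP^{dn'-d-1}$ attached to $n':=n-|I|+1$ points whose label set is $N':=I^c\cup\{n+1\}$, where the extra label $n+1$ (of weight $a_{n+1}=1$) records the common location $p_I$ of the collided points of $I$; one checks $\mathcal A_+(I^c)\in\mathcal D^T_{d,n'}$ since $a_{n+1}=1$ forces the total weight to exceed $1$. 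Under this identification, for $K\subsetneq N'$ the diagonal $\delta'_K$ of the smaller problem is the locus in $\delta_I$ where the $K$-points coincide (with $p_{n+1}=p_I$).

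Next I would analyze the restriction of each center to $\delta_I$. For $J\in\mathcal H_1^I$, i.e.\ $J\supsetneq I$, we have $\delta_J\subsetneq\delta_I$, so (dominant transform being strict transform here) blowing it up replaces the current transform of $\delta_I$ by its blowup along $\delta_J$, which under the dictionary is the diagonal $\delta'_{(J\setminus I)\cup\{n+1\}}$ containing the label $n+1$. For $J\in\mathcal H_2^I$, i.e.\ $J\cap I=\emptyset$, the variety $\delta_J$ meets $\delta_I$ cleanly along $\delta_I\cap\delta_J$, which corresponds to the diagonal $\delta'_J$ \emph{not} containing $n+1$; since the intersection is clean, the strict transform of $\delta_I$ is its blowup along $\delta'_J$. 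Collecting the centers that genuinely act on $\delta_I$ then yields precisely the diagonals $\delta'_K$ with $\sum_{k\in K}a'_k>1$: those with $n+1\in K$ arise from $\mathcal H_1^I$ (the inequality being automatic because $a_{n+1}=1$), and those with $n+1\notin K$ arise from $\mathcal H_2^I$ (inheriting the weight inequality defining $\wbuildtdn$). This is exactly the building set $\mathcal H_{\mathcal A_+(I^c)}$ of Definition \ref{defset} for the smaller problem.

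The delicate point, which I expect to be the main obstacle, is the third block $\mathcal H_3^I$. Here $\emptyset\neq J\cap I\subsetneq J$, so the two clusters share a label and $\delta_I\cap\delta_J=\delta_{I\cup J}$ with $I\cup J\supsetneq I$; thus the intersection lies in the center $\delta_{I\cup J}\in\mathcal H_1^I$, which $(\prec)$ blows up \emph{before} $\delta_J$. Using that $\delta_I$ and $\delta_J$ meet cleanly along $\delta_{I\cup J}$, blowing up $\delta_{I\cup J}$ separates their strict transforms, which become disjoint (in local clean coordinates the two strict transforms meet the exceptional $\PP^{a+b-1}$ in the complementary loci $\{x=0\}$ and $\{y=0\}$). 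I must therefore show that this separation, once achieved in the $\mathcal H_1^I$-stage, survives the intervening $\mathcal H_2^I$-blowups, so that every $\mathcal H_3^I$-center misses the current transform of $\delta_I$ and hence leaves its dominant transform unchanged. I would run the entire argument as an induction on blowup steps, in the same spirit as the Claim in the proof of Lemma \ref{tdnwon}: Proposition \ref{buildpullback} keeps the relevant intersections clean and transversal at each stage, and Lemmas \ref{commute} and \ref{basic} let me commute the ambient blowups with restriction to $\delta_I$, guaranteeing that the centers restrict as described above.

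Finally I would conclude. The order $(\prec)$ restricts to an order on $\mathcal H_{\mathcal A_+(I^c)}$ that blows up the $n+1$-diagonals first and the remaining ones afterwards; by the argument of Lemma \ref{anyextension} this order satisfies hypothesis (b) of Theorem \ref{thmLi}(2), so by Remark \ref{partialtotal} the resulting iterated blowup is order-independent and equals the wonderful compactification of $\mathcal H_{\mathcal A_+(I^c)}$. By Lemma \ref{tdnwon} and Corollary \ref{mainheavy} this wonderful compactification is $T^{\mathcal A_+(I^c)}_{d,n-|I|+1}$, and the inductive analysis above shows it coincides with $\delta_I^{[3]}$, which proves the statement.
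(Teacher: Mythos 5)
Your proposal is correct and follows essentially the same route as the paper: the paper likewise identifies the traces of the $\mathcal H_1^I$- and $\mathcal H_2^I$-centers on $\delta_I$ with the building set $\mathcal H_{\mathcal A_+(I^c)}$ (its Lemma \ref{bset2} plays the role of your dictionary-and-order argument), proves via its Lemma \ref{helpful} that after the $\mathcal H_1^I$-stage the transform of $\delta_I$ is disjoint from all $\mathcal H_3^I$-centers so that $\delta_I^{[3]}\cong\delta_I^{[2]}$, and concludes by Theorem \ref{thmLi}(2) together with Lemma \ref{tdnwon}. The only cosmetic difference is in the separation step: where you invoke cleanness of $\delta_I\cap\delta_J=\delta_{I\cup J}$ directly, the paper replaces $\delta_J$ by the larger $\delta_{J'}$ with $J'=\{j\}\cup(J\setminus I)$, $j\in J\cap I$, so that the intersection with $\delta_I$ becomes transversal and its Lemma \ref{basic}(2) applies.
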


\begin{lemma}\label{normal}
\begin{enumerate}
\item  The normal bundle of ${\delta_I}^{[3]}$ in $\textbf{P}_I^{[3]}$ is isomorphic to $\bigoplus\limits_{i=1}^{d(|I|-1)} \mathcal{O}_{{\delta_I}^{[3]}}(p-1)$, where $p$ is the cardinality of $\mathcal{H}^I_1$.
\item For any $\delta_{I'}\in\mathcal{H}^I_4$, the normal bundle of ${\delta_I}^{[3]}$ in ${\delta_{I'}}^{[3]}$ is isomorphic to $\bigoplus\limits_{i=1}^{d(|I|-|I'|)} \mathcal{O}_{{\delta_I}^{[3]}}(p-1)$, where $p$ is the cardinality of $\mathcal{H}^I_1$.
\end{enumerate}
\end{lemma}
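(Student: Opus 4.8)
The plan is to compute $N_{{\delta_I}^{[3]}/\textbf{P}_I^{[3]}}$ by starting from the normal bundle of the linear subspace $\delta_I\cong\PP^{d(n-|I|)-1}$ in $\PP^{d(n-1)-1}$ and tracking how it is modified by each of the three batches of blow-ups indexed by $\mathcal{H}_1^I,\mathcal{H}_2^I,\mathcal{H}_3^I$, performed in the order $(\prec)$. Since $\delta_I$ is cut out by linear equations (see \eqref{coincident}), its normal bundle in $\PP^{d(n-1)-1}$ is $\mathcal O_{\delta_I}(1)^{\oplus d(|I|-1)}$, the exponent being its codimension. The engine of the computation is the standard normal-bundle formula for strict transforms: if $\tilde Y$ is the strict transform of a smooth $Y$ under the blow-up along a smooth center $Z\subsetneq Y$, then $N_{\tilde Y}\cong\sigma^*N_Y\otimes\mathcal O(-E_Y)$ with $E_Y=\tilde Y\cap E$, whereas if $Z$ meets $Y$ transversally then $N_{\tilde Y}\cong\sigma^*N_Y$ with no twist. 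First I would record these two facts together with the transversality and clean-intersection statements of Proposition \ref{buildpullback} and Definition \ref{factors}.

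Next I would treat the three batches separately. The elements $\delta_J\in\mathcal{H}_1^I$ have $J\supsetneq I$, hence $\delta_J\subsetneq\delta_I$; since strict transforms preserve inclusions, all $p=|\mathcal{H}_1^I|$ of these centers remain inside the iterated transform of $\delta_I$ throughout the first batch, so each contributes exactly one twist by $-E_J$, applied uniformly to all $d(|I|-1)$ summands. For $\mathcal{H}_2^I$, where $J\cap I=\emptyset$, the locus $\delta_J\cap\delta_I$ is a clean transversal intersection, so by the transversal case these blow-ups modify $\delta_I$ (blowing it up along $\delta_I\cap\delta_J$) but leave its normal bundle untwisted. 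For $\mathcal{H}_3^I$, where $\emptyset\neq J\cap I\subsetneq J$ and $J\not\supseteq I$, the key observation is that $\delta_I\cap\delta_J=\delta_{I\cup J}$ (an index in $J\cap I$ forces the $I$-cluster and the $J$-cluster to coincide), and $\delta_{I\cup J}\in\mathcal{H}_1^I$ (or is empty when $I\cup J=N$). Thus $\delta_I\cap\delta_J$ has already been blown up during the first batch, which separates the transforms of $\delta_I$ and $\delta_J$; by the time the third batch is reached these centers are disjoint from the transform of $\delta_I$ and contribute nothing.

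Assembling these, I would obtain $N_{{\delta_I}^{[3]}/\textbf{P}_I^{[3]}}\cong\big(\tau^*\mathcal O_{\delta_I}(1)\otimes\mathcal O(-\sum_{J\in\mathcal{H}_1^I}E_J)\big)^{\oplus d(|I|-1)}$, where $\tau:{\delta_I}^{[3]}\to\delta_I$ is the induced blow-down; by Lemma \ref{dominant}, ${\delta_I}^{[3]}\cong T^{\mathcal A_+(I^c)}_{d,n-|I|+1}$, and the conventions fixed by that identification identify this twisted bundle with $\mathcal O_{{\delta_I}^{[3]}}(p-1)$, the $p$ exceptional twists playing against the single $\tau^*\mathcal O(1)$ to produce the $(p-1)$. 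This gives part (1). For part (2) I would run the identical argument relatively: $I'\subseteq I$ gives $\delta_I\subset\delta_{I'}$ with $N_{\delta_I/\delta_{I'}}=\mathcal O_{\delta_I}(1)^{\oplus d(|I|-|I'|)}$, and every $J\in\mathcal{H}_1^I$ still satisfies $J\supsetneq I'$, so the same $p$ centers lie in $\delta_{I'}$ and twist the relative normal bundle, while the $\mathcal{H}_2^I$ and $\mathcal{H}_3^I$ batches again contribute nothing; the relative computation then yields $\mathcal O_{{\delta_I}^{[3]}}(p-1)^{\oplus d(|I|-|I'|)}$.

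The main obstacle is not the bookkeeping of the group-$1$ twists, which is mechanical once the normal-bundle formula is in hand, but rigorously justifying that batches $2$ and $3$ contribute nothing. This rests on (i) the cleanness/transversality of the iterated transforms, which I would extract from Proposition \ref{buildpullback}, and (ii) the separation statement for batch $3$: that blowing up the intersection $\delta_{I\cup J}$ of two cleanly-meeting subvarieties during the first batch makes their later transforms disjoint. A secondary, purely bookkeeping hazard is the sign normalization: one must fix the meaning of $\mathcal O_{{\delta_I}^{[3]}}(1)$ coming from Lemma \ref{dominant} consistently, so that the accumulated twist of $\tau^*\mathcal O(1)$ by the $p$ exceptional divisors is correctly identified with $\mathcal O(p-1)$ and not with its dual.
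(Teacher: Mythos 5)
Your plan is essentially the paper's own proof: the paper likewise starts from the normal bundle of the linear subspace $\delta_I$, twists once for each of the $p$ blow-ups along the centers in $\mathcal{H}^I_1$ (via its Lemma \ref{basic}(1)(b)), and disposes of the batches $\mathcal{H}^I_2$ and $\mathcal{H}^I_3$ by showing that those centers meet the iterated transform of $\delta_I$ transversally or not at all (its Lemma \ref{helpful}(iii)--(iv), which is exactly the separation fact you re-derive), so that Lemma \ref{basic}(1)(a) applies, and part (2) is the same computation run relative to $\delta_{I'}$, just as you propose. The only divergence is the sign bookkeeping you yourself flagged: the paper starts from $\mathcal{O}(-1)^{\oplus d(|I|-1)}$ and twists by $+E$, while your standard conventions give $\bigl(\tau^*\mathcal{O}_{\delta_I}(1)\otimes\mathcal{O}(-\sum_{J\in\mathcal{H}^I_1}E_J)\bigr)^{\oplus d(|I|-1)}$; since the lemma is only ever invoked through the fact that all $d(|I|-1)$ summands are one and the same line bundle (see the proof of Theorem \ref{maintdn}(2), where it is untwisted before projectivizing), this discrepancy is immaterial.
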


Now, we are ready to finish the proof of our main theorem.\\
\textit{Proof of Theorem \ref{maintdn}(2)}:  We need to keep track of the iterated dominant transform of ${\delta_I}^{[3]}\subset \textbf{P}_I^{[3]}$ in the sequence of blowups $\textbf{P}_I^{[4]}\rightarrow \textbf{P}_I^{[3]}$ in the order $(\prec)$. By definition, the first blowup in this sequence has center ${\delta_I}^{[3]}$. As a result, the dominant transform of ${\delta_I}^{[3]}$ in this first blowup is the exceptional divisor. The latter is equal to $\PP(N_{{\delta_I}^{[3]}/\textbf{P}^{[3]}})$, which in turn is equal to (cf Lemma \ref{normal}(1)) 

\begin{align*}
\PP\left(\bigoplus\limits_{i=1}^{d(|I|-1)} \mathcal{O}_{{\delta_I}^{[3]}}(p-1)\right)  & \iso \PP\left((\bigoplus\limits_{i=1}^{d(|I|-1)} \mathcal{O}_{{\delta_I}^{[3]}}(p-1))\otimes  \mathcal{O}_{{\delta_I}^{[3]}}(1-p))\right)
=\PP\left(\bigoplus\limits_{i=1}^{d(|I|-1)} \mathcal{O}_{{\delta_I}^{[3]}}\right)\\
&=
\PP^{d(|I|-1)-1}
\times
{\delta_I}^{[3]} 
=
 \PP^{d(|I|-1)-1} \times T^{\mathcal{A}_+(I^c)}_{d,n-|I|+1} 
\end{align*}
where the last equality follows from Lemma \ref{dominant}. Moreover the dominant transform of ${\delta_{I'}}^{[3]}$ in the blowup of $\textbf{P}_I^{[3]}$ along ${\delta_I}^{[3]}$ intersects the above exceptional divisor in $\PP(N_{{\delta_I}^{[3]}/{\delta_{I'}}^{[3]}})$. As above, using Lemmas \ref{dominant} and \ref{normal}(2) we have   
\begin{center}
$\PP(N_{{\delta_I}^{[3]}/{\delta_{I'}}^{[3]}})\iso \PP^{d(|I|-|I'|)-1} \times T^{\mathcal{A}_+(I^c)}_{d,n-|I|+1} $
\end{center}
Therefore, the iterated dominant transform of ${\delta_I}^{[3]}$ in $\textbf{P}_I^{[4]}$, that is $\Gamma_I$, is isomorphic to the product 
\begin{center} 
$
Bl_{\mathcal{H}_{\mathcal{A}(I)}}\PP^{d(|I|-1)-1} \times T^{\mathcal{A}_+(I^c)}_{d,n-|I|+1} 
$
\end{center}
where $Bl_{\mathcal{H}_{\mathcal{A}(I)}}\PP^{d(|I|-1)-1}$ is the iterated blowup of $\PP^{d(|I|-1)-1}$ at the iterated strict transforms of the varieties that belong to the set
\begin{center}
$\{\PP^{d(|I|-|I'|)-1}\subset \PP^{d(|I|-1)-1}|\, I'\subsetneq I \,\text{and} \sum\limits_{i\in I'} a_i>1 \}$
\end{center}
in ascending dimension order. This is precisely the set $\mathcal{H}_{\mathcal{A}(I)}$ in Definition \ref{defset} for input data $d,I$ and $ \mathcal{A}(I)$, where $\mathcal{A}(I)$ is  defined in Section \ref{weights}. Therefore, by Corollary \ref{mainheavy}(1), we deduce the statement of Theorem.
\qed

\subsection{Proof of Proposition \ref{thm:hyper}}\label{sec:hyper}

 The iterative blow up construction of $\overline{M}^{m}(\hatbP^2,n)$ is described 
in  \cite[Thm. 5.5.2, Thm 5.5.3]{alexeev2013moduli} and it is induced by increasing the weights used in Section \ref{conspdn}
to the weights $(1, \ldots, 1)$. 
By  \cite[Thm. 5.5.2(2)]{alexeev2013moduli}, if the wall crossing is not an isomorphism, then it is a blow up  whose center is supported in  the loci parametrizing the shas that are not stable with respect to the new weights. The set of the possible weights $(b_1, \ldots, b_n) \in \mathcal D(3,n)$ has a chamber decomposition induced by two types of walls:
\begin{itemize}
\item  
$\sum_{i \in I} b_i=1$  for  all $I \subset \{ 1, \ldots, n \}$, 
$2 \leq |I| \leq n-3$.
 After crossing  this type of walls for a fixed $I$, the shas where at least one component has coincident lines $l_{i}=l_{j}$ for $i, j \in I$ become unstable with respect to the new weights. We define the loci parametrizing such pairs as  $B_I$.  
\item 
$\sum_{i \in J} b_i=2$  for  all $J \subset \{ 1, \ldots, n \}$, 
$3 \leq |J| \leq n-2$. 
 After crossing  this type of walls for a fixed $J$, the shas where at least one component has concurrent lines $\{ l_i \; | \; i \in J \}$ at a point 
become unstable with respect to the new weights.  We will not consider this locus or pairs any further.
\end{itemize} 
The order of the blow ups arises by considering the order of the walls in the  weight domain. We first cross the walls associated to $(n-3)$ coincident lines
that is 
$\sum_{i \in I} b_i=1$  for  all $I \subset \{ 1, \ldots, n \}$ with
$|I| = n-3$. Afterwards, we cross the walls associated with $(n-2)$ concurrent lines, followed by the wall associated with $(n-2)$ coincident lines, and so forth. 
\begin{figure}[h!]
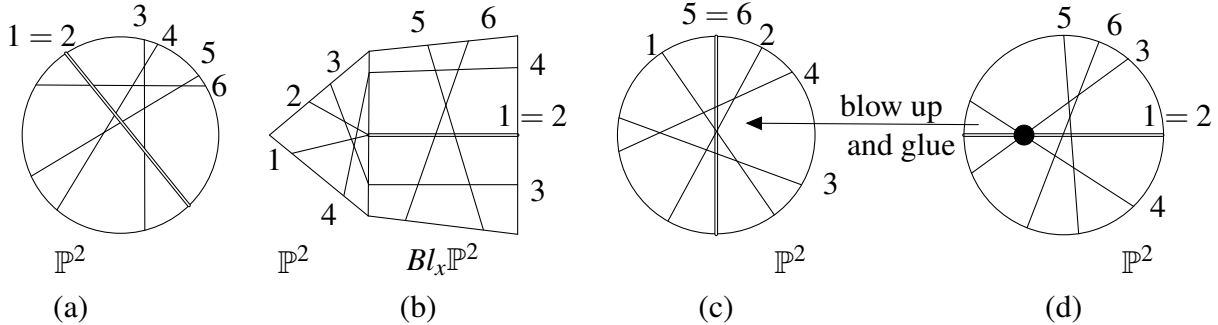

\tikzpicture[line cap=round,line join=round,>=triangle 45,x=1.0cm,y=1.0cm,
scale=0.66
]
\draw(5+1,7) circle (1.98cm);
\draw (5.49+1,5.08)-- (5.48+1,8.92);
\draw (3.3+1,8.01)-- (6.71+1,7.99);
\draw (3.72+1,5.49)-- (5.75+1,8.83);
\draw (3.2+1,6.17)-- (6.58+1,8.2);
\draw[double] (3.9+1,8.65)-- (6.38+1,5.58);

\draw[double] (11,7)-- (14,7);
\draw (9,7)-- (11.01,8.69);
\draw (9,7)-- (10.99,5.37);
\draw (10.99,5.37)-- (11.01,8.69);
\draw (11.01,8.69)-- (14,9);
\draw (14,5)-- (14,9);
\draw (10.99,5.37)-- (14,5);
\draw (9.8,7.67)-- (11,7);
\draw (9.45,6.63)-- (11,7);
\draw (10.5,5.78)-- (11.01,8.26);
\draw (10.23,8.03)-- (11,6);
\draw (11.01,8.26)-- (14,8.36);
\draw (11,6)-- (14,6);
\draw (11.74,5.28)-- (13.01,8.9);
\draw (12.19,8.81)-- (13.31,5.09);
\draw(18,7) circle (1.99cm);
\draw[double]  (18,5)-- (18,8.99);
\draw (16.91,8.66)-- (19.17,5.39);
\draw (17.03,5.27)-- (18.92,8.76);
\draw (16.04,6.67)-- (19.53,8.27);
\draw (16.04,7.35)-- (19.71,6);

\draw[double]  (21+1.99,7)-- (25.99+1,7);
\draw(24+1,7) circle (2.01cm);
\draw (22+1.14,6.23)-- (25.3+1,8.54);
\draw (22.11+1,7.69)-- (25.41+1,5.56);
\draw (23.26+1,5.13)-- (24.71+1,8.88);
\draw  (24+1,9.01)-- (24.3+1,5.01);

\draw[color=black] (5.4+1,9.4) node {$3$}; 
\draw[color=black] (6.99+1,8) node {$6$}; 
\draw[color=black] (6+1,9) node {$4$}; 
\draw[color=black] (6.78+1,8.66) node {$5$}; 
\draw[color=black] (9.5,7.8) node {$2$}; 
\draw[color=black] (9.1,6.5) node {$1$}; 
\draw[color=black] (14.29,7.4) node {$1=2$}; 
\draw[color=black] (10.3,8.49) node {$3$};
\draw[color=black] (14.41,8.42) node {$4$}; 
\draw[color=black] (14.41,5.87) node {$3$}; 
\draw[color=black] (13.42,9.37) node {$6$}; 
\draw[color=black] (12.0,9.29) node {$5$}; 
\draw[color=black] (18.02,9.45) node {$5=6$}; 
\draw[color=black] (16.7,8.82) node {$1$}; 
\draw[color=black] (19.03,9.03) node {$2$}; 
\draw[color=black] (19.93,8.24) node {$4$}; 
\draw[color=black] (26.28+1,7.4) node {$1=2$}; 
\draw[color=black] (25.6+1,8.7) node {$3$}; 
\draw[color=black] (25.9+1,5.56) node {$4$}; 
\draw[color=black] (25.03+1,9.24) node {$6$}; 
\draw[color=black] (24.02+1,9.38) node {$5$}; 
\draw[color=black] (3.4+1,9) node {$1=2$}; 
\draw[color=black] (10.2,5.4) node {$4$}; 
\draw[color=black] (20.31,6.03) node {$3$}; 
\draw[color=black] (4.0+1,4.5) node {$\PP^2$}; 
\draw[color=black] (9.5,4.5) node {$\PP^2$}; 
\draw[color=black] (12.5,4.5) node {$Bl_x\PP^2$}; 
\draw[color=black] (19.5,4.5) node {$\PP^2$}; 
\draw[color=black] (25.5+1,4.5) node {$\PP^2$}; 
\draw [->] (22.28+1,7.21) -- (17.59+1,7.24);
\draw[color=black] (20.53+1,7.54) node {blow up};
\draw[color=black] (20.73+1,6.74) node {and glue};
\draw [fill]  (24.2,7)  circle [radius=0.2];
\draw[color=black] (4.0+1,3.5) node {(a)}; 
\draw[color=black] (12,3.5) node {(b)}; 
\draw[color=black] (18,3.5) node {(c)}; 
\draw[color=black] (25,3.5) node {(d)}; 
\endtikzpicture
\caption{Configurations described in the proof of Theorem \ref{thm:hyper}}
\label{figProof}
\end{figure}

To show our claims about $ B_I$, we exhibit two types of shas parametrized by 
two different locally closed subvarieties of $ B_I$ each of which is not in the closure of the other.
From this, it will follow that $B_I$ is a reducible scheme and $B_I$ is strictly larger than the strict transform of $H_I$.  For the case $n=6$ and $I=\{ 1,2\}$ the shas are represented by (a) and (b) in Figure \ref{figProof}. 

The first sha (see Figure \ref{figProof}(a))  is a configuration of $n$ lines in $\hatbP^2$ where $l_1=\ldots =l_{|I|}$ are coincident and the rest of the lines are in general position. The locus parametrizing this configuration has dimension equal to 
$$
2((n-|I|+1)-4))=2(n-|I|-3)
$$ 
and its closure in 
$ \overline{M}^{m}_{\vec \beta_{k}}(\hatbP^2,n) $
is by definition the strict transform of $H_I$. 

The second sha (see also Figure \ref{figProof}(b)) is given by a pair $(X,D)$ where $X$ is the union of $\hatbP^2$ and  
the blow up $Bl_x\hatbP^2$ of $\hatbP^2$ at a point $x$ and $D$ is a divisor in $X$ obtained as follows. Let $\hatbP^1$ be a distinguished hyperplane in $\hatbP^2$ and let $E\iso\hatbP^1$ be the exceptional divisor of $Bl_x\hatbP^2$. The variety $Bl_x\hatbP^2$ is a $\hatbP^1$-bundle over $\hatbP^1$.
Further, let $D_1\subset {\hatbP^2}$ be the union of 
two generic lines $l_{|I|+1}, l_{|I|+2} $ in  ${\hatbP^2}$ that intersect the distinguished $\hatbP^1$ at distinct points $x_{|I|+1}$ and $x_{|I|+2}$ and concurrent lines $\{ l_i \; | \; i \in I \}$ in ${\hatbP^2}$ that intersect the distinguished $\hatbP^1$ at a single point $x_{|I|}$ which is different from $x_{|I|+1}$ and $x_{|I|+2}$.  
Further, let $ D_2\subset {Bl_x\hatbP^2}$ be the union of the  three types of divisors:
\begin{itemize}
\item a divisor with multiplicity $|I|$ defined by coincident fibers $\{  l'_{i} \; | \;  i \in I \}$ of $Bl_x\hatbP^2\rightarrow \hatbP^1$ that intersect $E\iso\hatbP^1$ at a single point $x'_{|I|}$;
\item the union of generic irreducible divisors $\{  l_s' \; | \; |I|+3 \leq s \leq n \}$ of self-intersection one in $Bl_x\hatbP^2$ and 
\item the union of two generic fibers $ l'_{|I|+1}, l'_{|I|+2} $ of $Bl_x\hatbP^2\rightarrow \hatbP^1$ that intersect the exceptional divisor at distinct points $x'_{|I|+1}$ and $x'_{|I|+2}$ both different from $x'_{|I|}$.  
\end{itemize}
We now glue the pairs ($\hatbP^2$,$\hatbP^1$) and ($Bl_x\hatbP^2$,$\hatbP^1$) along $\hatbP^1$ via the automorphism of $\hatbP^1$ that takes the triple ($x_{|I|},x_{|I|+1},x_{|I|+2}$) to ($x'_{|I|},x'_{|I|+1},x'_{|I|+2}$) to obtain $X$. The divisor $D$ is then the image of $D_1\times D_2$ in $X$ via the above gluing. We suppose that the divisor $D$ and its restrictions to $\hatbP^2$ and $Bl_x\hatbP^2$ do not have any coincident or concurrent lines besides the ones described above.


To count the dimension of the locus parametrizing $(X,D)$ we interpret this pair as two configurations of lines in $\hatbP^2$ glued together (see
\cite[6.8]{keel2006geometry}). 
In our particular example, (c) and (d) in Figure \ref{figProof} represent those configurations of lines for the sha in (b). For general $n$ these configurations and the gluing locus are described in (1)-(3) below.
Configuration (1) is obtained by contracting the component $Bl_x\hatbP^2\subset X$ to $\hatbP^1$ and configuration (2) by contracting the component $\hatbP^2\subset X$ to a point and $Bl_x\hatbP^2$ to $\hatbP^2$ by blowing down.
\begin{enumerate}
\item  
The lines $l_{1}, \ldots, l_{|I|} $ are concurrent at a point. Through that point, we have the coincident lines
$l_{|I|+3}=\ldots=l_{n}$ while the lines $l_{|I|+1}$ and $l_{|I|+2}$ are generic.
The dimension of the locus parametrizing this configuration of lines is 
$|I|-1$.
\item 
The coincident lines
$\{ l'_1= \ldots =l'_{|I|} \}$ also support the intersection of lines $l'_{|I|+1}$ and  
$l'_{|I|+2}$. The lines $\{ l'_{|I|+3}, \ldots , l'_{n} \}$ are generic.
The dimension of the locus parametrizing this configuration is $2(n-|I|)-7$.
\item The gluing locus always parametrizes configurations of three points in $\PP^1$, hence it is zero dimensional.
\end{enumerate}

By above discussion, the dimension of the loci parametrizing the stable pair $(X,D)$ is 
$
2(n-|I|)-7+(|I|-1)= 2(n-4)-|I|
$. 

Our result follows because  $2(n-4)-|I| \geq 2(n-|I|-3)$ for all $ |I| \geq 2$. Therefore, the dimension of the locus parametrizing $(X,D)$ is larger than the dimension of the strict transform of $H_I$. 
The two shas must be parametrized by different components of $B_I$ because
degenerations of the pair $(X,D)$ correspond to pairs $(X',D')$ where $X'$ is a further degeneration of $X$.  Then, the strict transform of $H_I$ cannot be in the closure of the loci parametrizing $(X,D)$ (see \cite[Sec 4]{alexeev2013moduli}). \\

Finally, the existence of the sequence of blowups resulting in $\overline P_{d,n}$ follows from Corollary \ref{corpdn}(1) for $\mathcal{A}=\{1,\dots 1\}$. The intermediate spaces in that sequence are obtained by taking $\vec \alpha_{k}$ to be the ordered set of cardinality $n$, whose first two elements are equal to $1$ and the rest equal to $\frac{1}{k}$. Each $S_I$, $|I|=k$, is the iterated strict transform of the variety $H_I$ under the iterated blowup $Bl_{\mathcal{G}_{\vec \alpha_{k}}}(\PP^{n-4})^2\rightarrow (\PP^{n-4})^2$, where $\mathcal{G}_{\vec \alpha_{k}}$ is defined in Definition \ref{hi}. Therefore, $S_I$ is smooth by Proposition \ref{buildpullback}(1). By using Theorem\ref{mainpdn}(2), we also see that its geometric points parametrize the stable trees mentioned in the statement.

\section{Reduction, Forgetful Morphisms and Toric models }\label{prodfor}
\subsection{Reduction and forgetful morphisms} We define certain operations on the weight sets in the domains $\mathcal D_{d,n}^T  $ and $\mathcal D_{d,n}^P$ (see Section \ref{weights}) that induce morphisms among our compactifications. 
\begin{proposition}\label{reduction}(Reduction)
Let $\mathcal{A}:=\{a_1,a_2, \dots,a_n\} $ and  $\mathcal{B}:=\{b_1,b_2, \dots,b_n \} $ be two weight sets in $\mathcal D_{d,n}^T  $ (resp. $\mathcal D_{d,n}^P$) such that $b_i\leq a_i\,$ for all $i=1,2,\dots n$ (resp. for all $i=d+2,\dots n$).
There exists a natural reduction morphism 
\begin{align*}
\rho_{\mathcal{B},\mathcal{A}}: \wtdn \rightarrow \wtdnb
& &
(\text{resp.}\,\, \hat \rho_{\mathcal{B},\mathcal{A}}: \wpdn \rightarrow 
\overline P_{d,n}^{\mathcal B})
 \end{align*}
Given an $\mathcal{A}$-stable rooted tree (resp. $\mathcal{A}$-stable tree) $(W,s_1,\dots,s_n)$, $\rho_{\mathcal{B},\mathcal{A}}((W,s_1,\dots,s_n))$ (resp. $\hat \rho_{\mathcal{B},\mathcal{A}}((W,s_1,\dots,s_n))$) is obtained by successively collapsing all components of $W$ that are unstable with respect to $\mathcal{B}$. 
\end{proposition}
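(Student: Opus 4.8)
The plan is to exhibit both the source and the target as wonderful compactifications of \emph{nested} building sets inside a common ambient variety, to get the morphism for free from the functoriality of the defining embedding in Definition \ref{Aconst}, and only then to match it with the stated modular recipe. First I would record the combinatorial input. In the affine case $\wtdn$ (resp. $\wtdnb$) is the wonderful compactification of $\wbuildtdn$ (resp. $\wbuildtdnb$) inside $\PP^{d(n-1)-1}$ by Lemma \ref{tdnwon}, and in the projective case $\wpdn$ (resp. $\overline P_{d,n}^{\mathcal B}$) is the wonderful compactification of $\wbuild$ (resp. $\wbuildb$) inside $(\PP^{n-d-2})^d$ by Corollary \ref{Wonpdn} and Lemma \ref{buildpdn}. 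Since $b_i\le a_i$ on all the indices that vary (with the first $d+1$ projective weights held fixed in the $\overline P$ case), we have $\sum_{i\in I}b_i\le \sum_{i\in I}a_i$ for every $I$ relevant to the two building sets; hence $\sum_{i\in I}b_i>1$ forces $\sum_{i\in I}a_i>1$. This yields the inclusions $\wbuildtdnb\subseteq\wbuildtdn$ and $\wbuildb\subseteq\wbuild$, which is the whole point of the argument.

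With the inclusion in hand the morphism is essentially immediate. Writing $Y$ for the ambient variety and $\mathcal G'\subseteq \mathcal G$ for the two building sets, the coordinate projection $\prod_{S\in \mathcal G}\Bl_S Y\to \prod_{S\in \mathcal G'}\Bl_S Y$ restricts, on the open locus $Y\setminus\bigcup_{S\in\mathcal G}S$ (which is dense in $Y\setminus\bigcup_{S\in\mathcal G'}S$ because $\mathcal G\setminus\mathcal G'$ consists of proper subvarieties), to the locally closed embedding of Definition \ref{Aconst} defining $Y_{\mathcal G'}$. Taking closures, this projection carries $Y_{\mathcal G}$ into $Y_{\mathcal G'}$, producing the desired morphism, namely $\rho_{\mathcal B,\mathcal A}$ (resp. $\hat\rho_{\mathcal B,\mathcal A}$). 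The same construction shows it is birational and restricts to the identity on $\wtdno$ (resp. on $\wpdno$), which embeds as an open dense subset of both compactifications; an alternative route to the very same map is Theorem \ref{thmLi}(3) together with the divisibility $\prod_{S\in\mathcal G}\mathcal I_S=\big(\prod_{S\in\mathcal G'}\mathcal I_S\big)\big(\prod_{S\in\mathcal G\setminus\mathcal G'}\mathcal I_S\big)$ of the defining ideals.

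Finally I would match this morphism with the modular operation of collapsing $\mathcal B$-unstable components. I would compare the two universal families: the fibers of $\hat\phi_{\mathcal A}$ and $\hat\phi_{\mathcal B}$ are the $\mathcal A$- resp. $\mathcal B$-stable trees (Theorem \ref{mainpdn}(2)), and by Theorem \ref{GITHu}(3), valid since $\SL_{d+1}$ acts with trivial stabilizers on the stable locus, these fibers are literally the fibers of the weighted Fulton--MacPherson families $\phi_{\mathcal A}$ and $\phi_{\mathcal B}$ on $\bP^d_{\mathcal A}[n]$ and $\bP^d_{\mathcal B}[n]$. On that level the inclusion $\mathcal K_{\mathcal B}\subseteq \mathcal K_{\mathcal A}$ produces, by the identical projection argument, an $\SL_{d+1}$-equivariant reduction morphism $\bP^d_{\mathcal A}[n]\to\bP^d_{\mathcal B}[n]$ which descends through the quotient and restricts (via the fiber over $\Delta_N$ in Definition \ref{defTdn}) to $\rho_{\mathcal B,\mathcal A}$. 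Tracing the iterated-blowup descriptions and the construction in Section \ref{wdeg}, the exceptional divisors $D_I$ that get contracted are exactly those with $\sum_{i\in I}a_i>1\ge\sum_{i\in I}b_i$, i.e. the components that no longer carry total weight exceeding one and so fail to be $\mathcal B$-stable.

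The main obstacle is precisely this last identification. Because, as noted in the Remark following Theorem \ref{maintdn}, our spaces are not yet known to represent functors, I cannot simply invoke a universal property to say the map ``collapses unstable components''; the collapse must be verified by hand on geometric fibers. Concretely the work is to check that the contraction the Fulton--MacPherson reduction morphism induces on each stable tree agrees with the combinatorial collapse dictated by $\mathcal B$ --- tracking exactly which boundary divisors are contracted, confirming the resulting $n$-pointed degeneration is again stable for $\mathcal B$ --- and that all of this is compatible with descent to the GIT quotient and with passing to the fiber over $\Delta_N$ that defines $\wtdn$.
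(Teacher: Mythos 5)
Your proposal is correct and is essentially the paper's own proof: the paper likewise observes that $b_i\leq a_i$ forces the inclusion of building sets, and then produces $\rho_{\mathcal{B},\mathcal{A}}$ (resp.\ $\hat\rho_{\mathcal{B},\mathcal{A}}$) from the blowup structure over the common ambient variety, precisely via your ``alternative route'': by Theorem \ref{thmLi}(3) both spaces are blowups of products of the ideal sheaves of the $\delta_I$ (resp.\ $H_I$), and \cite[Lemma 3.2]{LiLi} identifies $\wtdn$ with the blowup of $\wtdnb$ along the inverse image of the product of the ideal sheaves of the elements of $\wbuildtdn\setminus\wbuildtdnb$, exhibiting the reduction map as a blowup morphism. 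As for the step you flag as the main obstacle, the paper does not carry it out either: it simply asserts that this morphism ``has the desired interpretation on geometric points,'' noting only that the argument follows \cite[Theorem 5]{routis2014weighted}, so your sketch of the verification (an equivariant Fulton--MacPherson-level reduction descended through the GIT quotient via Theorem \ref{GITHu}(3)) is, if anything, more explicit than what the paper records.
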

\begin{proof} Our argument follows closely the argument in the proof of \cite[Theorem 5]{routis2014weighted}.
 Let $\wbuildtdn$ and $\mathcal{H}_{\mathcal{B}}$ with  notation as in Lemma \ref{defset}.
  By Theorem \ref{mainheavy}, we know that $\wbuildtdn$ and $\wbuildtdnb$ are building sets. Let us denote by $\mathcal{M}_{\mathcal{A}}$ (resp. $\mathcal{M}_{\mathcal{B}}$) the set of ideal sheaves of the varieties $\delta_I \in \wbuildtdn$ (resp. $\delta_I \in \wbuildtdnb$) in $\PP^{d(n-1)-1}$. By the hypothesis $\mathcal{M}_{\mathcal{B}}\subset\mathcal{M}_{\mathcal{A}}$. Now let us consider the following ideal sheaves in $\PP^{d(n-1)-1}$:
  
  \begin{align*}
  \mathcal{I}_{\mathcal{B}}:=\displaystyle\prod\limits_{\mathcal{I}\in\mathcal{M}_{\mathcal{B}}} \mathcal{I}, && \mathcal{I}_{\mathcal{A}\mathcal{B}}:=\displaystyle\prod\limits_{\mathcal{I}\in\mathcal{M}_{\mathcal{A}}\setminus\mathcal{M}_{\mathcal{B}}} \mathcal{I} &&\text{and}\,\,\,\,\,\,\,\, \mathcal{I}_{\mathcal{A}}:=\displaystyle\prod\limits_{\mathcal{I}\in\mathcal{M}_{\mathcal{A}}} \mathcal{I}\end{align*}

  Now, by Lemma \ref{bset} and Theorem \ref{thmLi}(3), $\wtdnb$ is isomorphic to the blowup of $\PP^{d(n-1)-1}$ with respect to the ideal sheaf $\mathcal{I}_{\mathcal{B}}$; let $\chi_{\mathcal{A}}:\wtdnb\rightarrow  \PP^{d(n-1)-1}$ be the blowup morphism. Further, let $\chi_{\mathcal{A}}^{-1}\mathcal{I}_{\mathcal{A}\mathcal{B}}\cdot \mathcal{O}_{\wtdnb}$ be the inverse image ideal sheaf of $\mathcal{I}_{\mathcal{A}\mathcal{B}}$ in $\wtdnb$. Then the blowup of $\wtdnb$ with respect to the ideal sheaf $\chi_{\mathcal{A}}^{-1}\mathcal{I}_{\mathcal{A}\mathcal{B}}\cdot \mathcal{O}_{\wtdnb}$ is isomorphic to the blowup of $\PP^{d(n-1)-1}$ with respect to the ideal sheaf $\mathcal{I}_{\mathcal{A}}$ (see \cite[Lemma 3.2]{LiLi})
, which, in turn, is isomorphic to $\wtdn$, by Lemma \ref{bset} and Theorem \ref{thmLi}(3) .
  
Therefore we  obtain
a natural blowup morphism   $\rho_{\mathcal{B},\mathcal{A}}: \wtdn \rightarrow \wtdnb$, which has the desired interpretation on geometric points.
The proof for $\hat \rho_{\mathcal{B},\mathcal{A}}$ is entirely analogous. 
\end{proof}

\begin{figure}[h]\label{sbrooted}
\caption{\footnotesize\textit{Example of reduction morphisms for stable rooted trees with $n=6$.}}
 \begin{tikzpicture}[scale =0.75]

\begin{scope}[shift={(15,0)}] 
\draw [fill=white] (0,0) rectangle (3.5,2.7);
\node[mark size=2pt,color=black] at (1.8,2) {\pgfuseplotmark{*}};
\node [above] at (1.8,2) {\footnotesize $1,2,3,4,5$};
\node [below] at (1.8,2) {\footnotesize $x$};

\node[mark size=2pt,color=black] at (2.8,1.4) {\pgfuseplotmark{*}};
\node [below] at (2.8,1.4) {\footnotesize $6$};

\node [below] at (1.75,-0.01) {\footnotesize $\bP^d$};
\draw [thick](0.5,0.6)--(3,0.6);
\node [below] at (1.5,0.6) {\footnotesize $H$};
\end{scope}

\begin{scope}[shift={(2,0)}] 
\draw [fill=white] (6,0) rectangle (9.5,2.7);
\draw [fill=white] (6.2,1.7) rectangle (9.1,3.8);
 \node[mark size=2pt,color=black] at (7,3) {\pgfuseplotmark{*}};
\node [below] at (7,3) {\footnotesize $1,2,3$};
\node [above] at (7,3) {\footnotesize $x'$};
 \node[mark size=2pt,color=black] at (8.4,2.7) {\pgfuseplotmark{*}};
\node [below] at (8.4,2.7) {\footnotesize $4,5$};
\node [above] at (8.4,2.7) {\footnotesize $x''$};
\node [below] at (7.75,-0.01) {\footnotesize $Bl_x\bP^d$};

\node[mark size=2pt,color=black] at (8.5,1.3) {\pgfuseplotmark{*}};
\node [below] at (8.5,1.3) {\footnotesize $6$};

\draw [thick](6.5,0.6)--(9,0.6);
\node [below] at (8,0.6) {\footnotesize $H$};
\node [above] at (8,3.8) {\footnotesize {$\bP^d$}};
\end{scope}

\begin{scope}[shift={(-12,0)}] 
\draw [fill=white] (12,0) rectangle (15.2,2.7);
\draw [fill=white] (12.2,1.7) rectangle (15.1,3.8);
\draw [fill=white] (12.4,3) rectangle (13.7,5);
 \node[mark size=2pt,color=black] at (12.7,4.5) {\pgfuseplotmark{*}};
 \node [below] at (12,4.7) {\footnotesize$2,3$};
  \node[mark size=2pt,color=black] at (13.4,4) {\pgfuseplotmark{*}};
 \node [below] at (13.3,4) {\footnotesize$1$};
 \draw [fill=white] (14,3) rectangle (15,5);
 \draw [thick](12.5,0.6)--(15,0.6);

\node[mark size=2pt,color=black] at (14.5,1.3) {\pgfuseplotmark{*}};
\node [below] at (14.5,1.3) {\footnotesize $6$};

 \node[mark size=2pt,color=black] at (14.2,4.5) {\pgfuseplotmark{*}};
 \node [below] at (14.3,4.5) {\footnotesize$4$}; 
  \node[mark size=2pt,color=black] at (14.7,3.8) {\pgfuseplotmark{*}};
 \node [below] at (14.7,3.8) {\footnotesize$5$}; 
 \node [below] at (13.75,-0.01) {\footnotesize$Bl_x\bP^d$};
 \node [above] at (14.5,5) {\footnotesize {$\bP^d$}};
\node [above] at (13,5) {\footnotesize {$\bP^d$}};
\node [below] at (13.5,2.7) {\footnotesize $Bl_{x',x''}\bP^d$};
\node [below] at (13.5,0.7) {\footnotesize $H$};
\end{scope}

 \begin{scope}[shift={(8,0)}] 
  \node at (4,1.5) (a){} ;
 \node at (5,2) (text) {$\rho_{\mathcal C, \mathcal B}$};
  \node at (6,1.5) (b) {};
  \draw[->] (a.east)  to (b.east);
  \end{scope}

\begin{scope}[shift={(-5,0)}] 
    \node at (9,1.5) (c){} ;
     \node at (11,2) (text) {$\rho_{\mathcal B, \mathcal A}$};
  \node at (11.6,1.5) (d) {};
  \draw[->] (c.east)  to (d.east);
\end{scope}
  \end{tikzpicture}
$$
\mathcal A
 =\left\lbrace
1, \frac{1}{3} +\epsilon,\frac{1}{3} +\epsilon, 1,1,1
 \right\rbrace
, \qquad{}
\mathcal B=
\left\lbrace
\frac{1}{5}+\epsilon, \ldots,\frac{1}{5}+\epsilon,1
 \right\rbrace
, \qquad{}
\mathcal C=
\left\lbrace
\frac{1}{6}+\epsilon, \ldots, \frac{1}{6}+\epsilon, 1
 \right\rbrace
$$
\end{figure}
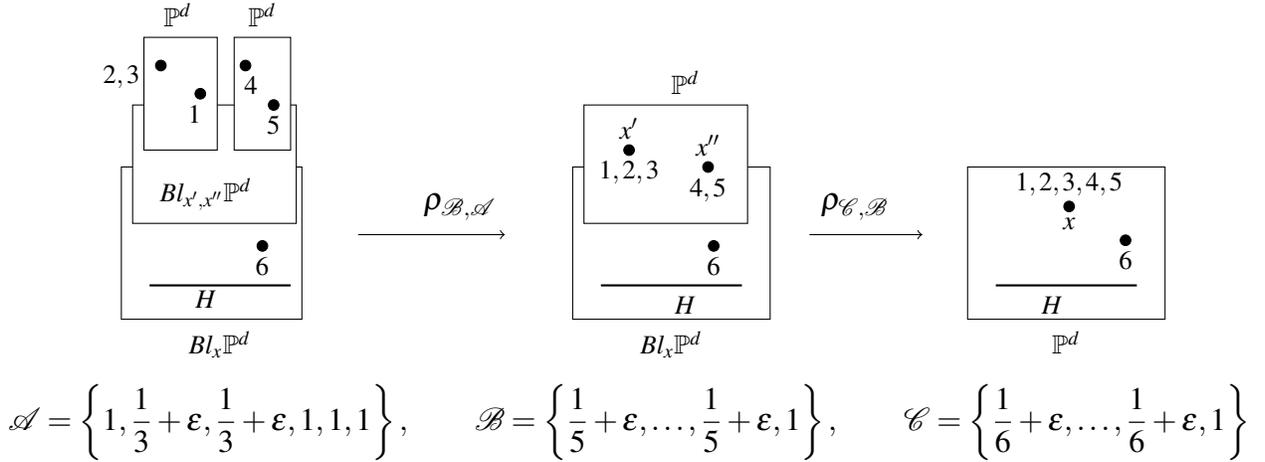

The above morphisms behave favourably under weight reduction, as the following Proposition shows.  We omit its proof, since it is identical to the proof of \cite[Proposition 5]{routis2014weighted}.

\begin{proposition}
 Let $\mathcal{A}:=\{a_1,a_2, \dots,a_n \} $,  $\mathcal{B}:=\{b_1,b_2, \dots,b_n \} $ and $\mathcal{C}:=\{c_1,c_2, \dots,c_n \} $ be weight sets in $\mathcal D_{d,n}^T $ (resp. $\mathcal D_{d,n}^P$) such that $c_i\leq b_i\leq a_i$ for all $i=1,2,\dots n$. Then: 
\begin{align*}
\rho_{\mathcal{C},\mathcal{A}}= \rho_{\mathcal{C},\mathcal{B}}\circ \rho_{\mathcal{B},\mathcal{A}}, 
&& 
(resp.\,\, \hat \rho_{\mathcal{C},\mathcal{A}}= \hat \rho_{\mathcal{C},\mathcal{B}}\circ \hat \rho_{\mathcal{B},\mathcal{A}})
\end{align*}
\end{proposition}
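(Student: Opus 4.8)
The plan is to mimic the construction of the reduction morphisms themselves (Proposition \ref{reduction}) and reduce the statement to the uniqueness clause in the universal property of blowing up. First I would record the containment coming from the weight inequalities. Since $c_i\le b_i\le a_i$ for every $i$, for any $I\subsetneq N$ we have $\sum_{i\in I}c_i\le\sum_{i\in I}b_i\le\sum_{i\in I}a_i$, so that $\sum_{i\in I}c_i>1$ forces both $\sum_{i\in I}b_i>1$ and $\sum_{i\in I}a_i>1$. Hence the building sets satisfy $\wbuildtdnc\subseteq\wbuildtdnb\subseteq\wbuildtdn$, and the associated collections of ideal sheaves obey $\mathcal{M}_{\mathcal{C}}\subseteq\mathcal{M}_{\mathcal{B}}\subseteq\mathcal{M}_{\mathcal{A}}$. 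Writing $\mathcal{I}_{\mathcal{A}\mathcal{B}}:=\prod_{\mathcal{I}\in\mathcal{M}_{\mathcal{A}}\setminus\mathcal{M}_{\mathcal{B}}}\mathcal{I}$ as in the proof of Proposition \ref{reduction}, and likewise $\mathcal{I}_{\mathcal{B}\mathcal{C}}$ and $\mathcal{I}_{\mathcal{A}\mathcal{C}}$, the disjoint decomposition $\mathcal{M}_{\mathcal{A}}\setminus\mathcal{M}_{\mathcal{C}}=(\mathcal{M}_{\mathcal{A}}\setminus\mathcal{M}_{\mathcal{B}})\sqcup(\mathcal{M}_{\mathcal{B}}\setminus\mathcal{M}_{\mathcal{C}})$ gives the factorization of ideal sheaves $\mathcal{I}_{\mathcal{A}\mathcal{C}}=\mathcal{I}_{\mathcal{A}\mathcal{B}}\cdot\mathcal{I}_{\mathcal{B}\mathcal{C}}$ on $\PP^{d(n-1)-1}$.

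Next I would use the staged-blowup description already in hand. By Lemma \ref{bset} and Theorem \ref{thmLi}(3), each of $\wtdn$, $\wtdnb$ and $T^{\mathcal{C}}_{d,n}$ is the blowup of $\PP^{d(n-1)-1}$ at $\mathcal{I}_{\mathcal{A}}$, $\mathcal{I}_{\mathcal{B}}$, $\mathcal{I}_{\mathcal{C}}$, with blowdown morphisms $\chi_{\mathcal{A}},\chi_{\mathcal{B}},\chi_{\mathcal{C}}$ to the common base. Exactly as in the proof of Proposition \ref{reduction}, $\rho_{\mathcal{B},\mathcal{A}}$ exhibits $\wtdn$ as the blowup of $\wtdnb$ at the inverse image ideal sheaf $\chi_{\mathcal{B}}^{-1}\mathcal{I}_{\mathcal{A}\mathcal{B}}\cdot\mathcal{O}_{\wtdnb}$ (via \cite[Lemma 3.2]{LiLi}), and similarly $\rho_{\mathcal{C},\mathcal{B}}$ exhibits $\wtdnb$ as the blowup of $T^{\mathcal{C}}_{d,n}$ at $\chi_{\mathcal{C}}^{-1}\mathcal{I}_{\mathcal{B}\mathcal{C}}\cdot\mathcal{O}$, while $\rho_{\mathcal{C},\mathcal{A}}$ exhibits $\wtdn$ as the blowup of $T^{\mathcal{C}}_{d,n}$ at $\chi_{\mathcal{C}}^{-1}\mathcal{I}_{\mathcal{A}\mathcal{C}}\cdot\mathcal{O}$. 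All three are morphisms over the base, and I would apply \cite[Lemma 3.2]{LiLi} twice together with the factorization $\mathcal{I}_{\mathcal{A}\mathcal{C}}=\mathcal{I}_{\mathcal{A}\mathcal{B}}\cdot\mathcal{I}_{\mathcal{B}\mathcal{C}}$ to see that $\rho_{\mathcal{C},\mathcal{B}}\circ\rho_{\mathcal{B},\mathcal{A}}$ realizes $\wtdn$ as the blowup of $T^{\mathcal{C}}_{d,n}$ along the same ideal sheaf $\chi_{\mathcal{C}}^{-1}\mathcal{I}_{\mathcal{A}\mathcal{C}}\cdot\mathcal{O}$. Since both the composite and $\rho_{\mathcal{C},\mathcal{A}}$ are morphisms $\wtdn\to T^{\mathcal{C}}_{d,n}$ over $\PP^{d(n-1)-1}$ for which the inverse image of $\mathcal{I}_{\mathcal{C}}$ is invertible, the uniqueness in the universal property of blowing up forces them to coincide. (Alternatively, both restrict to the identity on the dense common open locus $\PP^{d(n-1)-1}\setminus\bigcup_I\delta_I$ parametrizing distinct points, and the equality follows by separatedness.)

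The projective statement is entirely analogous: by Corollary \ref{corpdn}, each of $\wpdn$, $\overline P_{d,n}^{\mathcal{B}}$, $\overline P_{d,n}^{\mathcal{C}}$ is the blowup of $(\PP^{n-d-2})^d$ at the product of the ideal sheaves of the varieties in $\mathcal{G}_{\mathcal{A}}$, $\mathcal{G}_{\mathcal{B}}$, $\mathcal{G}_{\mathcal{C}}$, and the inequalities (now imposed for $i=d+2,\dots,n$) give $\mathcal{G}_{\mathcal{C}}\subseteq\mathcal{G}_{\mathcal{B}}\subseteq\mathcal{G}_{\mathcal{A}}$; repeating the staged-blowup argument over the base $(\PP^{n-d-2})^d$ yields $\hat\rho_{\mathcal{C},\mathcal{A}}=\hat\rho_{\mathcal{C},\mathcal{B}}\circ\hat\rho_{\mathcal{B},\mathcal{A}}$. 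The step I expect to be the main obstacle is the bookkeeping guaranteeing that the \emph{composite} blowdown is itself the blowup along the single ideal sheaf $\chi_{\mathcal{C}}^{-1}\mathcal{I}_{\mathcal{A}\mathcal{C}}\cdot\mathcal{O}$ — that is, checking via \cite[Lemma 3.2]{LiLi} that the two successive blowups compose correctly rather than merely agreeing factor by factor. Once this compatibility is established, the uniqueness in the universal property of blowing up closes the argument at once.
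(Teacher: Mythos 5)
Your proposal is correct and takes essentially the approach the paper intends: the paper actually omits the proof altogether, stating it is identical to that of \cite[Proposition 5]{routis2014weighted}, and the intended argument is exactly what you wrote — nested building sets from the weight inequalities, the factorization $\mathcal{I}_{\mathcal{A}\mathcal{C}}=\mathcal{I}_{\mathcal{A}\mathcal{B}}\cdot\mathcal{I}_{\mathcal{B}\mathcal{C}}$, two applications of \cite[Lemma 3.2]{LiLi} to identify the composite of blowups with a single blowup, and then uniqueness in the universal property of blowing up (or, equivalently, agreement with $\rho_{\mathcal{C},\mathcal{A}}$ on the dense open locus of distinct points plus separatedness). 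One small correction: since the index sets $I$ defining $\mathcal{G}_{\mathcal{A}}$ may contain $d+1$, the containment $\mathcal{G}_{\mathcal{C}}\subseteq\mathcal{G}_{\mathcal{B}}\subseteq\mathcal{G}_{\mathcal{A}}$ in the projective case also requires $c_{d+1}\leq b_{d+1}\leq a_{d+1}$, which the proposition's hypothesis (inequalities for all $i$) does provide, so your parenthetical restriction to $i=d+2,\dots,n$ should be dropped.
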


\begin{proposition}\label{forgetful}(Forgetful)
Let $R$ be a subset of $N=\{1,2,\dots,n\}$ and $\mathcal{A}$ be a weight set in $\mathcal D_{d,n}^T $ (resp. $\mathcal D_{d,r}^P,$ where $r= |R|$). Let $\mathcal{A}(R)$ be the subset of 
$\mathcal{A}$ (resp. with the additional assumption that $R\supseteq \{1,\dots ,d+1\}$) . Then, there exists a natural forgetful morphism
\begin{align*}
\phi_{\mathcal{A},\mathcal{A}(R)}: \wtdn \rightarrow \wtdr
& &
(resp. \,\,\hat \phi_{\mathcal{A},\mathcal{A}(R)}: 
\overline P^{\mathcal A}_{d,n} \to 
\wpdr).
\end{align*}
 Given an $\mathcal{A}$-stable rooted tree (resp. $\mathcal{A}$-stable tree) $(W,s_1,\dots,s_n)$,  $\phi_{\mathcal{A},\mathcal{A}(R)}((W,s_1,\dots,s_n))$ (resp. $\hat \phi_{\mathcal{A},\mathcal{A}(R)}((W,s_1,\dots,s_n))$) is obtained by successively collapsing all components of $W$ that are unstable with respect to $\mathcal{A}(R)$. \\
\end{proposition}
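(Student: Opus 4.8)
The plan is to build both morphisms on the birational models of Corollaries~\ref{mainheavy} and~\ref{corpdn}, by extending the tautological ``drop the coordinates of the forgotten points'' rational map and invoking the universal property of the blow-up, exactly as in the proof of the reduction morphism (Proposition~\ref{reduction}). Throughout I assume the target is defined, so that $\sum_{i\in R}a_i>1$ in the $T$-case and $\mathcal{A}(R)\in\mathcal D^P_{d,r}$ with $R\supseteq\{1,\dots,d+1\}$ in the $P$-case; in particular $\delta_R\in\mathcal H_{\mathcal A}$ (resp. $H_R\in\mathcal G_{\mathcal A}$).

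For $\wtdn$ I would first relabel so that $n\in R$. Then Lemma~\ref{base} presents the forgetful operation as the linear projection $\varpi\colon\PP^{d(n-1)-1}\dashrightarrow\PP^{d(r-1)-1}$ discarding the coordinate blocks indexed by $i\notin R$; its base locus is the linear space $\delta_R$ of configurations in which all retained points collide at the origin. More generally, for each $I'\subseteq R$ the coincidence locus $\delta_{I'}\subseteq\PP^{d(r-1)-1}$ of~(\ref{coincident}) pulls back set-theoretically to the locus $\delta_{I'}\subseteq\PP^{d(n-1)-1}$ cut out by the identical condition on the $R$-coordinates. Since $\delta_{I'}$ being a centre for $\wtdr$ means $\sum_{i\in I'}a_i>1$, and $I'\subseteq R\subseteq N$, the same $\delta_{I'}$ (with the identical weight sum) lies in $\mathcal H_{\mathcal A}$. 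Hence the entire base locus of the composite rational map $\PP^{d(n-1)-1}\dashrightarrow\wtdr$ is a union of members of $\mathcal H_{\mathcal A}$, all of which are principalised in $\wtdn$, their iterated dominant transforms being the Cartier divisors $\Gamma_I$ of Corollary~\ref{mainheavy}(3). The universal property of the blow-up (Theorem~\ref{thmLi}(3)) then promotes the composite to a morphism $\phi_{\mathcal A,\mathcal A(R)}\colon\wtdn\to\wtdr$. The effect on geometric points is read off from Theorem~\ref{maintdn}(2): on the open configuration locus $\phi$ equals $\varpi$, and on a boundary divisor $\Gamma_I\cong T^{\mathcal A(I)}_{d,|I|}\times T^{\mathcal A_+(I^c)}_{d,n-|I|+1}$ it drops the sections $s_i$, $i\notin R$, factorwise and contracts any resulting $\mathcal A(R)$-unstable component.

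For $\wpdn$ the strategy transcribes to the linear model $(\PP^{n-d-2})^d$ of Lemma~\ref{GITpoints}: forgetting the points $i\notin R$ (all light, since $R\supseteq\{1,\dots,d+1\}$) is the product over the $d$ factors of the linear projections discarding the coordinates $b^k_i$ with $i\notin R$, and the coincidence centres $H_{I'}$ of Definition~\ref{hi} again pull back to centres $H_{I'}\in\mathcal G_{\mathcal A}$. The step I expect to be the \emph{main obstacle} is that, unlike the single projection of the $T$-case, this \emph{product} map has base locus equal to the union over the $d$ factors of the loci where the retained light points all acquire a vanishing $k$-th coordinate; geometrically these are precisely the configurations whose retained subconfiguration $\{p_i:i\in R\}$ violates stability inequality~(4) of Lemma~\ref{GITpoints}. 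These loci are not coincidence loci, hence not among the $H_I$ blown up in $\wpdn$, so the base ideal is not automatically principalised and the naive extension does not close.

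To handle this I would instead descend the $SL_{d+1}$-equivariant weighted Fulton--MacPherson forgetful morphism $\bP^d_{\mathcal A}[n]\to\bP^d_{\mathcal A(R)}[r]$ (\cite{routis2014weighted}, after \cite{Fulton-MacPherson}) to the GIT quotients via relative GIT (Theorem~\ref{GITHu} and Proposition~\ref{WonGIT}, the very mechanism used to build $\wpdn$). Within this framework the crux remains the same obstruction in disguise: one must linearise so that the stable locus of the source maps into the stable locus of the target, equivalently control the loci on which a retained subconfiguration degenerates onto a frame hyperplane. This compatibility of stable loci under forgetting is, in my estimation, the real content of the $P$-case and the step deserving the most care; once it is secured, the morphism's description on geometric points follows as in the $T$-case from the universal families (Remark~\ref{utdn}, Theorem~\ref{mainpdn}(2)) together with the boundary description of Theorem~\ref{maintdn}(2)(b).
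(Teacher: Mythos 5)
Your affine case is correct, but it takes a genuinely different route from the paper's. The paper never works on the linear model: it takes the weighted Fulton--MacPherson forgetful morphism $\bP^d_{\mathcal A}[n]\to \bP^d_{\mathcal A(R)}[r]\times(\bP^d)^{n-r}$ of \cite{routis2014weighted}, proves that its restriction to the divisor $D_N$ surjects onto a copy of $D_R$ (by tracking iterated dominant transforms of the small diagonal), and then pulls back along $x\to\bP^d$, using Definition \ref{defTdn} of $\wtdn$ and $\wtdr$ as fibers of $D_N$ and $D_R$. Your argument instead extends the coordinate-dropping projection $\PP^{d(n-1)-1}\dashrightarrow\PP^{d(r-1)-1}$ across $\wtdn$; this is sound, since each relevant ideal $\mathcal I_{\delta_{I'}}$, $I'\subseteq R$, has invertible inverse image ideal sheaf on $\wtdn$ (use the tautological projections $\wtdn\to Bl_{\delta_{I'}}\PP^{d(n-1)-1}$ implicit in Definition \ref{Aconst}), so Theorem \ref{thmLi}(3) and the universal property of blowing up apply. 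Your route is more self-contained; the paper's inherits the modular description of the morphism from the Fulton--MacPherson family with less effort.

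In the projective case your proposal has a genuine gap, and it sits exactly where you said it does: you never prove that stable loci are compatible under forgetting, you only flag it as ``the real content.'' You should know that this is not a step you could have closed by being more careful, because the needed statement is false. The paper's own proof disposes of it in one line --- it asserts that $q_R\colon(\bP^d)^n\to(\bP^d)^r$ maps $((\bP^d)^n)^s(L_{d,n})$ into $((\bP^d)^r)^s(L_{d,r})$, citing \cite[Thm 11.2]{dolgachev2003lectures} --- but stability of the retained configuration imposes conditions (3)--(4) of Lemma \ref{GITpoints} on the retained points alone, and these are not implied by stability of the full configuration. Concretely, for $d=2$, $n=6$, $R=\{1,\dots,5\}$, the configuration $p_1=[1:0:0]$, $p_2=[0:1:0]$, $p_3=p_4=p_5=[0:0:1]$, $p_6$ generic is $L_{2,6}$-stable (the triple point carries weight $5/12+1/4+1/4=11/12<1$), while its image under $q_R$ is $L_{2,5}$-unstable (the same point carries $5/9+1/3+1/3=11/9>1$). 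Moreover, the obstruction you located in the linear model is real and not removable: a configuration with $p_4\ne p_5$ both lying on the line through $p_2,p_3$ (and $p_6$ generic) is an interior point of $\overline P_{2,6}$, away from every centre $H_I$, so $\overline P_{2,6}\to(\bP^2)^2$ is an isomorphism near it; there the forgetful map reads $([b^0_4:b^0_5:b^0_6],[b^1_4:b^1_5:b^1_6])\mapsto([b^0_4:b^0_5],[b^1_4:b^1_5])$, which has direction-dependent indeterminacy along $b^0_4=b^0_5=0$. Hence no morphism $\overline P_{2,6}\to\overline P_{2,5}$ extending the forgetful map on the interior exists, and the projective half of the proposition for $d\ge 2$ cannot be proved as stated: it needs either additional blow-ups along these non-coincidence loci or a weaker (rational-map) formulation. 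Only for $d=1$ do the bad loci coincide with coincidence loci $H_I$, and there your affine argument goes through verbatim, recovering the classical forgetful morphisms of Hassett-type spaces.
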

\begin{proof} 
We start with the morphism $\phi_{\mathcal{A},\mathcal{A}(R)}$. By \cite[Theorem 6]{routis2014weighted} and its proof, there exists a morphism 
$$
 \bP^d_{\mathcal A}[n] \to  \bP^d_{\mathcal{A}(R)}[r] \times (\bP^d)^{n-r}
$$
Let $D_N\subset  \bP^d_{\mathcal A}[n]  $ and $D_{R}\subset  \bP^d_{\mathcal{A}(R)}[r] $ be the divisors corresponding to the small diagonals $\Delta_N\subset (\bP^d)^N$ and $\Delta_{R}\subset (\bP^d)^{R}$ respectively. 
 \\

\textit{Claim}: The restriction of $ \bP^d_{\mathcal A}[n] \to  \bP^d_{\mathcal{A}(R)}[r] \times (\bP^d)^{n-r}$ to $D_N$ surjects onto a subvariety isomorphic to $D_{R}$.
\\
\textit{Proof of Claim}: 
By the proof of \cite[Theorem 6]{routis2014weighted}, 
$ \bP^d_{\mathcal{A}(R)}[r] \times ( \bP^d)^{n-r} \to (\bP^d)^{n}$  is obtained as a sequence of blowups of $(\bP^d)^n$ along the iterated dominant transforms of the set 
$$
\mathcal K_{\mathcal{A}(R)}=\{\Delta_I \subset (\bP^d)^n | \, I\subset R\, \text{and}\,
 \sum\limits _{i_k\in I} a_{i_k} >1\}
$$ 
in ascending dimension order. Moreover, $ \bP^d_{\mathcal A}[n]$ is obtained from 
$ \bP^d_{\mathcal{A}(R)}[r] \times (\bP^d)^{n-r}$ by blowing up ideal sheaves corresponding to 
$\mathcal K_{\mathcal A} \setminus \mathcal K_{\mathcal{A}(R)}$. By \cite{LiLi}, $D_N$ is the iterated dominant transform of $\Delta_N$ along the sequence of blowups 
$ \bP^d_{\mathcal A}[n] \rightarrow  \bP^d_{\mathcal{A}(R)}[r] \times (\bP^d)^{n-r}\rightarrow (\bP^d)^n$. It therefore suffices to observe that the iterated dominant transform of $\Delta_N$ along the sequence of blowups $ \bP^d_{\mathcal{A}(R)}[r]  \times (\bP^d)^{n-r}\rightarrow (\bP^d)^n$
is isomorphic to the divisor $D_{R}\subset \bP^d_{\mathcal{A}(R)}[r]$. Indeed, consider the embedding $j:(\bP^d)^r\rightarrow(\bP^d)^r\times (\bP^d)^{n-r}$, which is obtained as the graph of the composite morphism $(\bP^d)^r\xrightarrow{q_i}\bP^d\xrightarrow {diag}(\bP^d)^{n-r}$ where $q_i$ is the projection to the $i$-th factor (for any $i\in N\setminus R$). Then $\Delta_N\subset (\bP^d)^n$ is the image of $\Delta_R\subset (\bP^d)^r$  via $j$. Therefore, since the iterated dominant transform of $\Delta_{R}$ along 
$ \bP^d_{\mathcal{A}(R)}[r] \rightarrow (\bP^d)^{r}$ is $D_R$ (theorem \ref{thmLi}), we conclude that the iterated dominant transform of $\Delta_N$ in $ \bP^d_{\mathcal{A}(R)}[r]  \times (\bP^d)^{n-r}$ is the graph of $D_{R}\rightarrow (\bP^d)^{n-r}$. 
\textit{End of proof of Claim.}

In view of the claim, we have a morphism of $D_N$ to $D_{R}$ over $\bP^d$, which pulls back to a morphism $\wtdn\rightarrow \wtdr$ between their fibers over $x\in \bP^d$
as illustrated in the following diagram.
\begin{displaymath}
    \xymatrix{
\wtdn \ar[rr] \ar[dr] \ar[dd] &&  D_N \ar[dd] |!{[d];[rr]}\hole \ar[dr] 
 \ar[rr]&& \bP^d_{\mathcal A}[n] \ar [dr]\ar [dd] |!{[d];[d]}\hole
\\
& \wtdr \ar[dl] \ar[rr]&&  D_{R} \ar[dl] \ar[rr]&& \bP^d_{\mathcal{A}(R)}[r] \times (\bP^d)^{n-r} \ar[dl]
\\
x \ar[rr] && \bP^d \ar[rr]^-{diag.} && (\bP^d)^n =(\bP^d)^r\times (\bP^d)^{n-r}
}
\end{displaymath}

Next we prove the existence of the map $\hat \phi_{\mathcal{A},\mathcal{A}(R)}$. By  \cite[Theorem 6]{routis2014weighted},  there exists a natural  forgetful morphism
$ \psi_R: \bP^d_{\mathcal A}[n] \to \bP^d_{\mathcal{A}(R)}[r]$. From the proof in [ibid.], $\psi_R$ is the composition of a sequence of blowups $\bP^d_{\mathcal A}[n]\rightarrow \bP^d_{\mathcal{A}(R)}[r]\times (\bP^d)^{n-r}$ at $SL_{d+1}$-invariant loci with the projection of $\bP^d_{\mathcal{A}(R)}[r]\times (\bP^d)^{n-r}$ to the first factor. Therefore $\psi_R$ is $SL_{d+1}$-equivariant. To verify our statement, we check that $\psi_R$ takes $ (\bP^d_{\mathcal A}[n])^s$ to $(\bP^d_{\mathcal{A}(R)}[r])^s $. Therefore we have a commutative diagram
$$
\xymatrixcolsep{5pc}\xymatrix{
\bP^d_{\mathcal A}[n] \ar[r]\ar[dr]_{\pi_{\mathcal{A}}}&\bP^d_{\mathcal{A}(R)}[r]\times (\bP^d)^{n-r} \ar[d]_{ \pi_{\mathcal{A}(R)}\times id} \ar[r]& \bP^d_{\mathcal{A}(R)}[r] \ar[d]^{\pi_{\mathcal{A}(R)}}
\\
&(\bP^d)^r \times (\bP^d)^{n-r}  \ar[r]^{q_R} & (\bP^d)^r
}
$$
where the morphism $q_R$ is the projection from $(\bP^d)^n=(\bP^d)^r \times (\bP^d)^{n-r} $ to $(\bP^d)^r$. Now, recall that $(\bP^d_{\mathcal A}[n])^s$ is equal to the preimage of $((\bP^d)^n)^s$ under $\pi_{\mathcal{A}}$
(see \ref{stablelocus}). It would therefore suffice to show that the preimage of $((\bP^d)^n)^s$ under $\pi_{\mathcal{A}(R)}\times id$ maps to $(\bP^d_{\mathcal{A}(R)}[r])^s$ via the projection $\bP^d_{\mathcal{A}(R)}[r]\times (\bP^d)^{n-r}\rightarrow \bP^d_{\mathcal{A}(R)}[r]$. But $(\bP^d_{\mathcal{A}(R)}[r])^s$ is in turn equal to the preimage of $((\bP^d)^r)^s$ under $\pi_{\mathcal{A}(R)}$. Consequently, it is enough to show that the projection $q_R$ takes $((\bP^d)^n)^s$ to $((\bP^d)^r)^s$, which can be seen directly using \cite[Thm 11.2]{dolgachev2003lectures}.

Since $\psi_R$-at the level of $k$-points- successively collapses all components of an $\mathcal A$-stable degeneration that are unstable with respect to $\mathcal A(R)$, we deduce, by (3) in Theorem \ref{GITHu}, that the morphism
$ \overline P^{\mathcal A}_{d,n}  \to \overline  P^{\mathcal{A}(R)}_{d,n}$ 
has the desired moduli interpretation at geometric points.
\end{proof}
Next, we denote as $\pi_I$ the forgetful map $\pi_I:T_{d,n} \rightarrow T_{d,|I|}$  obtained by forgetting the points $\{ p_{i} \; | \; i \in I^c \}$ and stabilizing afterwards.
First, we illustrate  a particular case which leads us to Theorem  \ref{thmProd}.  

\begin{example}\label{cbd24}
Consider the three dimensional loci $T_{2,2} \times ( T_{2,2} \times T_{2,2})   \subset T_{2,4} $ that parametrizes a stable tree 
$X=X_1 \cup X_2 \cup X_3$ as on the adjacent figure. 
\begin{center}
\begin{tikzpicture}[line cap=round,line join=round,>=triangle 45,x=1.0cm,y=1.0cm, scale=0.7]
\draw [color=black] (0,7)-- (2,6);
\draw [color=black] (2,6)-- (2,8);
\draw [color=black] (2,8)-- (0,7);
\draw [color=black] (2,8)-- (4,8.5);
\draw [color=black] (4,8.5)-- (4,5.5);
\draw [color=black] (4,5.5)-- (2,6);
\draw [color=black] (2,6)-- (2,8);
\draw [color=black] (-7,7)-- (-5,6);
\draw [color=black] (-5,6)-- (-5,8);
\draw [color=black] (-5,8)-- (-7,7);
\draw [color=black] (-5,6)-- (-3.5,5.5);
\draw [color=black] (-3.5,5.5)-- (-3.5,8.5);
\draw [color=black] (-3.5,8.5)-- (-5,8);
\draw [color=black] (-5,8)-- (-5,6);
\draw [color=black] (-3.5,5.5)-- (-2,5);
\draw [color=black] (-2,5)-- (-2,9);
\draw [color=black] (-2,9)-- (-3.5,8.5);
\draw [color=black] (-3.5,8.5)-- (-3.5,5.5);
\draw [color=black] (6,7)-- (8,8);
\draw [color=black] (8,8)-- (8,6);
\draw [color=black] (8,6)-- (6,7);
\draw [color=black] (8,6)-- (10,5.5);
\draw [color=black] (10,5.5)-- (10,8.5);
\draw [color=black] (10,8.5)-- (8,8);
\draw [color=black] (8,8)-- (8,6);
\draw [line width=1.5pt] (-2.53,8.82)-- (-2.52,5.17);
\draw [line width=1.5pt] (3.49,8.37)-- (3.5,5.62);
\draw [line width=1.5pt] (9.48,8.37)-- (9.48,5.63);

\fill [color=black] (9.02,6.1) circle (2.5pt);
\draw[color=black] (9.13,6.54) node {$p_4$};
\fill [color=black] (7.5,7.5) circle (2.5pt);
\draw[color=black] (7.72,8.2) node {$p_2$};
\fill [color=black] (7.21,6.77) circle (2.5pt);
\draw[color=black] (7.64,6.64) node {$p_3$};
\draw[color=black] (9.22,4.25) node {$\pi_{234}(X)$};
\draw[color=black] (2.72,4.29) node {$\pi_{123}(X)$};
\fill [color=black] (3,7.3) circle (2.5pt);
\draw[color=black] (3.0,7.74) node {$p_3$};
\fill [color=black] (1.5,7.5) circle (2.5pt);
\draw[color=black] (1.7,8.2) node {$p_2$};
\fill [color=black] (1.31,6.62) circle (2.5pt);
\draw[color=black] (1.73,6.56) node {$p_1$};
\fill [color=black] (-5.5,7.5) circle (2.5pt);
\draw[color=black] (-5.28,8.2) node {$p_2$};
\fill [color=black] (-5.8,6.66) circle (2.5pt);
\draw[color=black] (-5.3,6.52) node {$p_1$};
\fill [color=black] (-4.16,7.42) circle (2.5pt);
\draw[color=black] (-3.99,7.77) node {$p_3$};
\fill [color=black] (-3,5.7) circle (2.5pt);
\draw[color=black] (-2.99,6.24) node {$p_4$};
\draw[color=black] (-4.8,4.19) node {$X=X_1 \cup X_2 \cup X_3$};
\end{tikzpicture}
\end{center}
The morphism  $\pi_{123}$  contracts the last component $X_3 \cong Bl_x \PP^2$ to a line. We obtain  a configuration of points parametrized by  $T_{2,2} \times T_{2,2}$ with the point $p_{1}$ and $p_2$ supported in the first surface and $p_3$ supported in the last one. 
We can recover the position of $p_1$, $p_2$ and $p_3$ in $X$ from $\pi_{123}(X)$ but we lost the information of $p_4$.   Similarly,  the morphism $\pi_{234}$ contracts  $X_1 \cong \PP^2$ and we lost the information of the points $p_1$ and $p_2$, but   the position of the points  $p_3$ and  $p_4$ can be recovered from $\pi_{234}(X)$. By using all possible subsets $|I|=3$ we can recover the initial configuration of points in $X$ uniquely. 
\end{example}

The following result is essentially the one described in above example, and it started from long discussions with N. Giansiracusa in the contex of \cite{gallardo2015chen}. We recall that  for $T_{1,n} \cong \overline{M}_{0,n+1}$ the product of forgerful morphisms is injective  (see \cite[Thm 1.3]{giansiracusa2010kapranov}). We generalize this result for all $T_{d,n}$.
\begin{theorem}\label{thmProd}
For any $3\leq k \leq n$ the product 
$$
\pi_{k}: T_{d,n} \to \prod_{|I|=k}T_{d,I}  
$$ 
of forgetful morphisms
$\pi_I:T_{d,n} \rightarrow T_{d,|I|}$ over all subsets $I \subset \{1, \ldots, n \}$ of cardinallity $|I|=k$ is injective.  In contrast,  if $k=2$  the morphism  $\pi_{2}$  has positive-dimensional fibers.
\end{theorem}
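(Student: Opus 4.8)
The plan is to reduce the injectivity assertion to the case $k=3$, reconstruct a point of $T_{d,n}$ from its triple-images first on the interior (by synchronizing scales across overlapping triples) and then on the boundary (by induction, using the product description of the boundary divisors), and finally to exhibit an explicit positive-dimensional fiber when $k=2$.

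\emph{Reduction to $k=3$.} For a triple $J\subset N$ and any $I\supseteq J$ with $|I|=k$, the moduli interpretation of the forgetful morphisms in Proposition \ref{forgetful} --- successive collapsing of the components unstable with respect to the retained markings --- shows that forgetting directly to $J$ agrees with forgetting to $I$ and then to $J$, i.e. $\pi_J=\pi_{J,I}\circ\pi_I$ where $\pi_{J,I}\colon T_{d,I}\to T_{d,J}$ is the forgetful map of the smaller configuration. Since $3\le k\le n$, every triple $J$ is contained in some $k$-subset $I$, so the collection $\{\pi_I(X)\mid |I|=k\}$ determines $\{\pi_J(X)\mid |J|=3\}$. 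Hence it suffices to show that $\pi_3$ is injective.

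\emph{Interior.} On the open dense locus $T_{d,n}^{o}$ of $n$ distinct points, I would use the affine model of Lemma \ref{base}: normalizing $p_1=0$ by a translation, a point is the class $[p_2:\cdots:p_n]\in\mathbb{P}^{d(n-1)-1}$. The triple image $\pi_{\{1,2,j\}}(X)\in T_{d,3}$ records the pair $(p_2,p_j)$ up to a common scalar $\mu_j$. Since $p_2\neq 0$ (the points are distinct), the $p_2$-slots of the images of $\{1,2,3\}$ and $\{1,2,j\}$ are two known nonzero multiples of $p_2$, so their ratio determines $\mu_j/\mu_3$; dividing out recovers a common representative $\mu_3(p_2,p_3,\dots,p_n)$, hence $[p_2:\cdots:p_n]$. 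This proves $\pi_3$ is injective on $T_{d,n}^{o}$.

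\emph{Boundary.} Here I would argue by induction on $n$ (the cases $n\le 3$ being immediate). Given $X$ in the boundary, one first recovers its combinatorial type: for each triple the image lands in the interior of $T_{d,3}$ or in one of its three boundary divisors $\Gamma_{ij}$, thereby detecting the induced rooted triple topology, and a rooted $N$-tree is determined by its set of induced triples. This locates a subset $I\subsetneq N$ with $X\in\Gamma_I\cong T_{d,|I|}\times T_{d,n-|I|+1}$ (Theorem \ref{maintdn}(2)). Writing $X=(Y,Z)$, the bubble factor $Y$ is recovered from the triples contained in $I$ when $|I|\ge 3$ (inductive hypothesis), and directly from the first factor of $\pi_{\{i,j,l\}}(X)\in\Gamma_{ij}\subset T_{d,3}$ for any $l\notin I$ when $|I|=2$; the root factor $Z$ is recovered by choosing a proxy $m\in I$, noting that keeping $I^c\cup\{m\}$ collapses the bubble so that $m$ sits at the attaching node $p_{n+1}$, whence $Z=\pi_{I^c\cup\{m\}}(X)\in T_{d,n-|I|+1}$ is determined from its own triples by induction. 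The main obstacle is precisely this boundary step: organizing the reconstruction of the dual tree from rooted triples, and threading the inductive product description of $\Gamma_I$ through the degenerate $|I|=2$ case and the ``proxy marking at the node'' device, so that the per-component data glue back to the correct stable rooted tree.

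\emph{The case $k=2$.} Finally, since $T_{d,2}\cong\mathbb{P}^{d-1}$ records only the direction joining two points, I would note that the locus of $n$ distinct collinear points (a copy of $T_{1,n}\cong\overline{M}_{0,n+1}$, of dimension $n-2\ge 1$ for $n\ge 3$) maps under $\pi_2$ to the single tuple of constant directions. This exhibits a positive-dimensional fiber, establishing the contrast.
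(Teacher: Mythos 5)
Your interior and $k=2$ arguments are essentially the paper's own. On the open locus $T^0_{d,n}$ the paper likewise fixes the images of two of the points and observes that each triple image then determines the remaining point uniquely; your coordinate bookkeeping with the scalars $\mu_j$ in the chart of Lemma \ref{base} is the same computation. For $k=2$ the paper exhibits exactly the fiber you describe: all configurations of $n$ distinct collinear points with a common direction have the same image under $\pi_2$, a locus of dimension $n-2>0$. Your preliminary reduction of $k\geq 3$ to $k=3$ via $\pi_J=\pi_{J,I}\circ\pi_I$ is harmless (it holds on geometric points by the moduli interpretation of Proposition \ref{forgetful}), though the paper does not need it, since its argument is uniform in $k$.

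The genuine divergence, and the genuine gap, is the boundary step, which you yourself flag as the main obstacle. Your plan --- read off the dual rooted tree from the combinatorics of the triple images, locate $X$ in some $\Gamma_I\cong T_{d,|I|}\times T_{d,n-|I|+1}$ (Theorem \ref{maintdn}(2)), and reconstruct the two factors by induction on $n$ with a proxy marking at the node --- is plausible but rests on three unproven ingredients: (i) the combinatorial lemma that the nested family of branch-marking sets of a stable rooted tree is determined by its induced rooted triples; (ii) the compatibility of the forgetful maps with the product structure of $\Gamma_I$, i.e.\ that for $J\subset I$ the image $\pi_J(X)$ is computed from the first factor alone, and that $\pi_{I^c\cup\{m\}}(X)$ equals the second factor with $m$ placed at the attaching node; and (iii) the degenerate cases $|I|=2$ and $|I^c|=1$, where the inductive hypothesis does not apply and one must fall back on mixed triples. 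The paper's proof shows that this entire apparatus can be avoided: it never reconstructs the dual tree and never invokes the product decomposition of the boundary. Instead it recovers the marked points component by component: if a component $X_v$ carries $|I(v)|\leq k$ markings, one well-chosen $K\supseteq I(v)$ with $|K|=k$ exhibits those points unchanged in $\pi_K(X)$; if $|I(v)|>k$, then for every $J\subset I(v)$ with $|J|=k$ the image $\pi_J(X)$ is supported on a single $\mathbb{P}^d$ and the interior argument (fix two points, recover the rest) applies, so all points of $X_v$ are recovered from these $J$'s. This componentwise recovery removes both the induction on $n$ and the tree-reconstruction lemma. If you wish to keep your inductive route you must supply (i)--(iii); otherwise the boundary step should be replaced by the paper's direct argument.
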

\begin{proof}
We first  show the statement for the open locus  $T^0_{d,n}$ that parametrizes $n$ distinct points.  Let $X$ be one of those stable rooted trees, and let $p_1, \ldots p_n$ be its  marked points.  For the sake of clarity and only in this paragraph  we write the argument for $k=3$, the one for  general $k$ follows verbatim.   Select a set $I$ with three indices say $I=\{1,2,j\}$.
Recall that $\pi_I: T_{d,n} \to T_{d,|I|}$,  the support of both $X$ and $\pi_I(X)$ is $\mathbb{P}^d$, and without loss of generality, we can fix the same position for $p_1$ and $p_2$ in both $X$ and $\pi_I(X)$.   
The key observation is that fixing $p_1$ and $p_2$ fixes the location of $p_j$ in both $X$ and $\pi_I(X)$  completely.  The situation is identical  to the one for $M_{0,n}$ where fixing three points in a $\mathbb{P}^1$ assigns unique coordinates to the other $(n-3)$ points in that projective line.    Then, we can uniquely recover the coordinates of $p_j$ in $X$ from $\pi_I(X)$.  Since we are considering \emph{all} subsets $I$  with $|I|=3$, we recover uniquely all points in the stable tree $X$ from their images $\pi_I(X)$.

Next, we consider a stable rooted tree $X =\cup_v X_v$ parametrized by the boundary. 
 Let  $k$ be a fixed integer with $3\leq k \leq n$ and  let $I(v)$ be the set of indices of the marked points contained in the component $X_v$. For instance, in Example \ref{cbd24}, we have $I(1)=\{1,2\}$, $I(2) =\{ 3 \}$ and $I(3)=\{4\}$.   
Suppose that $X_v$ is a component such that $  |I(v)| \leq k $. Then, there is a set of indices $K$ with $|K|=k$ such that $\pi_K$  leaves the positions of the points in $X_v$ unchanged because we can choose it to  be $I(v) \subset K$. This means that from $X \to \pi_K(X)$, we recover all the points  $p_i \in X_v$.  For instance in Example \ref{cbd24}, the set   $K=\{1,2,3\}$ allows us to recover the points in the component $X_1$. 
Next, suppose that $X_{\tilde v}$ is a component such that 
$3 \leq k <  |I(\tilde v)|$. If we choose  a $J \subset I(\tilde v) $ then it holds that 
$X \to \pi_J(X) \cong \mathbb{P}^d$. We can uniquely determine the points in 
$X_{\tilde v}$ by using \emph{all} the indices $J$ such that $J \subset I(\tilde v)$ and $|J|=k$.  The argument is the same as the one used in the previous paragraph:  Fixing two points, say $p_{i_1}$ and $p_{i_2}$ in both $X_{\tilde v}$ and $\pi_J(X)$, will completely determine the positions of all $p_i$ with $i \in J$.   Therefore,  the position of the points in any component of  $X$ can be recovered by considering all such subsets $J$ and our statement follows.

Finally, we treat $k=2$.  The problem is that we cannot distinguish configurations where all  points are collinear.  Let $I  \subset \{1, \ldots, n\}$ be a subset of two elements and let $l(I)$ be the line in $\PP^d$ generated by two points $p_i$ with $i \in I$.   Notice that the image of forgetful morphism 
$
\pi_{I}: T^o_{d,n} \to T_{d,2} \cong \PP^{d-1} 
$
 is defined by intersecting the line $l(I)$ with the root  $H$. 
Indeed,  without loss of generality we may take $I =\{1,2 \}$. After making an appropriate translation, we may assume $p_1=[1:0:\dots:0]$. After this choice, our automorphism group is $\mathbb{G}_m$. This $\mathbb{G}_m$-action fixes both the root and the point $p_1$. It acts on the line $l(I)$ by translating $p_2$ along it.  Therefore, all pairs of distinct points ${p_1, p_2}$  supported on $l(I)$ define the same $G$-orbit;  and we can take $l(I) \cap H$ to be the  image of $X$ in $\mathbb{P}^{d-1}$.  In particular,  for a given $X \in T^0_{d,n}$, the image $\pi_I(X)$ does not depend on the points $p_i \not\in I$. 

The above argument implies that the product of forgetful morphisms $\pi_2$ is generated by the intersection of the root $H$ with the lines $l(I)$ such that $I \subset \{1 \ldots, n \}$ and $|I|=2$. If the $n$ points are collinear, there is only one line $l(I)$ generated by all pairs of points. This line intersects the root at the same point regardless of the positions of the points inside $l(I)$. The loci parametrizing configurations of $n$ collinear distinct points is positive dimensional  in $T^0_{d,n}$ for any $n \geq 3$ and it will be contracted by $\pi_2$.
\end{proof}


\subsection{Toric Compactifications}\label{sec:toric}
Next, we describe toric models for our configuration spaces  by choosing appropriate weights.
They are generalizations of the toric compactification of $M_{0,n}$ known as the Losev-Manin space \cite{losev2000new-moduli-spaces}. This toric model can be identified with Hassett's  moduli space of weighted stable curves for the set of weights  $\mathcal{A}_{LM}=(\epsilon, \ldots, \epsilon,1,1)$ (see \cite[Sec 6]{Hassett-weighted}).

To describe a toric model of $T_{d,n}$, we denote the rays of the fan associated to $\mathbb{P}^{d(n-1)-1}$ as  
\begin{align*}
\{ \vec e^{1}_1, \ldots, \vec e^{d}_1,
 \vec e^{1}_2 \ldots, \vec e^{d}_2 , \ldots 
\vec e^{1}_{n-1}, \ldots, \vec e^{d}_{n-1}  \}
& & 
\text{ with  }  & &
\vec e^k_i \in \mathbb{Z}^{d(n-1)} /   \sum_{i,k} \vec e^k_i=0 
\end{align*}
 where  $ \vec e^{k}_i$ has its unique non-zero entry  at the index 
$d(i-1)+k-1$.  For example, for $\mathbb{P}^3$ we have 
\begin{align*}
\vec e^{1}_1=(1,0,0,0), & & \vec e^{2}_1=(0,1,0,0), & &
\vec e^{1}_2=(0,0,1,0), & & \vec e^{2}_2=(0,0,0,1).
\end{align*}

\begin{corollary}\label{toricTdn}
Given $\epsilon = \frac{1}{n-1}$, the compactification  $T^{LM}_{d,n}$  associated to the set of weights 
$(\epsilon, \ldots, \epsilon, 1)$  is a toric variety whose fan has rays of the form $\vec e^{1}_1, \ldots,  \vec e^{d}_{n-1}  $ and
$ \sum_{i \in I} \left( \vec e^1_i+ \ldots+ \vec e^d_i \right) $ where $1 \leq |I| \leq n-2$ and $I \subsetneq \{1, \ldots, n-1 \}$.
\end{corollary}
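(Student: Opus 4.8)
The plan is to identify $T^{LM}_{d,n}$ with the iterated blowup furnished by Corollary \ref{mainheavy} and then to recognize every blowup in that sequence as a toric star subdivision. First I would set $\mathcal{A}_{LM}=(\epsilon,\dots,\epsilon,1)$ with $\epsilon=\tfrac{1}{n-1}$ and check that $\mathcal{A}_{LM}\in\mathcal D^T_{d,n}$: indeed $0<a_i\le 1$ and $\sum_i a_i=(n-1)\epsilon+1=2>1$. By Corollary \ref{mainheavy}(1), $T^{LM}_{d,n}=T^{\mathcal{A}_{LM}}_{d,n}$ is the iterated blowup of $\mathbb{P}^{d(n-1)-1}$ at the building set $\mathcal{H}_{\mathcal{A}_{LM}}$ of Definition \ref{defset}, performed in ascending dimension order.

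Next I would determine $\mathcal{H}_{\mathcal{A}_{LM}}$ explicitly. For $I\subsetneq N$ with $2\le |I|\le n-1$ one has $\sum_{i\in I}a_i>1$ precisely when $n\in I$: if $n\notin I$ then $\sum_{i\in I}a_i=|I|\epsilon=\tfrac{|I|}{n-1}\le 1$, whereas if $n\in I$ then $\sum_{i\in I}a_i=(|I|-1)\epsilon+1>1$. Hence, writing $J:=I\setminus\{n\}$, the building set is
\[
\mathcal{H}_{\mathcal{A}_{LM}}=\{\,\delta_{J\cup\{n\}}\;:\;J\subsetneq\{1,\dots,n-1\},\ 1\le |J|\le n-2\,\}.
\]
By the formula (\ref{coincident}) in the case $n\in I$, each center equals the coordinate subspace $\delta_{J\cup\{n\}}=V(x_{ik}:i\in J,\ 1\le k\le d)$, an intersection of torus-invariant divisors of $\mathbb{P}^{d(n-1)-1}$, and is therefore itself torus-invariant.

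Finally I would run the toric blowup/star-subdivision dictionary. The variety $\mathbb{P}^{d(n-1)-1}$ is toric with the fan generated by the rays $\vec e^k_i$ subject to $\sum_{i,k}\vec e^k_i=0$, and the coordinate divisor $V(x_{ik})$ corresponds to the ray $\vec e^k_i$. Thus $\delta_{J\cup\{n\}}$ is the orbit closure of the smooth cone $\sigma_J=\operatorname{cone}(\vec e^k_i:i\in J,\ 1\le k\le d)$. By the standard correspondence between blowups of torus-orbit closures and star subdivisions of fans, blowing up $\delta_{J\cup\{n\}}$ is the star subdivision of the fan at $\sigma_J$, which adjoins the single new ray $\sum_{i\in J}(\vec e^1_i+\dots+\vec e^d_i)$, keeps the ambient variety toric, and sends the strict transform of any remaining center to the orbit closure of the corresponding cone. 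Carrying out these subdivisions over all $J\subsetneq\{1,\dots,n-1\}$ with $1\le |J|\le n-2$ produces exactly the fan whose rays are the original $\vec e^k_i$ together with the vectors $\sum_{i\in I}(\vec e^1_i+\dots+\vec e^d_i)$ for $I\subsetneq\{1,\dots,n-1\}$, $1\le|I|\le n-2$, as claimed.

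The main obstacle will be the bookkeeping of compatibility of the successive star subdivisions: I must verify that, after each blowup, the strict transform of every not-yet-blown-up center $\delta_{J'\cup\{n\}}$ remains the orbit closure of $\operatorname{cone}(\vec e^k_i:i\in J')$ in the subdivided fan, so that its later blowup again contributes precisely $\sum_{i\in J'}(\vec e^1_i+\dots+\vec e^d_i)$ and no spurious ray is ever created. Here the order-independence of Theorem \ref{thmLi}(2) together with Remark \ref{partialtotal}, and the building-set structure of $\mathcal{H}_{\mathcal{A}_{LM}}$ from Lemma \ref{bset}, reduce this to the standard fact that star subdivisions indexed by a building set of cones may be carried out in ascending dimension order with the expected result; the upshot is that the final ray set is the union of the original rays with one barycentric ray per element of $\mathcal{H}_{\mathcal{A}_{LM}}$.
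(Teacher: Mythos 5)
Your proposal is correct and follows essentially the same route as the paper's proof: compute that the building set $\mathcal{H}_{\mathcal{A}_{LM}}$ consists exactly of the $\delta_I$ with $n\in I$, observe via (\ref{coincident}) that these are torus-invariant coordinate subspaces, and then use the blowup/star-subdivision dictionary so that each center contributes the single ray $\sum_{i\in I}(\vec e^1_i+\dots+\vec e^d_i)$. The only difference is that you spell out the compatibility bookkeeping for the successive star subdivisions, which the paper leaves implicit in the phrase ``because the blow up is smooth.''
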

\begin{proof}
Definition \ref{defset} implies that the building set associated to our weights is  $\{ \delta_I \; | \; n \in I \}$. By Expression \ref{coincident} that loci is supported in the toric boundary, so the compactification is a toric variety.
Each  center is the intersection of divisors associated to the rays 
$\{ \vec e_{i_k}^1, \ldots \vec e_{i_k}^d \}$ where $i_k \in I$. The wonderful compactification involves blowing up these intersections, each of which generates a divisor associated to the ray 
$\sum_{i \in I }\left( \vec e^1_i+ \ldots+ \vec e^d_i \right) $, because the blow up is smooth.
\end{proof}

Next, we describe the toric model of $\overline P_{d,n}$. We denote the rays of the fan of 
$\left( \mathbb{P}^{n-d-2}\right)^d$ as
$\{  e^i_{d+2}, \ldots e^i_{n}\}$
where  $e^i_k \in \mathbb{Z}^{d(n-1)-1}$ 
with $1 \leq i \leq d$.
\begin{corollary}
\label{toricPdn}
Given $\epsilon=\frac{1}{n-d-1}$, 
the compactification $\overline P_{d,n}^{LM}$ associated to the weights
\begin{align*}
a_1=\ldots =a_{d+1}=1, \,\,\,\,\,\,  a_{d+2}= \ldots= a_{n}=\epsilon
\end{align*}
is a toric variety whose fan has rays of the form $e^i_{d+2}, \ldots, e^i_{n}$ and 
$
\sum_{i\in I}\left( e^1_{i} + \ldots +e^{d}_{i} \right)
$
where  $1 \leq |I| \leq n-d-2$  and $I \subset \{d+2, \ldots, n\}$.
\end{corollary}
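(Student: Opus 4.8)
The plan is to follow the same strategy as the proof of Corollary \ref{toricTdn}, now using the description of $\overline P_{d,n}$ as an iterated blowup of $(\mathbb{P}^{n-d-2})^d$ from Corollary \ref{corpdn}. First I would compute the building set $\mathcal{G}_{\mathcal{A}}$ of Definition \ref{hi} for the weights $a_1=\dots=a_{d+1}=1$, $a_{d+2}=\dots=a_n=\epsilon=\frac{1}{n-d-1}$. The key observation is that a subset $I\subsetneq\{d+1,\dots,n\}$ with $d+1\notin I$ never satisfies $\sum_{i\in I}a_i>1$, since then $\sum_{i\in I}a_i=|I|\epsilon\le (n-d-1)\epsilon=1$; whereas for $I=\{d+1\}\cup J$ with $\emptyset\neq J\subset\{d+2,\dots,n\}$ one always has $\sum_{i\in I}a_i=1+|J|\epsilon>1$, and the condition $I\subsetneq\{d+1,\dots,n\}$ forces $|J|\le n-d-2$. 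Hence $\mathcal{G}_{\mathcal{A}}=\{H_{\{d+1\}\cup I}\mid I\subset\{d+2,\dots,n\},\ 1\le|I|\le n-d-2\}$, and by Definition \ref{hi} each such center equals the coordinate subspace $\bigcap_{i\in I}V(b^0_i,\dots,b^{d-1}_i)$, i.e. the locus where the $i$-th coordinate vanishes in all $d$ copies of $\mathbb{P}^{n-d-2}$ for every $i\in I$. In particular all of these centers are torus-invariant.

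Next I would invoke Corollary \ref{corpdn}(1), which expresses $\overline P_{d,n}^{LM}$ as the iterated blowup of $(\mathbb{P}^{n-d-2})^d$ along the iterated strict transforms of the elements of $\mathcal{G}_{\mathcal{A}}$ in ascending dimension order. Since each center is a smooth torus-invariant subvariety, every blowup in the sequence is a toric morphism, so $\overline P_{d,n}^{LM}$ is a toric variety. To read off the rays I would note that $H_{\{d+1\}\cup I}$ is the intersection of the torus-invariant divisors dual to the rays $\{e^k_i\mid 1\le k\le d,\ i\in I\}$, hence corresponds to the smooth cone $\sigma_I$ they span; blowing it up is the star subdivision of the fan at the barycenter of $\sigma_I$, which introduces exactly the new ray $\sum_{i\in I}(e^1_i+\dots+e^d_i)$.

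The hard part will be justifying rigorously that the iterated strict transforms still correspond to the cones $\sigma_I$, so that the set of rays is exactly as claimed with none missing and no extras. The delicate point is the interaction between the ascending-dimension blowup order and the successive star subdivisions: whenever $I\subsetneq I'$ the center $H_{\{d+1\}\cup I'}$ has strictly larger codimension, hence is blown up before $H_{\{d+1\}\cup I}$, and $\sigma_{I'}\supsetneq\sigma_I$. I would argue that since the barycenter of $\sigma_{I'}$ lies in its relative interior, every proper face such as $\sigma_I$ survives all earlier subdivisions unchanged; thus the strict transform of $H_{\{d+1\}\cup I}$ still corresponds to $\sigma_I$ at the stage it is blown up, contributing precisely the barycentric ray $\sum_{i\in I}(e^1_i+\dots+e^d_i)$. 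Combining, the rays of the fan of $\overline P_{d,n}^{LM}$ are the original rays $e^i_{d+2},\dots,e^i_n$ together with these barycentric rays for $1\le|I|\le n-d-2$, as asserted.
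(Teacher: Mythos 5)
Your proposal is correct and takes essentially the same approach as the paper: identify the building set $\mathcal{G}_{\mathcal{A}}$ for these weights via Definition \ref{hi}/Lemma \ref{wallStrata}, note that its elements are torus-invariant coordinate subspaces so that the iterated blowup of Corollary \ref{corpdn} is a sequence of toric (star-subdivision) blowups, and read off the barycentric rays. Your discussion of why the iterated strict transforms still correspond to the cones $\sigma_I$ is in fact more detailed than the paper's own (very terse) argument.
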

\begin{proof}
The proof is the same than the one of Corollary \ref{toricTdn}. Only now we use Lemma \ref{wallStrata} which implies the building set is supported in the toric boundary of $\left( \bP^{n-d-2} \right)^{d}$, so our compactification is a toric variety. Each center is the intersection of the divisors associated to the rays $ e^1_{i_1}, \ldots e^d_{i_s}$ with $i_k \in I$. Then, the rays obtained by blowing up these loci are the ones in the statement.
\end{proof}
\begin{remark}
V. Alexeev communicated to the first author that the moduli space of weighted hyperplane arrangements for the choice of weights 
$a_1=\ldots =a_{d+1}=1, a_{d+2}= \ldots = a_n=\epsilon$ is also a toric variety constructed from a sequence of blow ups of  $\left( \bP^{n-d-2} \right)^{d}$ which generalizes the Losev-Manin space (see also Section \ref{hyparr}).
\end{remark}

\section{Appendix}\label{apx}

\begin{lemma}\label{commute} 


Let $Z\hookrightarrow X$ be an embedding of smooth varieties, both flat over a variety $Y$. For any geometric point $y\in Y$ let $Z_y$ and $X_y$ be the fibers of $Z\rightarrow Y$ and $X\rightarrow Y$ over $y$. Then the blowup $Bl_Z X$ of $X$ at $Z$ is flat over $Y$ and we have an isomorphism $$Bl_Z X\times_Y y \iso Bl _{Z_y}X_y.$$
\end{lemma}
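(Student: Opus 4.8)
The plan is to work entirely with the Rees-algebra description of the blowup and to reduce everything to a single flatness assertion about the powers of the ideal of $Z$. Let $I\subset\mathcal O_X$ be the ideal sheaf of $Z$, so that $Bl_Z X=\Proj_X\bigl(\bigoplus_{n\geq0}I^n\bigr)$, and write $\mathcal R:=\bigoplus_{n\geq 0}I^n$ for the Rees algebra. Since both $Y$-flatness and the claimed isomorphism are local on $X$, I would first pass to an affine chart. Because $Z$ and $X$ are both smooth, the closed immersion $Z\hookrightarrow X$ is a regular (local complete intersection) embedding, so the conormal sheaf $I/I^2$ is a locally free $\mathcal O_Z$-module and there is a canonical isomorphism $\operatorname{gr}_I\mathcal O_X=\bigoplus_{n\geq0}I^n/I^{n+1}\iso \operatorname{Sym}_{\mathcal O_Z}(I/I^2)$. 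The whole argument then rests on the claim that \emph{each $I^n$ is flat over $Y$}.

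To prove this claim I would argue by induction. The base case is $\mathcal O_X/I=\mathcal O_Z$, which is flat over $Y$ by hypothesis, while $\mathcal O_X$ itself is flat over $Y$. For the inductive step, note that $I^n/I^{n+1}\iso\operatorname{Sym}^n_{\mathcal O_Z}(I/I^2)$ is a locally free $\mathcal O_Z$-module, hence flat over $Y$ since $\mathcal O_Z$ is. Feeding this into the short exact sequence $0\to I^n/I^{n+1}\to \mathcal O_X/I^{n+1}\to \mathcal O_X/I^n\to 0$, and using that an extension of $Y$-flat modules is $Y$-flat (immediate from the long exact $\operatorname{Tor}$ sequence), I conclude inductively that $\mathcal O_X/I^n$ is flat over $Y$ for all $n$. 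The short exact sequence $0\to I^n\to\mathcal O_X\to \mathcal O_X/I^n\to 0$ then shows each $I^n$ is $Y$-flat, so $\mathcal R$ is flat over $Y$ in every degree.

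With the claim in hand both assertions follow formally. For the base-change isomorphism I would use $Bl_Z X\times_Y y=Bl_Z X\times_X X_y=\Proj_{X_y}\!\bigl(\mathcal R\otimes_{\mathcal O_X}\mathcal O_{X_y}\bigr)$, the general compatibility of relative $\Proj$ with base change along $X_y\to X$. Flatness of $I^n$ over $Y$ gives $I^n\otimes_{\mathcal O_Y}k(y)\iso I^n\cdot\mathcal O_{X_y}=(I_y)^n$, where $I_y:=I\cdot\mathcal O_{X_y}$; moreover $\mathcal O_{X_y}/I_y=\mathcal O_Z\otimes_{\mathcal O_Y}k(y)=\mathcal O_{Z_y}$ identifies $I_y$ with the ideal of $Z_y$ in $X_y$. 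Hence $\mathcal R\otimes_{\mathcal O_X}\mathcal O_{X_y}=\bigoplus_n (I_y)^n$ is exactly the Rees algebra of $Z_y\subset X_y$, yielding $Bl_Z X\times_Y y\iso Bl_{Z_y}X_y$. Flatness of $Bl_Z X$ over $Y$ follows because on each standard affine chart $D_+(s)$, for $s$ a local generator of $I$, the structure ring is the degree-zero part of $\mathcal R[s^{-1}]$, a filtered colimit of copies of the $Y$-flat modules $I^n$, hence itself $Y$-flat.

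The step I expect to be the main obstacle is the inductive flatness of the powers $I^n$: everything hinges on identifying $\operatorname{gr}_I\mathcal O_X$ with $\operatorname{Sym}_{\mathcal O_Z}(I/I^2)$, which is precisely where the regularity of the embedding $Z\hookrightarrow X$ (guaranteed by smoothness of $Z$ and $X$) is genuinely used, and on being careful that the formation of each $I^n$ commutes with restriction to the fibre, so that $I^n\otimes_{\mathcal O_Y}k(y)$ really equals $(I_y)^n$ rather than merely surjecting onto it.
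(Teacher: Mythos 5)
Your proof is correct, and its core is identical to the paper's: both arguments use smoothness of $Z$ and $X$ to see that $Z\hookrightarrow X$ is a regular embedding, so that each $\mathcal{I}^n/\mathcal{I}^{n+1}$ is locally free over $\mathcal{O}_Z$ and hence $Y$-flat, and then run the induction on the exact sequences $0\to \mathcal{I}^n/\mathcal{I}^{n+1}\to \mathcal{O}_X/\mathcal{I}^{n+1}\to \mathcal{O}_X/\mathcal{I}^{n}\to 0$ to conclude that every $\mathcal{O}_X/\mathcal{I}^n$ is flat over $Y$. The two write-ups diverge only in how the conclusion is extracted from this: the paper stops at that point and invokes \cite[Lemma 1]{ishii1982moduli}, which is exactly the statement that $Y$-flatness of all $\mathcal{O}_X/\mathcal{I}^n$ forces the blowup to be $Y$-flat and to commute with passage to fibers, whereas you prove that implication yourself through the Rees algebra $\mathcal{R}=\bigoplus_n\mathcal{I}^n$: compatibility of relative $\Proj$ with base change, the identification $\mathcal{R}\otimes_{\mathcal{O}_X}\mathcal{O}_{X_y}\iso\bigoplus_n(\mathcal{I}_y)^n$, and flatness of the charts $(\mathcal{R}_s)_0=\varinjlim_n s^{-n}\mathcal{I}^n$ as filtered colimits of $Y$-flat modules. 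What your route buys is self-containedness (no external black box) at the cost of a paragraph of standard commutative algebra; what the paper's route buys is brevity. One small attribution slip in your argument: the injectivity of $\mathcal{I}^n\otimes_{\mathcal{O}_Y}k(y)\to\mathcal{O}_{X_y}$, hence the equality of its image with $(\mathcal{I}_y)^n$, comes from $Y$-flatness of $\mathcal{O}_X/\mathcal{I}^n$ (which kills $\operatorname{Tor}_1^{\mathcal{O}_Y}(\mathcal{O}_X/\mathcal{I}^n,k(y))$ in the sequence obtained by tensoring $0\to\mathcal{I}^n\to\mathcal{O}_X\to\mathcal{O}_X/\mathcal{I}^n\to 0$ with $k(y)$), not from flatness of $\mathcal{I}^n$ itself as you assert; since you have established both flatness statements, this is harmless, but the justification should point at the quotient rather than the ideal.
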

\begin{proof}Let $\mathcal {I}$ be the ideal sheaf corresponding to the embedding $Z\hookrightarrow X$. Since $Z$ and $X$ are smooth, that embedding is regular. Consequenlty, $\mathcal{I}^n/\mathcal{I}^{n+1}$ is a locally free, hence flat, sheaf of $ \mathcal{O}_X/\mathcal{I}$- modules for all $n\geq 0$. By the hypothesis $ \mathcal{O}_{X}/\mathcal{I}$ is flat over $\mathcal{O}_Y$, consequently $\mathcal{I}^n/\mathcal{I}^{n+1}$ is also flat over $\mathcal{O}_Y$. Then, by the exact sequence
  $$ 0\rightarrow \dfrac{\mathcal{I}^n}{\mathcal{I}^{n+1}}\rightarrow  \dfrac{\mathcal{O}_X}{\mathcal{I}^{n+1}} \rightarrow  \dfrac{\mathcal{O}_X}{\mathcal{I}^{n}}\rightarrow 0
 $$ 
\vspace{0.1in}
 \noindent we deduce by induction that $ \mathcal{O}_{X}/\mathcal{I}^{n}$ is also flat over $ \mathcal{O}_Y$ for all $n\geq0$. Now, the Lemma follows by \cite[Lemma 1]{ishii1982moduli}. 

\end{proof}
\bigskip

\begin{lemma}\label{basic}Let  $Z$ be a smooth subvariety of a smooth variety $Y$ and let $\pi: Bl_Z Y\rightarrow Y$ be the blowup, with exceptional divisor $E=\pi^{-1}(Z)$.\\ 
\begin{enumerate}
\item Let $V$ be a smooth subvariety of $Y$, not contained in $Z$, and let $\widetilde{V}\subset Bl_Z Y$ be its strict transform. Then,\\
\begin{enumerate}
\item if $V$ meets $Z$ transversally (or is disjoint from $Z$), then $\widetilde{V}=\pi^{-1}(V)$ and $\mathcal{I}_{\pi^{-1}(V)}=\mathcal{I}_{\widetilde{V}}$. Moreover 
\begin{center}
$N_{\widetilde{V}/Bl_Z Y}\iso \pi^*N_{V/Y}$
\end{center}
\vspace{0.1in}
\item if $V\supset Z$, then $\mathcal{I}_{\pi^{-1}(V)}=\mathcal{I}_{\widetilde{V}}\cdot \mathcal{I}_E$. Moreover \\
\begin{center}
$N_{\widetilde{V}/Bl_Z Y}\iso \pi^*N_{V/Y}\otimes \mathcal{O}(E)$
\end{center}
Also, if Z has codimension 1 in V, the projection from $\widetilde{V}$ to $V$ is an isomorphism. 
\vspace{0.1in}

\end{enumerate}
\item Let $Z_1, Z_2$ be smooth subvarieties of $Y$ intersecting transversally. \\
\begin{enumerate}
\item Assume $Z_1\cap Z_2 \supseteq Z$. Then their strict transforms $\widetilde{Z_1}$ and $\widetilde{Z_2}$ intersect transversally and $\widetilde{Z_1}\cap \widetilde{Z_2}= \widetilde{Z_1\cap Z_2}$; in particular, if $Z_1\cap Z_2 = Z$, then $\widetilde{Z_1}\cap \widetilde{Z_2}= \emptyset$.\\
\item Assume $Z$ intersects transversally with $Z_1$ and $Z_2$, as well as with their intersection $Z_1\cap Z_2$. Then their strict transforms $\widetilde{Z_1}$ and $\widetilde{Z_2}$ intersect transversally and $\widetilde{Z_1}\cap \widetilde{Z_2}= \widetilde{Z_1\cap Z_2}$. \\
\item If $Z_1\supseteq Z$ and $Z_2$ intersects transversally with $Z$, then the intersections \\
\begin{gather*}
\widetilde{Z_1}\cap \widetilde{Z_2}\,\, \text{and}\,\, (E\cap \widetilde{Z_1})\cap \widetilde{Z_2}
\end{gather*}  
are transversal. Moreover, $\widetilde{Z_1}\cap \widetilde{Z_2}=\widetilde{Z_1\cap Z_2}$.\\
\end{enumerate}
\end{enumerate}
\end{lemma}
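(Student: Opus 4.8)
The plan is to reduce every assertion to an explicit computation in the standard charts of the blowup of affine space along a coordinate subspace. All of the statements --- the equalities of ideal sheaves, the normal bundle isomorphisms, and the transversality of strict transforms --- are local on $Y$ and compatible with passing to étale neighborhoods or formal completions. Since $Z$, $V$, $Z_1$, $Z_2$ are smooth and meet cleanly or transversally, I would first choose, at each point of the relevant intersections, local coordinates $x_1,\dots,x_N$ on $Y$ in which the subvarieties are coordinate linear subspaces; each transversality hypothesis translates exactly into the condition that the corresponding conormal directions are linearly independent, so the index sets defining the subvarieties may be taken pairwise disjoint in the appropriate ranges. In these coordinates $Bl_Z Y$ is the classical blowup of $\mathbb{A}^N$ along $V(x_1,\dots,x_c)$, covered by the charts $U_k$, $1\le k\le c$, with pivot $x_k$, where $x_j=x_k u_j$ for $j\ne k$ and $E=\{x_k=0\}$.

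For part (1), take $Z=V(x_1,\dots,x_c)$. In case (a), transversality lets me write $V=V(x_{c+1},\dots,x_{c+e})$, whose defining equations are untouched in every chart; this immediately gives $\pi^{-1}(V)=\widetilde V$, the equality $\mathcal{I}_{\pi^{-1}(V)}=\mathcal{I}_{\widetilde V}$, and $N_{\widetilde V/Bl_Z Y}\cong \pi^*N_{V/Y}$. In case (b), the inclusion $Z\subset V$ lets me write $V=V(x_1,\dots,x_e)$ with $e\le c$; computing in a chart $U_k$ with $k>e$, the total transform factors as $\mathcal{I}_{\pi^{-1}(V)}=(x_k)\cdot(u_1,\dots,u_e)=\mathcal{I}_E\cdot \mathcal{I}_{\widetilde V}$, and since each generator $u_i=x_i/x_k$ of $\mathcal{I}_{\widetilde V}$ is $\pi^*x_i$ tensored with the section $1/x_k$ of $\mathcal{O}(E)$, one reads off $N^*_{\widetilde V}\cong \pi^*N^*_{V/Y}\otimes\mathcal{O}(-E)$, i.e.\ $N_{\widetilde V/Bl_Z Y}\cong \pi^*N_{V/Y}\otimes\mathcal{O}(E)$. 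The last claim, that $\widetilde V\to V$ is an isomorphism when $Z$ has codimension one in $V$, follows because $\widetilde V\cong Bl_Z V$ is then the blowup of the smooth $V$ along the smooth divisor $Z$.

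For part (2), the same normalization turns each statement into a monomial computation. In (a) the hypotheses $Z\subseteq Z_1\cap Z_2$ and transversality of $Z_1,Z_2$ give $Z_1=V(x_i:i\in A)$, $Z_2=V(x_i:i\in B)$ with $A,B\subseteq\{1,\dots,c\}$ and $A\cap B=\emptyset$; in a chart $U_k$ with $k\notin A\cup B$ one finds $\widetilde{Z_1}=V(u_i:i\in A)$, $\widetilde{Z_2}=V(u_j:j\in B)$, whose conormals are disjoint hence independent, yielding transversality and $\widetilde{Z_1}\cap\widetilde{Z_2}=\widetilde{Z_1\cap Z_2}$, while if $k\in A$ (resp.\ $B$) the ideal of $Z_1$ (resp.\ $Z_2$) becomes $(x_k)$ and its strict transform is absent from $U_k$. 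When $Z_1\cap Z_2=Z$, i.e.\ $A\cup B=\{1,\dots,c\}$, every pivot lies in $A$ or $B$, so in each chart one strict transform is absorbed into $E$, giving $\widetilde{Z_1}\cap\widetilde{Z_2}=\emptyset$. Case (b) is the general-position case $Z=V(x_1,\dots,x_c)$, $Z_1=V(x_{c+1},\dots,x_{c+a})$, $Z_2=V(x_{c+a+1},\dots,x_{c+a+b})$, where the equations of $Z_1,Z_2$ survive the blowup and transversality is immediate; and in (c), with $Z_1=V(x_1,\dots,x_a)$, $a\le c$, and $Z_2=V(x_{c+1},\dots,x_{c+b})$ transverse to $Z$, the chart $U_k$ with $a<k\le c$ gives $\widetilde{Z_1}=V(u_1,\dots,u_a)$, $E\cap\widetilde{Z_1}=V(x_k,u_1,\dots,u_a)$ and $\widetilde{Z_2}=V(x_{c+1},\dots,x_{c+b})$, whose conormals involve the disjoint coordinate sets $\{du_i\}$, $\{dx_k,du_i\}$, $\{dx_{c+j}\}$, so both intersections are transversal and equal to $\widetilde{Z_1\cap Z_2}$. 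Several of these transversality statements also appear in \cite[Lemma 2.6]{LiLi} and may be cited.

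The individual computations are routine; the main obstacle is purely organizational --- checking that each assertion holds in \emph{every} chart $U_k$, as the pivot index $k$ ranges over its possible positions relative to the index sets $A$, $B$ and $\{1,\dots,e\}$, and keeping the normal-bundle twist in (1)(b) consistent with these chart-by-chart factorizations. I would therefore isolate the single-chart computation once, and then dispatch each case of parts (1) and (2) by specifying which ranges of the pivot index are relevant.
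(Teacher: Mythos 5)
Your proof is correct, but it takes a genuinely different route from the paper's, whose entire argument is one line: part (1) is declared standard, and part (2) is quoted from \cite[Lemma 2.9]{LiLi} (you point instead to Lemma 2.6 of that paper; the statement the authors actually invoke is Lemma 2.9). Your proposal replaces these citations with a self-contained verification in the standard charts of the blowup of $\mathbb{A}^N$ along a coordinate subspace, after an \'etale- or formal-local linearization of the configuration, and the chart computations you describe do check out in every case, including the pivot ranges where a strict transform misses the chart entirely (e.g. in (1)(b) the charts $U_k$ with $k\le e$, where the asserted ideal equality degenerates to $(x_k)=(x_k)\cdot(1)$). Two points should be made explicit in a careful write-up. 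First, simultaneous linearization is legitimate here but not automatic: an arbitrary triple of pairwise cleanly meeting smooth subvarieties need not admit it; it works in each case of this lemma precisely because the configurations are assembled from nested pairs and transversal pairs (for instance in (2)(c) one takes coordinates $x_1,\dots,x_c$ adapted to the flag $Z\subseteq Z_1$ and appends local equations of $Z_2$, whose differentials are independent of $dx_1,\dots,dx_c$ by transversality with $Z$, to an \'etale coordinate system). Second, the reduction to charts uses that strict transforms and inverse-image ideal sheaves commute with \'etale base change --- true because the strict transform of $V$ is the blowup of $V$ along $\mathcal{I}_Z\cdot\mathcal{O}_V$ and blowups commute with flat base change --- and that the normal-bundle comparisons are induced by canonical maps, so that being an isomorphism can be tested locally. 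What your approach buys is self-containedness and an explicit form of the twist in (1)(b) (the generator $u_i=\pi^*x_i\otimes x_k^{-1}$ exhibiting $N_{\widetilde{V}/Bl_ZY}\iso\pi^*N_{V/Y}\otimes\mathcal{O}(E)$), which is exactly the computation the paper reuses in the proof of Lemma \ref{normal}; what the paper's choice buys is brevity, at the price of outsourcing all of part (2) to \cite{LiLi}.
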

\begin{proof} (1) is standard; the proof of (2) follows from \cite[Lemma 2.9] {LiLi}.
\end{proof} 

\subsection{Proofs of Lemmas \ref{dominant} and \ref{normal}}In order to prove Lemma  \ref{dominant}, we consider the intersections of the iterated strict transforms of $\delta_I$ with the centers of each of the blowups in the sequence $\textbf{P}_I^{[3]}\rightarrow\PP^{d(n-1)-1}$. In addition, we distinguish two \textit{types of subvarieties} $\delta_J\cap \delta_I$ of $\delta_I$ under the identification (\ref{coincident}) of $\delta_I$ with 
\begin{center}
$\PP^{d(n-|I|)-1}=[x_{11}: x_{12}:  \dots :x_{1d}:\dots :x_{21}: x_{22}:\dots x_{2d}: \dots :x_{(n-|I|)1}: x_{(n-|I|)2}:\dots: x_{(n-|I|)d}]$
\end{center}
 as follows:
\begin{enumerate}[label=(\alph*)]
\item  Let $\delta_J \in \mathcal{H}^I_1$ (i.e. $J$ contains $I$). In this case $\delta_J\cap\delta_I =\delta_J \iso V(\{x_{ij}\}| i\in J\setminus I, j=1,\dots d)\iso \PP^{d(n-|J|)-1}$
\item  Let $\delta_J \in \mathcal{H}^I_2$ (i.e. $J$ is disjoint from $I$). In this case $\delta_J\cap \delta_I\iso V(\{x_{ik}-x_{jk}\} | i,j\in J, k=1,\dots d)\iso \PP^{d(n-|I|-|J|)-1}$
\end{enumerate}

Clearly, we have an equality of sets $$\mathcal{H}_{\mathcal{A}_+(I^c)}=\{\delta_J\cap \delta_I\subset \delta_I=\PP^{d(n-|I|)-1}\,|\delta_J \in \mathcal{H}^I_1\cup \mathcal{H}^I_2\}$$  
where $\mathcal{H}_{\mathcal{A}_+(I^c)}$ is defined in Definition \ref{defset}  for input data $(d,n-|I|+1, \mathcal{A}_+(I^c))$. We give $\mathcal{H}_{\mathcal{A}_+(I^c)}$ an order $(\lessdot)$ compatible with the order $(\prec)$ of Definition \ref{part}, that is:
\begin{itemize}
\item for any $\delta_J \in \mathcal{H}^I_1\cup \mathcal{H}^I_2$, $\delta_J\cap \delta_I \lessdot \delta_{J'}\cap \delta_I$ if and only if $\delta_J\prec \delta_{J'}$.
\end{itemize}

 \begin{lemma} \label{bset2}The ordered set $(\mathcal{H}_{\mathcal{A}_+(I^c)},\lessdot)$ satisfies the second condition of Theorem \ref{thmLi} (2).
 \end{lemma}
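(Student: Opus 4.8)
The plan is to verify hypothesis (b) of Theorem \ref{thmLi}(2) directly: for a total order extending $(\lessdot)$ I must show that every initial segment $\{S_1,\dots,S_j\}$ of $\mathcal{H}_{\mathcal{A}_+(I^c)}$ is itself a building set. Recall that $\mathcal{H}_{\mathcal{A}_+(I^c)}$ is precisely the set of Definition \ref{defset} attached to the reduced data $(d,\,n-|I|+1,\,\mathcal{A}_+(I^c))$, so it is a building set by Lemma \ref{bset}; its members are the diagonal-type subvarieties $\delta_K$ of the form (\ref{coincident}), indexed by subsets $K$ of the reduced index set, in which a distinguished index $\ast$ plays the role of the collapsed cluster $I$. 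By the two types (a), (b) of subvarieties $\delta_J\cap\delta_I$ described above, the members coming from $\mathcal{H}^I_1$ are exactly those with $\ast\in K$ and those from $\mathcal{H}^I_2$ are exactly those with $\ast\notin K$; moreover, directly from the coordinate description (\ref{coincident}), $\delta_K\subseteq\delta_{K'}$ if and only if $K\supseteq K'$.

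The key step I would establish is that $(\lessdot)$ refines the inclusion partial order, i.e. $\delta_K\subsetneq\delta_{K'}$ (equivalently $K\supsetneq K'$) forces $\delta_K\lessdot\delta_{K'}$. This follows from a short case analysis governed by Definition \ref{part}. If $\ast\in K'$ then $\ast\in K$ as well, so both lie in $\mathcal{H}^I_1$, and since $|K|>|K'|$ gives $\dim\delta_K<\dim\delta_{K'}$, the rule ordering $\mathcal{H}^I_1$ by descending cardinality puts $\delta_K$ first. If $\ast\notin K'$ but $\ast\in K$, then $\delta_K$ is of type (a) and $\delta_{K'}$ of type (b), and the rule $\mathcal{H}^I_1\prec\mathcal{H}^I_2$ again puts $\delta_K$ first. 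If $\ast\notin K$, then also $\ast\notin K'$ and both are of type (b), so the cardinality rule applies as before. The remaining case $\ast\notin K,\ \ast\in K'$ is impossible, since $K\supseteq K'$ would force $\ast\in K$. Hence in every occurring case the smaller subvariety precedes the larger one; in particular $(\lessdot)$ in fact satisfies the stronger hypothesis (a), though only (b) is needed below.

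Granting this, the prefix property follows exactly as in the proof of Lemma \ref{bset}. Fix an initial segment $\mathcal{P}$ and let $S$ be any nonempty member of the arrangement induced by $\mathcal{P}$, say $S=\delta_{K_1}\cap\cdots\cap\delta_{K_r}$ with each $\delta_{K_t}\in\mathcal{P}$. Rewriting uniquely as $S=\delta_{L_1}\cap\cdots\cap\delta_{L_s}$, where the $L_t$ are the pairwise disjoint blocks generated by the $K_t$, each $L_t$ contains some $K_{t'}$; hence the total weight of $L_t$ is at least that of $K_{t'}$, which exceeds $1$, so $\delta_{L_t}\in\mathcal{H}_{\mathcal{A}_+(I^c)}$, and $L_t\supseteq K_{t'}$ gives $\delta_{L_t}\subseteq\delta_{K_{t'}}$, whence $\delta_{L_t}\lessdot\delta_{K_{t'}}$ and $\delta_{L_t}\in\mathcal{P}$ by the refinement property. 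These $\delta_{L_t}$ are the minimal elements of $\mathcal{P}$ containing $S$ (any $\delta_{K'}\in\mathcal{P}$ with $\delta_{K'}\supseteq S$ must have $K'\subseteq L_t$ for some $t$, so $\delta_{K'}\supseteq\delta_{L_t}$), their intersection is $S$, and since the $L_t$ are disjoint they meet transversally. Thus $\mathcal{P}$ is a building set, which is exactly hypothesis (b) of Theorem \ref{thmLi}(2).

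I expect the \emph{main obstacle} to be the bookkeeping in the refinement step, where one must confirm that grouping by the families $\mathcal{H}^I_i$, rather than globally by dimension, remains consistent with inclusion once only types (a) and (b) occur. The argument above isolates the single dangerous configuration — a type-(b) variety strictly contained in a type-(a) variety — and shows it cannot arise combinatorially. This is the same mechanism underlying Lemma \ref{anyextension}, but simpler, since here only two of the four families $\mathcal{H}^I_i$ are present.
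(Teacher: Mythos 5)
Your proposal is correct and takes essentially the approach the paper intends: the paper omits the proof with a pointer to Lemma \ref{bset}, and your argument is exactly that disjoint-block argument (merge overlapping index sets, check the blocks have weight $>1$, are proper, are the minimal elements, and meet transversally) applied to initial segments, supplemented by the verification that $(\lessdot)$ refines the inclusion order so that the merged blocks $\delta_{L_t}$ of a nonempty intersection again lie in the given initial segment. The case analysis you isolate—ruling out a type-(b) variety strictly contained in a type-(a) variety, which is the only way a block could fall outside an initial segment—is precisely the detail the paper's omitted proof leaves implicit, and you verify it correctly.
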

 \begin{proof} The proof is very similar to the proof of Lemma \ref{bset}, so we omit it.
 \end{proof}

For any $i\in\{1,2,3,4\}$ let $M_i$ be the cardinality of the set $\mathcal{H}^I_1\cup \dots \cup \mathcal{H}^I_i$. For any $k\in \ZZ$ such that $0<k\leq M_4$, we define $P_{I,k}$ to be $k$-th step in the sequence of blowups $\wtdn=\textbf{P}_I^{[4]}\rightarrow\PP^{d(n-1)-1}$ with respect to the order $(\prec)$. Moreover, for any $V\subset\PP^{d(n-1)-1}$, we denote by $V^{(k)}$ the iterated dominant transform of $V\subset\PP^{d(n-1)-1}$ in $P_{I,k}$. In particular $\delta_J^{(M_i)}=\delta_J^{[i]}$ for any $\delta_J\in \wbuildtdn$.
\begin{lemma} \label{helpful}
\begin{enumerate}[label=(\roman*)]
\item Let $\delta_J \in \mathcal{H}^I_1\cup \mathcal{H}^I_2$. For each $k$ such that $0\leq k\leq M_2$ the subvariety ${\delta_J}^{(k)}$ of $P_{I,k}$ either contains or intersects transversally with the center of the blowup $P_{I,k+1}\rightarrow P_{I,k}$.
\item $({\delta_I\cap\delta_J})^{(k)}= {\delta_I}^{(k)}\cap {\delta_J}^{(k)}$ for any $\delta_J\in \mathcal{H}_2^I$ and $0\leq k\leq M_1$. Moreover, each intersection ${\delta_I}^{(k)}\cap {\delta_J}^{(k)}$ is transversal.
\item ${\delta_I}^{(M_1)}\cap {\delta_J}^{(M_1)}=\emptyset$ for any $\delta_J\in \mathcal{H}_3^I$. Therefore ${\delta_I}^{(k)}\cap {\delta_J}^{(k)}=\emptyset$ for all $k>M_1$ as well.
\item $({\delta_I\cap\delta_J})^{(k)}= {\delta_I}^{(k)}\cap {\delta_J}^{(k)}$ for any $\delta_J\in \mathcal{H}_2^I$ and $M_1< k\leq M_2$. Moreover, each intersection ${\delta_I}^{(k)}\cap {\delta_J}^{(k)}$ is transversal.
\end{enumerate}
\end{lemma}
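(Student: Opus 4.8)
The plan is to induct on the number of blowups, tracking the iterated transforms of the pair $\delta_I,\delta_J$ through the two phases of the sequence $\textbf{P}_I^{[4]}\to\PP^{d(n-1)-1}$ that precede step $M_2$, namely the blowups along the transforms of $\mathcal{H}_1^I$ (steps $1,\dots,M_1$) and of $\mathcal{H}_2^I$ (steps $M_1+1,\dots,M_2$). The first thing I would record is that the order $(\prec)$ restricted to $\mathcal{H}_1^I\cup\mathcal{H}_2^I$ is compatible with inclusion: within each class it is the ascending dimension order, and there is no containment $\delta_L\subsetneq\delta_{L'}$ across the two classes, since such a containment would force either $L'\supsetneq L\supseteq I$ or $L\supsetneq L'\supseteq I$, contradicting that one of $L,L'$ is disjoint from $I$ (here $|I|\ge 2$). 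As $\mathcal{H}_1^I\cup\mathcal{H}_2^I$ consists of the first $M_2$ elements of $(\prec)$, it is a building set by Lemma \ref{anyextension}, so Proposition \ref{buildpullback} applies; claim (i) is then precisely its part (2), together with the trivial observation that $\delta_J^{(k)}$ contains the next center whenever its index set contains the center's index set.

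The engine for (ii)--(iv) is the combinatorics of the diagonals: for index sets $L,L'$ one has $\delta_L\cap\delta_{L'}=\delta_{L\cup L'}$ when $L\cap L'\neq\emptyset$, a transversal product when $L\cap L'=\emptyset$, and a containment when $L\subseteq L'$ or $L'\subseteq L$. Two consequences drive the argument. First, when $J\in\mathcal{H}_2^I$ (so $J\cap I=\emptyset$) the intersection $\delta_I\cap\delta_J$ is a transversal two-block diagonal and equals no single $\delta_L$; hence it never occurs as a center, which will let it survive intact and transversal through steps $1,\dots,M_2$. Second, when $J\in\mathcal{H}_3^I$ (genuine overlap, so $I\cap J\neq\emptyset$, $I\not\subseteq J$, $J\not\subseteq I$) the overlap forces $\delta_I\cap\delta_J=\delta_{I\cup J}$ with $I\cup J\supsetneq I$; thus $\delta_I\cap\delta_J$ either equals an element of $\mathcal{H}_1^I$ or is already empty (when $I\cup J=N$), and in the former case it is blown up during the first phase.

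For (ii) and (iv) I would establish, by induction on $k\le M_2$, the stronger assertion that $\delta_I^{(k)}\cap\delta_J^{(k)}=(\delta_I\cap\delta_J)^{(k)}$ and that this intersection is transversal. At the inductive step the center is a transform $\delta_L^{(k)}$, and by (i) it is either contained in or transversal to each of $\delta_I^{(k)}$ and $\delta_J^{(k)}$. According to which occurs---contained in both (hence in their intersection), contained in one and transversal to the other, or transversal to both and, by the clean-intersection structure of the diagonal arrangement, to their intersection as well---one of parts (a), (c), (b) of Lemma \ref{basic}(2) applies and propagates both the equality of transforms and the transversality to step $k+1$. The induction never stalls because, by the first consequence above, the two-block diagonal $\delta_I\cap\delta_J$ is itself never a center; this yields (ii) for $k\le M_1$ and (iv) for $M_1<k\le M_2$.

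For (iii) I would run the same induction through phase one only, up to the step at which the center is the transform of $\delta_{I\cup J}=\delta_I\cap\delta_J$; at that step $\delta_I^{(k)}$ and $\delta_J^{(k)}$ meet transversally precisely along the center, so Lemma \ref{basic}(2)(a) gives $\delta_I^{(k+1)}\cap\delta_J^{(k+1)}=\emptyset$, and disjoint subvarieties remain disjoint under every further blowup, whence $\delta_I^{(M_1)}\cap\delta_J^{(M_1)}=\emptyset$ and the intersection stays empty for all $k>M_1$. The main obstacle I anticipate is not any single step but the bookkeeping of the iterated transforms: one must verify that the containment and transversality relations among $\delta_I$, $\delta_J$ and the successive centers, established for the undeformed diagonals, persist verbatim for their iterated transforms at every intermediate stage, which is exactly what Proposition \ref{buildpullback} and the clean-intersection statements of Lemma \ref{basic}(2) are designed to guarantee. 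A secondary nuisance is that in phase two $\delta_J$ itself becomes a center, so $\delta_J^{(k)}$ passes from a strict transform to a divisor; here one relies on (i) to see that blowing up $\delta_J$ transversalizes its transform against the later centers $\delta_L$ with $L\subsetneq J$, so the case analysis above continues to apply without change.
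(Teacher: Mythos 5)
Your treatment of (i), (ii) and (iv) is essentially the paper's: identify $\mathcal{H}_1^I\cup\mathcal{H}_2^I$ as a building set with an inclusion-compatible order, apply Proposition \ref{buildpullback}(2), and propagate the equality and transversality of intersections through the blowups with Lemma \ref{basic}(2)(a),(c). Two slips there are repairable. First, containments across the two classes \emph{do} exist, because containment of diagonals reverses inclusion of index sets: for $I=\{1,2\}$, $L=\{1,2,3,4\}$, $L'=\{3,4\}$ one has $\delta_L\subsetneq\delta_{L'}$. The order is nevertheless inclusion-compatible, since any such cross-class containment has the $\mathcal{H}_1^I$-member as the \emph{smaller} variety, and that class is listed first. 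Second, part (i) cannot be quoted for $\delta_I$ itself, since $\delta_I\notin\mathcal{H}_1^I\cup\mathcal{H}_2^I$: in phase one the centers are \emph{contained} in the transforms of $\delta_I$ (their index sets properly contain $I$), and in phase two one needs, as the paper does in its proof of (iv), that $\mathcal{H}_1^I\cup\mathcal{H}_2^I\cup\{\delta_I\}$ is itself a building set with inclusion-compatible order before Proposition \ref{buildpullback}(2) yields transversality of $\delta_I^{(k)}$ with the centers.

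The genuine gap is in (iii). If $|I\cap J|\geq 2$, the intersection $\delta_I\cap\delta_J=\delta_{I\cup J}$ is clean but \emph{not} transversal: the codimension of $\delta_L$ in $\PP^{d(n-1)-1}$ is $d(|L|-1)$, so transversality would force $|I|+|J|-2=|I\cup J|-1$, i.e.\ $|I\cap J|=1$. Hence your assertion that at the relevant step the transforms of $\delta_I$ and $\delta_J$ ``meet transversally precisely along the center'' is false, and Lemma \ref{basic}(2)(a), whose standing hypothesis is that $Z_1,Z_2$ meet transversally, cannot be invoked. (Disjointness of strict transforms after blowing up a clean intersection is in fact true, via $N_{Z/Z_1}\cap N_{Z/Z_2}=0$, but you neither prove this nor cite a statement that covers it.) Moreover, your induction cannot even reach that step: for $\delta_J\in\mathcal{H}_3^I$, part (i) says nothing about how $\delta_J^{(k)}$ meets the earlier phase-one centers, and these can meet $\delta_J$ cleanly with neither containment nor transversality (take $n=6$, $I=\{1,2,3\}$, $J=\{2,3,4\}$ and center index $L=\{1,2,3,5\}$: then $\delta_J\cap\delta_L=\delta_{\{1,2,3,4,5\}}\neq\emptyset$, while the codimensions give $2d+3d\neq 4d$), so Lemma \ref{basic}(2) propagates nothing. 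The paper's key device, absent from your proposal, is to replace $J$ by $J':=\{j\}\cup(J\setminus I)$ for some $j\in I\cap J$: then $|I\cap J'|=1$, so $\delta_{J'}\supseteq\delta_J$ meets $\delta_I$ transversally with $\delta_I\cap\delta_{J'}=\delta_{I\cup J}$; the set $\mathcal{H}_1^I\cup\{\delta_{J'}\}$ is a building set (by the argument of Lemma \ref{bset}), so the machinery of (ii) legitimately tracks $\delta_{J'}$ up to the blowup of $\delta_{I\cup J}$, after which Lemma \ref{basic}(2) gives $\delta_I^{(m)}\cap\delta_{J'}^{(m)}=\emptyset$, and the containment $\delta_J^{(M_1)}\subseteq\delta_{J'}^{(M_1)}$ finishes (iii).
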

\begin{proof}(i) By Lemma \ref{anyextension}, $\mathcal{H}^I_1\cup \mathcal{H}^I_2$ is a building set. Also, observe that the order $(\prec)$ on $\mathcal{H}^I_1\cup \mathcal{H}^I_2$ is inclusion preserving. Therefore, the claim follows from Proposition \ref{buildpullback}(2).\\

\noindent (ii) Let $k$ such that $0\leq k\leq M_1$. By part (i), the iterated strict transform of each $\delta_J\in \mathcal{H}_2^I$ in $P_{I,k}$ must either intersect the blowup center inside $P_{I,k}$ transversally or it must contain that center. Also, observe that for $\delta_J\in \mathcal{H}_2^I$ we have that the intersection $\delta_I\cap \delta_J$ is transversal. Moreover, the iterated strict transform of $\delta_I$ in $P_{I,k}$ contains the corresponding center for all the above $k$. Therefore, by a repeated use of Lemma \ref{basic}(2)(a) and (c) we deduce (ii).\\

\noindent (iii) For any $\delta_J\in \mathcal{H}_3^I$, by definition, the set $J$ overlaps with $I$. Consider an element $j\in J\cap I$ and set $J':=j\cup (J\setminus I)$. Since ${\delta_{J'}}^{(M_1)}\supseteq{\delta_J}^{(M_1)}$, it is enough to show that ${\delta_I}^{(M_1)}\cap {\delta_{J'}}^{(M_1)}=\emptyset$. While $\delta_{J'}$ is not necessarily an element of $\wbuildtdn$, the same argument as in Lemma \ref{bset} shows that the set $\mathcal{H}_1^I\cup \{\delta_{J'}\}$, with $\mathcal{H}_1^I$ given an ascending dimension order and $\delta_J$ listed last, is a building set. Now the intersection $\delta_I\cap \delta_J'=\delta_{I\cup J'}$ belongs to $\mathcal{H}_1^I$; assume it is the $m$-th element of that set with respect to $(\prec)$ for some $m$ with $0<m\leq M_1$. Consider the iterated strict transforms $\delta_I^{(m-1)}$ and $\delta_{J'}^{(m-1)}$ of $\delta_I$ and $\delta_{J'}$ respectively under $P_{I,m-1}\rightarrow \PP^{d(n-1)-1}$. Since the intersection $\delta_I\cap \delta_{J'}$ is transversal, by using the argument of the previous paragraph we see that $\delta_I^{(m-1)}\cap\delta_{J'}^{(m-1)}=\delta_{I\cup J'}^{(m-1)}$. Now consider the $m$-th blowup $P_{I,m}\rightarrow P_{I,m-1}$. By Lemma \ref{basic}(2), we deduce that $\delta_I^{(m)}\cap\delta_{J'}^{(m)}=\emptyset$, so ${\delta_I}^{(M_1)}\cap {\delta_{J'}^{(M_1)}}=\emptyset$. \\

\noindent (iv) By Lemma \ref{anyextension}, $\mathcal{H}^I_1\cup \mathcal{H}^I_2$ is a building set and it is straightforward to see that the set $\mathcal{H}^I_1\cup \mathcal{H}^I_2\cup \{\delta_I\}$ is a building set as well. Since the latter is a subset of $\wbuildtdn$ we can  consider it ordered with order ($\prec$). Now, observe that the order $(\prec)$ on $\mathcal{H}^I_1\cup \mathcal{H}^I_2 \cup \{\delta_I\}$ is inclusion preserving. Therefore, by Proposition \ref{buildpullback}(2), $\delta_I^{(k)}$ intersects transversally with the center of the $k$-th blowup for all $k$ as in the statement and so does $\delta_J^{(k)}$ where $\delta_J\in \mathcal{H}_2^I$. The proof is then very similar to the proof of (ii). 
\end{proof}

\textit{Proof  Lemma \ref{dominant}}: By Lemma \ref{helpful}(3), we see that the iterated dominant transform $\delta_I^{[3]}\subset \textbf{P}_I^{[3]}$ is isomorphic to the iterated dominant transform $\delta_I^{[2]}\subset \textbf{P}_I^{[2]}$. By Lemma \ref{helpful}(2) and (4), the latter is, in turn, equal to the iterated blowup of $\delta_I$ at the ordered building set $(\mathcal{H}_{\mathcal{A}_+(I^c)},\lessdot)$. The Lemma now follows by Lemma \ref{bset2} and Theorem \ref{thmLi}(2).
\qed
\bigskip

\textit{Proof of Lemma \ref{normal}}: (1) First, note that the normal bundle of $\delta_I\iso\PP^{d(n-|I|)-1}$ in $\PP^{d(n-1)-1}$ is isomorphic to 
\begin{center}
$\bigoplus\limits_{i=1}^{d(|I|-1)} \mathcal{O}_{\PP^{d(n-|I|)-1}}(-1)$
\end{center}
	 Clearly $\mathcal{O}_{\PP^{d(n-1)-1}}(-1)$ pulls back to $\mathcal{O}_{\textbf{P}^{[1]}}(-1)$ on $\textbf{P}^{[1]}$, so $\mathcal{O}_{\PP^{d(n-|I|)-1}}(-1)$ pulls back to $\mathcal{O}_{{\delta_I}^{[1]}}(-1)$ on ${\delta_I}^{[1]}$. Therefore, by a repeated application of Lemma \ref{basic}(1)(b) we see that the normal bundle of the iterated strict transform of $\delta_I$ in $\textbf{P}^{[1]}$ is isomorphic to

\begin{align*}
\left(\bigoplus\limits_{i=1}^{d(|I|-1)} \mathcal{O}_{{\delta_I}^{[1]}}(-1)\right)\otimes \underbrace{\mathcal{O}_{{\delta_I}^{[1]}}(1)\otimes\dots\otimes\mathcal{O}_{{\delta_I}^{[1]}}(1)}_{p\, \text{times}}
=\bigoplus\limits_{i=1}^{d(|I|-1)} \mathcal{O}_{{\delta_I}^{[1]}}(p-1)
\end{align*}

Now, by Lemma \ref{helpful}(iii) and (iv), the intersection of each iterated strict transform of $\delta_I$ in the sequence of blowups $\textbf{P}^{[3]}\rightarrow\textbf{P}^{[1]}$ with every blowup center (corresponding to $\mathcal{H}_2^I \cup \mathcal{H}_3^I$) is transversal (even empty). Therefore, by applying Lemma \ref{basic}(1)(a) we complete the proof.\\
\indent The proof of part (2) is identical to the proof of (1) and is therefore omitted.
\qed

{\large{\bibliographystyle{amsalpha}}}
\bibliography{bib}
\end{document}